\tikzset{black/.style={circle,fill=black,inner sep=3pt,outer sep=3pt},
         white/.style={circle,fill=white,draw=black,inner sep=3pt,outer sep=3pt},
}
\numberwithin{equation}{section}
\newtheorem{theorem}{Theorem}[section]
\newtheorem{thm}[theorem]{Theorem}
\newtheorem{cor}[theorem]{Corollary}
\newtheorem{lemm}[theorem]{Lemma}
\newtheorem{prop}[theorem]{Proposition}
\newtheorem{question}[theorem]{Question}
\theoremstyle{definition}
\newtheorem{definition-theorem}[theorem]{Definition-Theorem}
\newtheorem{defi}[theorem]{Definition}
\newtheorem{remk}[theorem]{Remark}
\newtheorem{exam}[theorem]{Example}
\newtheorem{notation}[theorem]{Notation}
\newcommand{\tors}{\mathsf{tors}\hspace{.01in}}
\newcommand{\ftors}{\mathsf{f\mbox{-}tors}\hspace{.01in}}
\newcommand{\Hasse}{\mathsf{Hasse}}
\newcommand{\TT}{\operatorname{\mathcal T}\nolimits}
\newcommand{\VV}{\operatorname{\mathcal V}\nolimits}
\newcommand{\I}{\operatorname{\overline{ \ell}}\nolimits}
\newcommand{\add}{\mathsf{add}\hspace{.01in}}
\newcommand{\proj}{\mathsf{proj}\hspace{.01in}}
\newcommand{\id}{\operatorname{id}\nolimits}
\newcommand{\Hom}{\operatorname{Hom}\nolimits}
\newcommand{\Ext}{\operatorname{Ext}\nolimits}
\newcommand{\RHom}{\mathbf{R}\strut\kern-.2em\operatorname{Hom}\nolimits}
\def\dim{\mathop{\mathrm{dim}}\nolimits}
\def\Ker{\mathop{\mathrm{Ker}}\nolimits}
\def\Hom{\mathop{\mathrm{Hom}}\nolimits}
\def\End{\mathop{\mathrm{End}}\nolimits}
\def\Ext{\mathop{\mathrm{Ext}}\nolimits}
\def\RHom{\mathop{\mathbb R\mathrm{Hom}}\nolimits}
\DeclareMathOperator{\moduleCategory}{\mathsf{mod}} \renewcommand{\mod}{\moduleCategory}
\newcommand{\ind}{\mbox{\rm ind}\hspace{.01in}}
\newcommand{\itaurigid}{\mathsf{i\tau\mbox{-}rigid}\hspace{.01in}}
\newcommand{\taurigid}{\tau\mbox{-}\mathsf{rigid}\hspace{.01in}}
\newcommand{\stautilt}{\mathsf{s\tau\mbox{-}tilt}\hspace{.01in}}
\renewcommand{\ind}{\mathsf{ind}\hspace{.01in}}
\newcommand{\Dim}{\mathrm{Dim}\hspace{.01in}}
\newcommand{\area}{\mathrm{area}\hspace{.01in}}
\newcommand{\taurigidpair}{\mathsf{\tau\mbox{-}rigid\mbox{-}pair}\hspace{.01in}}
\newcommand{\link}{\mathrm{lk}\hspace{.01in}}
\newcommand{\Fac}{\mathsf{Fac}\hspace{.01in}}
\def\add{\mathsf{add}\hspace{.01in}}
\newcommand{\Cat}{\mathrm{Cat}\hspace{.01in}}
\newcommand{\Eul}{\mathrm{Eul}\hspace{.01in}}
\begin{document}
\title[Dimensions of $\tau$-tilting modules]
{Dimensions of $\tau$-tilting modules over path algebras and preprojective algebras of Dynkin type}
\author{Toshitaka Aoki}
\address{Toshitaka Aoki,
Graduate School of Human Development and Environment, Kobe University, 3-11 Tsurukabuto, Nada-ku, Kobe 657-8501, Japan}
\email{toshitaka.aoki@people.kobe-u.ac.jp}

\thanks{}
\author{Yuya Mizuno}
\address{Yuya Mizuno, 
Faculty of Liberal Arts, Sciences and Global Education, Osaka Metropolitan University, 1-1 Gakuen-cho, Naka-ku, Sakai, Osaka 599-8531, Japan}
\email{yuya.mizuno@omu.ac.jp}


\begin{abstract}
In this paper, we introduce a new generating function called $d$-polynomial for the dimensions of $\tau$-tilting modules over a given finite dimensional algebra. 
Firstly, we study basic properties of $d$-polynomials and show that it can be realized as a certain sum of the $f$-polynomials of the simplicial complexes arising from $\tau$-rigid pairs. 
Secondly, we give explicit formulas of $d$-polynomials for preprojective algebras and path algebras of Dynkin quivers by using a close relation with $W$-Eulerian polynomials and $W$-Narayana polynomials. 
Thirdly, we consider the ordinary and exponential generating functions defined from $d$-polynomials and give closed-form expressions in the case of preprojective algebras and path algebras of Dynkin type $\mathbb{A}$. 
\end{abstract}

\maketitle
\tableofcontents

\section{Introduction}
\subsection{Background} 
Since its introduction, $\tau$-tilting theory \cite{AIR14}, which can also be formulated as 2-term silting theory, has been extensively studied. 
Recently, the geometric viewpoint using simplicial complexes, fans and polytopes has lead to several new insights and questions. 
Among others, a representation-theoretic interpretation of $f$-vectors and $h$-vectors of $g$-simplicial complexes 
provide a new approach to the study of $\tau$-tilting theory (see \cite{Hille15,DIJ19,AMN20,AHIKM22,AHIKM23} for example). 
Let us briefly recall our basic  terminology. 

Let $A$ be a finite dimensional algebra over a field $k$. We denote by $\taurigidpair A$ the set of isomorphism classes of basic $\tau$-rigid pairs for $A$ (Definition \ref{tau tilting2}). 
We assume that $A$ is $g$-finite (i.e., $\taurigidpair A$ is finite) and $|A|=n$. 
Then, the $g$-simplicial complex $\Delta(A)$ of $A$ is defined to be a simplicial complex whose $j$-simplices are given by 
    $$
    \taurigidpair^{j+1} A := \{(M,P)\in \taurigidpair A \mid \text{$|M|+|P| = j+1$}\} \quad \text{($0 \leq j\leq n-1$).}
    $$  
Recall that the $f$-polynomial of $A$ is defined by 
\begin{equation*}
    f(A;t) := 
    \sum_{(M,P)\in \taurigidpair A} t^{n-|M|-|P|} = 
    \sum_{j=0}^n f_{j-1} t^{n-j},   
\end{equation*}
where $f_j = \#\taurigidpair^{j+1}A$ is the number of $j$-simplices of $\Delta(A)$. 
The $h$-polynomial is given by 
\begin{equation*}
    h(A;t) = f(A;t-1). 
\end{equation*}

Many interesting properties of the $f$-polynomials, together with the $h$-polynomials, have been widely studied and it has recently turned out that they also play an important role in the study of representation theory. 
In this paper, we introduce a new 
generating function called $d$-polynomial of $A$, which counts the dimension of $\tau$-tilting modules and can be regarded as a (weighted) variant of $f$-polynomials.

\begin{defi}[Subsection \ref{Enumeration}]
We define 
\begin{equation}
    d(A;t) := \sum_{(M,P)\in \taurigidpair A} \dim_k M \cdot t^{n-|M|-|P|} = \sum_{j=1}^n d_{j-1}t^{n-j}.
\end{equation}
That is, each coefficient $d_j\in \mathbb{Z}$ is the sum of the $k$-dimensions 
\begin{equation*}
    d_j = \sum_{(M,P) \in \taurigidpair^{j+1} A} \dim_{k} M \quad (0 \leq j \leq n-1).  
\end{equation*}
We call $d(A;t)$ the \emph{d-polynomial} of $A$. 
\end{defi}
In particular, the number $d_0$ (respectively, $d_{n-1}$) is the sum of the $k$-dimensions of all indecomposable $\tau$-rigid (respectively, basic support $\tau$-tilting) modules up to isomorphism. 
We remark that two finite dimensional algebras having isomorphic $g$-simplicial complexes (hence the same $f$- and $h$-polynomials) may have different $d$-polynomials. 
In fact, every local algebra $\Lambda$ has the same $g$-simplicial complex (up to isomorphism) with the $f$-polynomial $t+2$, whereas it has the $d$-polynomial $d(\Lambda;t) =\dim_k \Lambda$. 
Further examples can be found in Remark \ref{remk:EJR-resuction} and Example \ref{example:BTA}. 
As a result, our $d$-polynomial provides a new numerical invariant for $\tau$-tilting modules that differs from the $f$- and $h$-polynomials.

\subsection{Our results} 
The first aim of this paper is to study basic properties of $d$-polynomials. 
There is a close connection between $d$-polynomials and $f$-polynomials via the reduction method due to Jasso \cite{Jasso15}, recalled in Section \ref{Enumeration}. 
Using this technique, the $d$-polynomials 
can be calculated as follows.

\begin{prop}[Proposition \ref{prop:d and f}]\label{intro1}
We have 
    \begin{equation}
    d(A; t) = \sum_{M} \dim_k M \cdot  f(C_M;t), 
\end{equation}
where $M$ runs over all indecomposable $\tau$-rigid $A$-modules up to isomorphisms, 
and $C_M$ is the algebra obtained by the reduction at $M$. 
\end{prop}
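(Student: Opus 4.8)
The plan is to start from the definition of $d(A;t)$, decompose each $\tau$-rigid module into its indecomposable direct summands so as to expose the dimension as an additive quantity, and then reorganize the double sum and invoke Jasso's $\tau$-tilting reduction to recognize the resulting inner sums as $f$-polynomials of the reduced algebras $C_M$.

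First I would use additivity of $\dim_k$ along direct sums. For a basic $\tau$-rigid pair $(N,Q) \in \taurigidpair A$, writing $N$ as the direct sum of its (distinct) indecomposable summands gives $\dim_k N = \sum_{M \mid N} \dim_k M$, where $M \mid N$ ranges over the indecomposable direct summands of $N$. Substituting into the definition and interchanging the order of summation, I would rewrite
\begin{equation*}
d(A;t) = \sum_{(N,Q) \in \taurigidpair A} \Big( \sum_{M \mid N} \dim_k M \Big) t^{n - |N| - |Q|} = \sum_{M} \dim_k M \sum_{\substack{(N,Q) \in \taurigidpair A \\ M \mid N}} t^{n - |N| - |Q|},
\end{equation*}
where the outer sum now runs over all isomorphism classes of indecomposable $\tau$-rigid $A$-modules $M$. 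It therefore suffices to identify the inner sum with $f(C_M;t)$ for each fixed $M$.

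The second step is to apply the $\tau$-tilting reduction of Jasso at the indecomposable $\tau$-rigid pair $(M,0)$. This yields a bijection between the $\tau$-rigid pairs $(N,Q)$ of $A$ having $M$ as a direct summand of $N$ and the $\tau$-rigid pairs $(\bar N, \bar Q)$ of the reduced algebra $C_M$. The crucial bookkeeping is the behavior of the rank under this bijection: since the summand $M$ is removed in passing to $C_M$, one has $|N| + |Q| = |\bar N| + |\bar Q| + 1$, while $|C_M| = n - 1$. Combining these two identities gives
\begin{equation*}
n - |N| - |Q| = (n-1) - |\bar N| - |\bar Q| = |C_M| - |\bar N| - |\bar Q|,
\end{equation*}
so the exponent of $t$ is preserved under the correspondence. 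Hence the inner sum equals $\sum_{(\bar N, \bar Q) \in \taurigidpair C_M} t^{|C_M| - |\bar N| - |\bar Q|} = f(C_M;t)$, which is exactly the asserted formula.

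I expect the main obstacle to be the precise alignment in the second step: one must confirm that Jasso's reduction indeed restricts to a bijection of $\tau$-rigid \emph{pairs} (matching both the module part $N$ and the projective part $Q$) between those of $A$ containing $(M,0)$ and all of those of $C_M$, and that this bijection shifts the grading by $|N|+|Q| = |\bar N| + |\bar Q| + 1$ together with $|C_M| = n-1$. Once these compatibilities are established, the remaining passage from the interchanged double sum to the stated identity is a purely formal manipulation of generating functions.
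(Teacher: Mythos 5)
Your proposal is correct and follows essentially the same route as the paper: additivity of $\dim_k$ over indecomposable summands, interchange of the double sum, and identification of the inner sum with $f(C_M;t)$ via reduction at $M$. The only cosmetic difference is that the paper packages the second step as the isomorphism $\link(M)\cong\Delta(C_M)$ of simplicial complexes (Proposition \ref{prop:reduction f}, deduced from the fan isomorphism of \cite{AHIKM22}) rather than invoking Jasso's bijection of $\tau$-rigid pairs directly, but this is exactly the compatibility you correctly flag as the crux.
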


This result asserts that our $d$-polynomial can be realized as a sum of the $f$-polynomials of some $g$-simplicial complexes. 
Thanks to this, the $d$-polynomials share several properties with $f$-polynomials and $h$-polynomials, such as palindromicity and unimodality (Corollary \ref{palindromic}).

The second aim of this paper is to give formulas of $d$-polynomials for preprojective algebras and path algebras of Dynkin quivers. 
Let us recall that $\tau$-tilting theory of these classes of algebras are closely related to important classes of combinatorial objects. 
More precisely, the set of support $\tau$-tilting modules of a preprojective algebra (respectively, path algebra) can be 
parameterized by the elements of the corresponding Weyl group (respectively, Cambrian combinatorics). We refer to \cite{IT09,Mizuno14,IRRT18,DIRRT23} for background. 
In particular, as pointed out in \cite{AHIKM22}, 
the $h$-vectors 
of preprojective algebras and path algebras can be realized by well-studied integer sequences, 
called $W$-Eulerian numbers and $W$-Narayana numbers respectively (see Sections \ref{section ppalg} and \ref{section path alg}). 
Hence, the $W$-Eulerian polynomials and $W$-Narayana polynomials, which are denoted by   
\begin{equation*}
    \Eul(W(Q);t) \quad \text{and} \quad \Cat(W(Q);t),  
\end{equation*}
associated with Dynkin quivers $Q$ coincide with the $h$-polynomials of preprojective algebras and path algebras of $Q$,  respectively.

Through the above connection together with Proposition  \ref{intro1}, we can describe their $d$-polynomials by using $W$-Eulerian polynomials and $W$-Narayana polynomials 
as follows. 
Now, we denote by $Q_{\I}$ the quiver obtained from $Q$ by deleting the vertex $\ell$. 

\begin{thm}[Theorem \ref{thm:orbit ppalg}] \label{intro thm:orbit ppalg}
Let $\Pi := \Pi(Q)$ be the preprojective algebra of a Dynkin quiver $Q$. Then, we have 
\begin{equation*}
d(\Pi;t) = \sum_{\ell\in Q_0} \Dim([e_{\ell}\Pi])\Eul(W(Q_{\I}); t+1), 
\end{equation*}
where $\Dim([e_{\ell}\Pi])$ is the sum of the $k$-dimensions of all indecomposable $\tau$-rigid submodules of the indecomposable projective module $e_{\ell} \Pi$ corresponding to the vertex $\ell$ up to isomorphism.
\end{thm}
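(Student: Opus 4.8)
The plan is to deduce the formula from Proposition~\ref{intro1}, which writes $d(\Pi;t)=\sum_{M}\dim_k M\cdot f(C_M;t)$ as a sum over all indecomposable $\tau$-rigid $\Pi$-modules $M$, and then to reorganize this sum by collecting the modules that embed into a common indecomposable projective. Since $\Pi=\Pi(Q)$ is self-injective, each indecomposable projective $e_\ell\Pi$ is also injective, with simple socle; the injective envelope of a simple module is thus one of the $e_\ell\Pi$. The first step is therefore to show that every indecomposable $\tau$-rigid $\Pi$-module $M$ has simple socle. Granting this, $M$ embeds into the injective envelope of its socle, so there is a well-defined vertex $\ell=\ell(M)\in Q_0$ with $M\subseteq e_\ell\Pi$, and conversely every nonzero indecomposable submodule of $e_\ell\Pi$ has socle $\Soc(e_\ell\Pi)$; hence the indecomposable $\tau$-rigid submodules of $e_\ell\Pi$ are exactly the (iso-classes of) $M$ with $\ell(M)=\ell$. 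This makes $Q_0$ into a partition of the index set of Proposition~\ref{intro1}, and by definition $\sum_{\ell(M)=\ell}\dim_k M=\Dim([e_\ell\Pi])$. Note that the partition property is genuinely necessary: if some $M$ embedded into two distinct projectives, it would be counted twice on the right-hand side, so the simple-socle property cannot be dispensed with.

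The second and central step is to identify the reduced algebra $C_M$. I claim that for every indecomposable $\tau$-rigid submodule $M$ of $e_\ell\Pi$, Jasso's $\tau$-tilting reduction \cite{Jasso15} at $M$ yields $C_M\cong\Pi(Q_{\I})$, the preprojective algebra of the quiver obtained by deleting the vertex $\ell$; crucially this isomorphism depends only on $\ell$ and not on the particular $M\subseteq e_\ell\Pi$. To establish it I would identify the star of $M$ in the $g$-simplicial complex $\Delta(\Pi)$ — equivalently the interval of support $\tau$-tilting modules admitting $M$ as a common direct summand — with the support $\tau$-tilting poset of $\Pi(Q_{\I})$, through Mizuno's bijection \cite{Mizuno14} between support $\tau$-tilting $\Pi$-modules and the Weyl group $W(Q)$, under which the relevant interval should correspond to the parabolic subgroup $W(Q_{\I})$. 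Proving this identification uniformly in $M$ is the main obstacle: it requires that the reduction be insensitive to the choice of $\tau$-rigid module inside a fixed projective, which I expect to follow from the compatibility of $\tau$-tilting reduction with the layer/ideal description of the modules $I_w$ and from a direct analysis of the bricks labelling the cover relations out of the interval.

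Granting these two facts, the conclusion is a formal assembly together with the combinatorial input recalled in the introduction. By \cite{AHIKM22} the $h$-polynomial of a preprojective algebra is the associated $W$-Eulerian polynomial, so $h(\Pi(Q_{\I});t)=\Eul(W(Q_{\I});t)$, and the relation $h(A;t)=f(A;t-1)$ gives $f(\Pi(Q_{\I});t)=\Eul(W(Q_{\I});t+1)$. Using $C_M\cong\Pi(Q_{\I})$ whenever $\ell(M)=\ell$, and then the partition of the first step, I compute
\begin{align*}
d(\Pi;t)
&= \sum_{M}\dim_k M\cdot f(C_M;t)
= \sum_{\ell\in Q_0}\Big(\sum_{\ell(M)=\ell}\dim_k M\Big)\,f(\Pi(Q_{\I});t)\\
&= \sum_{\ell\in Q_0}\Dim([e_\ell\Pi])\,\Eul(W(Q_{\I});t+1),
\end{align*}
which is the desired formula. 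The only ingredients specific to preprojective algebras are the simple-socle property of indecomposable $\tau$-rigid modules and the uniform identification $C_M\cong\Pi(Q_{\I})$; everything else is a formal rearrangement of Proposition~\ref{intro1} combined with the known $h$-polynomial formula.
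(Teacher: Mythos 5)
Your overall architecture is the same as the paper's: partition $\itaurigid \Pi$ according to the indecomposable projective $e_\ell\Pi$ containing each module (the paper gets this from Theorem \ref{tau weyl}, which says every indecomposable $\tau$-rigid module is $e_\ell I_w\subset e_\ell\Pi$; your simple-socle observation is an equivalent way to see the partition is well defined), then show that the reduction depends only on $\ell$, and finally feed in $h(\Pi(Q_{\I});t)=\Eul(W(Q_{\I});t)$. The formal assembly at the end is correct. But the proof has a genuine gap exactly where you flag "the main obstacle": the uniform identification of the reduction at an arbitrary $M\subseteq e_\ell\Pi$ is the entire mathematical content of the theorem, and you do not prove it — you only state that you "expect" it to follow from compatibility of reduction with the ideals $I_w$. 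In the paper this is Proposition \ref{prop: poset WI}, whose proof is not a formality: it identifies $\stautilt_M\Pi$ with the interval $[\Fac M,{}^\perp(\tau M)]$ (Lemma \ref{interval}), transports this through Mizuno's anti-isomorphism to an interval $[v,w]$ in the weak order, computes $v=\bigwedge\{s_jw\mid j\in\I\}=w_0(\I)w$ using the $n$-regularity of the Hasse quiver and \cite[Lemma 3.2.4]{BB05}, and then invokes \cite[Proposition 3.1.6]{BB05} to get $[w_0(\I)w,w]\cong W_{\I}$. None of these steps appears in your sketch, so the key lemma remains an unproven assertion.

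A secondary issue: you aim at the statement $C_M\cong\Pi(Q_{\I})$ as algebras, which is strictly stronger than what is needed and than what the paper establishes. The formula only requires $f(C_M;t)=\Eul(W(Q_{\I});t+1)$, i.e.\ an isomorphism of simplicial complexes $\link(M)\cong\Delta(C_M)\cong\Delta(W(Q_{\I}))$ (equivalently a poset isomorphism $\stautilt_M\Pi\cong W_{\I}$), combined with Theorem \ref{thm:W-E is h-vector}. The paper proves exactly this weaker statement and never identifies $C_M$ up to algebra isomorphism; insisting on the algebra-level isomorphism would make your task harder without any gain, and it is not clear it holds for all $M\subseteq e_\ell\Pi$ rather than just $M=e_\ell\Pi$. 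I would recommend reformulating your second step as the poset/complex isomorphism and then supplying the interval computation sketched above.
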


\begin{thm}[Theorem \ref{thm:OOpathalg}] \label{intro thm:OOpathalg}
Let $H=kQ$ be a path algebra of a Dynkin quiver $Q$ and $\Pi = \Pi(Q)$ the preprojective algebra of $Q$. 
Then, we have 
\begin{equation*}
    d(H;t) = \sum_{\ell\in Q_0} \dim_k (e_{\ell}\Pi)  
    \Cat(W(Q_{\I});t+1). 
\end{equation*}
In particular, the polynomial $d(H;t)$ does not depend on an orientation of $Q$. 
\end{thm}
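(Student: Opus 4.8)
The plan is to feed the description of indecomposable $\tau$-rigid modules over a Dynkin path algebra into Proposition~\ref{prop:d and f} and then to organize the resulting sum by $\tau$-orbits. Since $H=kQ$ is hereditary of Dynkin type, a module is $\tau$-rigid if and only if it is rigid, and every indecomposable $H$-module is rigid; hence the indecomposable $\tau$-rigid modules are exactly the indecomposable modules, each of which is preprojective and so of the form $\tau^{-i}P_\ell$ for a unique $\ell\in Q_0$ and $i\ge 0$. Grouping them accordingly partitions the indecomposables into $n$ orbits $\mathcal O_\ell=\{\tau^{-i}P_\ell : i\ge 0\}$, and Proposition~\ref{prop:d and f} reads
\[
d(H;t)=\sum_{\ell\in Q_0}\Bigl(\sum_{M\in\mathcal O_\ell}\dim_k M\cdot f(C_M;t)\Bigr).
\]

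Two facts will then finish the proof. The first is the dimension identity $\sum_{M\in\mathcal O_\ell}\dim_k M=\dim_k(e_\ell\Pi)$, which I would deduce from the classical description of the preprojective algebra of a Dynkin quiver as $\Pi\cong\bigoplus_{i\ge 0}\tau^{-i}(kQ)$ in $\mod kQ$; under this isomorphism $e_\ell\Pi$ restricts to $\bigoplus_{i\ge 0}\tau^{-i}P_\ell$, so the $k$-dimensions match orbit by orbit. The second, which is the heart of the matter, is that $f(C_M;t)=\Cat(W(Q_{\I});t+1)$ for every $M\in\mathcal O_\ell$, independently of $i$. Granting both, the double sum collapses to $\sum_{\ell}\dim_k(e_\ell\Pi)\,\Cat(W(Q_{\I});t+1)$, which is the asserted formula; the orientation-independence is then immediate, because $\dim_k(e_\ell\Pi)$ depends only on the underlying graph of $Q$ (the preprojective algebra does) and $\Cat(W(Q_{\I});t+1)$ depends only on the underlying graph of $Q_{\I}$.

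To prove the reduction identity I would work inside the cluster category $\mathcal C=\mathcal C_Q$, whose cluster-tilting complex is canonically the $g$-simplicial complex $\Delta(H)$; under this identification $f(C_M;t)$ is the $f$-polynomial of the link of the vertex $M$ in $\Delta(H)$. The anchor of the computation is the shifted projective $(0,P_\ell)=P_\ell[1]$: reduction at this $\tau$-rigid pair is just the idempotent quotient $H/\langle e_\ell\rangle\cong kQ_{\I}$, so the link of $P_\ell[1]$ equals $\Delta(kQ_{\I})$ and carries $f$-polynomial $f(kQ_{\I};t)=h(kQ_{\I};t+1)=\Cat(W(Q_{\I});t+1)$, using $h(A;t)=f(A;t-1)$ and the identification of the $h$-polynomial of a Dynkin path algebra with the $W$-Narayana polynomial. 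Now the AR translation $\tau_{\mathcal C}$ is a triangle autoequivalence of $\mathcal C$, hence permutes cluster-tilting objects and induces a simplicial automorphism of $\Delta(H)$ that preserves vertex links; consequently the link $f$-polynomial is constant along $\tau_{\mathcal C}$-orbits. Finally, the wraparound $\tau_{\mathcal C}^{-1}(P_\ell[1])=P_\ell$ (and then $\tau_{\mathcal C}^{-1}$ marches through $P_\ell\mapsto\tau^{-1}P_\ell\mapsto\cdots$) places the entire module orbit $\mathcal O_\ell$ in the single $\tau_{\mathcal C}$-orbit of $P_\ell[1]$. Transporting the anchor value around this orbit gives $f(C_M;t)=\Cat(W(Q_{\I});t+1)$ for all $M\in\mathcal O_\ell$, as required.

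I expect the main obstacle to be the $i$-independence, i.e.\ rigorously matching Jasso's reduced algebra $C_M$ with the link of $M$ in the cluster complex and verifying the $\tau_{\mathcal C}$-invariance together with the wraparound $\tau_{\mathcal C}^{-1}(P_\ell[1])=P_\ell$ at the level of $f$-polynomials. A point to watch is that the $\tau_{\mathcal C}$-orbit of $P_\ell[1]$ may also pass through other shifted projectives $P_{\ell'}[1]$, so that orbits $\mathcal O_\ell$ and $\mathcal O_{\ell'}$ merge in $\mathcal C$; this is harmless for the argument, since each $M\in\mathcal O_\ell$ is anchored to $P_\ell[1]$ with the correct index $\ell$, but it does mean one should not try to read off $\ell$ from the $\tau_{\mathcal C}$-orbit alone. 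Once these compatibilities are in place, the collapse of the double sum and the orientation-independence follow formally.
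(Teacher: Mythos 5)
Your proposal is correct and follows essentially the same route as the paper: decompose $\ind H=\itaurigid H$ into $\tau$-orbits, identify $\sum_{M\in[P_\ell]_\tau}\dim_kM$ with $\dim_k(e_\ell\Pi)$ via $\Pi_H\cong\bigoplus_{s\ge0}\tau^{-s}H$, and show the link $f$-polynomial is constant along each orbit with anchor value $\Cat(W(Q_{\I});t+1)$. The only cosmetic difference is that the paper realizes the link-preserving automorphism of $\Delta(H)$ directly by the map $(M,P)\mapsto(\tau M_{\rm np}\oplus\nu P,\,M_{\rm pr})$ on $\tau$-rigid pairs and anchors at $P_\ell$ (using $C_{P_\ell}\cong kQ_{\I}$), whereas you phrase the same automorphism as $\tau_{\mathcal C}$ on the cluster category and anchor at $P_\ell[1]$.
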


In more detail, we can give an explicit formula of the $d$-polynomials for each Dynkin type $\mathbb{A}$, $\mathbb{D}$ and $\mathbb{E}$ in Sections \ref{section ppalg} and \ref{section path alg}. 
In addition, these polynomials for low rank can be found in tables of each subsection.

The third aim of this paper is to use the method of generating functions, which is defined over the ring of formal power series, to capture a behavior of certain invariants in $\tau$-tilting theory for a given family of finite dimensional algebras. 
We define four kinds of generating functions as follows. 

\begin{defi}
Let $A_{\bullet} := (A_n)_{n\geq0}$ be a (possibly infinite) sequence of finite dimensional algebras such that $A_n$ is $g$-finite for each $n$.
We define generating functions
$$
\mathcal{H}(A_{\bullet};t,z):= \sum_{n\geq 0} h(A_n;t) z^{n} \quad \text{and} \quad 
\mathscr{H}(A_{\bullet};t,z):= \sum_{n\geq 0} h(A_n;t) \frac{z^{n}}{n!}, 
$$
which we call the \emph{ordinary and exponential generating functions of $h$-polynomials of $A_{\bullet}$}, respectively. 
Similarly, we call   
$$
\mathcal{D}(A_{\bullet};t,z):= \sum_{n\geq 0} d(A_n;t) z^{n} \quad \text{and} \quad 
\mathscr{D}(A_{\bullet};t,z):= \sum_{n\geq 0} d(A_n;t) \frac{z^{n}}{n!}
$$
the \emph{ordinary and exponential generating functions of $d$-polynomials of $A_{\bullet}$}, respectively. 
\end{defi}
Notice that the generating functions of $f$-polynomials can be obtained from those of $h$-polynomials as
\begin{equation*}
    \mathcal{H}(A_{\bullet};t-1,z) = \sum_{n\geq 0} f(A_n;t)z^n \quad \text{and} \quad 
    \mathscr{H}(A_{\bullet};t-1,z) = \sum_{n\geq 0} f(A_n;t)\frac{z^n}{n!}.
\end{equation*}

Now, let $\Pi(\mathbb{A}_{n})$ be a preprojective algebra of type $\mathbb{A}_n$ for $n\geq 0$, and let $\Pi(\mathbb{A}_{\bullet}):= (\Pi(\mathbb{A}_{n-1}))_{n\geq 0}$. 
Since the (classical) Eulerian polynomials coincide with $h$-polynomials of preprojective
algebras of type $\mathbb{A}$, 
Euler's famous result (see Proposition \ref{prop:Euler}) on the exponential generating function of Eulerian polynomials 
implies the following result. 
\begin{prop}[Proposition \ref{prop:Euler-ppaA}]
    \begin{equation*}    
    \mathscr{H}(\Pi(\mathbb{A}_{\bullet});t,z) = 
    \frac{t-1}{t-e^{z(t-1)}}.
    \end{equation*}
\end{prop}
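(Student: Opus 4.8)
The plan is to invoke Euler's classical result on the exponential generating function of the Eulerian polynomials and then match it to our setting. The key fact (stated as Proposition \ref{prop:Euler} in the paper) is that the classical Eulerian polynomials $A_n(t)$ satisfy
\[
\sum_{n\geq 0} A_n(t)\frac{z^n}{n!} = \frac{t-1}{t-e^{z(t-1)}}.
\]
So the entire content of the statement reduces to identifying our $h$-polynomials $h(\Pi(\mathbb{A}_{n-1});t)$ with the appropriate Eulerian polynomials $A_n(t)$ under the correct indexing convention, and then the formula follows by direct substitution into the definition of $\mathscr{H}$.

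First I would recall from the earlier discussion that the $h$-polynomial of the preprojective algebra $\Pi(Q)$ of a Dynkin quiver $Q$ coincides with the $W$-Eulerian polynomial $\Eul(W(Q);t)$, since the support $\tau$-tilting modules of $\Pi(Q)$ are parameterized by the Weyl group $W(Q)$ and the $h$-vector realizes the $W$-Eulerian numbers. Specializing to type $\mathbb{A}$, the Weyl group $W(\mathbb{A}_{n-1})$ is the symmetric group $S_n$, and the $W$-Eulerian polynomial is exactly the classical Eulerian polynomial $A_n(t)=\Eul(S_n;t)$. Thus $h(\Pi(\mathbb{A}_{n-1});t)=A_n(t)$, matching the indexing convention $\Pi(\mathbb{A}_{\bullet})=(\Pi(\mathbb{A}_{n-1}))_{n\geq 0}$ chosen precisely so that the $n$-th term of the sequence is governed by $S_n$.

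With this identification in hand, I would substitute directly:
\[
\mathscr{H}(\Pi(\mathbb{A}_{\bullet});t,z)
= \sum_{n\geq 0} h(\Pi(\mathbb{A}_{n-1});t)\frac{z^n}{n!}
= \sum_{n\geq 0} A_n(t)\frac{z^n}{n!}
= \frac{t-1}{t-e^{z(t-1)}},
\]
where the last equality is Euler's formula. One small bookkeeping point to verify is the base case $n=0$: one must check that $h(\Pi(\mathbb{A}_{-1});t)=A_0(t)=1$, i.e. that the empty/zero algebra contributes the constant term $1$, which is consistent with the convention that there is a single (empty) support $\tau$-tilting module.

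**The main obstacle** is not analytic but conceptual: the proof is essentially a matter of carefully pinning down the indexing and normalization conventions so that our combinatorial data lines up with Euler's classical statement. The genuine mathematical work — the correspondence between support $\tau$-tilting modules and Weyl group elements, and the realization of the $h$-vector as $W$-Eulerian numbers — has already been established in the cited references and recalled earlier in the paper, so here it only needs to be cited correctly. I would therefore spend the bulk of the verification on confirming that the shift in the subscript (type $\mathbb{A}_{n-1}$ corresponding to $S_n$ and hence to $A_n(t)$) is applied consistently and that the $n=0$ term is handled correctly, after which the closed form is immediate from Proposition \ref{prop:Euler}.
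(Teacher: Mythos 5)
Your proposal is correct and follows essentially the same route as the paper: identify $h(\Pi(\mathbb{A}_{n-1});t)$ with $\Eul(\mathbb{A}_{n-1};t)$ via Theorem \ref{thm:h of ppalg}, recognize this as the classical Eulerian polynomial of $S_n$, and conclude by Euler's closed form (Proposition \ref{prop:Euler}). Your extra care with the indexing shift and the $n=0$ term matches the paper's convention $\mathbb{A}_{-1}:=\mathbb{A}_0$.
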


It is natural to consider the analogue for $d$-polynomials. 
Using their description in terms of Eulerian polynomials (Theorem \ref{thm:dpoly ppaA}), we can also get a closed-form expression for the exponential generating function of $d$-polynomials of $\Pi(\mathbb{A}_{\bullet})$. 
Our result is the following. 

\begin{theorem}[Theorem \ref{thm:gen ppaA}]
In the above notations, we have  
\begin{equation*}
\mathscr{D}(\Pi(\mathbb{A}_{\bullet});t,z) = \frac{z^2t^4 e^{2zt}}{2(t+1 - e^{zt})^4}. 
\end{equation*}
\end{theorem}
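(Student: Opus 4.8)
The plan is to collapse the claim onto the already-known exponential generating function of Eulerian polynomials and to recognise the target as (a multiple of) the square of a derivative. Throughout, write $g_k(t) := f(\Pi(\mathbb{A}_k);t) = \Eul(W(\mathbb{A}_k);t+1)$ for the $f$-polynomials, with the conventions $g_{-1}=g_0=1$, and set $F(t,z) := \sum_{n\ge 0} g_{n-1}(t)\,\tfrac{z^{n}}{n!}$. By Proposition \ref{prop:Euler-ppaA} together with the identity $f(A;t)=h(A;t+1)$, this series is exactly $\mathscr{H}(\Pi(\mathbb{A}_{\bullet});t+1,z) = \tfrac{t}{t+1-e^{zt}}$. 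Differentiating in $z$, the series $G(t,z) := \partial_z F = \sum_{k\ge 0} g_k(t)\,\tfrac{z^{k}}{k!}$ is the exponential generating function of the $f$-polynomials indexed directly by $\mathbb{A}_k$, and a one-line computation yields the key identity $G = e^{zt}F^{2}$.

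Next I would feed in the explicit shape of the $d$-polynomials. By Theorem \ref{thm:dpoly ppaA} (which is Theorem \ref{thm:orbit ppalg} specialised to type $\mathbb{A}$, using the splitting $Q_{\I}=\mathbb{A}_{i-1}\sqcup\mathbb{A}_{m-i}$ together with multiplicativity of $W$-Eulerian polynomials, so that $\Eul(W(Q_{\I});t+1)=g_{i-1}g_{m-i}$, and the value $\Dim([e_i\Pi(\mathbb{A}_m)])=\binom{m+1}{2}\binom{m-1}{i-1}$), the $d$-polynomial of $\Pi(\mathbb{A}_m)$ is the binomial convolution
\[
 d(\Pi(\mathbb{A}_m);t) = \binom{m+1}{2}\sum_{a=0}^{m-1}\binom{m-1}{a}\, g_a(t)\, g_{m-1-a}(t).
\]
The crucial observation is that this is, up to the scalar $\binom{m+1}{2}$, precisely the convolution of two copies of the coefficient sequence of $G$.

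Then I would assemble the generating function. Since $\Pi(\mathbb{A}_{\bullet})=(\Pi(\mathbb{A}_{n-1}))_{n\ge 0}$, we have $\mathscr{D}(\Pi(\mathbb{A}_{\bullet});t,z)=\sum_{m\ge 0} d(\Pi(\mathbb{A}_m);t)\,\tfrac{z^{m+1}}{(m+1)!}$. Substituting the formula above and using the identity $\binom{m+1}{2}\big/(m+1)! = 1/\bigl(2(m-1)!\bigr)$, the sum reorganises (via the product rule for exponential generating functions, i.e.\ $N!\,[z^N]G^2=\sum_{a+b=N}\binom{N}{a}g_a g_b$) into
\[
 \mathscr{D}(\Pi(\mathbb{A}_{\bullet});t,z) = \frac{z^{2}}{2}\,G(t,z)^{2} = \frac{z^{2}}{2}\,(\partial_z F)^{2}.
\]
Finally, inserting $G=e^{zt}F^{2}$ and $F=\tfrac{t}{t+1-e^{zt}}$ gives $\mathscr{D}=\tfrac{z^{2}}{2}e^{2zt}F^{4}=\tfrac{z^{2}t^{4}e^{2zt}}{2(t+1-e^{zt})^{4}}$, as claimed.

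I expect the main obstacle to be the bookkeeping in the third step: one must track the index shift $A_n=\Pi(\mathbb{A}_{n-1})$ and verify that the weight $\binom{m+1}{2}$ is exactly what makes, after division by $(m+1)!$, the prefactor collapse to the constant $z^{2}/2!$ rather than distorting the binomial convolution. The conceptual heart is recognising the whole series as $\tfrac{z^2}{2}(\partial_z F)^2$; once the coefficient extraction is matched on both sides, the remaining manipulations — in particular the differentiation identity $\partial_z F = e^{zt}F^{2}$ — are routine.
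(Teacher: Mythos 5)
Your argument is correct and is essentially the paper's own proof: the paper likewise substitutes the explicit weighted convolution formula of Theorem \ref{thm:dpoly ppaA} into the exponential generating function, observes that the weight ${\rm T}(n-1,n-\ell)=\binom{n+1}{2}\binom{n-1}{\ell-1}$ turns the sum into $\tfrac{z^2}{2}\bigl(\partial_z S(t,z)\bigr)^2$ for the Eulerian EGF $S$, and then invokes Euler's closed form. The only cosmetic differences are that the paper works at the shifted variable $t-1$ and squares $\partial_z S$ directly from the closed form, whereas you stay at $t$ and route through the identity $\partial_z F=e^{zt}F^2$; these are equivalent.
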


We also give a similar result for path algebras of type $\mathbb{A}$ (Theorem \ref{thm:gen pathA}).  
Contrary to the case of preprojective algebras, it relates to the ordinary generating functions of (classical) Narayana polynomials. 
At the end of this paper (Section \ref{question}), we pose several questions related to further development.


\bigskip 
\begin{notation}
Throughout this paper, let $k$ be a field.
For a finite dimensional $k$-algebra $A$, we denote by $\mod A$ the category of finitely generated right $A$-modules, by $\proj A$ the category of finitely generated projective $A$-modules. 
For an $A$-module $X$, we denote by $|X|$ the number of pairwise non-isomorphic indecomposable direct summands of $X$. In addition, we denote by $\add X$ the smallest full subcategory which contains $X$ and is closed under taking direct sums, summands and isomorphisms. We denote by $\Fac X$ the full subcategory consisting of all modules which are quotients of finite direct sums of $X$. 
We say that a full subcategory $\TT$ of $\mod A$ is \emph{contravariantly finite} if for each $X\in\mod A$, there is a map $f\colon T\to X$ with $T\in\TT$ such that $(T',f) \colon\Hom_A(T',T)\to \Hom_A(T',X)$ is surjective for all $T'\in\TT$. 
Dually, a \emph{covariantly finite subcategory} is defined. We say that $\TT$ is \emph{functorially finite} if it is contravariantly and covariantly finite. 
\end{notation}

\section{Preliminaries}\label{sec:Preliminaries}
Let $A$ be a finite dimensional algebra over a field $k$. 

\subsection{$\tau$-tilting theory}\label{Support tau tilting modules} 
In this section, we recall basics of $\tau$-tilting theory \cite{AIR14}. We denote by $\tau$ the Auslander-Reiten translation of $\mod A$. 

\begin{defi}\label{tau tilting}
Let $M\in \mod A$. 
\begin{enumerate}[\rm (1)]
    \item We call $M$ \emph{$\tau$-rigid} if $\Hom_{A}(M,\tau M)=0$.
    \item We call $M$ \emph{$\tau$-tilting} 
    if $M$ is $\tau$-rigid and $|M|=|A|$. 
    \item We call $M$ \emph{support $\tau$-tilting} if there exists an idempotent $e$ of $A$ such that $M$ is a $\tau$-tilting $(A/AeA)$-module.
\end{enumerate}
\end{defi}

\begin{defi}\label{tau tilting2} Let $M\in\mod A$ and $P \in \proj A$. 
\begin{enumerate}[\rm (1)]
    \item We call a pair $(M,P)$ \emph{$\tau$-rigid} if $M$ is $\tau$-rigid and $\Hom_A(P,M)=0$.
\item We call a pair $(M,P)$ \emph{support $\tau$-tilting} if it is a $\tau$-rigid pair and $|M|+|P|=|A|$. 
    \end{enumerate}
\end{defi}

For a pair $(M,P)$, we say that it is \emph{basic} if $M$ and $P$ are basic. In addition, we say that $(M,P)$ is a \emph{direct summand} of $(N,Q)$ if $M$ (respectively, $P$) is a direct summand of $N$ (respectively, $Q$). 
Note that a basic support $\tau$-tilting module 
determines a basic support $\tau$-tilting pair uniquely and hence we can identify basic support $\tau$-tilting modules with basic support $\tau$-tilting pairs (see \cite{AIR14}).
We denote by $\stautilt A$ (respectively, $\taurigid A$, $\itaurigid A$) the set of isomorphism classes of basic support $\tau$-tilting (respectively, basic $\tau$-rigid, indecomposable $\tau$-rigid) $A$-modules, by $\taurigidpair A$ the set of isomorphism classes of basic $\tau$-rigid pairs for $A$. 

Next, we recall the notion of torsion classes. 

\begin{defi}Let $\TT$ be a full subcategory of $\mod A$. 
We call $\TT$ \emph{torsion class} (respectively, \emph{torsion-free class}) if it is closed under taking factor modules (respectively, submodules) and extensions. An $A$-module $M \in \mathcal{T}$ is called \emph{Ext-projective} if $\Ext^1_A(M,X)=0$ for all $X \in \mathcal{T}$.
We denote by $P(\mathcal{T})$ the direct sum of one copy of each of the indecomposable Ext-projective modules in $\mathcal{T}$ up to isomorphism.
\end{defi}

We denote by $\tors A$ (respectively, $\ftors A$) the set of torsion classes (respectively, functorially finite torsion classes) in $\mod A$. 

We use the following relationship between support $\tau$-tilting modules and torsion classes.

\begin{definition-theorem}
\label{poset iso}
Let $A$ be a finite dimensional $k$-algebra. 
\begin{enumerate}[\rm (1)]
\item \cite[Theorem 2.7]{AIR14} There is a bijection 
\[\stautilt A\xrightarrow{\sim}\ftors A\]
given by $\stautilt A\ni T\mapsto\Fac T\in\ftors A$ and $\ftors A \ni \mathcal{T} \mapsto P(\mathcal{T})\in \stautilt A$. 
Via this correspondence, we regard $\stautilt A$ as a poset defined by inclusion of $\ftors A$. 
\item \cite[Theorem 1.2]{DIJ19} The following conditions are equivalent: 
\begin{enumerate}
    \item[\rm (a)] The set $\itaurigid A$ is finite.  
    \item[\rm (b)] The set $\stautilt A$ is finite.
    \item[\rm (c)] $\tors A = \ftors A$. 
\end{enumerate} 
If one of the above equivalent conditions (a)-(c) holds, then we say that $A$ is \emph{g-finite} (This is called \emph{$\tau$-tilting finite} in \cite{DIJ19}).
\end{enumerate}
\end{definition-theorem}

For a given $U\in\taurigid A$, let
$$\stautilt_U (A) := \{T \in \stautilt A \mid U\in\add T\}.$$

\begin{theorem}\cite[Theorem 2.10]{AIR14} \label{completion}
Let $U$ be a basic $\tau$-rigid module. Then, ${^\perp}(\tau U)$ is a functorially finite torsion class and $P({^\perp}(\tau U))$ is a basic $\tau$-tilting $A$-module having $U$ as its direct summand. 
\end{theorem}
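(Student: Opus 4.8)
The plan is to establish the two assertions separately, using a Bongartz-type completion to carry the main weight. Throughout write $\TT := {}^\perp(\tau U)$.

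\emph{Step 1: $\TT$ is a torsion class and $U$ is Ext-projective in it.} The first point requires nothing about $\tau$-rigidity: for an arbitrary module $N$ the subcategory ${}^\perp N=\{X\mid\Hom_A(X,N)=0\}$ is a torsion class. It is closed under quotients because, given $q\colon X\twoheadrightarrow X'$ and $g\colon X'\to N$, the composite $gq\colon X\to N$ vanishes and surjectivity of $q$ forces $g=0$; it is closed under extensions because, given $0\to X'\to X\to X''\to 0$ with $X',X''\in{}^\perp N$ and $g\colon X\to N$, the restriction $g|_{X'}$ is $0$, so $g$ factors through $X''$ and hence is $0$. Taking $N=\tau U$ gives that $\TT$ is a torsion class. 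The hypothesis on $U$ enters through Auslander--Reiten duality $\Ext^1_A(U,X)\cong D\,\overline{\Hom}_A(X,\tau U)$: for $X\in\TT$ we have $\Hom_A(X,\tau U)=0$, so the quotient $\overline{\Hom}_A(X,\tau U)$ vanishes and $\Ext^1_A(U,X)=0$. Thus $U$ is Ext-projective in $\TT$; the case $X=U$ recovers $U\in\TT$, and closure under finite sums and quotients yields $\Fac U\subseteq\TT$. In particular $\Ext^1_A(U,U)=0$, so $U$ is even rigid.

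\emph{Step 2: functorial finiteness via a Bongartz completion (the main obstacle).} I would produce a module $T$ with $\Fac T=\TT$; then $\TT$ is functorially finite because $\Fac T$ always is (Auslander--Smal\o{}), and $T$ will be arranged to be the Ext-projective generator $P(\TT)$. Following Bongartz, form a universal extension
\[ 0\longrightarrow A\longrightarrow E\longrightarrow U^{\oplus d}\longrightarrow 0 \]
whose class in $\Ext^1_A(U^{\oplus d},A)$ is chosen so that the connecting map $\Hom_A(U^{\oplus d},U^{\oplus d})\to\Ext^1_A(U^{\oplus d},A)$ is surjective, and set $T:=U\oplus E$. The rigidity $\Ext^1_A(U,U)=0$ from Step 1 makes this behave like the classical construction. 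The crux, which I expect to be the genuinely hard step, is to prove $\Fac T=\TT$. For the inclusion $\Fac T\subseteq\TT$ it suffices to show $E\in\TT$, i.e. $\Hom_A(E,\tau U)=0$: applying $\Hom_A(-,\tau U)$ to the sequence reduces this, via $\Hom_A(U^{\oplus d},\tau U)=0$, to the injectivity of the map $\Hom_A(A,\tau U)\to\Ext^1_A(U^{\oplus d},\tau U)$ induced by the extension class, which is exactly where the universal choice of the class and AR duality must be combined. For $\TT\subseteq\Fac T$ I would, for each $X\in\TT$, build an epimorphism onto $X$ from a module in $\add T$ by a torsion-theoretic approximation argument. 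Allowing $\mathrm{pd}\,U>1$ and tracking $\tau$ rather than $\Ext^1$ throughout is what makes this more delicate than the classical partial-tilting completion.

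\emph{Step 3: conclusion that $P(\TT)$ is $\tau$-tilting with summand $U$.} Once $\Fac T=\TT$ is known, $T=P(\TT)$, and since $T$ is Ext-projective in $\Fac T$ we have $\Ext^1_A(T,\Fac T)=0$, which by the standard characterization means $T$ is $\tau$-rigid. Two features of the completion then finish the argument. First, the regular module $A$ embeds in $E$, so $E$, and hence $\TT$, is sincere; this is precisely what upgrades $P(\TT)$ from a support $\tau$-tilting module to an honest $\tau$-tilting module, forcing $|T|=|A|$. (Alternatively, $|T|=|A|$ follows from the general bound that a $\tau$-rigid module has at most $|A|$ indecomposable summands, with equality exactly for $\tau$-tilting modules, together with the maximality of the Bongartz completion.) Second, $U$ is Ext-projective in $\TT=\Fac T$ by Step 1, so $U\in\add P(\TT)=\add T$, i.e. $U$ is a direct summand of the resulting $\tau$-tilting module, as required.
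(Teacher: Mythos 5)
This statement is not proved in the paper at all: it is quoted verbatim from \cite[Theorem 2.10]{AIR14}, so the only meaningful comparison is with the original Adachi--Iyama--Reiten argument, and your plan is indeed the same Bongartz-type completion they use. Your Step 1 is correct and complete: ${}^\perp(\tau U)$ is a torsion class for formal reasons, and the AR formula $\Ext^1_A(U,X)\cong D\overline{\Hom}_A(X,\tau U)$ gives Ext-projectivity of $U$ in $\TT:={}^\perp(\tau U)$. The deferred inclusion $\TT\subseteq\Fac T$ is also not a serious worry: pushing out $0\to A^m\to E^m\to U^{\oplus dm}\to 0$ along an epimorphism $A^m\twoheadrightarrow X$ and splitting the bottom row via the vanishing $\Ext^1_A(U,X)=0$ from Step 1 exhibits $X$ as a summand of a quotient of $E^m$; this is routine.

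The genuine gap is the claim $E\in\TT$, i.e.\ $\Hom_A(E,\tau U)=0$, which is the entire content of the $\tau$-tilting version of Bongartz's lemma. You correctly reduce it to injectivity of the connecting map $\delta\colon\Hom_A(A,\tau U)\to\Ext^1_A(U^{\oplus d},\tau U)$, but then only remark that "the universal choice of the class and AR duality must be combined"; that names the difficulty without resolving it. The point is that the obvious dualization of Bongartz's argument fails here: surjectivity of $\Hom_A(U,U^{\oplus d})\to\Ext^1_A(U,A)$ yields $\Ext^1_A(U,E)=0$, but by the AR formula this only says $\overline{\Hom}_A(E,\tau U)=0$, i.e.\ every map $E\to\tau U$ factors through an injective module --- strictly weaker than $\Hom_A(E,\tau U)=0$. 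The clean way out is the Auslander--Smal\o{} criterion ($\Hom_A(N,\tau M)=0$ if and only if $\Ext^1_A(M,\Fac N)=0$ for $M$ $\tau$-rigid), which requires you to prove $\Ext^1_A(U,Y)=0$ for \emph{every} quotient $Y$ of $E$, not just for $E$ itself; and you cannot invoke Step 1's vanishing for that, since Step 1 applies only to objects already known to lie in $\TT$, which is exactly what is being proved --- the argument as sketched is circular at this point. An extension argument over quotients of $E$ (or the approximation formulated at the level of two-term complexes, as in later treatments) is needed, and until it is supplied the proposal is a correct plan rather than a proof. A smaller issue: in Step 3 the identification $T=P(\TT)$ from $\Fac T=\TT$ requires checking that $E$ is Ext-projective in $\TT$ (apply $\Hom_A(-,X)$ to the defining sequence and use $\Ext^1_A(A,X)=0=\Ext^1_A(U,X)$) and that every indecomposable Ext-projective of $\TT$ occurs in $\add T$; both are standard but should be said.
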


In the above, $T := P({^\perp}(\tau U))$ is called the \emph{maximal completion} of $U$. 
In addition, let $C_{U} := \End_A(T)/[U]$, where $[U]$ denotes the ideal consisting of all endomorphisms $f\colon T\to T$ factoring through $\add U$. We call $C_U$ \emph{the algebra obtained by the reduction at $U$}.  

\begin{thm}\label{reduction}\cite{Jasso15} 
Let $U$ be a basic $\tau$-rigid $A$-module. 
Then, we have $|C_U| = |A| - |U|$ and a poset isomorphism 
$$\stautilt_U (A)\cong \stautilt C_U.$$
\end{thm}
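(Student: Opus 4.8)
The plan is to prove the two assertions separately, translating the poset statement into the language of (functorially finite) torsion classes so that the completion $T = P({}^{\perp}(\tau U))$ furnished by Theorem~\ref{completion} can be exploited throughout. Throughout I write $T = U \oplus V$ with $U$ the prescribed $\tau$-rigid summand.

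For the equality $|C_U| = |A| - |U|$, I would argue as follows. Since $T$ is a basic $\tau$-tilting module, $B := \End_A(T)$ is a basic algebra whose simple modules are indexed by the $|T| = |A|$ indecomposable summands of $T$. Letting $e \in B$ be the idempotent given by the projection of $T$ onto $U$, the ideal $[U]$ of endomorphisms factoring through $\add U$ is exactly $BeB$. Hence $C_U = B / BeB$ is the quotient of a basic algebra by an idempotent ideal, an operation that deletes precisely the simple modules supported at the summands of $U$; therefore $|C_U| = |A| - |U|$.

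For the poset isomorphism, the first reduction is to reinterpret $\stautilt_U(A)$ via Definition-Theorem~\ref{poset iso}(1) as the closed interval
$$\stautilt_U(A) \;\cong\; [\,\Fac U,\ {}^{\perp}(\tau U)\,] \subseteq \ftors A, \qquad {}^{\perp}(\tau U) = \Fac T.$$
Concretely, for $T' \in \stautilt A$ I claim $U \in \add T'$ if and only if $\Fac U \subseteq \Fac T' \subseteq {}^{\perp}(\tau U)$. In the forward direction, $U$ being a summand of $T' = P(\Fac T')$ forces $U \in \Fac T'$ and makes $U$ $\Ext$-projective in $\Fac T'$, so the Auslander--Reiten duality $\Ext^1_A(U,X) \cong D\,\overline{\Hom}_A(X,\tau U)$ yields $\Fac T' \subseteq {}^{\perp}(\tau U)$. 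Conversely, the indecomposable $\Ext$-projectives of the functorially finite torsion class $\Fac T'$ are exactly the summands of $P(\Fac T') = T'$, so each indecomposable summand of $U$, lying in $\Fac T'$ and being $\Ext$-projective there, must be a summand of $T'$. As $T' \mapsto \Fac T'$ is an order isomorphism, this realizes $\stautilt_U(A)$ as the stated interval.

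The heart of the argument, and the step I expect to be the main obstacle, is to identify this interval with $\ftors C_U \cong \stautilt C_U$. Here I would introduce the $\tau$-perpendicular category
$$\mathcal{W} := {}^{\perp}(\tau U) \cap U^{\perp}, \qquad U^{\perp} := \{X \in \mod A : \Hom_A(U,X) = 0\},$$
show that it is a wide (abelian, extension-closed) subcategory, and prove that $\Hom_A(T,-)$ restricts to an equivalence $\mathcal{W} \xrightarrow{\sim} \mod C_U$: since $U$ is $\Ext$-projective throughout ${}^{\perp}(\tau U) = \Fac T$, a module $X \in \Fac T$ lies in $\mathcal{W}$ exactly when $\Hom_A(T,X)$ is annihilated by $e$, i.e. is a $C_U = B/BeB$-module, and one checks this functor is fully faithful and essentially surjective onto $\mod C_U$. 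Controlling the behaviour of $\Hom_A(T,-)$ on the merely $\tau$-tilting (rather than classically tilting) module $T$ and pinning down its image is the technically delicate core. Granting this, I would finally transport torsion classes across the equivalence via the mutually inverse, inclusion-preserving assignments $\TT' \mapsto \TT' \cap \mathcal{W}$ and $\mathcal{S} \mapsto \Filt(\Fac U * \mathcal{S})$ (the smallest torsion class of $\mod A$ containing both $\Fac U$ and $\mathcal{S}$), giving a poset isomorphism between the interval $[\Fac U, {}^{\perp}(\tau U)]$ and the functorially finite torsion classes of $\mathcal{W} \simeq \mod C_U$. Composing this with the interval description above and Definition-Theorem~\ref{poset iso}(1) applied to $C_U$ produces the desired isomorphism $\stautilt_U(A) \cong \stautilt C_U$.
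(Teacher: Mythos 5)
The paper gives no proof of this theorem; it is imported wholesale from Jasso's paper, so the only meaningful comparison is with the argument there. Your outline is in fact a faithful reconstruction of that argument: the count $|C_U|=|A|-|U|$ via $C_U=B/BeB$ with $B=\End_A(T)$ basic and $|T|=|A|$, the identification $\stautilt_U(A)\cong[\Fac U,{}^\perp(\tau U)]$ (this is \cite[Proposition 2.9]{AIR14}, which the present paper itself invokes in Lemma \ref{interval}(a)), the $\tau$-perpendicular category $\mathcal{W}={}^\perp(\tau U)\cap U^\perp$ together with the equivalence $\Hom_A(T,-)\colon\mathcal{W}\xrightarrow{\sim}\mod C_U$, and the transport of torsion classes by $\TT'\mapsto\TT'\cap\mathcal{W}$ and $\mathcal{S}\mapsto$ the smallest torsion class containing $\Fac U$ and $\mathcal{S}$ are precisely Jasso's steps. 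So the route is the standard one, not an alternative.

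Two caveats. First, a small imprecision: Auslander--Reiten duality gives $\Ext^1_A(U,X)\cong D\overline{\Hom}_A(X,\tau U)$ with the \emph{stable} Hom, so Ext-projectivity of $U$ in $\Fac T'$ does not immediately yield $\Fac T'\subseteq{}^\perp(\tau U)$; you need the Auslander--Smal\o{} refinement $\Hom_A(X,\tau U)=0\iff\Ext^1_A(U,\Fac X)=0$ (which applies because $\Fac T'$ is quotient-closed). Second, and more substantially, the step you yourself flag as the obstacle --- that $\mathcal{W}$ is wide, that $\Hom_A(T,-)$ restricts to an exact equivalence onto $\mod C_U$, and that the two torsion-class assignments are well defined and mutually inverse --- is exactly where all the content of Jasso's Theorems 1.4 and 1.5 lives, and in your write-up it is asserted rather than proved (essential surjectivity and exactness are the delicate points, since $T$ is only $\tau$-tilting and $\mathcal{W}$ is not a priori abelian). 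As a proof sketch of a cited result the structure is right and nothing you say is wrong, but as a self-contained proof the decisive lemma is still owed.
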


\begin{exam}\label{example:reduction}
    Let $A$ be a finite dimensional algebra over $k$. 
    For a given primitive idempotent $e\in A$, the maximal completion of $eA$ is $A$, and hence $C_{eA} \cong A/AeA$. 
\end{exam}

\subsection{Preprojective algebras}\label{sec:ppa}

Let $Q = (Q_0,Q_1)$ be a Dynkin quiver of type $\mathbb{A}$, $\mathbb{D}$ or $\mathbb{E}$, where $Q_0$ (respectively, $Q_1$) is the set of vertices (respectively, arrows) of $Q$. 
We recall the definition of preprojective algebras. 

\begin{defi}\label{preprojective}
The preprojective algebra associated to $Q$ is the algebra
$$
\Pi(Q):=k\overline{Q}/\langle \sum_{a\in Q_1} (aa^* - a^*a)\rangle, 
$$
where $\overline{Q}$ is the double quiver of $Q$, which is obtained from $Q$ by adding for each arrow 
$a:i\rightarrow j$ in $Q_1$ an arrow $a^*:i\leftarrow j$ pointing in the opposite direction. 
\end{defi}

Let $Q$ be a Dynkin quiver with $n$ vertices and $\Pi=\Pi(Q)$ the preprojective algebra of $Q$.  
Let $W=W(Q):=\langle s_i \mid i\in Q_0\rangle$ be the Coxeter group of the underlying graph of the quiver $Q$.  
We denote by $l(w)$ the length of $w\in W.$ 
For $u,w\in W$,  
we write $u\leq w$ if there exist $s_{i_1},\ldots, s_{i_m}$ such that $w=s_{i_m}\ldots s_{i_1}u$ and $l(s_{i_j}\ldots s_{i_1}u)=l(u)+j$ for $0\leq j\leq m$. 
Then, the relation $\leq$ gives a partial order on $W$ called \emph{(left) weak order}.

We recall a bijection between $\stautilt\Pi$ and $W$. 
For each $i\in Q_0$, let $I_i:=\Pi(1-e_i)\Pi$ be an ideal in $\Pi$, where $e_i$ is the primitive idempotent of $\Pi$ corresponding to $i$. 
We denote by $\langle I_1,\ldots,I_n\rangle$ the set of ideals of $\Pi$ which can be written as 
$$I_{i_1}I_{i_2}\cdots I_{i_m}$$ for some $m\geq0$ and $i_1,\ldots,i_m\in Q_0$. 
Then, we have the following result.

\begin{theorem}\label{tau weyl}\cite{Mizuno14}
We have a bijection $W\to\langle I_1,\ldots,I_n\rangle$, which is given by $w\mapsto I_w =
I_{i_1}I_{i_{2}}\cdots I_{i_m}$ for any reduced 
expression $w=s_{i_1}\cdots s_{i_m}$.
Moreover, it gives a poset anti-isomorphism between 
$$W \overset{\sim}{\longrightarrow} \stautilt \Pi.$$ 
In particular, every indecomposable $\tau$-rigid module is of the form $e_iI_w$, and hence it is a submodule of $e_i\Pi$ for some $i\in Q_0$. 
\end{theorem}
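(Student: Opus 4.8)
The plan is to build the correspondence in three layers: first show that $I_w$ is well defined, then promote each $I_w$ to a support $\tau$-tilting module by an inductive mutation argument along the weak order, and finally read off the poset anti-isomorphism together with the description of the indecomposable $\tau$-rigid modules. For the well-definedness, I would verify that $I_{i_1}\cdots I_{i_m}$ depends only on $w$ and not on the chosen reduced word. By Matsumoto's theorem any two reduced expressions for $w$ are connected by a sequence of braid moves, so it suffices to check the braid relations at the level of ideals: $I_iI_j=I_jI_i$ whenever $s_is_j=s_js_i$, and $I_iI_jI_i=I_jI_iI_j$ whenever $s_is_js_i=s_js_is_j$ (these are the only braid relations in the simply-laced case). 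Both identities reduce to rank-two computations inside $\Pi$ and follow from the defining mesh relation $\sum_{a}(aa^*-a^*a)$, while the idempotency $I_i^2=I_i$ guarantees that the products stabilise. This produces a well-defined and manifestly surjective map $w\mapsto I_w$ onto $\langle I_1,\ldots,I_n\rangle$.

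Next I would show that each $I_w$, viewed as a right $\Pi$-module, is a basic support $\tau$-tilting module, arguing by induction on $l(w)$. The base case $w=e$ gives the empty product $I_e=\Pi$, which is the free module and hence $\tau$-tilting. For the inductive step, a covering relation $w\lessdot s_iw$ in the left weak order corresponds to left multiplication $I_{s_iw}=I_iI_w$, and the core claim is that this operation realises a single mutation of support $\tau$-tilting modules in the sense of Theorem \ref{completion}: there is a short exact sequence relating $I_w$ and $I_{s_iw}$ that exchanges exactly one indecomposable summand. Since mutation preserves the support $\tau$-tilting property and the mutation graph of a $g$-finite algebra is connected — and $\Pi$ is $g$-finite, being of finite representation type — every $I_w$ is support $\tau$-tilting and, conversely, every basic support $\tau$-tilting module is reached. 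Because distinct support $\tau$-tilting modules are non-isomorphic, this forces $w\mapsto I_w$ to be injective as well, completing the bijection $W\xrightarrow{\sim}\langle I_1,\ldots,I_n\rangle$ and its identification with $\stautilt\Pi$.

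The same inductive picture yields the order-reversal: each weak-order cover $w\lessdot s_iw$ lengthens $w$ while shrinking the associated functorially finite torsion class $\Fac I_w$, so the bijection sends the weak order to the opposite of the inclusion order on $\ftors\Pi$, that is, a poset anti-isomorphism $W\xrightarrow{\sim}\stautilt\Pi$ under the identification of Definition-Theorem \ref{poset iso}. Finally, the last assertion follows by decomposing the right module $I_w=\bigoplus_{i\in Q_0}e_iI_w$ along the primitive idempotents: the indecomposable $\tau$-rigid modules are precisely the indecomposable direct summands of support $\tau$-tilting modules, hence are of the form $e_iI_w$, and each such summand is visibly a submodule of $e_i\Pi$ since $I_w\subseteq\Pi$.

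The hardest step is the inductive one, namely proving that $I_{s_iw}=I_iI_w$ is exactly a one-step mutation. This is where the special homological geometry of $\Pi$ enters — its self-injectivity and the $2$-Calabi--Yau structure of the associated stable category — and one must genuinely produce the mutation exact sequence and check $\tau$-rigidity of the new summand, rather than merely tracking dimensions. Everything else is either combinatorial bookkeeping (the Matsumoto reductions and length monotonicity) or a formal consequence of the $\tau$-tilting/torsion-class dictionary already recalled above.
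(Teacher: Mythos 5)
This theorem is imported from \cite{Mizuno14}; the paper you are reading gives no proof of it, so there is no internal argument to compare against. Measured against the actual proof in \cite{Mizuno14} (which itself leans on Buan--Iyama--Reiten--Scott for the well-definedness of $I_w$), your three-layer outline is the right architecture: braid relations plus Matsumoto for well-definedness, induction on length identifying $I_{s_iw}=I_iI_w$ with a single mutation, and then the torsion-class dictionary of Definition-Theorem \ref{poset iso} to read off the anti-isomorphism. You also correctly isolate the genuinely hard step, namely producing the exchange sequence showing that left multiplication by $I_i$ is one mutation.

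Two points in your sketch do not hold up as written. First, you justify connectedness of the exchange graph by asserting that $\Pi$ is $g$-finite ``being of finite representation type.'' Preprojective algebras of Dynkin type are \emph{not} representation-finite in general (already $\Pi(\mathbb{A}_5)$ is tame and larger types are wild), and in \cite{Mizuno14} the $\tau$-tilting finiteness of $\Pi$ is a \emph{consequence} of this theorem, not an input. The correct tool is the result of \cite{AIR14} that a finite connected component of the support $\tau$-tilting exchange graph must be the whole graph: the set $\{I_w\mid w\in W\}$ is finite and closed under mutation, hence exhausts $\stautilt\Pi$. Second, your injectivity argument is circular: ``distinct support $\tau$-tilting modules are non-isomorphic'' is a tautology and does not show that $w\neq w'$ forces $I_w\not\cong I_{w'}$. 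Injectivity is a real step in \cite{Mizuno14}; one recovers $w$ from $I_w$ inductively, e.g.\ by showing that the descent set of $w$ is determined by which summands $e_iI_w$ admit a left mutation (equivalently by the arrows of the Hasse quiver at $\Fac I_w$), so that the surjection $W\to\langle I_1,\ldots,I_n\rangle$ cannot identify distinct group elements. With those two repairs your outline matches the published proof.
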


By Theorem \ref{tau weyl}, $\#\stautilt \Pi = \#W$ holds and it is given by the following table.  
\begin{equation}\label{table:orderW}
\begin{tabular}{ccccccccc}\hline
    $Q$& $\mathbb{A}_n$ & $\mathbb{D}_n$& $\mathbb{E}_6$ &$\mathbb{E}_7$ &$ \mathbb{E}_8$ \\ 
    $\# W$ & $(n+1)!$  & $2^{n-1} n!$ & $51840$& $2903040$ & $696729600$ \\ \hline 
\end{tabular}    
\end{equation}

\section{$d$-polynomials in $\tau$-tilting theory}
Throughout this section, let $A$ be a finite dimensional $k$-algebra. 
We assume that $|A|=n$ and $A$ is $g$-finite. 
An aim of this section is to introduce a generating function, which we call $d$-polynomial of $A$, for the $k$-dimensions of $\tau$-rigid $A$-modules.

\subsection{Simplicial complexes of $\tau$-rigid pairs} \label{Enumeration}
We begin with the definition of a simplicial complex  arising from $\tau$-rigid pairs. 

\begin{defi}
    For $j\geq 0$, let 
    $$\taurigidpair^j A := \{(M,P)\in \taurigidpair A \mid \text{$|M|+|P| = j$}\}.$$ 
    We define a simplicial complex $\Delta(A)$ as follows: The set of $j$-simplices is given by $\taurigidpair^{j+1}A$.  
\end{defi}

Notice that $\Delta(A)$ is naturally isomorphic to the simplicial complex given by $2$-term silting complexes and called \emph{$g$-simplicial complex} \cite{AHIKM22}. 
We denote by 
\begin{equation*}
    (f_{-1},f_0,\ldots,f_{n-1}) \quad \text{and} \quad (h_{0},h_1,\ldots,h_{n})
\end{equation*}
the $f$-vector and $h$-vector of $\Delta(A)$. 
That is, 
\begin{equation*}
    f_{-1} := 1 \quad \text{and} \quad f_{j} := \# \taurigidpair^{j+1} A
\end{equation*}
is the number of $j$-simplices of $\Delta(A)$ for $0\leq j \leq n-1$, and $h_{j}$ is defined by 
\begin{equation*}
    h_j = \sum_{i=0}^j (-1)^{j-i} \binom{n-i}{j-i}f_{i-1}\quad  (0 \leq j \leq n). 
\end{equation*}
In addition, we set  
\begin{equation*}
    f(\Delta(A);t) := \sum_{j=0}^n f_{j-1} t^{n-j} \quad \text{and} \quad h(\Delta(A); t):= \sum_{j=0}^n h_{j}t^{n-j} 
\end{equation*}
and call them \emph{$f$-polynomial} and \emph{$h$-polynomial} of $\Delta(A)$, which are related by \begin{equation}\label{h=f-1}
h(\Delta(A);t) = f(\Delta(A); t-1). 
\end{equation}
Thus, 
\begin{equation}\label{h to f}
    f_{j-1} = \sum_{i=0}^j \binom{n-i}{j-i}h_{i} \quad (0\leq j \leq n).  
\end{equation}
For simplicity, we will write $f(A;t) := f(\Delta(A);t)$ and $h(A;t) := h(\Delta(A);t)$. 

Now, we introduce the following polynomial in one variable $t$ associated with $A$. 

\begin{defi}
Let 
\begin{equation}\label{dim-polynomial}
    d(A;t) := \sum_{(M,P)\in \taurigidpair A} \dim_kM \cdot t^{n-|M|-|P|} = \sum_{j=1}^n d_{j-1}t^{n-j}.
\end{equation}
That is, each coefficient $d_j\in \mathbb{Z}$ computes the sum of the $k$-dimensions 
\begin{equation*}
    d_j = \sum_{(M,P) \in \taurigidpair^{j+1} A} \dim_{k} M \quad (0 \leq j \leq n-1).  
\end{equation*}
We call $d(A;t)$ the \emph{$d$-polynomial} of $A$. 
\end{defi}

By definition, the number $d_0$ (respectively, $d_{n-1}$) is the sum of the $k$-dimensions of all indecomposable $\tau$-rigid (respectively, basic support $\tau$-tilting) modules up to isomorphism. 
Namely, 
\begin{eqnarray}\label{d_0 and d_n-1}
    \Dim(\itaurigid A) &:=& d_0 = \sum_{M\in \itaurigid A} \dim_kM \quad \text{and} \\ \label{st:d_0 and d_n-1} 
    \Dim(\stautilt A) 
    &:=& d_{n-1} = \sum_{M\in \stautilt A} \dim_kM.
\end{eqnarray}

\begin{remk} \label{remk:EJR-resuction}
Let $I$ be an ideal in $A$ which is nilpotent and is contained in the center of $A$. 
Then, it is shown in \cite{EJR18} (see also \cite{DIRRT23}) that there are isomorphisms 
\begin{equation*}
    \stautilt A \simeq \stautilt (A/I) 
    \quad \text{and} \quad \Delta(A)\simeq \Delta(A/I). 
\end{equation*} 
In particular, we have $f(A;t) = f(A/I;t)$ and $h(A;t) = h(A/I;t)$. 
However, their $d$-polynomials $d(A;t)$ and $d(A/I;t)$ are different in general. 
For example, every local algebra $\Lambda$ has the $d$-polynomial $d(\Lambda;t) = \dim_k\Lambda$ which is constant. 
\end{remk}

\begin{exam}\label{ex:d-polynomial}
\begin{enumerate}[\rm (1)]
    \item Let $Q$ be a quiver $Q=(1\to 2 \to 3)$ and $kQ$ the path algebra of $Q$. 
    Then, its $g$-simplicial complex $\Delta(kQ)$ is described in the left figure of Figure \ref{fig:example g}, where $P_i[1]$ means a $\tau$-rigid pair $(0,P_i)$ for the indecomposable projective module $P_i$ corresponding to $i$. 
    In fact, we have 
    \begin{eqnarray*}
    \taurigidpair^1 kQ &=& \left\{
    \footnotesize
    \begin{tikzpicture}[baseline=-3mm]
    \coordinate(x) at(45:0.25);
    \coordinate(y) at(-45:0.25);
    \node at($0*(x)+0*(y)$) {$1$}; 
    \node at($0*(x)+1*(y)$) {$2$}; 
    \node at($0*(x)+2*(y)$) {$3$}; 
    \end{tikzpicture}, 
    \begin{tikzpicture}[baseline=-2mm]
    \coordinate(x) at(45:0.25);
    \coordinate(y) at(-45:0.25);
    \node at($0*(x)+0*(y)$) {$2$}; 
    \node at($0*(x)+1*(y)$) {$3$}; 
    \end{tikzpicture}, 
    \begin{tikzpicture}[baseline=-2mm]
    \coordinate(x) at(45:0.25);
    \coordinate(y) at(-45:0.25);
    \node at($0*(x)+0*(y)$) {$3$}; 
    \end{tikzpicture},
    \begin{tikzpicture}[baseline=-2mm]
    \coordinate(x) at(45:0.25);
    \coordinate(y) at(-45:0.25);
    \node at($0*(x)+0*(y)$) {$1$}; 
    \end{tikzpicture}, 
    \begin{tikzpicture}[baseline=-2mm]
    \coordinate(x) at(45:0.25);
    \coordinate(y) at(-45:0.25);
    \node at($0*(x)+0*(y)$) {$2$}; 
    \end{tikzpicture}, 
    \begin{tikzpicture}[baseline=-2mm]
    \coordinate(x) at(45:0.25);
    \coordinate(y) at(-45:0.25);
    \node at($0*(x)+0*(y)$) {$1$}; 
    \node at($0*(x)+1*(y)$) {$2$}; 
    \end{tikzpicture}, 
    P_1[1], P_2[1], P_3[1] 
    \right\}, \\ 
    \taurigidpair^2 kQ &=& 
    \footnotesize
    \left\{
    \begin{tikzpicture}[baseline=-3mm]
    \coordinate(x) at(45:0.25);
    \coordinate(y) at(-45:0.25);
    \node at($0*(x)+0*(y)$) {$1$}; 
    \node at($0*(x)+1*(y)$) {$2$}; 
    \node at($0*(x)+2*(y)$) {$3$}; 
    \end{tikzpicture}
    \oplus 
    \begin{tikzpicture}[baseline=-2mm]
    \coordinate(x) at(45:0.25);
    \coordinate(y) at(-45:0.25);
    \node at($0*(x)+0*(y)$) {$2$}; 
    \node at($0*(x)+1*(y)$) {$3$}; 
    \end{tikzpicture}, 
    \begin{tikzpicture}[baseline=-3mm]
    \coordinate(x) at(45:0.25);
    \coordinate(y) at(-45:0.25);
    \node at($0*(x)+0*(y)$) {$1$}; 
    \node at($0*(x)+1*(y)$) {$2$}; 
    \node at($0*(x)+2*(y)$) {$3$}; 
    \end{tikzpicture}
    \oplus 
    \begin{tikzpicture}[baseline=-2mm]
    \coordinate(x) at(45:0.25);
    \coordinate(y) at(-45:0.25);
    \node at($0*(x)+0*(y)$) {$3$}; 
    \end{tikzpicture},
    \begin{tikzpicture}[baseline=-3mm]
    \coordinate(x) at(45:0.25);
    \coordinate(y) at(-45:0.25);
    \node at($0*(x)+0*(y)$) {$1$}; 
    \node at($0*(x)+1*(y)$) {$2$}; 
    \node at($0*(x)+2*(y)$) {$3$}; 
    \end{tikzpicture}
    \oplus 
    \begin{tikzpicture}[baseline=-2mm]
    \coordinate(x) at(45:0.25);
    \coordinate(y) at(-45:0.25);
    \node at($0*(x)+0*(y)$) {$1$}; 
    \end{tikzpicture},
    \begin{tikzpicture}[baseline=-3mm]
    \coordinate(x) at(45:0.25);
    \coordinate(y) at(-45:0.25);
    \node at($0*(x)+0*(y)$) {$1$}; 
    \node at($0*(x)+1*(y)$) {$2$}; 
    \node at($0*(x)+2*(y)$) {$3$}; 
    \end{tikzpicture}
    \oplus 
    \begin{tikzpicture}[baseline=-2mm]
    \coordinate(x) at(45:0.25);
    \coordinate(y) at(-45:0.25);
    \node at($0*(x)+0*(y)$) {$1$}; 
    \node at($0*(x)+1*(y)$) {$2$}; 
    \end{tikzpicture}, 
    \begin{tikzpicture}[baseline=-3mm]
    \coordinate(x) at(45:0.25);
    \coordinate(y) at(-45:0.25);
    \node at($0*(x)+0*(y)$) {$1$}; 
    \node at($0*(x)+1*(y)$) {$2$}; 
    \node at($0*(x)+2*(y)$) {$3$}; 
    \end{tikzpicture}
    \oplus 
    \begin{tikzpicture}[baseline=-2mm]
    \coordinate(x) at(45:0.25);
    \coordinate(y) at(-45:0.25);
    \node at($0*(x)+0*(y)$) {$2$}; 
    \end{tikzpicture},
    \begin{tikzpicture}[baseline=-2mm]
    \coordinate(x) at(45:0.25);
    \coordinate(y) at(-45:0.25);
    \node at($0*(x)+0*(y)$) {$2$}; 
    \node at($0*(x)+1*(y)$) {$3$}; 
    \end{tikzpicture}
    \oplus 
    \begin{tikzpicture}[baseline=-2mm]
    \coordinate(x) at(45:0.25);
    \coordinate(y) at(-45:0.25);
    \node at($0*(x)+0*(y)$) {$3$}; 
    \end{tikzpicture},
    \right. \\   && \quad \ \footnotesize
    \begin{tikzpicture}[baseline=-2mm]
    \coordinate(x) at(45:0.25);
    \coordinate(y) at(-45:0.25);
    \node at($0*(x)+0*(y)$) {$2$}; 
    \node at($0*(x)+1*(y)$) {$3$}; 
    \end{tikzpicture} 
    \oplus 
    \begin{tikzpicture}[baseline=-2mm]
    \coordinate(x) at(45:0.25);
    \coordinate(y) at(-45:0.25);
    \node at($0*(x)+0*(y)$) {$2$}; 
    \end{tikzpicture}, 
    \begin{tikzpicture}[baseline=-2mm]
    \coordinate(x) at(45:0.25);
    \coordinate(y) at(-45:0.25);
    \node at($0*(x)+0*(y)$) {$2$}; 
    \node at($0*(x)+1*(y)$) {$3$}; 
    \end{tikzpicture} 
    \oplus P_1[1],  
    \begin{tikzpicture}[baseline=-2mm]
    \coordinate(x) at(45:0.25);
    \coordinate(y) at(-45:0.25);
    \node at($0*(x)+0*(y)$) {$3$}; 
    \end{tikzpicture} 
    \oplus 
    \begin{tikzpicture}[baseline=-2mm]
    \coordinate(x) at(45:0.25);
    \coordinate(y) at(-45:0.25);
    \node at($0*(x)+0*(y)$) {$1$}; 
    \end{tikzpicture}, 
    \begin{tikzpicture}[baseline=-2mm]
    \coordinate(x) at(45:0.25);
    \coordinate(y) at(-45:0.25);
    \node at($0*(x)+0*(y)$) {$3$}; 
    \end{tikzpicture} 
    \oplus P_1[1], 
    \begin{tikzpicture}[baseline=-2mm]
    \coordinate(x) at(45:0.25);
    \coordinate(y) at(-45:0.25);
    \node at($0*(x)+0*(y)$) {$3$}; 
    \end{tikzpicture} 
    \oplus P_2[1], 
    \begin{tikzpicture}[baseline=-2mm]
    \coordinate(x) at(45:0.25);
    \coordinate(y) at(-45:0.25);
    \node at($0*(x)+0*(y)$) {$1$}; 
    \end{tikzpicture} 
    \oplus 
    \begin{tikzpicture}[baseline=-2mm]
    \coordinate(x) at(45:0.25);
    \coordinate(y) at(-45:0.25);
    \node at($0*(x)+0*(y)$) {$1$}; 
    \node at($0*(x)+1*(y)$) {$2$}; 
    \end{tikzpicture},  \\   && \quad \ \footnotesize
    \begin{tikzpicture}[baseline=-2mm]
    \coordinate(x) at(45:0.25);
    \coordinate(y) at(-45:0.25);
    \node at($0*(x)+0*(y)$) {$1$}; 
    \end{tikzpicture} 
    \oplus P_2[1], 
    \begin{tikzpicture}[baseline=-2mm]
    \coordinate(x) at(45:0.25);
    \coordinate(y) at(-45:0.25);
    \node at($0*(x)+0*(y)$) {$1$}; 
    \end{tikzpicture} 
    \oplus P_3[1], 
    \begin{tikzpicture}[baseline=-2mm]
    \coordinate(x) at(45:0.25);
    \coordinate(y) at(-45:0.25);
    \node at($0*(x)+0*(y)$) {$2$}; 
    \end{tikzpicture} 
    \oplus 
    \begin{tikzpicture}[baseline=-2mm]
    \coordinate(x) at(45:0.25);
    \coordinate(y) at(-45:0.25);
    \node at($0*(x)+0*(y)$) {$1$}; 
    \node at($0*(x)+1*(y)$) {$2$}; 
    \end{tikzpicture}, 
    \begin{tikzpicture}[baseline=-2mm]
    \coordinate(x) at(45:0.25);
    \coordinate(y) at(-45:0.25);
    \node at($0*(x)+0*(y)$) {$2$}; 
    \end{tikzpicture} 
    \oplus P_1[1], 
    \begin{tikzpicture}[baseline=-2mm]
    \coordinate(x) at(45:0.25);
    \coordinate(y) at(-45:0.25);
    \node at($0*(x)+0*(y)$) {$2$}; 
    \end{tikzpicture} 
    \oplus P_3[1], 
    \begin{tikzpicture}[baseline=-2mm]
    \coordinate(x) at(45:0.25);
    \coordinate(y) at(-45:0.25);
    \node at($0*(x)+0*(y)$) {$1$}; 
    \node at($0*(x)+1*(y)$) {$2$}; 
    \end{tikzpicture} 
    \oplus P_3[1], 
    \\   && \quad\ \footnotesize  
    \left. 
    P_1[1] \oplus P_2[1], P_1[1]\oplus P_3[1], P_2[1]\oplus P_3[1] 
    \begin{tikzpicture}[baseline=-3mm]
    \coordinate(x) at(45:0.25);
    \coordinate(y) at(-45:0.25);
    \node at($1*(x)+0*(y)$) {}; 
    \end{tikzpicture}\!\!\!
    \right\}, \\ 
    \taurigidpair^3 kQ &=& \left\{ \footnotesize
    \begin{tikzpicture}[baseline=-3mm]
    \coordinate(x) at(45:0.25);
    \coordinate(y) at(-45:0.25);
    \node at($0*(x)+0*(y)$) {$1$}; 
    \node at($0*(x)+1*(y)$) {$2$}; 
    \node at($0*(x)+2*(y)$) {$3$}; 
    \end{tikzpicture}
    \oplus 
    \begin{tikzpicture}[baseline=-3mm]
    \coordinate(x) at(45:0.25);
    \coordinate(y) at(-45:0.25);
    \node at($0*(x)+0*(y)$) {$2$}; 
    \node at($0*(x)+1*(y)$) {$3$}; 
    \end{tikzpicture}
    \oplus
    \begin{tikzpicture}[baseline=-3mm]
    \coordinate(x) at(45:0.25);
    \coordinate(y) at(-45:0.25);
    \node at($0*(x)+0*(y)$) {$3$}; 
    \end{tikzpicture}, 
    \begin{tikzpicture}[baseline=-3mm]
    \coordinate(x) at(45:0.25);
    \coordinate(y) at(-45:0.25);
    \node at($0*(x)+0*(y)$) {$1$}; 
    \node at($0*(x)+1*(y)$) {$2$}; 
    \node at($0*(x)+2*(y)$) {$3$}; 
    \end{tikzpicture}
    \oplus 
    \begin{tikzpicture}[baseline=-3mm]
    \coordinate(x) at(45:0.25);
    \coordinate(y) at(-45:0.25);
    \node at($0*(x)+0*(y)$) {$2$}; 
    \node at($0*(x)+1*(y)$) {$3$}; 
    \end{tikzpicture}
    \oplus
    \begin{tikzpicture}[baseline=-3mm]
    \coordinate(x) at(45:0.25);
    \coordinate(y) at(-45:0.25);
    \node at($0*(x)+0*(y)$) {$2$}; 
    \end{tikzpicture}, 
    \begin{tikzpicture}[baseline=-3mm]
    \coordinate(x) at(45:0.25);
    \coordinate(y) at(-45:0.25);
    \node at($0*(x)+0*(y)$) {$1$}; 
    \node at($0*(x)+1*(y)$) {$2$}; 
    \node at($0*(x)+2*(y)$) {$3$}; 
    \end{tikzpicture}
    \oplus 
    \begin{tikzpicture}[baseline=-3mm]
    \coordinate(x) at(45:0.25);
    \coordinate(y) at(-45:0.25);
    \node at($0*(x)+0*(y)$) {$1$}; 
    \end{tikzpicture}
    \oplus
    \begin{tikzpicture}[baseline=-3mm]
    \coordinate(x) at(45:0.25);
    \coordinate(y) at(-45:0.25);
    \node at($0*(x)+0*(y)$) {$3$}; 
    \end{tikzpicture}, 
    \begin{tikzpicture}[baseline=-3mm]
    \coordinate(x) at(45:0.25);
    \coordinate(y) at(-45:0.25);
    \node at($0*(x)+0*(y)$) {$1$}; 
    \node at($0*(x)+1*(y)$) {$2$}; 
    \node at($0*(x)+2*(y)$) {$3$}; 
    \end{tikzpicture}
    \oplus 
    \begin{tikzpicture}[baseline=-3mm]
    \coordinate(x) at(45:0.25);
    \coordinate(y) at(-45:0.25);
    \node at($0*(x)+0*(y)$) {$1$}; 
    \node at($0*(x)+1*(y)$) {$2$}; 
    \end{tikzpicture}
    \oplus
    \begin{tikzpicture}[baseline=-3mm]
    \coordinate(x) at(45:0.25);
    \coordinate(y) at(-45:0.25);
    \node at($0*(x)+0*(y)$) {$2$}; 
    \end{tikzpicture}, \right. 
    \\ && \quad \footnotesize
    \begin{tikzpicture}[baseline=-3mm]
    \coordinate(x) at(45:0.25);
    \coordinate(y) at(-45:0.25);
    \node at($0*(x)+0*(y)$) {$1$}; 
    \node at($0*(x)+1*(y)$) {$2$}; 
    \node at($0*(x)+2*(y)$) {$3$}; 
    \end{tikzpicture}
    \oplus 
    \begin{tikzpicture}[baseline=-3mm]
    \coordinate(x) at(45:0.25);
    \coordinate(y) at(-45:0.25);
    \node at($0*(x)+0*(y)$) {$1$}; 
    \node at($0*(x)+1*(y)$) {$2$}; 
    \end{tikzpicture}
    \oplus
    \begin{tikzpicture}[baseline=-3mm]
    \coordinate(x) at(45:0.25);
    \coordinate(y) at(-45:0.25);
    \node at($0*(x)+0*(y)$) {$1$}; 
    \end{tikzpicture},   
    \begin{tikzpicture}[baseline=-3mm]
    \coordinate(x) at(45:0.25);
    \coordinate(y) at(-45:0.25);
    \node at($0*(x)+0*(y)$) {$2$}; 
    \node at($0*(x)+1*(y)$) {$3$};  
    \end{tikzpicture}
    \oplus 
    \begin{tikzpicture}[baseline=-3mm]
    \coordinate(x) at(45:0.25);
    \coordinate(y) at(-45:0.25);
    \node at($0*(x)+0*(y)$) {$3$}; 
    \end{tikzpicture} \oplus P_1[1], 
    \begin{tikzpicture}[baseline=-3mm]
    \coordinate(x) at(45:0.25);
    \coordinate(y) at(-45:0.25);
    \node at($0*(x)+0*(y)$) {$2$}; 
    \node at($0*(x)+1*(y)$) {$3$};  
    \end{tikzpicture}
    \oplus 
    \begin{tikzpicture}[baseline=-3mm]
    \coordinate(x) at(45:0.25);
    \coordinate(y) at(-45:0.25);
    \node at($0*(x)+0*(y)$) {$2$}; 
    \end{tikzpicture} \oplus P_1[1], 
    \begin{tikzpicture}[baseline=-3mm]
    \coordinate(x) at(45:0.25);
    \coordinate(y) at(-45:0.25);
    \node at($0*(x)+0*(y)$) {$3$};  
    \end{tikzpicture}
    \oplus 
    \begin{tikzpicture}[baseline=-3mm]
    \coordinate(x) at(45:0.25);
    \coordinate(y) at(-45:0.25);
    \node at($0*(x)+0*(y)$) {$1$}; 
    \end{tikzpicture} \oplus P_2[1], 
    \\ && \quad \footnotesize 
    \begin{tikzpicture}[baseline=-3mm]
    \coordinate(x) at(45:0.25);
    \coordinate(y) at(-45:0.25);
    \node at($0*(x)+0*(y)$) {$3$};  
    \end{tikzpicture}
    \oplus P_1[1] \oplus P_2[1], 
    \begin{tikzpicture}[baseline=-3mm]
    \coordinate(x) at(45:0.25);
    \coordinate(y) at(-45:0.25);
    \node at($0*(x)+0*(y)$) {$1$};  
    \end{tikzpicture}
    \oplus 
    \begin{tikzpicture}[baseline=-3mm]
    \coordinate(x) at(45:0.25);
    \coordinate(y) at(-45:0.25);
    \node at($0*(x)+0*(y)$) {$1$};
    \node at($0*(x)+1*(y)$) {$2$};
    \end{tikzpicture} \oplus P_3[1], 
    \begin{tikzpicture}[baseline=-3mm]
    \coordinate(x) at(45:0.25);
    \coordinate(y) at(-45:0.25);
    \node at($0*(x)+0*(y)$) {$1$};  
    \end{tikzpicture}
    \oplus P_2[1] \oplus P_3[1], 
    \\ && \quad \footnotesize 
    \left. 
    \begin{tikzpicture}[baseline=-3mm]
    \coordinate(x) at(45:0.25);
    \coordinate(y) at(-45:0.25);
    \node at($0*(x)+0*(y)$) {$2$};  
    \end{tikzpicture} 
    \oplus 
    \begin{tikzpicture}[baseline=-3mm]
    \coordinate(x) at(45:0.25);
    \coordinate(y) at(-45:0.25);
    \node at($0*(x)+0*(y)$) {$1$};
    \node at($0*(x)+1*(y)$) {$2$};
    \end{tikzpicture} \oplus P_3[1], 
    \begin{tikzpicture}[baseline=-3mm]
    \coordinate(x) at(45:0.25);
    \coordinate(y) at(-45:0.25);
    \node at($0*(x)+0*(y)$) {$2$};  
    \end{tikzpicture}
    \oplus P_1[1] \oplus P_2[1], 
    P_1[1] \oplus P_2[1] \oplus P_3[1] 
    \right\}. 
    \end{eqnarray*}
    Thus, the $f$-polynomial and $h$-polynomial are given as follows, respectively. 
    \begin{equation*}
        f(kQ;t) = t^3 + 9t^2 + 21t + 14 \quad \text{and} \quad  
        h(kQ;t) = t^3 + 6t^2 + 6t + 1. 
    \end{equation*}
    In addition, the $d$-polynomial is given by 
    \begin{equation*}    
    d(kQ;t) = 10t^2 + 46t + 46.
    \end{equation*}
    \item Let $A:=kQ/I$, where
    \begin{equation*}
        Q = \left( \xymatrix{1 \ar@<1mm>[r]^-{a} \ar@<-1mm>@{<-}[r]_-{a^*} & 2 \ar@<1mm>[r]^-{b} \ar@<-1mm>@{<-}[r]_-{b^*} & 3 }\right) 
        \quad \text{and} \quad 
        I = \langle aa^*, a^*a - bb^*, b^*b \rangle. 
    \end{equation*}
    This is the preprojective algebra of type $\mathbb{A}_3$. 
    In this case, the $g$-simplicial complex of $A$ is described in the right figure of Figure \ref{fig:example g}. 
    Thus, we have 
    \begin{equation*}
        f(A;t) = t^3 + 14t^2 + 36t + 24 \quad \text{and} \quad  
        h(A;t) = t^3 + 11t^2 + 11t + 1.   
    \end{equation*}
    From a direct calculation, we find that the $d$-polynomial is given by 
    \begin{equation*}
        d(A;t) = 24t^2 + 120t + 120.    
    \end{equation*}
\end{enumerate}

\begin{figure}[t]
\begin{tabular}{ccccc}
    $\begin{tikzpicture}[baseline=0mm, scale=1.1]
    \node(x) at(200:0.8) {}; 
    \node(-x) at($-1*(x)$) {}; 
    \node(y) at(-10:1.9) {}; 
    \node(-y) at($-1*(y)$) {}; 
    \node(z) at(90:1.9) {}; 
    \node(-z) at($-1*(z)$) {}; 

    \coordinate(1) at($1*(x) + 0*(y) + 0*(z)$); 
    \coordinate(2) at($0*(x) + 1*(y) + 0*(z)$); 
    \coordinate(3) at($0*(x) + 0*(y) + 1*(z)$); 
    \coordinate(4) at($-1*(x) + 0*(y) + 0*(z)$); 
    \coordinate(5) at($0*(x) + -1*(y) + 0*(z)$); 
    \coordinate(6) at($0*(x) + 0*(y) + -1*(z)$); 
    \coordinate(7) at($1*(x) + -1*(y) + 0*(z)$); 
    \coordinate(8) at($0*(x) + 1*(y) + -1*(z)$); 
    \coordinate(9) at($1*(x) + 0*(y) + -1*(z)$); 

    \draw[dashed] (4)--(5); 
    \draw[dashed] (4)--(6); 
    \draw[dashed] (5)--(6); 
    \draw[dashed] (2)--(4); 
    \draw[dashed] (3)--(4); 
    \draw[dashed] (3)--(5); 
    \draw[dashed] (4)--(8); 
    \draw[dashed] (5)--(7); 
    \draw[dashed] (8)--(6); 
    \draw[dashed] (7)--(6); 
    \draw[dashed] (9)--(6); 

    \draw[] (1)--(2); 
    \draw[] (1)--(3); 
    \draw[] (2)--(3); 
    \draw[] (1)--(7); 
    \draw[] (3)--(7); 
    \draw[] (1)--(8); 
    \draw[] (2)--(8); 
    \draw[] (1)--(9); 
    \draw[] (7)--(9); 
    \draw[] (8)--(9); 
    
    \node[fill=white, inner sep = 0.1mm, shift={(0.2,0)}] at (1) 
    {$\scriptsize
    \begin{tikzpicture}[baseline=2mm]
    \coordinate(x) at(45:0.25);
    \coordinate(y) at(-45:0.25);
    \node at($0*(x)+0*(y)$) {$1$}; 
    \node at($0*(x)+1*(y)$) {$2$}; 
    \node at($0*(x)+2*(y)$) {$3$}; 
    \end{tikzpicture}$
    };
    \node[fill=white, inner sep = 0.5mm] at (2) 
    {$\scriptsize
    \begin{tikzpicture}[baseline=2mm]
    \coordinate(x) at(45:0.25);
    \coordinate(y) at(-45:0.25);
    \node at($0*(x)+0*(y)$) {$2$}; 
    \node at($0*(x)+1*(y)$) {$3$}; 
    \end{tikzpicture}$
    };
    \node[fill=white, inner sep = 0.5mm] at (3) 
    {\scriptsize$
    \begin{tikzpicture}[baseline=2mm]
    \coordinate(x) at(45:0.25);
    \coordinate(y) at(-45:0.25);
    \node at($0*(x)+0*(y)$) {$3$}; 
    \end{tikzpicture}$
    };
    \node[fill=white, inner sep = 0.5mm] at (4) 
    {\scriptsize $P_1[1]$
    };
    \node[fill=white, inner sep = 0mm, shift={(0.2,0)}] at (5) 
    {\scriptsize $P_2[1]$};
    \node[fill=white, inner sep = 0.5mm] at (6) 
    {\scriptsize $P_3[1]$};
    \node[fill=white, inner sep = 0.5mm] at (7) 
    {\scriptsize $
    \begin{tikzpicture}[baseline=2mm]
    \coordinate(x) at(45:0.25);
    \coordinate(y) at(-45:0.25);
    \node at($0*(x)+0*(y)$) {$1$}; 
    \end{tikzpicture}$
    };
    
    \node[fill=white, inner sep = 0.5mm] at (8) 
    {\scriptsize $
    \begin{tikzpicture}[baseline=2mm]
    \coordinate(x) at(45:0.25);
    \coordinate(y) at(-45:0.25);
    \node at($0*(x)+0*(y)$) {$2$}; 
    \end{tikzpicture}$
    };
    \node[fill=white, inner sep = 0.5mm] at (9) 
    {\scriptsize $
    \begin{tikzpicture}[baseline=2mm]
    \coordinate(x) at(45:0.25);
    \coordinate(y) at(-45:0.25);
    \node at($0*(x)+0*(y)$) {$1$}; 
    \node at($0*(x)+1*(y)$) {$2$}; 
    \end{tikzpicture}$
    };
    
\end{tikzpicture}$
& & 
$\begin{tikzpicture}[baseline=0mm, scale=1.1]
    \node(x) at(200:0.9) {}; 
    \node(-x) at($-1*(x)$) {}; 
    \node(y) at(-10:1.8) {}; 
    \node(-y) at($-1*(y)$) {}; 
    \node(z) at(90:1.8) {}; 
    \node(-z) at($-1*(z)$) {};

    \coordinate(1) at($1*(x) + 0*(y) + 0*(z)$); 
    \coordinate(2) at($0*(x) + 1*(y) + 0*(z)$); 
    \coordinate(3) at($0*(x) + 0*(y) + 1*(z)$); 
    \coordinate(4) at($-1*(x) + 0*(y) + 0*(z)$); 
    \coordinate(5) at($0*(x) + -1*(y) + 0*(z)$); 
    \coordinate(6) at($0*(x) + 0*(y) + -1*(z)$); 
    \coordinate(7) at($-1*(x) + 1*(y) + 0*(z)$); 
    \coordinate(8) at($1*(x) + -1*(y) + 1*(z)$); 
    \coordinate(9) at($0*(x) + 1*(y) + -1*(z)$); 
    \coordinate(10) at($-1*(x) + 0*(y) + 1*(z)$); 
    \coordinate(11) at($0*(x) + -1*(y) + 1*(z)$); 
    \coordinate(12) at($1*(x) + -1*(y) + 0*(z)$); 
    \coordinate(13) at($1*(x) + 0*(y) + -1*(z)$); 
    \coordinate(14) at($-1*(x) + 1*(y) + -1*(z)$); 

    \draw[] (1)--(2); 
    \draw[] (1)--(3); 
    \draw[] (2)--(3); 
    \draw[dashed] (4)--(5); 
    \draw[dashed] (4)--(6); 
    \draw[dashed] (5)--(6); 
    \draw[] (2)--(7); 
    \draw[] (3)--(7); 
    \draw[] (1)--(8); 
    \draw[] (3)--(8); 
    \draw[] (1)--(9); 
    \draw[] (2)--(9); 
    \draw[] (3)--(10); 
    \draw[] (7)--(10); 
    \draw[] (7)--(9); 
    \draw[] (3)--(11); 
    \draw[] (8)--(11); 
    \draw[] (1)--(12); 
    \draw[] (8)--(12); 
    \draw[] (1)--(13); 
    \draw[] (9)--(13); 
    \draw[dashed] (7)--(4); 
    \draw[dashed] (10)--(4); 
    \draw[] (10)--(11); 
    \draw[] (7)--(14); 
    \draw[] (9)--(14); 
    \draw[dashed] (11)--(12); 
    \draw[] (12)--(13); 
    \draw[dashed] (9)--(6); 
    \draw[dashed] (13)--(6); 
    \draw[dashed] (11)--(4); 
    \draw[dashed] (14)--(4); 
    \draw[dashed] (12)--(6); 
    \draw[dashed] (14)--(6); 
    \draw[dashed] (11)--(5); 
    \draw[dashed] (12)--(5); 
    \node[fill=white, inner sep = 0.3mm] at (1) 
    {$\scriptsize
    \begin{tikzpicture}[baseline=2mm]
    \coordinate(x) at(-135:0.25);
    \coordinate(y) at(-45:0.25);
    \node at($0*(x)+0*(y)$) {$1$}; 
    \node at($0*(x)+1*(y)$) {$2$}; 
    \node at($0*(x)+2*(y)$) {$3$}; 
    \end{tikzpicture}$
    };
    
    \node[fill=white, inner sep = 0.3mm] at (2) 
    {$\scriptsize
    \begin{tikzpicture}[baseline=2mm]
    \coordinate(x) at(-135:0.25);
    \coordinate(y) at(-45:0.25);
    \node at($0*(x)+0*(y)$) {$2$}; 
    \node at($0*(x)+1*(y)$) {$3$}; 
    \node at($1*(x)+0*(y)$) {$1$};
    \node at($1*(x)+1*(y)$) {$2$};
    \end{tikzpicture}$
    };
    \node[fill=white, inner sep = 0.3mm, shift={(0,-0.1)}] at (3) 
    {$\scriptsize
    \begin{tikzpicture}[baseline=2mm]
    \coordinate(x) at(-135:0.25);
    \coordinate(y) at(-45:0.25);
    \node at($0*(x)+0*(y)$) {$3$}; 
    \node at($1*(x)+0*(y)$) {$2$}; 
    \node at($2*(x)+0*(y)$) {$1$};
    \end{tikzpicture}$
    };
    \node[fill=white, inner sep = 0.3mm] at (4) 
    {\scriptsize $P_1[1]$}; 
    \node[fill=white, inner sep = 0.3mm] at (5) 
    {\scriptsize $P_2[1]$};
    \node[fill=white, inner sep = 0.3mm] at (6) 
    {\scriptsize $P_3[1]$};
    \node[fill=white, inner sep = 0.3mm] at (7) 
    {$\scriptsize
    \begin{tikzpicture}[baseline=2mm]
    \coordinate(x) at(-135:0.25);
    \coordinate(y) at(-45:0.25);
    \node at($0*(x)+0*(y)$) {$2$}; 
    \node at($0*(x)+1*(y)$) {$3$};
    \end{tikzpicture}$
    };
    
    \node[fill=white, inner sep = 0.3mm] at (8) 
    {$\scriptsize
    \begin{tikzpicture}[baseline=2mm]
    \coordinate(x) at(-135:0.25);
    \coordinate(y) at(-45:0.25);
    \node at($0*(x)+1*(y)$) {$3$}; 
    \node at($1*(x)+0*(y)$) {$1$};
    \node at($1*(x)+1*(y)$) {$2$};
    \end{tikzpicture}$
    };
    \node[fill=white, inner sep = 0.3mm] at (9) 
    {$\scriptsize
    \begin{tikzpicture}[baseline=2mm]
    \coordinate(x) at(-135:0.25);
    \coordinate(y) at(-45:0.25);
    \node at($0*(x)+0*(y)$) {$2$}; 
    \node at($1*(x)+0*(y)$) {$1$};
    \end{tikzpicture}$
    };
    \node[fill=white, inner sep = 0.3mm] at (10) 
    {$\scriptsize
    \begin{tikzpicture}[baseline=2mm]
    \coordinate(x) at(-135:0.25);
    \coordinate(y) at(-45:0.25);
    \node at($0*(x)+0*(y)$) {$3$}; 
    \node at($1*(x)+0*(y)$) {$2$};
    \end{tikzpicture}$
    };
    
    \node[fill=white, inner sep = 0.3mm] at (11) 
    {$\scriptsize
    \begin{tikzpicture}[baseline=2mm]
    \coordinate(x) at(-135:0.25);
    \coordinate(y) at(-45:0.25);
    \node at($0*(x)+0*(y)$) {$3$};
    \end{tikzpicture}$
    };
    \node[fill=white, inner sep = 0.3mm] at (12) 
    {$\scriptsize
    \begin{tikzpicture}[baseline=2mm]
    \coordinate(x) at(-135:0.25);
    \coordinate(y) at(-45:0.25);
    \node at($0*(x)+0*(y)$) {$1$};
    \end{tikzpicture}$
    };
    \node[fill=white, inner sep = 0.3mm] at (13) 
    {$\scriptsize
    \begin{tikzpicture}[baseline=2mm]
    \coordinate(x) at(-135:0.25);
    \coordinate(y) at(-45:0.25);
    \node at($0*(x)+0*(y)$) {$1$}; 
    \node at($0*(x)+1*(y)$) {$2$}; 
    \end{tikzpicture}$
    };
    \node[fill=white, inner sep = 0.3mm] at (14) 
    {$\scriptsize
    \begin{tikzpicture}[baseline=2mm]
    \coordinate(x) at(-135:0.25);
    \coordinate(y) at(-45:0.25);
    \node at($0*(x)+0*(y)$) {$2$}; 
    \end{tikzpicture}$
    };
\end{tikzpicture}
$ 
\end{tabular}
    \caption{The $g$-simplicial complexes in Example \ref{ex:d-polynomial}.}
    \label{fig:example g}
\end{figure}
\end{exam}

In the next example, we observe that $f$-polynomials (and $h$-polynomials) do not determine isomorphism classes of $g$-simplicial complexes. This follows from \cite[Theorem 5.1]{AMN20}, which states that every Brauer tree algebra with $n$ edges has the $h$-polynomial of the form $\sum_{j=0}^n\binom{n}{j}^2 t^j$ while their $g$-simplicial complexes are different.  

\begin{exam}\label{example:BTA}
Let $B_1$ and $B_2$ be finite dimensional symmetric algebras defined as follows. 
\begin{eqnarray*}
    B_1 &:=& k\left( \xymatrix{1 \ar@<1mm>[r]^-{a} \ar@<-1mm>@{<-}[r]_-{b} & 2 \ar@<1mm>[r]^-{c} \ar@<-1mm>@{<-}[r]_-{d} & 3 }\right) 
    /\langle aba, dcd, ac, db, ba - cd \rangle, \quad \text{and}  \\ 
    B_2 &:=& k\Biggl( 
    \begin{tikzpicture}[baseline=-2mm]
        \node at (0,0) {\small$
    \xymatrix@C=10pt@R=10pt{& 1 \ar@{->}[rd]^a& \\
    3 \ar@{->}[ru]^c & & 2 \ar@{->}[ll]_b}$}; 
    \end{tikzpicture}
    \Biggr) 
    /\langle abca,bcab,cabc \rangle.
\end{eqnarray*}
Then, $B_1$ (respectively, $B_2$) is a Brauer tree algebra corresponding to a Brauer line (respectively, a Brauer star) having $3$ edges. 
In Figure \ref{fig:BTA}, we describe the $g$-simplicial complex $\Delta(B_1)$ (respectively, $\Delta(B_2)$) on the left (respectively, right) side. 
One can check that they are non-isomorphic to each other but satisfy $h(B_1;t) = h(B_2;t) = t^3 + 9t^2 + 9t + 1$. 
On the other hand, we have  
\begin{equation*}
    d(B_1;t) = 20t^2 + 98t + 98 \quad \text{and}\quad d(B_2;t) = 21t^2 + 102t +102  
\end{equation*}
in this case. 
\end{exam}

\begin{figure}
    \begin{tabular}{cccccc}
$
\begin{tikzpicture}[baseline=0mm, scale=1.1]
    \node(x) at(200:0.8) {}; 
    \node(-x) at($-1*(x)$) {}; 
    \node(y) at(-10:1.9) {}; 
    \node(-y) at($-1*(y)$) {}; 
    \node(z) at(90:1.9) {}; 
    \node(-z) at($-1*(z)$) {}; 
    \coordinate(1) at($1*(x) + 0*(y) + 0*(z)$); 
    \coordinate(2) at($0*(x) + 1*(y) + 0*(z)$); 
    \coordinate(3) at($0*(x) + 0*(y) + 1*(z)$); 
    \coordinate(4) at($-1*(x) + 0*(y) + 0*(z)$); 
    \coordinate(5) at($0*(x) + -1*(y) + 0*(z)$); 
    \coordinate(6) at($0*(x) + 0*(y) + -1*(z)$); 
    \coordinate(7) at($-1*(x) + 1*(y) + 0*(z)$); 
    \coordinate(8) at($1*(x) + -1*(y) + 1*(z)$); 
    \coordinate(9) at($0*(x) + 1*(y) + -1*(z)$); 
    \coordinate(10) at($0*(x) + -1*(y) + 1*(z)$); 
    \coordinate(11) at($1*(x) + -1*(y) + 0*(z)$); 
    \coordinate(12) at($-1*(x) + 1*(y) + -1*(z)$); 

    \draw[] (1)--(2); 
    \draw[] (1)--(3); 
    \draw[] (2)--(3); 
    \draw[dashed] (4)--(5); 
    \draw[dashed] (4)--(6); 
    \draw[dashed] (5)--(6); 
    \draw[] (2)--(7); 
    \draw[] (3)--(7); 
    \draw[] (1)--(8); 
    \draw[] (3)--(8); 
    \draw[] (1)--(9); 
    \draw[] (2)--(9); 
    \draw[dashed] (3)--(4); 
    \draw[dashed] (7)--(4); 
    \draw[] (7)--(9); 
    \draw[] (3)--(10); 
    \draw[] (8)--(10); 
    \draw[] (1)--(11); 
    \draw[] (8)--(11); 
    \draw[] (1)--(6); 
    \draw[] (9)--(6); 
    \draw[] (7)--(12); 
    \draw[dashed] (12)--(4); 
    \draw[dashed] (10)--(4); 
    \draw[] (9)--(12); 
    \draw[dashed] (12)--(6); 
    \draw[] (11)--(6); 
    \draw[dashed] (10)--(11); 
    \draw[dashed] (10)--(5); 
    \draw[dashed] (11)--(5); 

    \node[fill=white, inner sep = 0.5mm] at (1) {$
    \scriptsize
    \begin{tikzpicture}[baseline=2mm]
    \coordinate(x) at(0:0.25);
    \coordinate(y) at(90:0.25);
    \node at($0*(x)+0*(y)$) {$1$}; 
    \node at($0*(x)+1*(y)$) {$2$}; 
    \node at($0*(x)+2*(y)$) {$1$}; 
    \end{tikzpicture}$}; 
    \node[fill=white, inner sep = 0.5mm] at (2) {$
    \scriptsize
    \begin{tikzpicture}[baseline=2mm]
    \coordinate(x) at(45:0.25);
    \coordinate(y) at(-45:0.25);
    \node at($0*(x)+0*(y)$) {$1$}; 
    \node at($1*(x)+0*(y)$) {$2$}; 
    \node at($0*(x)+1*(y)$) {$2$};
    \node at($1*(x)+1*(y)$) {$3$};
    \end{tikzpicture}$}; 
    \node[fill=white, inner sep = 0.5mm] at (3) {$
    \scriptsize
    \begin{tikzpicture}[baseline=2mm]
    \coordinate(x) at(0:0.25);
    \coordinate(y) at(90:0.25);
    \node at($0*(x)+0*(y)$) {$3$}; 
    \node at($0*(x)+1*(y)$) {$2$}; 
    \node at($0*(x)+2*(y)$) {$3$}; 
    \end{tikzpicture}$}; 
    \node[fill=white, inner sep = 0.3mm] at (4) {$
    \scriptsize
    \begin{tikzpicture}[baseline=2mm]
    \coordinate(x) at(0:0.25);
    \coordinate(y) at(90:0.25);
    \node at($0*(x)+0*(y)$) {$P_1[1]$}; 
    \end{tikzpicture}$}; 
    \node[fill=white, inner sep = 0.3mm] at (5) {$
    \scriptsize
    \begin{tikzpicture}[baseline=2mm]
    \coordinate(x) at(0:0.25);
    \coordinate(y) at(90:0.25);
    \node at($0*(x)+0*(y)$) {$P_2[1]$}; 
    \end{tikzpicture}$}; 
    \node[fill=white, inner sep = 0.3mm] at (6) {$
    \scriptsize
    \begin{tikzpicture}[baseline=2mm]
    \coordinate(x) at(0:0.25);
    \coordinate(y) at(90:0.25);
    \node at($0*(x)+0*(y)$) {$P_3[1]$}; 
    \end{tikzpicture}$}; 
    \node[fill=white, inner sep = 0.5mm] at (7) {$
    \scriptsize
    \begin{tikzpicture}[baseline=2mm]
    \coordinate(x) at(0:0.25);
    \coordinate(y) at(90:0.25);
    \node at($0*(x)+2*(y)$) {$2$}; 
    \node at($0*(x)+1*(y)$) {$3$}; 
    \end{tikzpicture}$}; 
    \node[fill=white, inner sep = 0.5mm] at (8) {$\scriptsize
    \begin{tikzpicture}[baseline=2mm]
    \coordinate(x) at(45:0.25);
    \coordinate(y) at(-45:0.25);
    \node at($0*(x)+0*(y)$) {$1$}; 
    \node at($0*(x)+1*(y)$) {$2$}; 
    \node at($1*(x)+1*(y)$) {$3$}; 
    
    \end{tikzpicture}$}; 
    \node[fill=white, inner sep = 0.5mm] at (9) {$
    \scriptsize
    \begin{tikzpicture}[baseline=2mm]
    \coordinate(x) at(0:0.25);
    \coordinate(y) at(90:0.25);
    \node at($0*(x)+2*(y)$) {$2$}; 
    \node at($0*(x)+1*(y)$) {$1$}; 
    \end{tikzpicture}$}; 
    \node[fill=white, inner sep = 0.5mm] at (10) {$
    \scriptsize
    \begin{tikzpicture}[baseline=2mm]
    \coordinate(x) at(0:0.25);
    \coordinate(y) at(90:0.25);
    \node at($0*(x)+0*(y)$) {$3$}; 
    \end{tikzpicture}$}; 
    \node[fill=white, inner sep = 0.5mm] at (11) {$
    \scriptsize
    \begin{tikzpicture}[baseline=2mm]
    \coordinate(x) at(0:0.25);
    \coordinate(y) at(90:0.25);
    \node at($0*(x)+0*(y)$) {$1$}; 
    \end{tikzpicture}$}; 
    \node[fill=white, inner sep = 0.5mm] at (12) {$
    \scriptsize
    \begin{tikzpicture}[baseline=2mm]
    \coordinate(x) at(0:0.25);
    \coordinate(y) at(90:0.25);
    \node at($0*(x)+0*(y)$) {$2$}; 
    \end{tikzpicture}$}; 
\end{tikzpicture}
$  & & 
$
\begin{tikzpicture}[baseline=0mm, scale=1.1]
    \node(x) at(200:0.8) {}; 
    \node(-x) at($-1*(x)$) {}; 
    \node(y) at(-10:1.9) {}; 
    \node(-y) at($-1*(y)$) {}; 
    \node(z) at(90:1.9) {}; 
    \node(-z) at($-1*(z)$) {}; 
    \coordinate(1) at($1*(x) + 0*(y) + 0*(z)$); 
    \coordinate(2) at($0*(x) + 1*(y) + 0*(z)$); 
    \coordinate(3) at($0*(x) + 0*(y) + 1*(z)$); 
    \coordinate(4) at($-1*(x) + 0*(y) + 0*(z)$); 
    \coordinate(5) at($0*(x) + -1*(y) + 0*(z)$); 
    \coordinate(6) at($0*(x) + 0*(y) + -1*(z)$); 
    \coordinate(7) at($-1*(x) + 0*(y) + 1*(z)$); 
    \coordinate(8) at($1*(x) + -1*(y) + 0*(z)$); 
    \coordinate(9) at($0*(x) + 1*(y) + -1*(z)$); 
    \coordinate(10) at($0*(x) + -1*(y) + 1*(z)$); 
    \coordinate(11) at($-1*(x) + 1*(y) + 0*(z)$); 
    \coordinate(12) at($1*(x) + 0*(y) + -1*(z)$); 

    \draw[] (1)--(2); 
    \draw[] (1)--(3); 
    \draw[] (2)--(3); 
    \draw[dashed] (4)--(5); 
    \draw[dashed] (4)--(6); 
    \draw[dashed] (5)--(6); 
    \draw[] (2)--(7); 
    \draw[] (3)--(7); 
    \draw[] (1)--(8); 
    \draw[] (3)--(8); 
    \draw[] (1)--(9); 
    \draw[] (2)--(9); 
    \draw[] (3)--(10); 
    \draw[] (7)--(10); 
    \draw[] (2)--(11); 
    \draw[] (7)--(11); 
    \draw[] (8)--(10); 
    \draw[] (1)--(12); 
    \draw[] (8)--(12); 
    \draw[] (9)--(11); 
    \draw[] (9)--(12); 
    \draw[dashed] (7)--(5); 
    \draw[dashed] (10)--(5); 
    \draw[dashed] (7)--(4); 
    \draw[dashed] (11)--(4); 
    \draw[dashed] (8)--(5); 
    \draw[dashed] (8)--(6); 
    \draw[dashed] (12)--(6); 
    \draw[dashed] (9)--(4); 
    \draw[dashed] (9)--(6); 

    \node[fill=white, inner sep = 0.5mm] at (1) {$
    \scriptsize
    \begin{tikzpicture}[baseline=2mm]
    \coordinate(x) at(0:0.25);
    \coordinate(y) at(90:0.25);
    \node at($0*(x)+2*(y)$) {$1$}; 
    \node at($0*(x)+1*(y)$) {$2$}; 
    \node at($0*(x)+0*(y)$) {$3$}; 
    \node at($0*(x)+-1*(y)$) {$1$};
    \end{tikzpicture}$}; 
    \node[fill=white, inner sep = 0.5mm] at (2) {$
    \scriptsize
    \begin{tikzpicture}[baseline=2mm]
    \coordinate(x) at(0:0.25);
    \coordinate(y) at(90:0.25);
    \node at($0*(x)+2*(y)$) {$2$}; 
    \node at($0*(x)+1*(y)$) {$3$}; 
    \node at($0*(x)+0*(y)$) {$1$}; 
    \node at($0*(x)+-1*(y)$) {$2$};
    \end{tikzpicture}$}; 
    \node[fill=white, inner sep = 0.5mm] at (3) {$
    \scriptsize
    \begin{tikzpicture}[baseline=2mm]
    \coordinate(x) at(0:0.25);
    \coordinate(y) at(90:0.25);
    \node at($0*(x)+2*(y)$) {$3$}; 
    \node at($0*(x)+1*(y)$) {$1$}; 
    \node at($0*(x)+0*(y)$) {$2$};
    \node at($0*(x)+-1*(y)$) {$3$};
    \end{tikzpicture}$}; 
    \node[fill=white, inner sep = 0.3mm] at (4) {$
    \scriptsize
    \begin{tikzpicture}[baseline=2mm]
    \coordinate(x) at(0:0.25);
    \coordinate(y) at(90:0.25);
    \node at($0*(x)+0*(y)$) {$P_1[1]$};
    \end{tikzpicture}$}; 
    \node[fill=white, inner sep = 0.3mm, shift={(0.2,0)}] at (5) {$
    \scriptsize
    \begin{tikzpicture}[baseline=2mm]
    \coordinate(x) at(0:0.25);
    \coordinate(y) at(90:0.25);
    \node at($0*(x)+0*(y)$) {$P_2[1]$};
    \end{tikzpicture}$}; 
    \node[fill=white, inner sep = 0.3mm] at (6) {$
    \scriptsize
    \begin{tikzpicture}[baseline=2mm]
    \coordinate(x) at(0:0.25);
    \coordinate(y) at(90:0.25);
    \node at($0*(x)+0*(y)$) {$P_3[1]$};
    \end{tikzpicture}$}; 
    \node[fill=white, inner sep = 0.5mm] at (7) {$
    \scriptsize
    \begin{tikzpicture}[baseline=2mm]
    \coordinate(x) at(0:0.25);
    \coordinate(y) at(90:0.25);
    \node at($0*(x)+0*(y)$) {$3$}; 
    \end{tikzpicture}$}; 
    \node[fill=white, inner sep = 0.5mm] at (8) {$
    \scriptsize
    \begin{tikzpicture}[baseline=2mm]
    \coordinate(x) at(0:0.25);
    \coordinate(y) at(90:0.25);
    \node at($0*(x)+0*(y)$) {$1$}; 
    \end{tikzpicture}$}; 
    \node[fill=white, inner sep = 0.5mm] at (9) {$
    \scriptsize
    \begin{tikzpicture}[baseline=2mm]
    \coordinate(x) at(0:0.25);
    \coordinate(y) at(90:0.25);
    \node at($0*(x)+0*(y)$) {$2$}; 
    \end{tikzpicture}$}; 
    \node[fill=white, inner sep = 0.5mm] at (10) {$
    \scriptsize
    \begin{tikzpicture}[baseline=2mm]
    \coordinate(x) at(0:0.25);
    \coordinate(y) at(90:0.25);
    \node at($0*(x)+2*(y)$) {$3$}; 
    \node at($0*(x)+1*(y)$) {$1$}; 
    \end{tikzpicture}$}; 
    \node[fill=white, inner sep = 0.5mm] at (11) {$
    \scriptsize
    \begin{tikzpicture}[baseline=2mm]
    \coordinate(x) at(0:0.25);
    \coordinate(y) at(90:0.25);
    \node at($0*(x)+2*(y)$) {$2$}; 
    \node at($0*(x)+1*(y)$) {$3$}; 
    \end{tikzpicture}$}; 
    \node[fill=white, inner sep = 0.5mm] at (12) {$
    \scriptsize
    \begin{tikzpicture}[baseline=2mm]
    \coordinate(x) at(0:0.25);
    \coordinate(y) at(90:0.25);
    \node at($0*(x)+2*(y)$) {$1$}; 
    \node at($0*(x)+1*(y)$) {$2$}; 
    \end{tikzpicture}$}; 
\end{tikzpicture}
$
    \end{tabular}
    \caption{The $g$-simplicial complexes in Example \ref{example:BTA}.}
    \label{fig:BTA}
\end{figure}

\subsection{A connection between $d$-polynomials and 
$f$-polynomials}
In this subsection, we study fundamental properties of $d$-polynomials of finite dimensional algebras.

Firstly, we study the case when $A$ is a product $A=B\times C$ of two finite dimensional algebras $B$ and $C$ which are $g$-finite. 
In this case, there is a natural bijection 
\begin{equation}\label{eq:product alg}
\taurigidpair B \times \taurigidpair C\xrightarrow{\sim} \taurigidpair A    
\end{equation}
given by taking direct sum of $\tau$-rigid pairs, regarded as $A$-modules. 
It induces an isomorphism $\Delta(A)\cong \Delta(B)\times \Delta(C)$ of simplicial complexes, and hence 
\begin{equation} \label{eq:f-product}    
f(A;t) = f(B;t)f(C;t).
\end{equation}

We have the following result.

\begin{prop}\label{prop:product and d}
    Suppose that $A=B\times C$ is a product of two finite dimensional algebras $B$ and $C$ which are $g$-finite. Then, the following equation holds.  
    \begin{equation}
        d(A;t) = d(B;t)f(C;t) + f(B;t)d(C;t).
    \end{equation}
\end{prop}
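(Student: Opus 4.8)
The plan is to exploit the natural bijection \eqref{eq:product alg} together with the additivity of the three quantities $\dim_k M$, $|M|$ and $|P|$ across the product decomposition, turning the defining sum \eqref{dim-polynomial} for $d(A;t)$ into a product of sums that recombine into the claimed Leibniz-type identity. In spirit this is exactly the product rule, with $d$ playing the role of a dimension-weighted derivative of $f$, and \eqref{eq:f-product} being the corresponding multiplicativity statement for $f$ alone.

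First I would set $n_B := |B|$ and $n_C := |C|$, so that $n = n_B + n_C$, and use \eqref{eq:product alg} to write every basic $\tau$-rigid pair $(M,P)\in\taurigidpair A$ uniquely as the direct sum of $(M_B,P_B)\in\taurigidpair B$ and $(M_C,P_C)\in\taurigidpair C$, where $B$- and $C$-modules are regarded as $A$-modules through the projections $A\to B$ and $A\to C$. The two elementary observations that drive the computation are that this identification preserves $k$-dimension, so that $\dim_k M = \dim_k M_B + \dim_k M_C$, and that the number of indecomposable summands is additive, so that $|M| = |M_B| + |M_C|$ and $|P| = |P_B| + |P_C|$. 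Consequently the exponent splits as
\[
n - |M| - |P| = \bigl(n_B - |M_B| - |P_B|\bigr) + \bigl(n_C - |M_C| - |P_C|\bigr).
\]

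Next I would substitute these relations into \eqref{dim-polynomial}, rewrite the sum over $\taurigidpair A$ as a double sum over $\taurigidpair B \times \taurigidpair C$ via \eqref{eq:product alg}, and expand $\dim_k M = \dim_k M_B + \dim_k M_C$ so as to split the double sum into two pieces. Because the exponent is additive, each piece factors as a product of two independent sums; for the piece carrying $\dim_k M_B$ one obtains
\[
\Bigl(\sum_{(M_B,P_B)\in\taurigidpair B} \dim_k M_B \, t^{\,n_B-|M_B|-|P_B|}\Bigr)\Bigl(\sum_{(M_C,P_C)\in\taurigidpair C} t^{\,n_C-|M_C|-|P_C|}\Bigr) = d(B;t)\,f(C;t),
\]
and symmetrically the piece carrying $\dim_k M_C$ equals $f(B;t)\,d(C;t)$. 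Adding the two yields the assertion.

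There is no serious obstacle here; the content is entirely a reorganisation of a finite sum. The only points requiring any care are the bookkeeping of the exponents and the justification that regarding $B$- and $C$-modules as $A$-modules respects $k$-dimension and the summand counts $|{\cdot}|$ — both immediate from the fact that $A = B\times C$ induces an equivalence $\mod A \simeq \mod B\times\mod C$ under which the quantities in question are additive.
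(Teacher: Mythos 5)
Your proposal is correct and follows essentially the same route as the paper: both use the bijection $\taurigidpair B\times\taurigidpair C\xrightarrow{\sim}\taurigidpair A$, the additivity of $\dim_k$, $|M|$ and $|P|$, and the resulting splitting of the sum into two factorizable pieces giving $d(B;t)f(C;t)+f(B;t)d(C;t)$. No gaps.
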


\begin{proof}
For simplicity, let $\mathcal{B}:=\taurigidpair B$ and $\mathcal{C} := \taurigidpair C$. 
In addition, for a given $\tau$-rigid pair $a := (M,P)$, 
we write $|a| := |M|+|P|$ and $\alpha(a) := \dim_k M$.
Using the bijection \eqref{eq:product alg}, we have 
\begin{eqnarray*}
    d(A;t) &=& \sum_{(b,c) \in \mathcal{B}\times \mathcal{C}}(\alpha(b) + \alpha(c)) 
    t^{|A|-|b|-|c|} \\ 
    &=& \left(\sum_{(b,c)\in \mathcal{B}\times \mathcal{C}} \alpha(b) t^{|A|-|b|-|c|}\right) + 
    \left(\sum_{(b,c)\in \mathcal{B}\times \mathcal{C}} \alpha(c) t^{|A|-|b|-|c|}\right) \\ 
    &=& \left(\sum_{b\in \mathcal{B}} \alpha(b) t^{|B|-|b|}\right)\left(\sum_{c\in \mathcal{C}}  t^{|C|-|c|} \right) + 
    \left(\sum_{b\in \mathcal{B}} t^{|B|-|b|}\right)\left(\sum_{c\in \mathcal{C}}  \alpha(c) t^{|C|-|c|} \right) \\
    &=& d(B;t)f(C;t) + f(B;t)d(C;t)
\end{eqnarray*}
as desired, where we use that $|A| = |B| + |C|$ by definition. 
\end{proof}

\begin{exam}\label{example:product and d}
Applying \eqref{eq:f-product} and Proposition \ref{prop:product and d}, we can deduce that a product $A^m = A\times \cdots \times A$ ($m$ times) has the $d$-polynomial of the form 
$$d(A^m; t) = md(A;t)f(A;t)^{m-1}.$$
For example, we calculate that $d(k^m;t) = m(t+2)^{m-1}$ by $d(k;t) = 1$ and $f(k;t)=t+2$. 
\end{exam}

Next, we use the reduction method from Theorem \ref{reduction} to study $d$-polynomials. 
For a given basic $\tau$-rigid pair $(M,P)$ for $A$, we define
\begin{equation*}
    \link _{\Delta(A)}(M,P), 
\end{equation*}
called the \emph{link} of $(M,P)$, to be the subset of $\Delta(A)$ consisting of all pairs $(N,Q)$ such that $(M\oplus N, P\oplus Q)$ is a basic $\tau$-rigid pair (Hence, $M$ and $N$ (respectively, $P$ and $Q$) have no common isomorphic direct summands).  
We naturally regard it as a simplicial complex. 
If $\Delta(A)$ is clear, then we omit it. 
In addition, let $\link(M) := \link_{\Delta(A)}(M,0)$ for a given basic $\tau$-rigid $A$-module $M$.

\begin{prop}\label{prop:reduction f}
For a given basic $\tau$-rigid $A$-module $M$, we have 
\[\link (M) \cong \Delta(C_{M}),\]
where $C_{M}$ is the algebra obtained by the reduction at $M$. 
\end{prop}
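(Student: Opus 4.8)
The plan is to realize both complexes as parameter spaces of $\tau$-rigid pairs and to produce an \emph{additive} bijection between their faces coming from Jasso's reduction, so that preservation of the face (summand) relation is automatic.

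First I would unwind what an isomorphism of simplicial complexes requires here: a bijection on vertices that carries faces to faces and reflects them. The faces of $\link(M)$ are the $\tau$-rigid pairs $(N,Q)$ of $A$ for which $(M\oplus N, Q)$ is again a $\tau$-rigid pair, that is, $M\oplus N$ is $\tau$-rigid and $\Hom_A(Q, M\oplus N)=0$ (so in particular $M$ and $N$ share no indecomposable summand), while the faces of $\Delta(C_M)$ are the $\tau$-rigid pairs of $C_M$. Both complexes are pure: by Theorem \ref{completion} every $\tau$-rigid pair completes to a support $\tau$-tilting pair, so the facets of $\link(M)$ are exactly the pairs $(N,Q)$ with $M\oplus N$ a support $\tau$-tilting module of projective part $Q$, while the facets of $\Delta(C_M)$ are the elements of $\stautilt C_M$. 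Since a simplicial complex is the downward closure of its facets, I only need an additive, containment-preserving bijection on faces.

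Second, I would construct the map. Let $T=M\oplus M'$ be the maximal completion of $M$ (Theorem \ref{completion}) and $C_M=\End_A(T)/[M]$. Jasso's reduction underlying Theorem \ref{reduction} is induced by an additive functor from the $\tau$-perpendicular subcategory of $\mod A$ attached to $M$ onto $\mod C_M$, carrying $\tau$-rigid pairs compatible with $M$ to $\tau$-rigid pairs of $C_M$ and indecomposables to indecomposables. Using it I would define $\Phi\colon \link(M)\to \Delta(C_M)$ on vertices and extend by direct sums. Additivity is the decisive feature: one gets $\Phi\bigl(\bigoplus_i (N_i,Q_i)\bigr)=\bigoplus_i \Phi(N_i,Q_i)$, so $(N,Q)$ is a summand of $(N',Q')$ exactly when $\Phi(N,Q)$ is a summand of $\Phi(N',Q')$; hence $\Phi$ preserves and reflects the face relation. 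For bijectivity I would restrict to facets, where Theorem \ref{reduction} identifies the facets of $\link(M)$, which correspond to $\stautilt_M(A)$, with $\stautilt C_M$, the facets of $\Delta(C_M)$, compatibly with $\Phi$. Since every face lies in a facet and $\Phi$ is a summand-compatible bijection on facets, it is a bijection on vertices, hence on all faces, and therefore a simplicial isomorphism.

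The main obstacle is the step that promotes Theorem \ref{reduction} from a statement about facets to a bijection on \emph{all} $\tau$-rigid pairs that is moreover additive. The quoted form of Jasso's theorem only provides a poset isomorphism of support $\tau$-tilting modules; to run the argument I must supply the functorial content behind it, namely that the reduction is induced by an additive equivalence preserving $\tau$-rigidity of pairs. Equivalently, I could recover the needed additivity internally: covering relations in $\stautilt A$ and in $\stautilt C_M$ correspond to single-summand mutation, so the poset isomorphism of Theorem \ref{reduction} already determines how indecomposable summands are matched, and this matching is precisely the vertex bijection inducing $\Phi$. Once this vertex-level compatibility is secured, the isomorphism $\link(M)\cong \Delta(C_M)$ follows formally.
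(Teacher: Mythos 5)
Your argument is essentially correct, but it takes a genuinely different route from the paper: the paper disposes of this proposition in one line by citing the isomorphism of $g$-fans under $2$-term silting reduction (\cite[Theorem 4.9]{AHIKM22}), whereas you reconstruct the statement directly from the module-theoretic content of Jasso's reduction. Your route is the ``under the hood'' argument: the bijection $\stautilt_U(A)\cong\stautilt C_U$ of Theorem \ref{reduction} is induced by an additive assignment on indecomposables (concretely, $N\mapsto \Hom_A(T,fN)$ where $fN$ is the torsion-free quotient of $N$ with respect to the torsion pair $(\Fac M, M^{\perp})$ in ${}^{\perp}(\tau M)$ --- note the functor is not applied to $N$ itself, since $N$ need not lie in the $\tau$-perpendicular category), and Jasso proves that this assignment gives a bijection on the level of $\tau$-rigid pairs, not merely of support $\tau$-tilting modules. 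Once that stronger form is invoked, your additivity argument does yield the simplicial isomorphism $\link(M)\cong\Delta(C_M)$ exactly as you describe. You correctly identify that this functorial strengthening is the crux and is not contained in Theorem \ref{reduction} as quoted; what each approach buys is clear: the paper's citation is shorter but imports the fan formalism, while yours stays inside $\tau$-tilting theory at the cost of needing the full statement of Jasso's theorem.

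One caution about your proposed fallback. The claim that the poset isomorphism of Theorem \ref{reduction} ``already determines how indecomposable summands are matched'' because covers correspond to single-summand mutations is not obviously sufficient: the exchange graph records which facets share a codimension-one face, but reconstructing the vertex set (and hence the whole complex) from the dual graph alone requires knowing when a family of facets shares a common vertex, and this is not formally implied by an abstract poset isomorphism. So you should not lean on that alternative; the functorial route is the one that closes the argument.
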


\begin{proof}
It follows from the isomorphism of fans given by \cite[Theorem 4.9]{AHIKM22}. 
\end{proof}

Let $M$ be an indecomposable $\tau$-rigid $A$-module. By Proposition \ref{prop:reduction f} and as $|C_M|=|A|-|M|=n-1$, we have 
\begin{equation*}
    f(\link(M); t) = f(C_{M}; t). 
\end{equation*} 
More explicitly, the corresponding $f$-vector $(f^M_{-1},\ldots, f^M_{n-2})$ of $\link(M)$ is given by  
\begin{equation}\label{eq:fMj}
    f_j^M = \#\{(N,Q)\in \taurigidpair^{j+2} A\mid \text{$(M,0)$ is a direct summand of $(N,Q)$}\} 
\end{equation}
for all $-1\leq j \leq n-2$.

There is a connection between the $d$-polynomial and $f$-polynomial via links as follows. 
\begin{prop}\label{prop:d and f}
We have 
\begin{equation}\label{eq:d and f}
    d(A; t) = \sum_{M\in \itaurigid A} \dim_kM \cdot  f(\link(M);t). 
\end{equation}
\end{prop}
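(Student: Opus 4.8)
The plan is to expand the factor $\dim_k M$ appearing in the definition \eqref{dim-polynomial} of $d(A;t)$ as a sum over the indecomposable direct summands of $M$, and then to interchange the order of summation so that the outer index runs over indecomposable $\tau$-rigid modules rather than over pairs. This is the natural move, since the weight $\dim_k M$ is additive on direct sums while the monomial $t^{n-|M|-|P|}$ depends only on the pair $(M,P)$.

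Concretely, I would first fix a basic $\tau$-rigid pair $(M,P)$ and write $M \cong \bigoplus_i N_i$ as a direct sum of pairwise non-isomorphic indecomposable $\tau$-rigid modules, so that $\dim_k M = \sum_i \dim_k N_i$. Substituting this into \eqref{dim-polynomial} produces a double sum over all pairs $(M,P)$ and over all indecomposable summands $N$ of $M$, in which each term carries the weight $\dim_k N$ and the monomial $t^{n-|M|-|P|}$. Reorganizing this double sum by grouping the terms according to the fixed summand $N$ gives
\begin{equation*}
 d(A;t) = \sum_{N \in \itaurigid A} \dim_k N \cdot \Biggl( \sum_{\substack{(M,P) \in \taurigidpair A \\ N \mid M}} t^{n-|M|-|P|}\Biggr),
\end{equation*}
where $N \mid M$ means that $N$ is a direct summand of $M$.

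The core of the argument is then to identify the inner generating sum with $f(\link(N);t)$. For this I would invoke the description \eqref{eq:fMj} of the $f$-vector of $\link(N)$: the pairs $(M,P)$ having $(N,0)$ as a direct summand and satisfying $|M|+|P| = j+2$ are precisely the $j$-simplices of $\link(N)$, and there are $f_j^N$ of them. Collecting the inner sum according to the value of $|M|+|P|$ thus yields $\sum_{j=-1}^{n-2} f_j^N\, t^{\,n-j-2}$, where the term $j=-1$ (namely the pair $(N,0)$ itself, with $f_{-1}^N = 1$) contributes $t^{\,n-1}$. Since $N$ is indecomposable, Proposition \ref{prop:reduction f} gives $\link(N) \cong \Delta(C_N)$ with $|C_N| = n-1$, so after the reindexing $i = j+1$ this sum becomes $\sum_{i=0}^{n-1} f_{i-1}^N\, t^{\,(n-1)-i} = f(\link(N);t)$, which is exactly what is needed.

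The main point requiring care is the bookkeeping of exponents: one must check that the degree shift induced by restricting to pairs that contain the fixed summand $N$ (equivalently, by passing to the reduced algebra $C_N$ on $n-1$ vertices) is compatible with the normalization of the $f$-polynomial, so that the ``$j+2$'' offset in \eqref{eq:fMj} matches the ``$f_{i-1}$'' indexing convention for $f$-vectors. Once these two indexing schemes are reconciled, the identification of each inner sum with $f(\link(N);t)$ is immediate, and summing over $N \in \itaurigid A$ gives \eqref{eq:d and f}.
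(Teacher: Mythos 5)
Your proof is correct and follows essentially the same route as the paper: both arguments rest on the additivity of $\dim_k$ over direct summands, regrouping the sum by indecomposable $\tau$-rigid modules, and identifying the resulting inner count with the $f$-vector of $\link(N)$ via \eqref{eq:fMj}. Your version just makes the exponent bookkeeping explicit where the paper states the coefficient identity $d_j = \sum_M \dim_k M \cdot f^M_{j-1}$ directly.
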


\begin{proof}
For each $M\in \itaurigid A$, we denote by $(f_{-1}^M,\ldots,f_{n-2}^M)$ the $f$-vector of $\link(M)$. 
Since $\dim_k$ is additive (i.e., $\dim_k(M\oplus N) = \dim_k M + \dim_k N$), we see using \eqref{eq:fMj} that we have
\begin{eqnarray*}
    d_j &=& \sum_{M\in \itaurigid A} \dim_kM \cdot f^M_{j-1}  
\end{eqnarray*}  
for all $0\leq j \leq n-1$. 
They imply the desired equation \eqref{eq:d and f}.
\end{proof}

The above result asserts that the $d$-polynomial $d(A;t)$ is the $f$-vector of a disjoint union of $g$-simplicial complexes. 
From this fact, we have the following properties. 
Now, we say that a polynomial $p(t) = \sum_{i=0}^n a_i t^i$ is \emph{palindromic} if $a_i = a_{n-i}$ for all $i$, and \emph{unimodal} if there exists an integer $i$ such that 
$a_1\leq a_2 \leq \cdots \leq a_i \geq \cdots \geq a_n$. 

\begin{cor}\label{palindromic}
    The polynomial $d(A;t-1)$ is palindromic and unimodal of degree $n-1$. 
\end{cor}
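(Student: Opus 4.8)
The plan is to deduce both properties of $d(A;t-1)$ from the corresponding properties of the $h$-polynomials of the complexes $\link(M)$, using the decomposition of Proposition \ref{prop:d and f}. First I would substitute $t\mapsto t-1$ into \eqref{eq:d and f}. Since $h(\Delta;t)=f(\Delta;t-1)$ holds for any simplicial complex, this yields
\begin{equation*}
d(A;t-1)=\sum_{M\in\itaurigid A}\dim_k M\cdot h(\link(M);t).
\end{equation*}
By Proposition \ref{prop:reduction f} we have $\link(M)\cong\Delta(C_M)$, and by Theorem \ref{reduction} the reduction $C_M$ at an indecomposable $\tau$-rigid module satisfies $|C_M|=|A|-|M|=n-1$; moreover $C_M$ is again $g$-finite, since $\stautilt C_M\cong\stautilt_M(A)$ is finite. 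Thus each summand is a positive multiple of the $h$-polynomial of the $g$-simplicial complex of a $g$-finite algebra with $n-1$ simple modules.

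The key input I would invoke is that the $h$-polynomial of the $g$-simplicial complex of a $g$-finite algebra is palindromic and unimodal; geometrically this rests on the $g$-simplicial complex being a simplicial sphere, so that Dehn--Sommerville relations give palindromicity and the $g$-theorem gives unimodality. Applied to $C_M$, this says $h(\link(M);t)$ is palindromic and unimodal of degree $n-1$, with symmetry $h_i=h_{(n-1)-i}$. The crucial point for the argument is that every $\link(M)$ yields a polynomial of the \emph{same} degree $n-1$ and the \emph{same} center of symmetry $(n-1)/2$, precisely because the reduction is taken only at indecomposable (hence $|M|=1$) $\tau$-rigid modules.

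It then remains to record the elementary fact that, for a fixed degree $d$, the palindromic and unimodal polynomials with common center $d/2$ form a cone closed under nonnegative combinations: each such polynomial is a nonnegative combination of the plateau polynomials $\gamma_i(t)=t^i+t^{i+1}+\cdots+t^{d-i}$ for $0\le i\le d/2$ (take $c_i=a_i-a_{i-1}$ for the coefficient differences), and conversely every nonnegative combination of the $\gamma_i$ is palindromic and unimodal. Consequently $\sum_{M}\dim_k M\cdot h(\link(M);t)$, being a combination of palindromic unimodal polynomials of degree $n-1$ with positive integer coefficients $\dim_k M$, is itself palindromic and unimodal; the degree is exactly $n-1$ because the leading coefficient equals $\sum_M\dim_k M=d_0>0$. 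This gives the claim.

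I would emphasize that the main obstacle is conceptual rather than computational: summing unimodal polynomials does not in general preserve unimodality, so the argument genuinely depends on all the $\link(M)$ sharing the common dimension $n-1$ and common center of symmetry, which is what licenses the cone argument. The other essential ingredient---the palindromicity and unimodality of the $h$-vectors of $g$-simplicial complexes---is the deep geometric fact I would take as known from the theory of $g$-fans.
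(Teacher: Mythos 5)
Your proof follows the same route as the paper's: both substitute $t\mapsto t-1$ into Proposition \ref{prop:d and f} to obtain $d(A;t-1)=\sum_{M\in\itaurigid A}\dim_k M\cdot h(C_M;t)$ with $|C_M|=n-1$, and then invoke the palindromicity and unimodality of $h$-polynomials of $g$-simplicial complexes of $g$-finite algebras (which the paper takes from \cite[Theorems 1.2 and 1.3]{AHIKM22}). If anything, your explicit cone argument with the plateau polynomials $\gamma_i(t)=t^i+\cdots+t^{d-i}$ is more careful than the paper's one-line assertion that a sum of palindromic unimodal polynomials of degree $n-1$ is again such: the shared degree and center of symmetry is exactly what licenses that step, and the paper leaves it implicit.
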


\begin{proof}
It was shown in \cite[Theorem 1.2 and 1.3]{AHIKM22} that the $h$-polynomial $h(A;t)$ is a palindromic and unimodal polynomial of degree $n$. 

By Proposition \ref{prop:d and f}, 
one can write 
\begin{equation}\label{d;x+1}
    d(A;t-1) = \sum_{M\in \itaurigid A} \dim_{k} M \cdot h(C_{M};t),   
\end{equation}
where $C_{M}$ is an algebra such that $|C_M| = n-1$. 
In particular, $h(C_M;t)$ is a palindromic and unimodal polynomial of degree $n-1$ for all $M \in \itaurigid A$. 
Then, \eqref{d;x+1} should be palindromic and unimodal since it is the sum of palindromic and unimodal polynomials of degree $n-1$. 
\end{proof}

Finally, we give the following definition to simplify our computation of $d$-polynomials.

\begin{defi}\label{assumption}
We say that an equivalence relation $\sim$ on the set $\itaurigid A$ is \emph{compatible with links} if $M\sim N$ implies $\link (M)\cong \link (N)$ for all $M,N\in\itaurigid A$. The equivalence class of $M$ under $\sim$ will be denoted by $[M]$. 
\end{defi}

We notice that 
\begin{equation*} 
    \link(M) \cong \link(N) \Rightarrow 
    f(\link(M);t) = f(\link(N);t)  
\end{equation*}
holds for two indecomposable $\tau$-rigid modules $M$ and $N$. 
However, the converse does not hold in general (see Example \ref{example:BTA} and (\ref{h=f-1})).

In the setting of Definition \ref{assumption}, we have the following result. Now, for a given finite set $\mathcal{S}$ of isomorphism classes of $A$-modules, let  
$$
\Dim(\mathcal{S}) := \sum_{M\in \mathcal{S}} \dim_k M. 
$$

\begin{prop}\label{prop:orbit decomposition}
Let $U_1,\ldots,U_r$ be complete representatives of $\itaurigid A$ under an equivalence relation $\sim$ which is compatible with links. Then, we have 
    \begin{equation}\label{eq:decomp dA}
        d(A;t) = \sum_{\ell=1}^r\Dim ([U_{\ell}]) f(\link(U_{\ell});t).
    \end{equation}
    In particular, we have 
\begin{equation*}
         \Dim(\itaurigid A) = \sum_{\ell=1}^r\Dim( [U_\ell]) \quad \text{and}\quad 
         \Dim(\stautilt A) = \sum_{\ell=1}^r \Dim([U_\ell]) \cdot \#\stautilt_{U_{\ell}} A. 
     \end{equation*}
\end{prop}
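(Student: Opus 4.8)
The plan is to obtain \eqref{eq:decomp dA} directly from Proposition \ref{prop:d and f} by partitioning the sum over $\itaurigid A$ into the classes of $\sim$. Starting from
\[
d(A;t) = \sum_{M\in \itaurigid A} \dim_k M \cdot f(\link(M);t),
\]
I would split the index set into the fibers $[U_1],\ldots,[U_r]$ of the quotient by $\sim$. Since $\sim$ is compatible with links, every $M\in[U_\ell]$ satisfies $\link(M)\cong\link(U_\ell)$, and hence $f(\link(M);t) = f(\link(U_\ell);t)$; this common polynomial factors out of the inner sum, leaving $\sum_{M\in[U_\ell]}\dim_k M = \Dim([U_\ell])$ by the definition of $\Dim$. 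Summing over $\ell$ produces the asserted formula. This part is purely formal once the factoring property of $\sim$ is invoked.

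For the two supplementary identities I would compare coefficients of the extreme powers of $t$ in \eqref{eq:decomp dA}. By Proposition \ref{prop:reduction f} together with Theorem \ref{reduction}, we have $\link(U_\ell)\cong\Delta(C_{U_\ell})$ with $|C_{U_\ell}| = n-1$, so each $f(\link(U_\ell);t)$ is monic of degree $n-1$, with leading coefficient $f^{U_\ell}_{-1} = 1$ and constant term $f^{U_\ell}_{n-2}$. Reading the coefficient of $t^{n-1}$ in \eqref{eq:decomp dA} then yields $d_0 = \sum_{\ell=1}^r \Dim([U_\ell])$, which is the first identity because $d_0 = \Dim(\itaurigid A)$.

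For the final identity I would interpret the constant term via \eqref{eq:fMj}: the number $f^{U_\ell}_{n-2}$ counts the pairs $(N,Q)\in\taurigidpair^{n} A$ having $(U_\ell,0)$ as a direct summand, that is, the support $\tau$-tilting modules $T$ with $U_\ell\in\add T$, which is precisely $\#\stautilt_{U_\ell} A$. Comparing constant terms in \eqref{eq:decomp dA} then gives $d_{n-1} = \sum_{\ell=1}^r \Dim([U_\ell])\cdot\#\stautilt_{U_\ell} A$, and $d_{n-1} = \Dim(\stautilt A)$ finishes the argument. The one point deserving care — the only mild obstacle here — is this constant-term bookkeeping: one must confirm that the top-dimensional faces of $\link(U_\ell)$ are exactly the support $\tau$-tilting pairs containing $U_\ell$, and that compatibility with links forces this count to be constant on each class, so that $\#\stautilt_{U_\ell} A$ is genuinely a function of $[U_\ell]$.
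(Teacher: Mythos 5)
Your argument is correct and follows the same route as the paper: apply Proposition \ref{prop:d and f}, partition $\itaurigid A$ into the classes of $\sim$, and factor out the common polynomial $f(\link(U_\ell);t)$ using compatibility with links. The paper dismisses the two supplementary identities as ``immediate,'' and your coefficient extraction at $t^{n-1}$ and $t^0$ (using that $f(\link(U_\ell);t)$ is monic of degree $n-1$ with constant term $\#\stautilt_{U_\ell}A$ via \eqref{eq:fMj}) is exactly the intended justification.
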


\begin{proof}
    By Proposition \ref{prop:d and f}, we have 
    \begin{eqnarray*}
        d(A;t) &=& \sum_{M\in \itaurigid A} \dim_kM\cdot f(\link(M);t) \\ 
        &=& \sum_{\ell = 1}^r \left\{\left(\sum_{M\in [U_{\ell}]} \dim_kM \right) f(\link(U_{\ell});t) \right\} = \sum_{\ell=1}^r \Dim ([U_{\ell}]) f(\link(U_\ell);t) 
    \end{eqnarray*} 
    as desired. 
    The latter assertion is immediate from the former one (cf. (\ref{d_0 and d_n-1}) and \eqref{st:d_0 and d_n-1})). 
\end{proof}

\section{$d$-polynomials of preprojective algebras}\label{section ppalg}
Let $Q$ be a Dynkin quiver with $n$ vertices. 
Let $W=W(Q):=\langle s_i \mid i\in Q_0\rangle$ be the Coxeter group of the underlying graph of $Q$ and $\Delta(W)$ the Coxeter complex associated with $W$. 

We first recall a description of the $h$-vector of $\Delta(W)$. 
For an element $w\in W$, let 
\begin{equation*}
    {\rm Des}(w) := \{s_i \mid l(w) > l(s_iw)\} \quad \text{and} \quad {\rm des}(w):= \#{\rm Des}(w), 
\end{equation*}
 where $l(w)$ is the length of $w$ (see Section \ref{sec:ppa}). Then, we call 
\begin{equation*}
    \left<\begin{matrix}
        W\\ j
    \end{matrix}\right> := \#\{w\in W \mid {\rm des}(w) = j\} \quad \text{$(0\leq j \leq n)$}
\end{equation*}
the \emph{$W$-Eulerian numbers}. The \emph{$W$-Eulerian polynomial} is defined to be a generating function for descents:
\begin{equation*}
    \Eul(Q;t) = \Eul(W;t) := \sum_{w\in W} t^{{\rm des}(w)} = \sum_{j=0}^n 
    \left<\begin{matrix}
        W\\ j
    \end{matrix}\right>
    t^j.
\end{equation*}
For simplicity, we will denote the corresponding $W$-Eulerian polynomials of Dynkin diagrams 
$\mathbb{A}_n$, $\mathbb{D}_n$ and $\mathbb{E}_n$ by 
$\Eul(\mathbb{A}_n;t)$, $\Eul(\mathbb{D}_n;t)$ and 
$\Eul(\mathbb{E}_n;t)$, respectively.

\begin{thm}{\rm (see \cite[Theorem 11.3]{Petersen15} for example)}\label{thm:W-E is h-vector}
The $h$-polynomial of $\Delta(W(Q))$ coincides with the $W$-Eulerian polynomial $\Eul(Q;t)$. 
\end{thm}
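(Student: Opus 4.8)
The plan is to read the $h$-vector of the Coxeter complex $\Delta(W)$ directly off its face structure and match it to the descent statistic. Write $S := \{s_i \mid i\in Q_0\}$, so that $|S| = n$. The nonempty faces of $\Delta(W)$ are the proper standard parabolic cosets $wW_J$ with $J\subsetneq S$, where $wW_J$ is a face of $wW_{J'}$ whenever $J'\subseteq J$ and $\dim (wW_J) = n-1-|J|$; in particular the facets are the chambers $wW_{\emptyset} = \{w\}$, indexed by the $|W|$ elements of $W$, and the vertices of the chamber $\{w\}$ are the cosets $v_s := wW_{S\setminus\{s\}}$ for $s\in S$. Thus $\Delta(W)$ is a pure $(n-1)$-dimensional complex, and the key input is Björner's theorem that it is shellable, via the shelling listing the chambers $\{w\}$ in order of weakly increasing length $l(w)$.

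The crux is to compute, for this shelling, the restriction face $\mathcal{R}(\{w\})$, that is, the unique minimal face of $\{w\}$ contained in no earlier chamber. Deleting the vertex $v_s$ from $\{w\}$ leaves the wall $wW_{\{s\}}$, whose only other containing chamber is $\{ws\}$; this neighbour precedes $\{w\}$ in the shelling exactly when $l(ws) < l(w)$, i.e. when $s$ is a (right) descent of $w$. Hence $v_s\in\mathcal{R}(\{w\})$ precisely for $s\in\mathrm{Des}(w)$, so $\mathcal{R}(\{w\})$ is the face indexed by the descent set and $|\mathcal{R}(\{w\})| = \mathrm{des}(w)$. The standard enumeration of the $h$-vector of a shellable complex by restriction sizes then gives
\[
h_j \;=\; \#\{\, w\in W \mid |\mathcal{R}(\{w\})| = j \,\} \;=\; \#\{\, w\in W \mid \mathrm{des}(w) = j \,\} \;=\; \left\langle\begin{matrix} W\\ j\end{matrix}\right\rangle .
\]

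It remains to reconcile conventions: the paper writes $h(\Delta(W);t) = \sum_{j} h_j t^{n-j}$, while $\Eul(W;t) = \sum_j \left\langle\begin{smallmatrix} W\\ j\end{smallmatrix}\right\rangle t^{j}$. These coincide because the $W$-Eulerian numbers are palindromic, $\left\langle\begin{smallmatrix} W\\ j\end{smallmatrix}\right\rangle = \left\langle\begin{smallmatrix} W\\ n-j\end{smallmatrix}\right\rangle$, a symmetry supplied by the length-complementing bijection $w\mapsto ww_0$ (with $w_0$ the longest element), under which $\mathrm{des}(ww_0) = n-\mathrm{des}(w)$. Substituting $j\mapsto n-j$ then yields $h(\Delta(W);t) = \Eul(W;t)$; alternatively one shells by decreasing length to obtain the matching indexing without invoking the symmetry.

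The main obstacle is the second paragraph: verifying that increasing length really is a shelling and, above all, identifying $\mathcal{R}(\{w\})$ with the descent face. This is precisely where the combinatorics of reduced words — the exchange and deletion conditions governing which walls face earlier chambers — must be used, the remaining steps being formal. A purely algebraic route avoids the geometry altogether: one computes $f_{j-1} = \sum_{|J| = n-j}[W:W_J]$ from the coset description, uses that $[W:W_J]$ counts the $w$ with $\mathrm{Des}(w)\subseteq S\setminus J$ (the minimal coset representatives), and applies Möbius inversion over the Boolean lattice of $S$ together with the identity $\sum_{k}\binom{n-i}{k}(t-1)^{n-i-k} = t^{\,n-i}$ to recover $\sum_{w} t^{\mathrm{des}(w)}$ up to the same palindromic reindexing.
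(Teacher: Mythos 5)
Your argument is correct, and it is essentially the standard proof of this fact: the paper itself offers no proof beyond the citation to Petersen, and the shelling of the Coxeter complex by a linear extension of weak order, with restriction faces given by descent sets, is exactly the argument in that reference. The only cosmetic point is that your computation produces right descents where the paper's ${\rm Des}(w)$ uses left descents, but this is harmless since $w\mapsto w^{-1}$ identifies the two statistics and the palindromic reindexing you carry out at the end is handled correctly.
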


It also gives the $h$-vector of the preprojective algebra of $Q$ as follows. 

\begin{thm}\cite[Theorem 7.4]{AHIKM22}\label{thm:h of ppalg}
    We have an isomorphism $\Delta(\Pi(Q))\cong \Delta(W(Q))$ of simplicial complexes. 
    In particular, we have $h(\Pi(Q);t)=\Eul(Q;t)$ and $f(\Pi(Q);t) = \Eul(Q;t+1)$. 
\end{thm}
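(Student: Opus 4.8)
The plan is to prove the combinatorial identities by first establishing the geometric statement $\Delta(\Pi(Q))\cong\Delta(W(Q))$ at the level of fans, and then reading off the $h$- and $f$-polynomials. Recall that for a $g$-finite algebra the $g$-simplicial complex is the radial projection of the complete simplicial \emph{$g$-fan} in $K_0(\proj A)_{\R}\cong\R^n$ (the fan whose isomorphisms are exploited in \cite[Theorem 4.9]{AHIKM22}, cf. Proposition \ref{prop:reduction f}), and that a complete simplicial fan is determined by its set of maximal cones. Hence it suffices to identify the maximal cones of the $g$-fan of $\Pi=\Pi(Q)$ with the Weyl chambers, i.e. with the maximal cones of the Coxeter fan of $W=W(Q)$.

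First I would use Mizuno's bijection (Theorem \ref{tau weyl}) to label the basic support $\tau$-tilting modules of $\Pi$ by the elements of $W$, writing $T_w$ for the module attached to $w$, with the aim of showing $C(T_w)=w\,C^{+}$ for the fundamental (dominant) chamber $C^{+}$. The crucial step is to compute the $g$-cone $C(T_w)$ spanned by the $g$-vectors of the indecomposable summands of $T_w$ and to verify that $C(T_w)=w\,C^{+}$, where $W$ acts on $K_0(\proj\Pi)_{\R}$ through its geometric (reflection) representation in the basis given by the classes $[e_i\Pi]$. I would do this inductively along reduced expressions: mutation of $T_w$ at a summand corresponds, under Theorem \ref{tau weyl}, to left multiplication by a simple reflection $s_i$, and one checks that the induced transformation of $g$-vectors is exactly the reflection $s_i$ acting on $\R^n$. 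Since $C(T_e)=C^{+}$ for the identity, it follows by induction on $l(w)$ that $C(T_w)=w\,C^{+}$ for all $w\in W$.

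Granting this, the set of maximal cones of the $g$-fan of $\Pi$ equals $\{\,w\,C^{+}\mid w\in W\,\}$, which is precisely the set of chambers of the Coxeter arrangement. Two complete simplicial fans with the same maximal cones coincide, so the $g$-fan of $\Pi$ equals the Coxeter fan, and radial projection yields the simplicial isomorphism $\Delta(\Pi(Q))\cong\Delta(W(Q))$. The polynomial statements are then immediate: $h(\Pi(Q);t)=h(\Delta(W(Q));t)=\Eul(Q;t)$ by Theorem \ref{thm:W-E is h-vector}, and since $f(A;t)=h(A;t+1)$ (equivalently $h(A;t)=f(A;t-1)$), we obtain $f(\Pi(Q);t)=h(\Pi(Q);t+1)=\Eul(Q;t+1)$.

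The main obstacle is the identification $C(T_w)=w\,C^{+}$, that is, showing that $\tau$-tilting mutation of $\Pi$ is geometrically realized by the reflection action of $W$ on $g$-vectors. The delicate points are the bookkeeping of signs and of the supports (the shifted projectives $P_i[1]$ that occur in the non-tilting support $\tau$-tilting pairs), together with the verification that $g$-vector mutation at a summand produces the \emph{linear} reflection and not merely a combinatorial one; once the base case and the one-step compatibility with each $s_i$ are in place, the induction along the weak order is routine.
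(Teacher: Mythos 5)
The paper does not actually prove this statement: it is imported wholesale as a citation of \cite[Theorem 7.4]{AHIKM22}, so there is no internal proof to compare against. Your outline reconstructs essentially the argument used in that reference (building on \cite{Mizuno14}): the identification of the maximal $g$-cones $C(T_w)$ with the Weyl chambers $wC^{+}$ is precisely Mizuno's theorem that the $g$-matrices of support $\tau$-tilting $\Pi$-modules realize $W$ acting on $K_0(\proj\Pi)_{\R}$ by its (contragredient) geometric representation, proved by induction along reduced expressions exactly as you describe; the passage from the fan identification to $h(\Pi(Q);t)=\Eul(Q;t)$ via Theorem \ref{thm:W-E is h-vector} and to $f(\Pi(Q);t)=\Eul(Q;t+1)$ via $h(A;t)=f(A;t-1)$ is then immediate. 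The one point to flag is that your induction step — that $g$-vector mutation at the summand singled out by Theorem \ref{tau weyl} acts as the \emph{linear} reflection $s_i$ — is the entire technical content of the cited theorem and is only asserted, not verified, in your sketch; as a blueprint for the known proof, however, the plan is sound and correctly locates where the work lies.
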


We refer to \cite[Section 11.4]{Petersen15} for tables of $W$-Eulerian numbers for low rank. 
In addition, for a given disjoint union $Q=Q_1\sqcup \cdots \sqcup Q_{m}$ of Dynkin quivers, we define its $W$-Eulerian polynomial by a product 
\begin{equation}\label{disjoint}
\Eul(Q;t):=\prod_{i=1}^m \Eul(Q_i;t).
\end{equation} 

\subsection{Reduction at $\tau$-rigid submodules of $e_{\ell}\Pi$}\label{sec:reduction ppalg}

Let $Q$ be a Dynkin quiver with $n$ vertices. We fix some notations. 
Let $W = W(Q) := \langle s_i \mid i\in Q_0\rangle$ be the Coxeter group of the underlying graph of $Q$. 
For a vertex $\ell \in Q_0$, let $\I:=Q_0 \setminus \{\ell\}$. 
We denote by $Q_{\I}$ the quiver obtained from $Q$ by removing the vertex $\ell$ and  
by $W_{\I}$ the subgroup of $W$ generated by the set $\{s_j\mid j \in \I\}$. Thus, $W_{\I} = W(Q_{\I})$. 
Notice that $Q_{\I}$ is not necessarily connected.

For our computation of the $d$-polynomial of $\Pi:=\Pi(Q)$, we use the following equivalence relation provided by Theorem \ref{tau weyl}. 

\begin{defi}
We write $M\sim_{\rm s} N$ if $M\subset e_{\ell} \Pi$ and $N\subset e_{\ell}\Pi$ for some $\ell \in Q_0$, where $e_{\ell}$ is the primitive idempotent of $\Pi$ corresponding to $\ell$. 
Then, it defines an equivalence relation on the set $\itaurigid \Pi$.  
\end{defi}
 
Under the above relation $\sim_{\rm s}$, by Theorem \ref{tau weyl}, 
a set $\{e_{\ell} \Pi \mid \ell \in Q_0\}$ forms a set of complete representatives, and their equivalence classes are given by 
$$
[e_{\ell}\Pi]_{\rm s} := \{X\in \itaurigid \Pi\mid X\subset e_{\ell} \Pi\}. 
$$ 

We have the following result. 

\begin{thm}\label{thm:orbit ppalg}
    Let $\Pi=\Pi(Q)$ be the preprojective algebra of a Dynkin quiver $Q$. Then, the following statements hold. 
    \begin{enumerate}[\rm (1)]
        \item For any $\ell\in Q_0$ and $X\in \itaurigid \Pi$ such that $X \subset e_{\ell}\Pi$, we have 
        \begin{equation*}\label{fell ppa}
        \link (X) \cong \Delta(W(Q_{\I})).  
\end{equation*}
In particular, the equivalence relation $\sim_{\rm s}$ on $\itaurigid \Pi$ is compatible with links.

    \item We have 
    \begin{equation}\nonumber 
    d(\Pi;t) = \sum_{\ell\in Q_0} 
    \Dim ([e_\ell\Pi]_{\rm s}) \Eul(Q_{\I};t+1). 
    \end{equation}
    \item We have 
\begin{equation}\nonumber
    \Dim(\itaurigid \Pi) = \sum_{\ell\in Q_0} \Dim( [e_{\ell}\Pi]_{\rm s}) \quad \text{and} \quad \Dim(\stautilt \Pi) = \sum_{\ell\in Q_0} \Dim([e_\ell \Pi]_{\rm s}) \cdot \# W(Q_{\I}). 
    \end{equation}
\end{enumerate}
\end{thm}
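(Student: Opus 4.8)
The plan is to establish (1) first---it is the crux---and then read off (2) and (3) from Proposition~\ref{prop:orbit decomposition} applied to the system of representatives $\{e_{\ell}\Pi \mid \ell\in Q_0\}$. Before anything else I would record why $\sim_{\rm s}$ is a genuine equivalence relation: by Theorem~\ref{tau weyl} every indecomposable $\tau$-rigid $\Pi$-module is realized as $X=e_{\ell}I_w\subseteq e_{\ell}\Pi$, and since $\Pi=\bigoplus_{\ell\in Q_0}e_{\ell}\Pi$ as right modules this vertex $\ell$ is the \emph{unique} one with $e_{\ell}X=X$. Hence the sets $[e_{\ell}\Pi]_{\rm s}=\{X\in\itaurigid\Pi\mid X\subseteq e_{\ell}\Pi\}$ partition $\itaurigid\Pi$, and each $e_{\ell}\Pi$, being projective and therefore $\tau$-rigid, is a legitimate representative of its own class.

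For (1) the entry point is Proposition~\ref{prop:reduction f}, which gives $\link(X)\cong\Delta(C_X)$; so it suffices to identify $\Delta(C_X)$ with $\Delta(W(Q_{\I}))$ for \emph{every} indecomposable $\tau$-rigid $X\subseteq e_{\ell}\Pi$. I would run this through the isomorphism $\Delta(\Pi)\cong\Delta(W)$ of Theorem~\ref{thm:h of ppalg} together with the classical structure of Coxeter complexes. The point to pin down is that, under this isomorphism, every vertex $(X,0)$ with $X\subseteq e_{\ell}\Pi$ (along with the shifted projective $P_{\ell}[1]$) is a vertex of Coxeter type $s_{\ell}$. This should follow from Mizuno's parametrization of Theorem~\ref{tau weyl}: a basic support $\tau$-tilting module corresponding to $w$ decomposes as $\bigoplus_{\ell\in Q_0}(T_w)_{\ell}$ with $(T_w)_{\ell}$ equal to $e_{\ell}I_w$ (or to $P_{\ell}[1]$ when $e_{\ell}I_w=0$), exactly matching the decomposition of a chamber of $\Delta(W)$ into one vertex of each type. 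Granting the type-labelling, the desired $\link(X)\cong\Delta(W_{\I})=\Delta(W(Q_{\I}))$ is precisely the standard fact that the link of a type-$\ell$ vertex of $\Delta(W)$ is the Coxeter complex of the maximal parabolic $W_{\I}=W_{Q_0\setminus\{\ell\}}$. Because this identification is independent of which $X$ in the class we chose, $\sim_{\rm s}$ is compatible with links, which is the ``in particular'' clause.

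Granting (1), part (2) is immediate: Proposition~\ref{prop:orbit decomposition} with $U_{\ell}=e_{\ell}\Pi$ gives
\[
d(\Pi;t)=\sum_{\ell\in Q_0}\Dim([e_{\ell}\Pi]_{\rm s})\, f(\link(e_{\ell}\Pi);t),
\]
and, by (1) and Theorem~\ref{thm:h of ppalg}, $f(\link(e_{\ell}\Pi);t)=f(\Delta(W(Q_{\I}));t)=\Eul(Q_{\I};t+1)$ (using $f(\Delta;t)=h(\Delta;t+1)$ and $h(\Delta(W(Q_{\I}));t)=\Eul(Q_{\I};t)$ from Theorem~\ref{thm:W-E is h-vector}). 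For (3) the first formula is the corresponding ``in particular'' assertion of Proposition~\ref{prop:orbit decomposition}, while the second reduces to computing $\#\stautilt_{e_{\ell}\Pi}\Pi$. This count equals the number of facets of $\link(e_{\ell}\Pi)\cong\Delta(W(Q_{\I}))$, i.e.\ the number of chambers of the Coxeter complex, which is $\#W(Q_{\I})$; equivalently $\stautilt_{e_{\ell}\Pi}\Pi\cong\stautilt C_{e_{\ell}\Pi}\cong\stautilt\Pi(Q_{\I})$ by Theorem~\ref{reduction} and Example~\ref{example:reduction}, and $\#\stautilt\Pi(Q_{\I})=\#W(Q_{\I})$ by Theorem~\ref{tau weyl}.

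The main obstacle is the type-compatibility in (1): making rigorous the claim that the representation-theoretic label $\ell$ of a $\tau$-rigid submodule of $e_{\ell}\Pi$ coincides with the combinatorial type of the corresponding Coxeter-complex vertex, so that the classical link computation applies \emph{uniformly} to every member of a class and not merely to the distinguished representative $e_{\ell}\Pi$ (for which Example~\ref{example:reduction} already gives the concrete description $C_{e_{\ell}\Pi}=\Pi/\Pi e_{\ell}\Pi$). A secondary, purely bookkeeping point is that $Q_{\I}$ may be disconnected; then $W(Q_{\I})$, $\Delta(W(Q_{\I}))$ and $\Pi(Q_{\I})$ are to be read as products (respectively joins) over the connected components, and the multiplicativity \eqref{eq:f-product} together with the product convention for $\Eul(Q_{\I};t)$ absorbs this at no extra cost.
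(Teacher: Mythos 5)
Your reduction of (2) and (3) to Proposition \ref{prop:orbit decomposition} is exactly what the paper does, and your count $\#\stautilt_{e_\ell\Pi}\Pi=\#W(Q_{\I})$ (via facets of the link, or via $C_{e_\ell\Pi}\cong\Pi/\Pi e_\ell\Pi\cong\Pi(Q_{\I})$ and Theorem \ref{tau weyl}) agrees with the paper's. Where you genuinely diverge is in (1). The paper does not pass through the global isomorphism $\Delta(\Pi)\cong\Delta(W)$ and the classical link computation in Coxeter complexes; instead it proves the poset anti-isomorphism $\stautilt_X\Pi\cong W_{\I}$ directly by a weak-order computation: writing $X=e_\ell I_w$, it identifies $\stautilt_X\Pi$ with the interval $[\Fac X,{}^\perp(\tau X)]$ (Lemma \ref{interval}(a)), expresses ${}^\perp(\tau X)$ as the join of the torsion classes covering $\Fac I_w$ from the side of the $n-1$ right mutations $\mu^R_{e_jI_w}(I_w)\cong I_{s_jw}$ for $j\in\I$ (Lemma \ref{interval}(b),(c) plus Mizuno's mutation formula), so that $v=\bigwedge\{s_jw\mid j\in\I\}=w_0(\I)w$ by the Bj\"orner--Brenti lemma (Proposition \ref{subgroup generated by s_i}), and finally $[w_0(\I)w,w]\cong W_{\I}$. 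Your route, if completed, would replace all of this by the standard fact that the link of a type-$\ell$ vertex of $\Delta(W)$ is $\Delta(W_{\I})$; that is arguably cleaner, but it shifts the entire burden onto the type-compatibility of the isomorphism of Theorem \ref{thm:h of ppalg}, which the paper's citation does not state and which is precisely the content the paper's weak-order argument supplies.

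That type-compatibility is a genuine gap as you have left it, and the justification you sketch does not close it. Knowing that each support $\tau$-tilting pair has exactly one indecomposable summand $e_\ell I_w$ or $P_\ell[1]$ for each $\ell$, and that each chamber of $\Delta(W)$ has exactly one vertex of each type, only says that both complexes carry proper $n$-colourings of their vertex sets; an abstract isomorphism of balanced chamber complexes then matches the two colourings only up to an unknown permutation $\pi$ of $Q_0$. That already gives the ``in particular'' clause (compatibility of $\sim_{\rm s}$ with links), but it does \emph{not} give $\link(X)\cong\Delta(W(Q_{\I}))$ with the correct $\ell$, and the correct pairing is essential for (2), where $\Dim([e_\ell\Pi]_{\rm s})$ must multiply $\Eul(Q_{\I};t+1)$ for the \emph{same} $\ell$. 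To pin down $\pi=\mathrm{id}$ you need to trace at least one wall-crossing through the isomorphism, e.g.\ observe that mutating $\Pi=I_e$ at the summand $e_\ell\Pi$ yields $I_{s_\ell}$, so the vertex replaced when passing from the chamber of $e$ to the chamber of $s_\ell$ is the type-$\ell$ one; since a type function on a gallery-connected chamber complex is determined by its restriction to one chamber, this forces the labellings to agree everywhere. Either add this argument (or an explicit appeal to the $g$-vector computation $g(e_\ell I_w)=w\varpi_\ell$ behind \cite[Theorem 7.4]{AHIKM22}), or fall back on the paper's direct computation of $[\Fac X,{}^\perp(\tau X)]$.
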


In the rest of this subsection, we prove Theorem \ref{thm:orbit ppalg}. A key observation is the following poset anti-isomorphism.

\begin{prop}\label{prop: poset WI}
Let $X\in\itaurigid \Pi$ and assume $X\subset e_\ell\Pi$.
Then, 
we have the following anti-isomorphism of posets and isomorphism of simplicial complexes 
$$\stautilt_X\Pi\cong W_{\I}\quad \textnormal{and}\quad 
\link(X)\cong \Delta(W_{\I}).$$
\end{prop}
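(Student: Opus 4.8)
The plan is to derive both statements from Jasso's reduction together with Mizuno's parametrization. First I would apply Theorem \ref{reduction} to the indecomposable $\tau$-rigid module $X$, obtaining a poset isomorphism $\stautilt_X \Pi \cong \stautilt C_X$, and Proposition \ref{prop:reduction f}, giving $\link(X) \cong \Delta(C_X)$, where $C_X$ is the reduction of $\Pi$ at $X$ and $|C_X| = n-1$. On the other hand, Theorem \ref{tau weyl} applied to the Dynkin quiver $Q_{\I}$ gives a poset anti-isomorphism $\stautilt \Pi(Q_{\I}) \cong W_{\I}$, and Theorem \ref{thm:h of ppalg} gives $\Delta(\Pi(Q_{\I})) \cong \Delta(W_{\I})$. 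Hence it suffices to match the support $\tau$-tilting poset and the $g$-simplicial complex of $C_X$ with those of $\Pi(Q_{\I})$; this reduces both assertions to a single statement about the reduction.

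For the poset anti-isomorphism I would work inside $W$ through Mizuno's anti-isomorphism $\Phi \colon W \xrightarrow{\sim} \stautilt \Pi$, $w \mapsto I_w$, whose value decomposes as a right module into $\bigoplus_{i} e_i I_w$. By Theorem \ref{tau weyl} every indecomposable $\tau$-rigid module has the form $e_i I_w$, and the hypothesis $X \subset e_\ell \Pi$ lets me fix $X \cong e_\ell I_{w_X}$. The heart of the argument is to determine $S := \{w \in W \mid X \in \add I_w\}$, since $\Phi(S) = \stautilt_X \Pi$. I would show that $e_\ell I_w$ depends only on the left coset $W_{\I} w$ of the maximal parabolic subgroup $W_{\I} = \langle s_j \mid j \in \I\rangle$, so that $S$ is a single such coset; taking its minimal-length representative $w_X$, this coset equals the (left) weak-order interval $[w_X, w_{\I} w_X]$, where $w_{\I}$ is the longest element of $W_{\I}$. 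Right multiplication by $w_X^{-1}$ identifies this interval order-isomorphically with $[e, w_{\I}] = W_{\I}$, and composing with $\Phi$ yields $\stautilt_X \Pi \cong W_{\I}$.

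For the simplicial-complex isomorphism I would pass to the $g$-fan. By Theorem \ref{thm:h of ppalg}, together with the fan isomorphism of \cite[Theorem 4.9]{AHIKM22} underlying Proposition \ref{prop:reduction f}, the $g$-fan of $\Pi$ is the Coxeter fan of $W$; under this identification indecomposable $\tau$-rigid modules correspond to rays, and the condition $X \subset e_\ell \Pi$ places the ray $\rho_X$ of $X$ in the $W$-orbit of the fundamental ray stabilized by $W_{\I}$. Because the link of such a ray in the Coxeter fan of $W$ is, $W$-equivariantly, the Coxeter fan of its stabilizer, I obtain $\link(X) \cong \Delta(W_{\I})$. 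Equivalently, this follows from $\link(X) \cong \Delta(C_X)$ and the matching of $\stautilt C_X$ with $\stautilt \Pi(Q_{\I})$ obtained above, since for preprojective algebras the $g$-simplicial complex is recovered from the $g$-fan.

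I expect the main obstacle to be the combinatorial core of the second paragraph: proving that $e_\ell I_w$ is constant exactly on the cosets of $W_{\I}$ and that $S$ is precisely one such coset, so that $\stautilt_X \Pi$ is a translate of the parabolic interval $W_{\I}$ carrying the correct induced order. On the geometric side this is the same difficulty in disguise, namely the precise correspondence between the module-theoretic condition $X \subset e_\ell \Pi$ and membership of $\rho_X$ in the $W$-orbit of the fundamental ray at $\ell$ — a $g$-vector computation. Once this is established, the remaining steps only assemble Theorems \ref{reduction}, \ref{tau weyl} and \ref{thm:h of ppalg} with Proposition \ref{prop:reduction f}.
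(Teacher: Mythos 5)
Your overall strategy coincides with the paper's: both transport the problem to the Weyl group via Mizuno's anti-isomorphism $w\mapsto I_w$ and identify $\stautilt_X\Pi$ with a left coset of the maximal parabolic subgroup $W_{\I}$, which as a weak-order interval $[w_X,w_0(\I)w_X]$ is isomorphic to $W_{\I}$ by \cite[Proposition 3.1.6]{BB05}. The easy half of your key claim is indeed routine: for $j\neq\ell$ one has $e_\ell I_j=e_\ell\Pi(1-e_j)\Pi=e_\ell\Pi$, hence $e_\ell I_{s_jw}=e_\ell I_jI_w=e_\ell I_w$, so $w\mapsto e_\ell I_w$ is constant on left cosets of $W_{\I}$ and $S:=\{w\in W\mid X\in\add I_w\}$ \emph{contains} a full coset. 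The genuine gap is the reverse inclusion. You assert that $S$ is \emph{precisely} one coset, name it as the expected obstacle, and give no argument; but this is not a verification one can wave at --- it is the entire content of the proposition. Without it you only obtain an order-embedding of $W_{\I}$ into $\stautilt_X\Pi$, not an isomorphism, and the same unproved statement reappears in your geometric paragraph as the unestablished claim that the ray of $X$ lies in the $W$-orbit of the fundamental ray stabilized by $W_{\I}$.

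The paper closes exactly this gap by a lattice-theoretic argument rather than by proving injectivity of $w\mapsto e_\ell I_w$ on cosets. By \cite[Proposition 2.9]{AIR14} (Lemma \ref{interval}(a)), $S$ corresponds to the interval $[\Fac X,{}^\perp(\tau X)]$ in $\tors\Pi$; Lemma \ref{interval}(b),(c) identify ${}^\perp(\tau X)$ with the join of the $n-1$ torsion classes $\Fac I_{s_jw}$ ($j\in\I$) covering $\Fac I_w=\Fac X$ in $\Hasse(\tors\Pi)$, using that $I_w$ admits a unique left mutation (necessarily at $X=e_\ell I_w$); and \cite[Lemma 3.2.4]{BB05} (Proposition \ref{subgroup generated by s_i}) evaluates the resulting meet $\bigwedge\{s_jw\mid j\in\I\}$ as $w_0(\I)w$, pinning down the second endpoint of the interval. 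To complete your write-up you would need either this argument or a direct proof that $e_\ell I_u\cong e_\ell I_w\neq 0$ forces $W_{\I}u=W_{\I}w$; neither is supplied.
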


For a proof, we need the following results.

\begin{lemm}\label{interval}
Let $A$ be a finite dimensional algebra which is $g$-finite and $|A|=n$ (hence $\tors A=\ftors A$ by Definition-Theorem \ref{poset iso}). 
Let $\Hasse(\tors A)$ be the Hasse quiver of the poset $\tors A$. 
For $X\in\itaurigid A$, the following statements hold. 

\begin{enumerate}[\rm (a)]
\item We have a poset isomorphism 
$$\stautilt_X A\cong[\Fac X,{}^\perp(\tau X)],$$
where $[\Fac X,{}^\perp(\tau X)]$ denotes the interval between $\Fac X$ and ${}^\perp(\tau X)$ in $\tors A$.
\item
We have  
$$
{}^\perp(\tau X)=\bigvee\{\VV\in\tors A \mid 
\text{$\exists \VV\to \Fac X$ in $\Hasse(\tors A)$} 
\}.
$$

\item There exists a unique $T\in\stautilt A$ such that 
$\Fac X =\Fac T$ and $X$ is a direct summand of $T$. 
Moreover, there is an irreducible left mutation $\mu_Y^L(T)$ of $T$ if and only if $Y=X$. In particular, 
there exists precisely $1$ arrow starting at $\Fac X$ and 
$n-1$ arrows ending at $\Fac X$ in $\Hasse(\tors A)$.

\end{enumerate}
\end{lemm}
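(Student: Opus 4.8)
The plan is to treat the three parts in the order (a), (c), (b): part (a) produces the interval, part (c) supplies the cover-count at $\Fac X$, and part (b) then reads off ${}^\perp(\tau X)$ as a join from that count. Throughout I identify $\stautilt A$ with $\ftors A=\tors A$ via $T\mapsto\Fac T$ (Definition-Theorem \ref{poset iso}), so that $\stautilt_X A$ corresponds to the torsion classes in which $X$ is an Ext-projective direct summand. For (a), I would show this set is exactly $[\Fac X,{}^\perp(\tau X)]$. One inclusion is clean: if $\Fac X\subseteq\TT\subseteq{}^\perp(\tau X)$ then $X\in\TT$, and for every $Y\in\TT$ we have $\Hom_A(Y,\tau X)=0$, so the Auslander--Reiten formula $\Ext^1_A(X,Y)\cong D\overline{\Hom}_A(Y,\tau X)$ forces $\Ext^1_A(X,Y)=0$, i.e. $X$ is Ext-projective. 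For the reverse inclusion, Ext-projectivity of $X$ in $\TT$ gives $\Fac X\subseteq\TT$ (closure under factors) and $\TT\subseteq{}^\perp(\tau X)$, the latter being the statement that ${}^\perp(\tau X)$ is the largest torsion class in which $X$ is Ext-projective, which I would invoke from \cite{AIR14} (its maximality is exactly the maximal completion of Theorem \ref{completion}). Restricting $T\mapsto\Fac T$ then gives the poset isomorphism in (a).

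For (c) I would first set $T:=P(\Fac X)$. Since $X\in\Fac X\subseteq{}^\perp(\tau X)$, the module $X$ is Ext-projective in $\Fac X$, hence a summand of $T$, and $\Fac T=\Fac X$; uniqueness of such a $T$ is immediate from the bijection in Definition-Theorem \ref{poset iso}. The number of arrows incident to $\Fac X$ in $\Hasse(\tors A)$ equals $|A|=n$ (support $\tau$-tilting mutation, \cite{AIR14,DIRRT23}). By part (a) together with the reduction isomorphism $\stautilt_X A\cong\stautilt C_X$ with $|C_X|=n-1$ (Theorem \ref{reduction}, Example \ref{example:reduction}), the interval $[\Fac X,{}^\perp(\tau X)]$ is isomorphic to $\tors C_X$, whose minimum $\{0\}$ has exactly $n-1$ upper covers (being minimal, all $n-1$ mutations of the zero module increase it). These correspond to upper covers of $\Fac X$ inside the interval, and since an interval is convex they are upper covers in $\tors A$ as well; thus $\Fac X$ has at least $n-1$ upper covers. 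As $X\neq 0$ we have $\Fac X\neq\{0\}$, so $\Fac X$ has at least one lower cover. Because the total is $n$, I conclude that $\Fac X$ has exactly $n-1$ upper covers and exactly one lower cover.

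To match that single lower cover with a left mutation, I would use that the lower covers of $\Fac T$ are indexed by the indecomposable summands $Y$ of $T$ with $Y\notin\Fac(T/Y)$ \cite{AIR14,DIRRT23}; for every summand $Y\neq X$ we have $X\mid T/Y$, whence $\Fac(T/Y)=\Fac X\ni Y$, so no such $Y$ gives a lower cover. Therefore the unique arrow starting at $\Fac X$ is $\mu^L_Y(T)$ with $Y=X$, which completes (c). For (b), the $n-1$ upper covers produced above are precisely the atoms of $[\Fac X,{}^\perp(\tau X)]\cong\tors C_X$. Since a poset isomorphism of lattices preserves joins and an interval is a sublattice of $\tors A$, it suffices to check that in $\tors C_X$ the join of all atoms is the top $\mod C_X$: each simple $C_X$-module $S$ lies in the atom $\Filt(S)$, so the join is a torsion class containing every simple and hence equals $\mod C_X$. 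Transporting this back through the reduction isomorphism identifies the join of the upper covers with ${}^\perp(\tau X)$, giving $\bigvee\{\VV\in\tors A\mid \VV\to\Fac X\}={}^\perp(\tau X)$, which is (b).

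The main obstacle is the mutation dictionary feeding part (c): namely that the arrows at a vertex of $\Hasse(\tors A)$ are exactly the $n$ support $\tau$-tilting mutations, and that the downward arrows are indexed by the summands of $P(\Fac X)$ not generated by the complementary summands. Once this is granted, the counting is purely formal, the nonvanishing of $X$ producing the single lower cover and the reduction producing the $n-1$ upper covers. A secondary delicate point is the forward implication in (a) (that Ext-projectivity of $X$ in $\TT$ forces $\TT\subseteq{}^\perp(\tau X)$), for which I would appeal to \cite{AIR14} rather than re-deriving it from the stable Auslander--Reiten formula, since the naive argument only yields vanishing of $\overline{\Hom}_A(Y,\tau X)$ rather than of $\Hom_A(Y,\tau X)$.
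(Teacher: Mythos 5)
Your proof is correct and follows essentially the same route as the paper: part (a) is AIR's Proposition 2.9 (which you partially re-derive, correctly flagging the stable-Hom subtlety), part (b) uses the Jasso reduction identifying $[\Fac X,{}^\perp(\tau X)]$ with $\tors C_X$, and part (c) rests on the $n$-regularity of $\Hasse(\tors A)$ and the mutation dictionary, exactly the inputs the paper cites from \cite{AIR14}, \cite{Jasso15} and \cite{AHIKM22}. The only difference is that you spell out details the paper delegates to references, e.g.\ that the join of the atoms of $\tors C_X$ is $\mod C_X$ because each $\Filt(S)$ for $S$ simple is an atom, which is a welcome completion of the paper's rather terse argument for (b).
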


\begin{proof}
(a) This follows from \cite[Proposition 2.9]{AIR14}. 

(b) Let $C := C_{X}$ be the algebra obtained by reduction at $X$. 
By \cite[Theorem 1.5]{Jasso15}, we have a poset isomorphism 
\[
[\Fac X,{}^\perp(\tau X)]\cong \tors C,
\]
and $\Fac C = \mod C$ is a unique maximal element in the poset $\tors C$. 
Thus, we get (b). 

(c) The statement follows from \cite[Lemma 3.6, 3.8]{AHIKM22} and the fact that the Hasse quiver is $n$-regular.
\end{proof}

We also recall the following well-known fact.

\begin{prop}\label{subgroup generated by s_i}\cite[Lemma 3.2.4]{BB05}
Let $w\in W$ and $\ell\in Q_0$. Assume that we have $s_j w<w$ for all $j\in\I$ and $s_\ell w>w$. Then, we have
$$\bigwedge\{s_jw\ |\ j\in\I \}=w_0(\I)w,$$ 
where $w_0(\I)$ is the the longest element of $W_{\I}$. 
\end{prop}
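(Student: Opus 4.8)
The plan is to prove this purely combinatorial identity in the (left) weak order on $W$ by passing to the parabolic decomposition of $w$ with respect to the finite parabolic subgroup $W_{\I}$. First I would translate the hypotheses: $s_j w < w$ for all $j \in \I$ together with $s_\ell w > w$ says exactly that the left descent set $\{\, i \in Q_0 \mid l(s_i w) < l(w)\,\}$ of $w$ equals $\I$. Let $v$ denote the unique element of minimal length in the left coset $W_{\I}w$ (equivalently, $l(s_j v) > l(v)$ for all $j \in \I$). The standard parabolic factorization gives $w = u v$ with $u \in W_{\I}$ and $l(w) = l(u) + l(v)$. Since any product $u'v$ with $u' \in W_{\I}$ is length-additive, for $j\in\I$ we have $l(s_j w) = l(s_j u) + l(v)$, so $j$ is a left descent of $w$ if and only if $j$ is a left descent of $u$ inside $W_{\I}$. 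As $\I$ exhausts the left descents of $w$, the element $u\in W_{\I}$ has full left descent set and is therefore the longest element, $u = w_0(\I)$. Thus $w = w_0(\I)v$ and $v = w_0(\I)w$, which already identifies the conjectural meet with the minimal coset representative $v$.

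Next I would check that $v = w_0(\I)w$ is a lower bound of $\{\,s_j w \mid j \in \I\,\}$. For each $j\in\I$ one writes $s_j w = (s_j w_0(\I))\,v$ with $s_j w_0(\I) \in W_{\I}$, and this product is again length-additive, so $v \leq s_j w$ in the weak order.

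The crux is to show $v$ is the \emph{greatest} lower bound. Given any $a$ with $a \leq s_j w$ for all $j\in\I$, transitivity gives $a \leq w$; set $c := w a^{-1}$, so $w = c a$ with $l(w) = l(c) + l(a)$. Using the length-additive description of the weak order, $a \leq s_j w$ means $l(s_j w) = l(s_j c) + l(a)$; combined with $l(s_j w) = l(w) - 1 = l(c) + l(a) - 1$ this is equivalent to $l(s_j c) = l(c) - 1$, i.e.\ $s_j$ is a left descent of $c$. Hence $\I$ is contained in the left descent set of $c$, and the same argument as above yields $c = w_0(\I)c'$ with $c'$ of minimal length in $W_{\I}c$ and $l(c) = l(w_0(\I)) + l(c')$. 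Substituting, $l(w) = l(w_0(\I)) + l(c') + l(a)$, so in the product $w = w_0(\I)(c'a)$ the lengths add up; by the triangle inequality this forces both $l(w_0(\I)(c'a)) = l(w_0(\I)) + l(c'a)$ and $l(c'a) = l(c') + l(a)$. The first gives $c'a = w_0(\I)w = v$, and the second gives $a \leq c'a = v$. Therefore every lower bound is $\leq v$, and $v = w_0(\I)w$ is the meet, as claimed.

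The main obstacle is not any single step but keeping the bookkeeping consistent: one must work throughout in the left weak order fixed in Section~\ref{sec:ppa}, and repeatedly invoke the two classical facts that drive the argument, namely that $u'v$ is length-additive for every $u' \in W_{\I}$ once $v$ is the minimal coset representative, and that the only element of a finite Coxeter group with full descent set is $w_0$. Since this is precisely \cite[Lemma 3.2.4]{BB05}, it suffices in the paper to cite it; the point of the proof proposal is to make explicit how the descent hypotheses pin down the $W_{\I}$-part of $w$ and how re-decomposing $c = w a^{-1}$ converts the lower-bound condition into the inequality $a \leq v$.
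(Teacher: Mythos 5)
Your proof is correct, and it is worth noting that the paper itself offers no argument here: Proposition \ref{subgroup generated by s_i} is quoted verbatim from \cite[Lemma 3.2.4]{BB05}, so there is no internal proof to compare against. What you supply is the standard parabolic-decomposition argument, and every step checks out against the paper's convention for the left weak order ($u\leq w$ iff $l(w)=l(wu^{-1})+l(u)$): the hypotheses pin the left descent set of $w$ to exactly $\I$; writing $w=uv$ with $u\in W_{\I}$ and $v$ the minimal-length representative of $W_{\I}w$, the length-additivity $l(u'v)=l(u')+l(v)$ for $u'\in W_{\I}$ transfers descents from $w$ to $u$, forcing $u=w_0(\I)$ and $v=w_0(\I)w$; the same additivity shows $v\leq s_jw$ for each $j\in\I$; and your re-decomposition of $c=wa^{-1}$ converts an arbitrary lower bound $a$ into $a\leq v$ via the squeeze $l(w)\leq l(w_0(\I))+l(c'a)\leq l(w_0(\I))+l(c')+l(a)=l(w)$. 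Two cosmetic remarks: the identification $c'a=v$ needs no length argument at all, since $w=w_0(\I)c'a=w_0(\I)v$ gives $c'a=v$ by group cancellation (the length-additivity is only needed for the second conclusion $a\leq c'a$); and your argument tacitly assumes $\I\neq\emptyset$ (used when passing from $a\leq s_jw$ to $a\leq w$), which is harmless since the degenerate rank-one case is excluded wherever the proposition is applied. As a self-contained replacement for the citation, your proof is sound and proves slightly more than the paper needs, namely that the meet exists without appealing to the complete-lattice property of the weak order from \cite{BB05}.
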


Now, we give a proof of Proposition \ref{prop: poset WI}. 

\begin{proof}[Proof of Proposition \ref{prop: poset WI}]
By Lemma \ref{interval}(a), we have 
$$\stautilt_X\Pi\cong[\Fac X,{}^\perp(\tau X)].$$
On the other hand, by Theorem \ref{tau weyl}, there exist elements $w,v\in W$ such that $\Fac I_w=\Fac X$, $\Fac I_v={}^\perp(\tau X)$
and an anti-isomorphism 
\begin{equation}\label{eq:anti-isom vw}
[\Fac X,{}^\perp(\tau X)]\cong [v,w]. 
\end{equation}

We will show that $$v=\bigwedge\{s_jw\ |\ j\in\I \}.$$

By our assumption and Theorem \ref{tau weyl}, we have $X=e_\ell I_x$ for some $x\in W$. 
Since $I_w=e_1I_w\oplus\cdots\oplus e_nI_w$ and $X$ is a direct summand of $I_w$, we have $X=e_\ell I_w$. 
Thus, Lemma \ref{interval}(c) implies that 
there is a unique left mutation $\mu_{e_\ell I_w}^L(I_w)$ of $I_w$, which is isomorphic to $I_\ell I_w$ by \cite[Lemma 2.1]{Mizuno14}. 
By Lemma \ref{interval}(b) and (c), we have 
\begin{eqnarray*} 
{}^\perp(\tau X)&=&\bigvee\{\VV\in\tors \Pi \mid 
\text{$\exists \VV\to \Fac I_w$ in $\Hasse(\tors \Pi)$} 
\}\\
&=&\bigvee\{\Fac(\mu_{e_j I_w}^R (I_w)) \mid j\in\I \}\\
&=&\bigvee\{\Fac(I_{s_jw}) \mid j\in\I \}\\
&=&\Fac\left(\bigvee \{I_{s_jw}\mid {j \in \I}\}\right), 
\end{eqnarray*}
where $\mu_{e_j I_w}^R$ denotes the right mutation of $I_w$. Since ${}^\perp(\tau X)=\Fac(I_v)$ and 
the map $\stautilt\Pi\to\tors\Pi$ given by $I_w\mapsto \Fac I_w$ is an isomorphism, we have $$
I_v=\bigvee \{I_{s_jw} \mid j\in\I \}. 
$$ 
Thus, the anti-isomorphism of Theorem \ref{tau weyl}  gives 
$$v=\bigwedge\{s_jw\ |\ j\in\I \}.$$
Applying Theorem \ref{subgroup generated by s_i}, we have 
$v=w_0(\I)w.$
Moreover, by \cite[Proposition 3.1.6]{BB05}, we have an isomorphism 
$$[v,w]=[w_0(\I)w,w]\cong[\id,w(w_0(\I)w)^{-1}]\cong W_{\I}.$$
Thus, we get the first statement. Moreover, since the isomorphism gives the isomorphism of fans \cite{AHIKM22}, we also obtain the second isomorphism.  
\end{proof}

Using this result, we show Theorem \ref{thm:orbit ppalg}. 

\begin{proof}[Proof of Theorem \ref{thm:orbit ppalg}]
(1) 
This is Proposition \ref{prop: poset WI}. 
(2) and (3) follow from (1) and Proposition \ref{prop:orbit decomposition}. 
\end{proof}

In the rest of this section, we give explicit formulas of 
$d$-polynomials for preprojective algebras of Dynkin types $\mathbb{A}$, $\mathbb{D}$ and $\mathbb{E}$. 
Our computation is based on Theorem \ref{thm:orbit ppalg}. 
That is, we compute 
$$
\Dim([e_{\ell}\Pi]_{\rm s}) \quad \text{and} \quad \Eul(Q_{\I};t+1)
$$ 
for each vertex $\ell$.

For our convenience, we label the vertices of Dynkin graphs of types $\mathbb{A}_n$ ($n\geq 1$), $\mathbb{D}_n$ ($n\geq 4$) and $\mathbb{E}_n$ ($n=6,7,8$) as follows: 
    \begin{eqnarray*}
    &&\begin{tikzpicture}
    \node (a) at(-2.5,0) {$\mathbb{A}_n$: }; 
    \node (a) at(-1,0) {$1$}; 
    \node (b) at(0,0) {$2$};
    \node (c) at(1,0) {$3$}; 
    \node (d) at(2,0) {}; 
    \node (x) at(2.5,0) {$\cdots$};
    \node (e) at(3,0) {}; 
    \node (f) at(4,0) {$n.$}; 
    \draw[-] (a)--(b); 
    \draw[-] (b)--(c)--(d); 
    \draw[-] (e)--(f); 
    \end{tikzpicture}\\
    &&\begin{tikzpicture}
    \node (a) at(-2.5,0) {$\mathbb{D}_n$: };
    \node (a) at(-1,0.7) {$1$}; 
    \node (aa) at(-1,-0.7) {$-1$}; 
    \node (b) at(0,0) {$2$};
    \node (c) at(1,0) {$3$}; 
    \node (d) at(2,0) {}; 
    \node (x) at(2.5,0) {$\cdots$};
    \node (e) at(3,0) {}; 
    \node (f) at(4.3,0) {$(n-1)$.}; 
    \draw[-] (a)--(b)--(aa); 
    \draw[-] (b)--(c)--(d); 
    \draw[-] (e)--(f); 
    \end{tikzpicture} \\ 
&&\begin{tikzpicture}
    \node (a) at(-2.5,0) {$\mathbb{E}_n$: };
    \node (a) at(-1,0) {$1$}; 
    \node (b) at(0,0) {$2$};
    \node (c) at(1,0) {$3$}; 
    \node (g) at(1,1) {$4$}; 
    \node (d) at(2,0) {$5$};
    \node (r) at(3,0) {};
    
    \node (x) at(3.5,0) {$\cdots$};
    \node (e) at(4,0) {}; 
    \node (f) at(5,0) {$n.$}; 
    \draw[-] (a)--(b); 
    \draw[-] (b)--(c)--(d)--(r); 
    \draw[-] (e)--(f); 
    \draw[-] (c)--(g); 
    \end{tikzpicture} 
\end{eqnarray*}

\subsection{Type $\mathbb{A}$} \label{sec:ppaA}
Let $Q$ be a Dynkin quiver of type $\mathbb{A}_n$ and $\Pi:=\Pi(Q)$ the preprojective algebra of $Q$. 
For $\ell\in Q_0=\{1,\ldots,n\}$, the representation of $e_\ell\Pi$ is given as follows.

\begin{equation}\label{proj:ppaA}    
\begin{split}
\begin{tikzpicture}
    \coordinate(x) at(1.8,0);
    \coordinate(y) at(0,0.9);
    \node(00) at ($0*(x)+0*(y)$) {\tiny$n$}; 
    \node(10) at ($1*(x)+0*(y)$) {\tiny$n-1$};
    \node(20) at ($2*(x)+0*(y)$) {\tiny$n-2$}; 
    \node(30) at ($3*(x)+0*(y)$) {\tiny$\cdots$};
    \node(40) at ($4*(x)+0*(y)$) {\tiny$n-\ell+3$};
    \node(50) at ($5*(x)+0*(y)$) {\tiny$n-\ell+2$}; 
    \node(60) at ($6*(x)+0*(y)$) {\tiny$n-\ell+1$};
    \node(01) at ($0*(x)+1*(y)$) {\tiny$n-1$}; 
    \node(11) at ($1*(x)+1*(y)$) {\tiny$n-2$};
    \node(21) at ($2*(x)+1*(y)$) {\tiny$n-3$}; 
    \node(31) at ($3*(x)+1*(y)$) {\tiny$\cdots$};
    \node(41) at ($4*(x)+1*(y)$) {\tiny$n-\ell+2$};
    \node(51) at ($5*(x)+1*(y)$) {\tiny$n-\ell+1$}; 
    \node(61) at ($6*(x)+1*(y)$) {\tiny$n-\ell$};
    \node(02) at ($0*(x)+2*(y)$) {\tiny\rotatebox{90}{$\cdots$}}; 
    \node(12) at ($1*(x)+2*(y)$) {\tiny\rotatebox{90}{$\cdots$}};
    \node(22) at ($2*(x)+2*(y)$) {\tiny\rotatebox{90}{$\cdots$}}; 
    \node(32) at ($3*(x)+2*(y)$) {\tiny$\cdots$};
    \node(42) at ($4*(x)+2*(y)$) {\tiny\rotatebox{90}{$\cdots$}};
    \node(52) at ($5*(x)+2*(y)$) {\tiny\rotatebox{90}{$\cdots$}}; 
    \node(62) at ($6*(x)+2*(y)$) {\tiny\rotatebox{90}{$\cdots$}};
    \node(03) at ($0*(x)+3*(y)$) {\tiny$\ell+2$}; 
    \node(13) at ($1*(x)+3*(y)$) {\tiny$\ell+1$};
    \node(23) at ($2*(x)+3*(y)$) {\tiny$\ell$}; 
    \node(33) at ($3*(x)+3*(y)$) {\tiny$\cdots$};
    \node(43) at ($4*(x)+3*(y)$) {\tiny$5$};
    \node(53) at ($5*(x)+3*(y)$) {\tiny$4$}; 
    \node(63) at ($6*(x)+3*(y)$) {\tiny$3$};
    \node(04) at ($0*(x)+4*(y)$) {\tiny$\ell+1$}; 
    \node(14) at ($1*(x)+4*(y)$) {\tiny$\ell$};
    \node(24) at ($2*(x)+4*(y)$) {\tiny$\ell-1$}; 
    \node(34) at ($3*(x)+4*(y)$) {\tiny$\cdots$};
    \node(44) at ($4*(x)+4*(y)$) {\tiny$4$};
    \node(54) at ($5*(x)+4*(y)$) {\tiny$3$}; 
    \node(64) at ($6*(x)+4*(y)$) {\tiny$2$};
    \node(05) at ($0*(x)+5*(y)$) {\tiny$\ell$}; 
    \node(15) at ($1*(x)+5*(y)$) {\tiny$\ell-1$};
    \node(25) at ($2*(x)+5*(y)$) {\tiny$\ell-2$}; 
    \node(35) at ($3*(x)+5*(y)$) {\tiny$\cdots$};
    \node(45) at ($4*(x)+5*(y)$) {\tiny$3$};
    \node(55) at ($5*(x)+5*(y)$) {\tiny$2$}; 
    \node(65) at ($6*(x)+5*(y)$) {\tiny$1$};
    \draw[->] (00)--(10); 
    \draw[->] (10)--(20); 
    \draw[->] (20)--(30);
    \draw[->] (30)--(40);
    \draw[->] (40)--(50);
    \draw[->] (50)--(60);
    \draw[->] (01)--(11);
    \draw[->] (11)--(21);
    \draw[->] (21)--(31);
    \draw[->] (31)--(41);
    \draw[->] (41)--(51);
    \draw[->] (51)--(61);
    \draw[->] (03)--(13);
    \draw[->] (13)--(23);
    \draw[->] (23)--(33);
    \draw[->] (33)--(43);
    \draw[->] (43)--(53);
    \draw[->] (53)--(63);
    \draw[->] (04)--(14);
    \draw[->] (14)--(24);
    \draw[->] (24)--(34);
    \draw[->] (34)--(44);
    \draw[->] (44)--(54);
    \draw[->] (54)--(64);
    \draw[->] (05)--(15);
    \draw[->] (15)--(25);
    \draw[->] (25)--(35);
    \draw[->] (35)--(45);
    \draw[->] (45)--(55);
    \draw[->] (55)--(65);
    \draw[<-] (00)--(01);
    \draw[<-] (01)--(02);
    \draw[<-] (02)--(03);
    \draw[<-] (03)--(04);
    \draw[<-] (04)--(05);
    \draw[<-] (10)--(11);
    \draw[<-] (11)--(12);
    \draw[<-] (12)--(13);
    \draw[<-] (13)--(14);
    \draw[<-] (14)--(15);
    \draw[<-] (20)--(21);
    \draw[<-] (21)--(22);
    \draw[<-] (22)--(23);
    \draw[<-] (23)--(24);
    \draw[<-] (24)--(25);
    \draw[<-] (40)--(41);
    \draw[<-] (41)--(42);
    \draw[<-] (42)--(43);
    \draw[<-] (43)--(44);
    \draw[<-] (44)--(45);
    \draw[<-] (50)--(51);
    \draw[<-] (51)--(52);
    \draw[<-] (52)--(53);
    \draw[<-] (53)--(54);
    \draw[<-] (54)--(55);
    \draw[<-] (60)--(61);
    \draw[<-] (61)--(62);
    \draw[<-] (62)--(63);
    \draw[<-] (63)--(64);
    \draw[<-] (64)--(65);
\end{tikzpicture}
\end{split}
\end{equation}

Here, each number $i$ shows a one-dimensional $k$-vector space $k$ lying on the vertex $i$, and each arrow is the identity map of $k$ (see \cite[Section 6.1]{IRRT18} for example). Under this description, submodules of $e_\ell\Pi$ correspond bijectively to sub-quivers that are closed under successors. 
For our convenience, we describe them in terms of lattice paths. 
\begin{defi}
A \emph{lattice path} is a path in $\mathbb{Z}^2$ that takes only steps East, from $(i, j)$ to $(i+1, j)$, and North, from $(i, j)$ to $(i, j+1)$. 
\end{defi}

For two positive integers $s,t>0$, we denote by $\mathbb{L}(s,t)$ the set of lattice paths $\sigma$ from $(0, 0)$ to $(s, t)$. 
By definition, any lattice path $\sigma$ in $\mathbb{L}(s,t)$ has length $s+t$ and every integer vector appearing in $\sigma$ lies in the rectangle $\mathcal{S} := [0,s]\times[0,t]\subset \mathbb{Z}^2$. 
Then, we denote by $\area(\sigma)$ the number of unit squares in $\mathcal{S}$ underneath the path $\sigma$. 
Here, we say that a \emph{unit square} is a subset of $\mathbb{R}^2$ of the form $[i,i+1]\times[j,j+1]$ with $i,j\in \mathbb{Z}$.

Now, we fix a vertex $\ell\in Q_0=\{1,\ldots,n\}$. 
We consider the set of lattice paths $\mathbb{L}(\ell,n-\ell+1)$ and the corresponding rectangle $\mathcal{S} = [0,\ell]\times [0,n-\ell+1]\subset \mathbb{R}^2$. 
We associate each unit square $[i,i+1]\times[j,j+1]\subset \mathcal{S}$ to the vertex of \eqref{proj:ppaA} located at $i$-th column from the left and $j$-th row from the bottom. For example, the unit square $[0,1]\times[0,1]$ corresponds to the lower left corner labeled by $n$. 
Under this correspondence, to each lattice path $\sigma\in \mathbb{L}(\ell,n-\ell+1)$, we associate $M_{\sigma}\subset e_\ell\Pi$ as the submodule given by the sub-quiver of \eqref{proj:ppaA} consisting of all vertices  underneath $\sigma$, which is closed under successors by definition. We display below some lattice paths $\sigma\in \mathbb{L}(4,3)$ together with the corresponding submodule $M_{\sigma}$ in the case of $n=6$ and $\ell = 4$. 

\begin{equation*}    
\begin{tabular}{ccccccccc}
    \begin{tikzpicture}
    \coordinate(x) at(0.6,0);
    \coordinate(y) at(0,0.6);
    \coordinate (00) at ($0*(x)+0*(y)$); 
    \coordinate (10) at ($1*(x)+0*(y)$); 
    \coordinate (20) at ($2*(x)+0*(y)$); 
    \coordinate (30) at ($3*(x)+0*(y)$); 
    \coordinate (40) at ($4*(x)+0*(y)$); 
    
    \coordinate (01) at ($0*(x)+1*(y)$); 
    \coordinate (11) at ($1*(x)+1*(y)$); 
    \coordinate (21) at ($2*(x)+1*(y)$); 
    \coordinate (31) at ($3*(x)+1*(y)$); 
    \coordinate (41) at ($4*(x)+1*(y)$); 

    \coordinate (02) at ($0*(x)+2*(y)$); 
    \coordinate (12) at ($1*(x)+2*(y)$); 
    \coordinate (22) at ($2*(x)+2*(y)$); 
    \coordinate (32) at ($3*(x)+2*(y)$); 
    \coordinate (42) at ($4*(x)+2*(y)$); 
    
    \coordinate (03) at ($0*(x)+3*(y)$); 
    \coordinate (13) at ($1*(x)+3*(y)$); 
    \coordinate (23) at ($2*(x)+3*(y)$); 
    \coordinate (33) at ($3*(x)+3*(y)$); 
    \coordinate (43) at ($4*(x)+3*(y)$); 

    \draw[dotted, opacity=0.8] (00)rectangle(43);
    \draw[dotted, opacity=0.8] (01)--(41);
    \draw[dotted, opacity=0.8] (02)--(42);
    \draw[dotted, opacity=0.8] (10)--(13);
    \draw[dotted, opacity=0.8] (20)--(23);
    \draw[dotted, opacity=0.8] (30)--(33);

    \node at($0.5*(x)+0.5*(y)$) {\footnotesize$6$};
    \node at($1.5*(x)+0.5*(y)$) {\footnotesize$5$};
    \node at($2.5*(x)+0.5*(y)$) {\footnotesize$4$};
    \node at($3.5*(x)+0.5*(y)$) {\footnotesize$3$};
    \node at($0.5*(x)+1.5*(y)$) {\footnotesize$5$};
    \node at($1.5*(x)+1.5*(y)$) {\footnotesize$4$};
    \node at($2.5*(x)+1.5*(y)$) {\footnotesize$3$};
    \node at($3.5*(x)+1.5*(y)$) {\footnotesize$2$};
    \node at($0.5*(x)+2.5*(y)$) {\footnotesize$4$};
    \node at($1.5*(x)+2.5*(y)$) {\footnotesize$3$};
    \node at($2.5*(x)+2.5*(y)$) {\footnotesize$2$};
    \node at($3.5*(x)+2.5*(y)$) {\footnotesize$1$};
    \node at(00) {\footnotesize$\bullet$};
    \node at(43) {\footnotesize$\bullet$};
    \draw[thick] (00)--(40)--(43);
    \end{tikzpicture}
    & \quad 
    \begin{tikzpicture}
    \coordinate(x) at(0.6,0);
    \coordinate(y) at(0,0.6);
    \coordinate (00) at ($0*(x)+0*(y)$); 
    \coordinate (10) at ($1*(x)+0*(y)$); 
    \coordinate (20) at ($2*(x)+0*(y)$); 
    \coordinate (30) at ($3*(x)+0*(y)$); 
    \coordinate (40) at ($4*(x)+0*(y)$); 
    
    \coordinate (01) at ($0*(x)+1*(y)$); 
    \coordinate (11) at ($1*(x)+1*(y)$); 
    \coordinate (21) at ($2*(x)+1*(y)$); 
    \coordinate (31) at ($3*(x)+1*(y)$); 
    \coordinate (41) at ($4*(x)+1*(y)$); 

    \coordinate (02) at ($0*(x)+2*(y)$); 
    \coordinate (12) at ($1*(x)+2*(y)$); 
    \coordinate (22) at ($2*(x)+2*(y)$); 
    \coordinate (32) at ($3*(x)+2*(y)$); 
    \coordinate (42) at ($4*(x)+2*(y)$); 
    
    \coordinate (03) at ($0*(x)+3*(y)$); 
    \coordinate (13) at ($1*(x)+3*(y)$); 
    \coordinate (23) at ($2*(x)+3*(y)$); 
    \coordinate (33) at ($3*(x)+3*(y)$); 
    \coordinate (43) at ($4*(x)+3*(y)$); 

    \fill[white!30!lightgray] (30)rectangle(41); 
    \draw[very thick] (00)--(30)--(31)--(41)--(43);
    
    \draw[dotted, opacity=0.8] (00)rectangle(43);
    \draw[dotted, opacity=0.8] (01)--(41);
    \draw[dotted, opacity=0.8] (02)--(42);
    \draw[dotted, opacity=0.8] (10)--(13);
    \draw[dotted, opacity=0.8] (20)--(23);
    \draw[dotted, opacity=0.8] (30)--(33);

    \node at($0.5*(x)+0.5*(y)$) {\footnotesize$6$};
    \node at($1.5*(x)+0.5*(y)$) {\footnotesize$5$};
    \node at($2.5*(x)+0.5*(y)$) {\footnotesize$4$};
    \node at($3.5*(x)+0.5*(y)$) {\footnotesize$3$};
    \node at($0.5*(x)+1.5*(y)$) {\footnotesize$5$};
    \node at($1.5*(x)+1.5*(y)$) {\footnotesize$4$};
    \node at($2.5*(x)+1.5*(y)$) {\footnotesize$3$};
    \node at($3.5*(x)+1.5*(y)$) {\footnotesize$2$};
    \node at($0.5*(x)+2.5*(y)$) {\footnotesize$4$};
    \node at($1.5*(x)+2.5*(y)$) {\footnotesize$3$};
    \node at($2.5*(x)+2.5*(y)$) {\footnotesize$2$};
    \node at($3.5*(x)+2.5*(y)$) {\footnotesize$1$};
    \node at(00) {\footnotesize$\bullet$};
    \node at(43) {\footnotesize$\bullet$};
    \end{tikzpicture} 
    & \quad 
     \begin{tikzpicture}
    \coordinate(x) at(0.6,0);
    \coordinate(y) at(0,0.6);
    \coordinate (00) at ($0*(x)+0*(y)$); 
    \coordinate (10) at ($1*(x)+0*(y)$); 
    \coordinate (20) at ($2*(x)+0*(y)$); 
    \coordinate (30) at ($3*(x)+0*(y)$); 
    \coordinate (40) at ($4*(x)+0*(y)$); 
    
    \coordinate (01) at ($0*(x)+1*(y)$); 
    \coordinate (11) at ($1*(x)+1*(y)$); 
    \coordinate (21) at ($2*(x)+1*(y)$); 
    \coordinate (31) at ($3*(x)+1*(y)$); 
    \coordinate (41) at ($4*(x)+1*(y)$); 

    \coordinate (02) at ($0*(x)+2*(y)$); 
    \coordinate (12) at ($1*(x)+2*(y)$); 
    \coordinate (22) at ($2*(x)+2*(y)$); 
    \coordinate (32) at ($3*(x)+2*(y)$); 
    \coordinate (42) at ($4*(x)+2*(y)$); 
    
    \coordinate (03) at ($0*(x)+3*(y)$); 
    \coordinate (13) at ($1*(x)+3*(y)$); 
    \coordinate (23) at ($2*(x)+3*(y)$); 
    \coordinate (33) at ($3*(x)+3*(y)$); 
    \coordinate (43) at ($4*(x)+3*(y)$); 
    
    \fill[white!30!lightgray] (00)rectangle(41);
    \fill[white!30!lightgray] (21)rectangle(42);
    \fill[white!30!lightgray] (32)rectangle(43);
    \draw[very thick] (00)--(01)--(21)--(22)--(32)--(33)--(43);
    
    \draw[dotted, opacity = 0.8] (00)rectangle(43);
    \draw[dotted, opacity = 0.8] (01)--(41);
    \draw[dotted, opacity = 0.8] (02)--(42);
    \draw[dotted, opacity = 0.8] (10)--(13);
    \draw[dotted, opacity = 0.8] (20)--(23);
    \draw[dotted, opacity = 0.8] (30)--(33);

    \node at($0.5*(x)+0.5*(y)$) {\footnotesize$6$};
    \node at($1.5*(x)+0.5*(y)$) {\footnotesize$5$};
    \node at($2.5*(x)+0.5*(y)$) {\footnotesize$4$};
    \node at($3.5*(x)+0.5*(y)$) {\footnotesize$3$};
    \node at($0.5*(x)+1.5*(y)$) {\footnotesize$5$};
    \node at($1.5*(x)+1.5*(y)$) {\footnotesize$4$};
    \node at($2.5*(x)+1.5*(y)$) {\footnotesize$3$};
    \node at($3.5*(x)+1.5*(y)$) {\footnotesize$2$};
    \node at($0.5*(x)+2.5*(y)$) {\footnotesize$4$};
    \node at($1.5*(x)+2.5*(y)$) {\footnotesize$3$};
    \node at($2.5*(x)+2.5*(y)$) {\footnotesize$2$};
    \node at($3.5*(x)+2.5*(y)$) {\footnotesize$1$};
    \node at(00) {\footnotesize$\bullet$};
    \node at(43) {\footnotesize$\bullet$};
    \end{tikzpicture} & \quad 
     \begin{tikzpicture}
    \coordinate(x) at(0.6,0);
    \coordinate(y) at(0,0.6);
    \coordinate (00) at ($0*(x)+0*(y)$); 
    \coordinate (10) at ($1*(x)+0*(y)$); 
    \coordinate (20) at ($2*(x)+0*(y)$); 
    \coordinate (30) at ($3*(x)+0*(y)$); 
    \coordinate (40) at ($4*(x)+0*(y)$); 
    
    \coordinate (01) at ($0*(x)+1*(y)$); 
    \coordinate (11) at ($1*(x)+1*(y)$); 
    \coordinate (21) at ($2*(x)+1*(y)$); 
    \coordinate (31) at ($3*(x)+1*(y)$); 
    \coordinate (41) at ($4*(x)+1*(y)$); 

    \coordinate (02) at ($0*(x)+2*(y)$); 
    \coordinate (12) at ($1*(x)+2*(y)$); 
    \coordinate (22) at ($2*(x)+2*(y)$); 
    \coordinate (32) at ($3*(x)+2*(y)$); 
    \coordinate (42) at ($4*(x)+2*(y)$); 
    
    \coordinate (03) at ($0*(x)+3*(y)$); 
    \coordinate (13) at ($1*(x)+3*(y)$); 
    \coordinate (23) at ($2*(x)+3*(y)$); 
    \coordinate (33) at ($3*(x)+3*(y)$); 
    \coordinate (43) at ($4*(x)+3*(y)$); 
    
    \fill[white!30!lightgray] (00)rectangle(43); 
    \draw[very thick] (00)--(03)--(43);
    
    \draw[dotted, opacity=0.8] (00)rectangle(43);
    \draw[dotted, opacity=0.8] (01)--(41);
    \draw[dotted, opacity=0.8] (02)--(42);
    \draw[dotted, opacity=0.8] (10)--(13);
    \draw[dotted, opacity=0.8] (20)--(23);
    \draw[dotted, opacity=0.8] (30)--(33);
    
    \node at($0.5*(x)+0.5*(y)$) {\footnotesize$6$};
    \node at($1.5*(x)+0.5*(y)$) {\footnotesize$5$};
    \node at($2.5*(x)+0.5*(y)$) {\footnotesize$4$};
    \node at($3.5*(x)+0.5*(y)$) {\footnotesize$3$};
    \node at($0.5*(x)+1.5*(y)$) {\footnotesize$5$};
    \node at($1.5*(x)+1.5*(y)$) {\footnotesize$4$};
    \node at($2.5*(x)+1.5*(y)$) {\footnotesize$3$};
    \node at($3.5*(x)+1.5*(y)$) {\footnotesize$2$};
    \node at($0.5*(x)+2.5*(y)$) {\footnotesize$4$};
    \node at($1.5*(x)+2.5*(y)$) {\footnotesize$3$};
    \node at($2.5*(x)+2.5*(y)$) {\footnotesize$2$};
    \node at($3.5*(x)+2.5*(y)$) {\footnotesize$1$};
    \node at(00) {\footnotesize$\bullet$};
    \node at(43) {\footnotesize$\bullet$};
    \end{tikzpicture} \\ 
    \footnotesize$\area(\sigma)=0$ & 
    \footnotesize$\area(\sigma)=1$&
    \footnotesize$\area(\sigma)=7$&
    \footnotesize$\area(\sigma)=12$\\
    \footnotesize$M_{\sigma}=0$ &
    \footnotesize$M_{\sigma}=3$ &
    \footnotesize$M_{\sigma}=\tiny
    \begin{tikzpicture}[baseline=2mm]
    \coordinate(x) at(135:0.25);
    \coordinate(y) at(45:0.25);
    \node at($3*(x)+0*(y)$) {$6$}; 
    \node at($2*(x)+0*(y)$) {$5$}; 
    \node at($1*(x)+0*(y)$) {$4$}; 
    \node at($0*(x)+0*(y)$) {$3$}; 
    \node at($1*(x)+1*(y)$) {$3$}; 
    \node at($0*(x)+1*(y)$) {$2$}; 
    \node at($0*(x)+2*(y)$) {$1$}; 
    \end{tikzpicture}$ &
    \footnotesize$M_{\sigma}=e_{4}\Pi$ 
\end{tabular}
\end{equation*}

\begin{prop} \cite[Theorem 6.1]{IRRT18}\label{prop:bij ppalgA}
For $\ell\in Q_0=\{1,\ldots,n\}$, every submodule of $e_\ell\Pi$ is $\tau$-rigid. Furthermore, we have a bijection 
\begin{equation*}
     \mathbb{L}(\ell,n-\ell+1) \xrightarrow{\sim} [e_\ell\Pi]_{\rm s} 
     \cup \{0\} \quad (\sigma \mapsto M_\sigma)
\end{equation*}
satisfying $\dim_kM_\sigma = \area(\sigma)$. 
In particular, we have 
\begin{equation*}
    \#[e_\ell \Pi]_{\rm s} +1 = \#\mathbb{L}(\ell,n-\ell+1) = \binom{n+1}{\ell}. 
\end{equation*}

\end{prop}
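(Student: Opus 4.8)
My plan is to read everything off the explicit representation \eqref{proj:ppaA} of $e_\ell\Pi$, taking as input (from \cite[Section 6.1]{IRRT18}) that its underlying space has a basis indexed by the unit squares of the rectangle $\mathcal{S}=[0,\ell]\times[0,n-\ell+1]$, with every arrow acting as the identity between the spaces attached to adjacent squares. Because all structure maps are identities, any $\Pi$-submodule is spanned by a subset of these basis vectors, and the submodule condition translates exactly into the combinatorial condition that the corresponding set of squares is closed under successors, i.e. closed under the East and South steps prescribed by the orientation in \eqref{proj:ppaA}. First I would check that such successor-closed sets of squares are precisely the regions lying underneath a monotone lattice path $\sigma\in\mathbb{L}(\ell,n-\ell+1)$: the upper-left boundary of a successor-closed region is a monotone path, and conversely the region beneath a given path is successor-closed. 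This establishes that $\sigma\mapsto M_\sigma$ is a bijection from $\mathbb{L}(\ell,n-\ell+1)$ onto the set of all submodules of $e_\ell\Pi$, including $0$, which is the image of the unique path of area $0$.

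The dimension formula is then immediate: each square contributes a one-dimensional space, so $\dim_k M_\sigma$ is the number of squares lying underneath $\sigma$, which is $\area(\sigma)$ by definition. In particular $M_\sigma=0$ for the area-zero path and $M_\sigma=e_\ell\Pi$ for the maximal path of area $\ell(n-\ell+1)$.

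To upgrade the bijection onto submodules into the asserted bijection onto $[e_\ell\Pi]_{\rm s}\cup\{0\}$, I would verify two things. First, every nonzero $M_\sigma$ is indecomposable, since its support is connected and the structure maps between adjacent squares are isomorphisms, which forces $\End_\Pi(M_\sigma)=k$. Second, every such $M_\sigma$ is $\tau$-rigid, and this is where I would invoke Theorem \ref{tau weyl}: it identifies the indecomposable $\tau$-rigid $\Pi$-modules with the modules $e_\ell I_w$, each of which is a submodule of $e_\ell\Pi$, and matching these against the indecomposable submodules produced above shows that the nonzero $M_\sigma$ are exactly the elements of $[e_\ell\Pi]_{\rm s}$. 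Adjoining the zero submodule then yields the stated bijection.

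Finally, the count is a standard enumeration: a path in $\mathbb{L}(\ell,n-\ell+1)$ is a shuffle of $\ell$ East steps and $n-\ell+1$ North steps, so $\#\mathbb{L}(\ell,n-\ell+1)=\binom{n+1}{\ell}$, and removing the single zero submodule gives $\#[e_\ell\Pi]_{\rm s}+1=\binom{n+1}{\ell}$. I expect the main obstacle to be the $\tau$-rigidity step and the precise matching of the indecomposable submodules $M_\sigma$ with the modules $e_\ell I_w$ of Theorem \ref{tau weyl}; the combinatorial bijection and the area identity are routine once the thin, identity-map structure of \eqref{proj:ppaA} is in hand.
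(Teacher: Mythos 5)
The paper offers no proof of this proposition: it is imported verbatim from \cite[Theorem 6.1]{IRRT18}, so there is no internal argument to measure yours against. Judged on its own, your sketch gets the combinatorial dictionary right (successor-closed regions of the grid $\leftrightarrow$ lattice paths, $\dim_k M_\sigma=\area(\sigma)$, and $\#\mathbb{L}(\ell,n-\ell+1)=\binom{n+1}{\ell}$), but it has a genuine gap at the one step that carries all the representation-theoretic content: showing that every nonzero $M_\sigma$ is $\tau$-rigid. You invoke Theorem \ref{tau weyl} in the wrong direction. That theorem says every indecomposable $\tau$-rigid module is of the form $e_iI_w$, hence a submodule of some $e_i\Pi$; combined with your classification of submodules this yields at best the inclusion $[e_\ell\Pi]_{\rm s}\subseteq\{M_\sigma\}\setminus\{0\}$. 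The proposition asserts the reverse inclusion as well, i.e.\ that \emph{every} submodule of $e_\ell\Pi$ is $\tau$-rigid, and for that you must either exhibit each nonzero $M_\sigma$ as some $e_\ell I_w$ (say by peeling cells off the region and realizing each peeling as right multiplication by an ideal $I_i$) or verify $\Hom_{\Pi}(M_\sigma,\tau M_\sigma)=0$ directly. You name this "the main obstacle" but supply no argument; that obstacle is precisely the theorem.

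Two secondary points. The claim that "because all structure maps are identities, any $\Pi$-submodule is spanned by a subset of these basis vectors" is not a valid general principle: a quiver vertex $i$ occurs at several cells of the grid, so $e_\ell\Pi e_i$ is multidimensional and a submodule could a priori meet it in a non-coordinate subspace. Ruling this out (and likewise proving $\End_\Pi(M_\sigma)=k$, where endomorphisms could mix distinct cells carrying the same label) requires using the specific shape of \eqref{proj:ppaA} -- for instance that the socle of any nonzero $M_\sigma$ is the simple module spanned by the bottom-right cell -- rather than the connectedness heuristic you give. Finally, since $[e_\ell\Pi]_{\rm s}$ is a set of isomorphism classes, the asserted bijection also needs that distinct paths give non-isomorphic modules, which your sketch does not address.
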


Now, for two positive integers $p \geq q$, 
let 
\begin{equation}\label{eq:T} 
    {\rm T}(p,q):=\frac{(p+1)(p+2)}{2}\binom{p}{q} = 
    \frac{(p+2)!}{2\cdot q!(p-q)!}.   
\end{equation}
The corresponding integer sequence is well-known and it can be found in \underline{A094305} ({O}nline {E}ncyclopedia of {I}nteger {S}equences \cite{OEIS}). 
Using the above bijection, we have the following. 

\begin{lemm}\label{lem:OOpathalgA}
    For $\ell\in Q_0$, we have 
    \begin{eqnarray*}
        \Dim([e_\ell\Pi]_{\rm s}) = \sum_{\sigma\in \mathbb{L}(\ell,n-\ell+1)} \area (\sigma) = {\rm T}(n-1,n-\ell).
    \end{eqnarray*}
\end{lemm}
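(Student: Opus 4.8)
The plan is to split the statement into two independent parts: the left-hand equality is a direct consequence of Proposition \ref{prop:bij ppalgA}, while the right-hand equality is a self-contained combinatorial identity about lattice paths, which I would then reconcile with the closed form ${\rm T}(n-1,n-\ell)$ by a short binomial computation.

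For the first equality, recall from Proposition \ref{prop:bij ppalgA} that $\sigma\mapsto M_\sigma$ is a bijection $\mathbb{L}(\ell,n-\ell+1)\xrightarrow{\sim}[e_\ell\Pi]_{\rm s}\cup\{0\}$ satisfying $\dim_kM_\sigma=\area(\sigma)$. Since $\dim_kM_\sigma=0$ holds exactly for the unique path of area $0$ (the one running along the bottom edge and then the right edge), which is the one sent to the zero module, deleting it from the sum changes nothing. Hence $\Dim([e_\ell\Pi]_{\rm s})=\sum_{X\in[e_\ell\Pi]_{\rm s}}\dim_kX=\sum_{\sigma\in\mathbb{L}(\ell,n-\ell+1)}\area(\sigma)$, which gives the first equality.

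For the combinatorial identity I would set $s:=\ell$ and $t:=n-\ell+1$, so that the ambient rectangle $\mathcal{S}=[0,s]\times[0,t]$ contains $st$ unit squares and $\#\mathbb{L}(s,t)=\binom{s+t}{s}$. The key device is the central symmetry $\rho(x,y)=(s-x,t-y)$: reading $\rho(\sigma)$ in the reverse direction reverses the step sequence of $\sigma$ while preserving the direction of each step, so $\rho$ restricts to an area-preserving involution of $\mathbb{L}(s,t)$. Because $\rho$ carries the unit squares lying below a path $\sigma$ bijectively onto the unit squares lying above $\rho(\sigma)$, one obtains $\area(\sigma)+\area(\rho(\sigma))=st$ for every $\sigma$. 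Summing over all paths and reindexing by the involution gives $2\sum_\sigma\area(\sigma)=\sum_\sigma st=st\binom{s+t}{s}$, hence $\sum_{\sigma\in\mathbb{L}(s,t)}\area(\sigma)=\frac{st}{2}\binom{s+t}{s}$.

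It then remains to match this with the claimed value. Substituting $s=\ell$ and $t=n-\ell+1$ yields $\frac{\ell(n-\ell+1)}{2}\binom{n+1}{\ell}$, and a direct manipulation (absorbing the factors $\ell$ and $n-\ell+1$ into the binomial coefficient via $(n+1-\ell)!=(n-\ell+1)(n-\ell)!$ and $n(n+1)(n-1)!=(n+1)!$) rewrites this as $\frac{n(n+1)}{2}\binom{n-1}{n-\ell}={\rm T}(n-1,n-\ell)$. The only genuine content is the area identity, and the main (mild) obstacle is checking carefully that $\rho$ is a well-defined involution on $\mathbb{L}(s,t)$ with $\area(\sigma)+\area(\rho(\sigma))=st$; the concluding binomial rearrangement is routine.
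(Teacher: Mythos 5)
Your proof is correct, and the interesting part — the evaluation of $\sum_{\sigma}\area(\sigma)$ — is done by a genuinely different route than the paper's. The paper splits $\mathbb{L}(\ell,n-\ell+1)$ according to the first step (East or North), derives the recurrence ${\rm S}(n-1,n-\ell)={\rm S}(n-2,n-\ell-1)+{\rm S}(n-2,n-\ell)+\ell\binom{n}{\ell}$, and then checks that ${\rm T}(n-1,n-\ell)$ satisfies the same recurrence and initial condition; this requires already knowing the closed form one is aiming for. You instead use the $180^\circ$ rotation $\rho(x,y)=(s-x,t-y)$ of the rectangle, which is an area-complementing involution of $\mathbb{L}(s,t)$ with $\area(\sigma)+\area(\rho(\sigma))=st$, so the average area is $st/2$ and the sum is $\frac{st}{2}\binom{s+t}{s}$ with no guessing; the identity $\frac{\ell(n-\ell+1)}{2}\binom{n+1}{\ell}=\frac{(n+1)!}{2(\ell-1)!(n-\ell)!}={\rm T}(n-1,n-\ell)$ is then a one-line cancellation, which you carry out correctly. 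Your handling of the first equality (discarding the zero module, which corresponds to the unique area-$0$ path) matches the paper's implicit use of Proposition \ref{prop:bij ppalgA}. The symmetry argument is shorter and self-contained; the paper's recurrence has the mild advantage of exhibiting the Pascal-like structure behind the OEIS entry it cites, but both are complete proofs.
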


\begin{proof}
    The left most equality follows from Proposition \ref{prop:bij ppalgA}. 
    We now show the right most equality. 
    Let ${\rm S}(n-1,n-\ell)$ be the sum of $\area(\sigma)$ of all $\sigma\in \mathbb{L}(\ell,n-\ell+1)$. 
    
    The set $\mathbb{L}(\ell,n-\ell+1)$ is a disjoint union of subsets $\mathbb{L}^{\rm E}(\ell,n-\ell+1)$ and $\mathbb{L}^{\rm N}(\ell,n-\ell+1)$ consisting of all lattice paths whose first steps are East and North, respectively.  
    By ignoring the first step of each lattice path, we have bijections 
    \begin{equation*}
        a\colon \mathbb{L}(\ell-1,n-\ell+1) \simeq \mathbb{L}^{\rm E}(\ell,n-\ell+1) 
        \quad \text{and} \quad 
        b\colon \mathbb{L}(\ell,n-\ell) \simeq \mathbb{L}^{\rm N}(\ell,n-\ell+1) 
    \end{equation*}  
    such that $\area (a(\sigma)) = \area (\sigma)$ for any $\sigma\in \mathbb{L}(\ell-1,n-\ell+1)$ and $\area(b(\sigma')) = \area (\sigma') + \ell$ for any $\sigma' \in \mathbb{L}(\ell,n-\ell)$. 
    Then, we get
    \begin{eqnarray}
        {\rm S}(n-1,n-\ell) &=& \sum_{\sigma\in \mathbb{L}^{\rm E}(\ell,n-\ell+1)} \area(\sigma) + \sum_{\sigma'\in \mathbb{L}^{\rm N}(\ell,n-\ell+1)} \area(\sigma') \nonumber\\ 
        &=&  \sum_{\sigma\in \mathbb{L}(\ell-1,n-\ell+1)} \area(\sigma) + \sum_{\sigma'\in \mathbb{L}(\ell,n-\ell)} \big(\area(\sigma') + \ell \big) \nonumber
        \\ 
        &=&{\rm S}(n-2,n-\ell-1) + {\rm S}(n-2,n-\ell) + \ell \binom{n}{\ell}. \label{eq:reccST} 
    \end{eqnarray}
    We clearly have ${\rm S}(0,0) = {\rm T}(0,0) = 1$. To prove the claim, it is enough to show that ${\rm T}(n-1,n-\ell)$ satisfies the same recurrence relation \eqref{eq:reccST}. 
    By \eqref{eq:T}, we write 
    \begin{equation*}
        {\rm T}(n-2,n-\ell-1) = \frac{n!(n-\ell)}{2(n-\ell)!(\ell-1)!} \quad \text{and} \quad {\rm T}(n-2,n-\ell) = \frac{n!(\ell-1)}{2(n-\ell)!(\ell-1)!}. 
    \end{equation*}
    Then, we obtain the desired equality by  
    \begin{equation*}
        {\rm T}(n-2,n-\ell-1) + {\rm T}(n-2,n-\ell) + \ell \binom{n}{\ell} 
        =\frac{(n+1)!}{2(n-\ell)!(\ell-1)!} = {\rm T}(n-1,n-\ell), 
    \end{equation*}
    where we use $\ell \binom{n}{\ell} = \frac{n!}{(n-\ell)!(\ell-1)!}$.
\end{proof}

We now obtain the following explicit formula.

\begin{thm}\label{thm:dpoly ppaA}
    Let $\Pi=\Pi(Q)$ be the preprojective algebra of a Dynkin quiver $Q$ of type $\mathbb{A}_n$. Then, we have 
    \begin{equation}\label{eq:dPi A}
        d(\Pi;t) = \sum_{\ell=1}^n {\rm T}(n-1,n-\ell) 
        \Eul(\mathbb{A}_{\ell-1};t+1) \Eul(\mathbb{A}_{n-\ell};t+1).  
    \end{equation}
    In particular, we have 
    \begin{eqnarray}\label{eq:Lah numbers A}
        \Dim(\itaurigid \Pi) = n(n+1)2^{n-2} \quad \text{and} \quad 
        \Dim(\stautilt\Pi) = \frac{1}{6}(n+2)!\binom{n+1}{2}.
    \end{eqnarray}
For $n \leq 9$, we describe coefficients $d_j$ $(0\leq j \leq n-1)$ of the polynomial \eqref{eq:dPi A} in Table \ref{Table:dimppaA}. 

\begin{table}[h]\scriptsize
\renewcommand{\arraystretch}{0.95}
\begin{tabular}{r| r r r r r r r r r r}
    $n\backslash\,j$ & $0$ & $1$ & $2$ & $3$ & $4$ & $5$ & $6$ & $7$ & $8$\\ \hline 
    $1$ & $1$  \\ 
    $2$ & $6$& $12$ \\
    $3$ & $24$& $120$& $120$ \\ 
    $4$ & $80$& $760$& $1800$& $1200$  \\ 
    $5$ & $240$& $3900$& $16500$& $25200$& $12600$ & \\ 
    $6$ & $672$& $17724$& $119700$& $315000$& $352800$& $141120$  \\ 
    $7$ & $1792$& $74480$& $756560$& $3057600$& $5762400$& $5080320$& $1693440$  \\ 
    $8$ & $4608$& $296496$& $4369680$& $25492320$& $71971200$& $104993280$& $76204800$& $21772800$  \\ 
    $9$ & $11520$& $1134900$& $23701500$& $192099600$& $762841800$& $1638403200$& $1943222400$& $1197504000$& $299376000$ \\
\end{tabular} 
\\  \ 
    \caption{Numbers $d_j$ for $d$-polynomials of preprojective algebras of type $\mathbb{A}$}
    \label{Table:dimppaA}
\end{table}
\end{thm}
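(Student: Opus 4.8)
The plan is to derive the entire statement from Theorem~\ref{thm:orbit ppalg}, which already reduces the $d$-polynomial to a sum over vertices $\ell$ of the two ingredients $\Dim([e_\ell\Pi]_{\rm s})$ and $\Eul(Q_{\I};t+1)$. Thus no new structural input is needed; the work is to specialize these ingredients to type $\mathbb{A}$ and then carry out a couple of elementary summations.

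First I would establish the main formula \eqref{eq:dPi A}. With the labelling of $\mathbb{A}_n$ fixed above, deleting the vertex $\ell$ disconnects the linear graph into the two segments supported on $\{1,\ldots,\ell-1\}$ and $\{\ell+1,\ldots,n\}$, so $Q_{\I}=\mathbb{A}_{\ell-1}\sqcup\mathbb{A}_{n-\ell}$ (with the conventions $\mathbb{A}_0=\varnothing$ and $\Eul(\mathbb{A}_0;t)=1$ covering the endpoints $\ell=1,n$). By the product convention for $W$-Eulerian polynomials on disjoint unions this gives $\Eul(Q_{\I};t+1)=\Eul(\mathbb{A}_{\ell-1};t+1)\,\Eul(\mathbb{A}_{n-\ell};t+1)$. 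Substituting this, together with the identity $\Dim([e_\ell\Pi]_{\rm s})={\rm T}(n-1,n-\ell)$ supplied by Lemma~\ref{lem:OOpathalgA}, into Theorem~\ref{thm:orbit ppalg}(2) yields \eqref{eq:dPi A} at once.

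Next I would read off the two closed forms as the constant and leading coefficients. For $\Dim(\itaurigid\Pi)=d_0$, Theorem~\ref{thm:orbit ppalg}(3) gives $\Dim(\itaurigid\Pi)=\sum_{\ell=1}^n {\rm T}(n-1,n-\ell)$; writing ${\rm T}(n-1,n-\ell)=\tfrac{n(n+1)}{2}\binom{n-1}{n-\ell}$ and re-indexing by $k=n-\ell$ collapses the sum to $\tfrac{n(n+1)}{2}\sum_{k=0}^{n-1}\binom{n-1}{k}=\tfrac{n(n+1)}{2}\,2^{n-1}=n(n+1)2^{n-2}$. For $\Dim(\stautilt\Pi)=d_{n-1}$, the same theorem gives $\Dim(\stautilt\Pi)=\sum_{\ell=1}^n {\rm T}(n-1,n-\ell)\cdot\#W(Q_{\I})$; using $\#W(\mathbb{A}_m)=(m+1)!$ from \eqref{table:orderW} I would write $\#W(Q_{\I})=\ell!\,(n-\ell+1)!$, and after cancelling these factorials against $\binom{n-1}{n-\ell}$ each summand simplifies to $\tfrac{n(n+1)(n-1)!}{2}\,\ell(n+1-\ell)$. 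The remaining sum is the elementary identity $\sum_{\ell=1}^n \ell(n+1-\ell)=\binom{n+2}{3}$, and feeding it back—together with the expansion $(n+2)!=(n+2)(n+1)n\,(n-1)!$—reproduces exactly $\tfrac{1}{6}(n+2)!\binom{n+1}{2}$.

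Finally, the entries $d_j$ of Table~\ref{Table:dimppaA} are obtained by expanding the product \eqref{eq:dPi A} for each $n\leq 9$ using the known low-rank Eulerian polynomials $\Eul(\mathbb{A}_m;t)$. There is no genuine conceptual obstacle: once Theorem~\ref{thm:orbit ppalg} and Lemma~\ref{lem:OOpathalgA} are granted, the proof is a matter of tracking the index shifts correctly and performing two short summations. The step demanding the most care is the simplification for $\Dim(\stautilt\Pi)$, where the factorials must be cancelled cleanly and the resulting quadratic sum recognised as $\binom{n+2}{3}$ before repackaging the answer into the stated form.
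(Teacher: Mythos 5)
Your proposal is correct and follows essentially the same route as the paper: both deduce \eqref{eq:dPi A} by combining Theorem \ref{thm:orbit ppalg}(2) with Lemma \ref{lem:OOpathalgA} and the factorization $\Eul(Q_{\I};t)=\Eul(\mathbb{A}_{\ell-1};t)\Eul(\mathbb{A}_{n-\ell};t)$, and then obtain the two closed forms from Theorem \ref{thm:orbit ppalg}(3) via the same factorial cancellation leading to $\frac{(n+1)!}{2}\sum_{\ell}\ell(n-\ell+1)$. Your write-up merely spells out the elementary summations (the binomial sum and $\sum_\ell \ell(n+1-\ell)=\binom{n+2}{3}$) in slightly more detail than the paper does.
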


\begin{proof}
The underlying graph of the quiver $Q_{\I}$ is a disjoint union $\mathbb{A}_{\ell-1}\sqcup\mathbb{A}_{n-\ell}$. By  (\ref{disjoint}), it implies
$$
\Eul(Q_{\I};t) = \Eul(\mathbb{A}_{\ell-1};t) \Eul(\mathbb{A}_{n-\ell};t).
$$

Using Theorem \ref{thm:orbit ppalg}(2) and Lemma \ref{lem:OOpathalgA}, 
we get the desired equation \eqref{eq:dPi A}. 
For the latter assertions, by Theorem \ref{thm:orbit ppalg}(3) and Lemma \ref{lem:OOpathalgA}, we have
$$\Dim(\itaurigid \Pi)=\sum_{\ell=1}^n\Dim([e_\ell \Pi]_{\rm s}) =
\sum_{\ell=1}^n{\rm T}(n-1,n-\ell)=n(n+1)2^{n-2}.$$

On the other hand, we have 
\begin{equation*}
    \#W(Q_{\I}) = \#W(\mathbb{A}_{\ell-1}) \# W(\mathbb{A}_{n-\ell}) = \ell!(n-\ell+1)! 
\end{equation*}
by \eqref{table:orderW}. Then, we get 
\begin{eqnarray*}
\Dim(\stautilt\Pi)=\sum_{\ell=1}^n\Dim([e_\ell\Pi]_{\rm s})\cdot \#W(Q_{\I})
=\frac{(n+1)!}{2}\sum_{\ell=1}^n \ell(n-\ell+1) 
=\frac{1}{6}(n+2)!\binom{n+1}{2}. 
\end{eqnarray*}
It finishes the proof. 
\end{proof}

We remark that the numbers in the right-hand side of \eqref{eq:Lah numbers A} for support $\tau$-tilting modules are listed as a part of \emph{Lah numbers} \underline{A001754} ({O}nline {E}ncyclopedia of {I}nteger {S}equences \cite{OEIS}). It would be interesting to give a direct connection between these two different objects.

\subsection{Type $\mathbb{D}$}
For Dynkin quivers of type $\mathbb{D}$, 
we will obtain the following.

\begin{prop}\label{prop:OOppalgD}
Let $\Pi=\Pi(Q)$ be the preprojective algebra of a Dynkin quiver $Q$ of type $\mathbb{D}_n$. 
For $\ell\in Q_0=\{\pm1,2,\ldots,n-1\}$, we have 
    \begin{equation}\label{eq:dimQell D}
    \Dim([e_{\ell}\Pi]_{\rm s}) = 
    \begin{cases}
        n(n-1)2^{n-3} & \text{if $\ell = \pm 1$}, \\
        (n-\ell)(n+\ell-1)2^{n-\ell-1}\binom{n}{\ell} & \text{if $\ell \neq \pm1$.}
    \end{cases}
\end{equation}
\end{prop}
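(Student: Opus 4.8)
The plan is to adapt the type $\mathbb{A}$ argument of Lemma \ref{lem:OOpathalgA} to the branched situation. First I would record the explicit Loewy structure of the indecomposable projective $e_\ell\Pi$ for type $\mathbb{D}_n$, in the same spirit as the representation \eqref{proj:ppaA}; the essential new feature is the trivalent (fork) vertex $2$, so that the type-$\mathbb{A}$ staircase acquires a branch carrying the two tip labels $1$ and $-1$. By Theorem \ref{tau weyl} the members of $[e_\ell\Pi]_{\rm s}$ are precisely the nonzero indecomposable submodules $e_\ell I_w\subseteq e_\ell\Pi$, which I would read off as the connected, successor-closed subrepresentations of this picture, so that $\dim_k$ of such a module is its number of boxes, exactly as $\dim_k M_\sigma=\area(\sigma)$ in Proposition \ref{prop:bij ppalgA}.

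With this model in hand I would compute $\Dim([e_\ell\Pi]_{\rm s})=\sum_M\dim_k M$ by cases. For the tips $\ell=\pm1$, the diagram automorphism exchanging $1$ and $-1$ forces the two sums to agree, so it suffices to analyze $e_{1}\Pi$; a direct enumeration of its indecomposable $\tau$-rigid submodules should yield $n(n-1)2^{n-3}$. For $\ell\neq\pm1$ I would either set up a recurrence in $(n,\ell)$ in the manner of \eqref{eq:reccST} — splitting the combinatorial objects both by their first step and by how they cross the fork — and check that the proposed closed form $(n-\ell)(n+\ell-1)2^{n-\ell-1}\binom{n}{\ell}$ satisfies it together with the appropriate base cases, or, alternatively, apply the Fubini-type identity
\[
\Dim([e_\ell\Pi]_{\rm s})=\sum_{v}\#\{M\in[e_\ell\Pi]_{\rm s}\mid v\in M\},
\]
where $v$ runs over the vertices of the representation $e_\ell\Pi$; here the factor $2^{n-\ell-1}$ should arise from the independent choices produced by the branch and $\binom{n}{\ell}$ from the underlying staircase count.

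The main obstacle is exactly the fork. In type $\mathbb{A}$ every nonzero submodule of $e_\ell\Pi$ is automatically connected, hence indecomposable, and $\tau$-rigid, which is what makes Proposition \ref{prop:bij ppalgA} a clean lattice-path enumeration; in type $\mathbb{D}$ the branch at vertex $2$ produces successor-closed subrepresentations that may be disconnected (hence decomposable) or fail to be $\tau$-rigid, so the delicate step is to characterize precisely those subrepresentations that yield indecomposable $\tau$-rigid modules. Once that characterization is secured, the remaining work is bookkeeping: separating the contribution of the two tips from that of the type-$\mathbb{A}$ spine and organizing the summation so that the doubling factor $2^{n-\ell-1}$ and the binomial $\binom{n}{\ell}$ emerge, which is where essentially all of the computational effort will lie.
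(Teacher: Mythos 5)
Your plan for the tip vertices $\ell=\pm1$ is essentially the paper's argument: there $e_{\pm1}\Pi$ is a thin module, every submodule is $\tau$-rigid and corresponds to a lattice path of length $n-1$ inside the staircase (Proposition \ref{bij:OOppalgDpm1}, imported from \cite[Theorem 6.5]{IRRT18}), and the sum of areas is evaluated by the same first-step recurrence you describe, giving $n(n-1)2^{n-3}$. That half of the proposal is sound.

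For $\ell\neq\pm1$ there is a genuine gap, and it is exactly the one you flag and then defer. The representation of $e_\ell\Pi$ at a spine vertex is \emph{not} thin: past the fork the composition multiplicities at the vertices $j\geq 2$ equal $2$ in each row (the paper's array \eqref{array:D} carries both labels $j$ and $-j$ at such vertices, glued by scalars $\alpha+\beta=1$), so submodules are no longer classified by successor-closed subquivers, disconnected or otherwise, and "number of boxes" is not a well-defined dimension count until one has an actual classification. The paper resolves this by importing the nontrivial parametrization of \cite[Theorem 6.12]{IRRT18}: the indecomposable $\tau$-rigid submodules of $e_\ell\Pi$ (together with $0$) are in bijection with decreasing sequences $u=(u_{\ell+1},\dots,u_n)$ in $\{\pm1,\dots,\pm n\}$, with $\dim_k Y_u={\rm n}(u)=\sum_{i=\ell+1}^n(i+u_i^*)$ (Lemma \ref{n to w*}). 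The sum is then evaluated not by a recurrence or a vertex-by-vertex Fubini count but by the involution $u\mapsto -u$, which kills the term $\sum_u\sum_i u_i$, leaving $\#\mathbb{U}_\ell\cdot\sum_{i=\ell+1}^n i$ minus twice $\sum_u\#\{i\mid u_i>0\}=(n-\ell)2^{n-\ell-1}\binom{n}{\ell}$; your heuristic attribution of the factor $2^{n-\ell-1}$ to "branch choices" and $\binom{n}{\ell}$ to "the staircase" does not reflect how these factors actually arise. Without supplying the classification (or an equivalent characterization of which subrepresentations are indecomposable $\tau$-rigid) and a concrete evaluation of the resulting sum, the case $\ell\neq\pm1$ remains unproved in your proposal.
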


After proving Proposition \ref{prop:OOppalgD}, we obtain the following result.

\begin{thm}
Let $\Pi=\Pi(Q)$ be the preprojective algebra of a Dynkin quiver $Q$ of type $\mathbb{D}_n$. Then, we have 
\begin{equation}\label{eq:dPi D}
\begin{array}{llllc}\displaystyle
    d(\Pi;t) &=& \displaystyle n(n-1)2^{n-2}\Eul(\mathbb{A}_{n-1};t+1) \\
    &+& \displaystyle\sum_{\ell = 2}^{n-1}(n-\ell)(n+\ell-1)2^{n-\ell-1}\binom{n}{\ell} \Eul(\mathbb{D}_{\ell};t+1) \Eul(\mathbb{A}_{n-\ell-1};t+1). 
\end{array}
\end{equation}
Here, we regard $\mathbb{D}_2 = \mathbb{A}_1\times \mathbb{A}_1$ and $\mathbb{D}_3 = \mathbb{A}_3$.
In particular, we have 
\begin{eqnarray*}
    \Dim(\itaurigid \Pi) &=& n(n-1)2^{n-2}+\sum_{\ell=2}^{n-1}(n+\ell-1)(n-\ell)2^{n-\ell-1}\binom{n}{\ell} \quad \text{and} \\ 
    \Dim(\stautilt \Pi) &=& 2n(n-1)2^{n-3}n!+
    \sum_{\ell=2}^{n-1}(n+\ell-1)(n-\ell)2^{2n-\ell-2}n!(n-\ell)!
    \binom{n}{\ell}.
\end{eqnarray*}
For $n \leq 9$, we describe coefficients $d_j$ $(0\leq j \leq n-1)$ of the polynomial \eqref{eq:dPi D} in Table \ref{Table:dimppaD}. 

\begin{table}[h]\scriptsize
\renewcommand{\arraystretch}{0.95}
\begin{tabular}{r|rrrrrrrrr}
    $n\backslash\,j$ & $0$ & $1$ & $2$ & $3$ & $4$ & $5$ & $6$ & $7$ & $8$\\ \hline 
    $4$ & $192$& $1728$& $4032$& $2688$  \\ 
    $5$ & $1200$& $18400$& $76000$& $115200$& $57600$ \\ 
    $6$ & $6360$& $165360$& $1091520$& $2839680$& $3168000$& $1267200$ \\ 
    $7$ & $30072$& $1331904$& $13374144$& $53437440$& $100101120$& $88058880$& $29352960$  \\ 
    $8$ & $131040$& $9935296$& $147721728$& $855421952$& $2399846400$& $3488808960$& $2528870400$& $722534400$ \\ 
    $9$ & $537696$& $70013952$& $1517147136$& $12300115968$& $48615727104$& $104021729280$& $123112120320$& $75804180480$& $18951045120$ \\
\end{tabular}  
\\  \ 
    \caption{Numbers $d_j$ for $d$-polynomials of  preprojective algebras of type $\mathbb{D}$}
    \label{Table:dimppaD}
\end{table}
\end{thm}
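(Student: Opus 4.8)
The plan is to obtain the theorem as a direct specialization of the general orbit decomposition in Theorem \ref{thm:orbit ppalg}, feeding in the dimension counts supplied by Proposition \ref{prop:OOppalgD}. Parts (2) and (3) of that theorem express $d(\Pi;t)$, $\Dim(\itaurigid\Pi)$ and $\Dim(\stautilt\Pi)$ purely in terms of the three data $\Dim([e_\ell\Pi]_{\rm s})$, the polynomials $\Eul(Q_{\I};t+1)$, and the orders $\#W(Q_{\I})$, summed over the vertices $\ell\in Q_0$. The first of these is handed to us by Proposition \ref{prop:OOppalgD}, so the only preparatory step is to identify, for each $\ell$, the isomorphism type of the quiver $Q_{\I}$ obtained by deleting $\ell$ from the $\mathbb{D}_n$-diagram.

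First I would work out this deletion combinatorially using the labelling $Q_0=\{\pm1,2,\ldots,n-1\}$ fixed in the text, with the two leaves $1,-1$ attached to the vertex $2$ and the tail $2-3-\cdots-(n-1)$. Deleting a leaf $\ell=\pm1$ leaves the path on the remaining $n-1$ vertices, so $Q_{\I}=\mathbb{A}_{n-1}$. Deleting any other vertex $2\le\ell\le n-1$ separates the fork part $\{\pm1,2,\ldots,\ell-1\}$, which is a $\mathbb{D}_\ell$-diagram, from the remaining tail $\{\ell+1,\ldots,n-1\}$, which is an $\mathbb{A}_{n-\ell-1}$-diagram; hence $Q_{\I}=\mathbb{D}_\ell\sqcup\mathbb{A}_{n-\ell-1}$. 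I would then record the degenerate conventions $\mathbb{D}_2=\mathbb{A}_1\times\mathbb{A}_1$, $\mathbb{D}_3=\mathbb{A}_3$ and the empty diagram $\mathbb{A}_0$ (occurring at $\ell=n-1$) so that the decomposition is uniform at the boundary values, and invoke the multiplicativity $\Eul(Q_1\sqcup Q_2;t)=\Eul(Q_1;t)\Eul(Q_2;t)$ adopted just before Subsection \ref{sec:reduction ppalg}.

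With this in place, formula \eqref{eq:dPi D} follows from Theorem \ref{thm:orbit ppalg}(2): the two symmetric leaves $\ell=\pm1$ contribute two equal summands $n(n-1)2^{n-3}\Eul(\mathbb{A}_{n-1};t+1)$, which combine (producing the factor $2$) into the first line, while each vertex $2\le\ell\le n-1$ contributes the $\ell$-th summand of the second line. The identity for $\Dim(\itaurigid\Pi)$ is the special case of Theorem \ref{thm:orbit ppalg}(3) that simply sums the numbers of Proposition \ref{prop:OOppalgD}. For $\Dim(\stautilt\Pi)$ I would read off the group orders from \eqref{table:orderW}, namely $\#W(\mathbb{A}_{n-1})=n!$ and $\#W(\mathbb{D}_\ell\sqcup\mathbb{A}_{n-\ell-1})=2^{\ell-1}\ell!\,(n-\ell)!$ (the conventions above making this valid for $\ell=2,3$ as well), multiply each by the corresponding $\Dim([e_\ell\Pi]_{\rm s})$, and collect terms.

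Since Proposition \ref{prop:OOppalgD} is assumed, there is no deep obstacle left in the present theorem; the argument is essentially bookkeeping. The two points that require care are organizational: keeping the leaves $\ell=1$ and $\ell=-1$ as distinct summands over $Q_0$ so that the coefficient $2$ in the first line of \eqref{eq:dPi D} is generated correctly, and treating the small-rank and empty sub-diagrams $\mathbb{D}_2,\mathbb{D}_3,\mathbb{A}_0$ consistently so that the closed forms stay valid at $\ell=2$ and $\ell=n-1$. The tabulated coefficients $d_j$ for $n\le9$ in Table \ref{Table:dimppaD} then serve as an arithmetic consistency check on the final collection of terms.
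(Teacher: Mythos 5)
Your proposal is correct and follows essentially the same route as the paper's proof: identify $Q_{\I}$ as $\mathbb{A}_{n-1}$ for $\ell=\pm1$ and as $\mathbb{D}_{\ell}\sqcup\mathbb{A}_{n-\ell-1}$ for $2\leq\ell\leq n-1$, then feed Proposition \ref{prop:OOppalgD}, the multiplicativity of $\Eul$ on disjoint unions, and the group orders from \eqref{table:orderW} into Theorem \ref{thm:orbit ppalg}(2)--(3). As a side remark, carrying out your final ``collect terms'' step with $\#W(\mathbb{D}_{\ell}\sqcup\mathbb{A}_{n-\ell-1})=2^{\ell-1}\ell!\,(n-\ell)!$ yields $(n+\ell-1)(n-\ell)2^{n-2}n!$ for each summand of $\Dim(\stautilt\Pi)$, which agrees with Table \ref{Table:dimppaD} (e.g.\ $2688$ for $n=4$), so your bookkeeping is sound.
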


\begin{proof}
    The underlying graph of $Q_{\I}$ is $\mathbb{A}_{n-1}$ for $\ell=\pm1$ and $\mathbb{D}_{\ell}\sqcup \mathbb{A}_{n-\ell-1}$ for $\ell\neq \pm1$. 
    Thus, 
    \begin{equation}\label{eq:EulQell D}
    \Eul(Q_{\I};t) =  \begin{cases} 
    \Eul(\mathbb{A}_{n-1};t) & \text{if $\ell=\pm1$,} \\ 
    \Eul(\mathbb{D}_{\ell};t)\Eul(\mathbb{A}_{n-\ell};t) & \text{if $\ell\neq \pm1$.}
    \end{cases}
    \end{equation}
    By \eqref{eq:dimQell D} and \eqref{eq:EulQell D}, we get the desired equation \eqref{eq:dPi D}. 
    In addition, by Theorem \ref{thm:orbit ppalg}(3), we have the latter assertions since $\# W(\mathbb{A}_m) = (m+1)!$ and $\#W(\mathbb{D}_{m}) = 2^{m-1}m!$ by \eqref{table:orderW}. 
\end{proof}

In the rest, we prove Proposition \ref{prop:OOppalgD} for each case of $\ell \in Q_0=\{\pm 1,2,\ldots,n-1\}$. 

\subsubsection{Case $\ell = \pm1$} \label{subsec:Dpm1}
Let $\ell = \pm1$. 
In this case, our discussion is similar to the case of type $\mathbb{A}$ in Section \ref{sec:ppaA}. 
Firstly, the representation of $e_\ell\Pi$ is given as follows (see \cite{IRRT18}). 

\begin{equation} \label{proj:ppaDpm1}   
\begin{split}
\begin{tikzpicture}
    \coordinate(x) at(1.4,0);
    \coordinate(y) at(0,0.9);
    \node(00) at ($0*(x)+0*(y)$) {\tiny$n-1$}; 
    \node(10) at ($1*(x)+0*(y)$) {\tiny$n-2$};
    \node(20) at ($2*(x)+0*(y)$) {\tiny$n-3$}; 
    \node(30) at ($3*(x)+0*(y)$) {\tiny$\cdots$};
    \node(40) at ($4*(x)+0*(y)$) {\tiny$3$};
    \node(50) at ($5*(x)+0*(y)$) {\tiny$2$}; 
    \node(60) at ($6*(x)+0*(y)$) {\tiny$\pm(-1)^n$};
    \node(01) at ($0*(x)+1*(y)$) {\tiny$n-2$}; 
    \node(11) at ($1*(x)+1*(y)$) {\tiny$n-3$};
    \node(21) at ($2*(x)+1*(y)$) {\tiny$n-4$}; 
    \node(31) at ($3*(x)+1*(y)$) {\tiny$\cdots$};
    \node(41) at ($4*(x)+1*(y)$) {\tiny$2$};
    \node(51) at ($5*(x)+1*(y)$) {\tiny$\mp(-1)^n$}; 
    \node(02) at ($0*(x)+2*(y)$) {\tiny\rotatebox{90}{$\cdots$}}; 
    \node(12) at ($1*(x)+2*(y)$) {\tiny\rotatebox{90}{$\cdots$}};
    \node(22) at ($2*(x)+2*(y)$) {\tiny\rotatebox{90}{$\cdots$}}; 
    \node(32) at ($3*(x)+2*(y)$) {\tiny\rotatebox{135}{$\cdots$}};
    \node(03) at ($0*(x)+3*(y)$) {\tiny$3$}; 
    \node(13) at ($1*(x)+3*(y)$) {\tiny$2$};
    \node(23) at ($2*(x)+3*(y)$) {\tiny$\pm1$}; 
    \node(04) at ($0*(x)+4*(y)$) {\tiny$2$}; 
    \node(14) at ($1*(x)+4*(y)$) {\tiny$\mp1$};
    \node(05) at ($0*(x)+5*(y)$) {\tiny$\pm1$}; 
    \draw[->] (00)--(10); 
    \draw[->] (10)--(20); 
    \draw[->] (20)--(30);
    \draw[->] (30)--(40);
    \draw[->] (40)--(50);
    \draw[->] (50)--(60);
    \draw[->] (01)--(11);
    \draw[->] (11)--(21);
    \draw[->] (21)--(31);
    \draw[->] (31)--(41);
    \draw[->] (41)--(51);
    \draw[->] (03)--(13);
    \draw[->] (13)--(23);
    \draw[->] (04)--(14);
    \draw[<-] (00)--(01);
    \draw[<-] (01)--(02);
    \draw[<-] (02)--(03);
    \draw[<-] (03)--(04);
    \draw[<-] (04)--(05);
    \draw[<-] (10)--(11);
    \draw[<-] (11)--(12);
    \draw[<-] (12)--(13);
    \draw[<-] (13)--(14);
    \draw[<-] (20)--(21);
    \draw[<-] (21)--(22);
    \draw[<-] (22)--(23);
    \draw[<-] (40)--(41);
    \draw[<-] (50)--(51);
\end{tikzpicture}
\end{split}
\end{equation}

Here, each number $i$ shows a one dimensional $k$-vector space $k$ lying on the vertex $i$, and each arrow is the identity map of $k$. 
Then, submodules of $e_\ell\Pi$ correspond bijectively to sub-quivers that are closed under successors. 
We describe them by using lattice path as follows. 

Let $\mathcal{S}':=\{(x,y)\in \mathbb{R}^2 \mid x,y\geq 0, x+y \leq n\}$. 
Let $\mathbb{L}'(n-1)$ be the set of lattice paths $\sigma$ 
starting at $(0,0)$ and ending at one of $(0,n-1),(1,n-2),\ldots,(n-1,0)$. In other words, they are precisely lattice paths of length $n-1$ starting at $(0,0)$. 
Thus, the number of lattice paths in $\mathbb{L}'(n-1)$ is $2^{n-1}$. 
For $\sigma\in \mathbb{L}'(n-1)$, the area $\area(\sigma)$ is defined to be the number of unit squares in $\mathcal{S'}$ which is underneath or right to $\sigma$.

We associate each unit square $[i,i+1]\times [j,j+1] \subset \mathcal{S}'$ to the vertex of \eqref{proj:ppaDpm1} located at $i$-th column from the left and $j$-th row from the bottom. 
Then, we associate $X_\sigma \subset e_\ell\Pi$ to each lattice path $\sigma\in \mathbb{L}'(n)$ as the submodule given by the sub-quiver of \eqref{proj:ppaDpm1} consisting of all vertices underneath or right to $\sigma$. 
In the next figure, we give some examples of lattice paths $\sigma\in \mathbb{L}'(5)$ and the corresponding submodule $X_{\sigma}$ for the case when $n=6$ and $\ell=1$.

\begin{equation*}    
\begin{tabular}{ccccccccc}
    \begin{tikzpicture}
    \coordinate(x) at(0.6,0);
    \coordinate(y) at(0,0.6);
    \coordinate (00) at ($0*(x)+0*(y)$); 
    \coordinate (10) at ($1*(x)+0*(y)$); 
    \coordinate (20) at ($2*(x)+0*(y)$); 
    \coordinate (30) at ($3*(x)+0*(y)$); 
    \coordinate (40) at ($4*(x)+0*(y)$); 
    \coordinate (50) at ($5*(x)+0*(y)$); 
    
    \coordinate (01) at ($0*(x)+1*(y)$); 
    \coordinate (11) at ($1*(x)+1*(y)$); 
    \coordinate (21) at ($2*(x)+1*(y)$); 
    \coordinate (31) at ($3*(x)+1*(y)$); 
    \coordinate (41) at ($4*(x)+1*(y)$); 
    \coordinate (51) at ($5*(x)+1*(y)$); 

    \coordinate (02) at ($0*(x)+2*(y)$); 
    \coordinate (12) at ($1*(x)+2*(y)$); 
    \coordinate (22) at ($2*(x)+2*(y)$); 
    \coordinate (32) at ($3*(x)+2*(y)$); 
    \coordinate (42) at ($4*(x)+2*(y)$); 
    \coordinate (52) at ($5*(x)+2*(y)$); 
    
    \coordinate (03) at ($0*(x)+3*(y)$); 
    \coordinate (13) at ($1*(x)+3*(y)$); 
    \coordinate (23) at ($2*(x)+3*(y)$); 
    \coordinate (33) at ($3*(x)+3*(y)$); 
    \coordinate (43) at ($4*(x)+3*(y)$); 
    \coordinate (53) at ($5*(x)+3*(y)$); 

    \coordinate (04) at ($0*(x)+4*(y)$); 
    \coordinate (14) at ($1*(x)+4*(y)$); 
    \coordinate (24) at ($2*(x)+4*(y)$); 
    \coordinate (34) at ($3*(x)+4*(y)$); 
    \coordinate (44) at ($4*(x)+4*(y)$); 
    \coordinate (54) at ($5*(x)+4*(y)$); 
    
    \coordinate (05) at ($0*(x)+5*(y)$); 
    \coordinate (15) at ($1*(x)+5*(y)$); 
    \coordinate (25) at ($2*(x)+5*(y)$); 
    \coordinate (35) at ($3*(x)+5*(y)$); 
    \coordinate (45) at ($4*(x)+5*(y)$); 
    \coordinate (55) at ($5*(x)+5*(y)$); 

    \draw[dotted, opacity=0.8] (00)--(05);
    \draw[dotted, opacity=0.8] (10)--(15);
    \draw[dotted, opacity=0.8] (20)--(24);
    \draw[dotted, opacity=0.8] (30)--(33);
    \draw[dotted, opacity=0.8] (40)--(42);
    \draw[dotted, opacity=0.8] (50)--(51);
    \draw[dotted, opacity=0.8] (00)--(50);
    \draw[dotted, opacity=0.8] (01)--(51);
    \draw[dotted, opacity=0.8] (02)--(42);
    \draw[dotted, opacity=0.8] (03)--(33);
    \draw[dotted, opacity=0.8] (04)--(24);
    \draw[dotted, opacity=0.8] (05)--(15);
    
    \node at($0.5*(x)+0.5*(y)$) {\footnotesize$5$};
    \node at($1.5*(x)+0.5*(y)$) {\footnotesize$4$};
    \node at($2.5*(x)+0.5*(y)$) {\footnotesize$3$};
    \node at($3.5*(x)+0.5*(y)$) {\footnotesize$2$};
    \node at($4.5*(x)+0.5*(y)$) {\footnotesize$1$};
    \node at($0.5*(x)+1.5*(y)$) {\footnotesize$4$};
    \node at($1.5*(x)+1.5*(y)$) {\footnotesize$3$};
    \node at($2.5*(x)+1.5*(y)$) {\footnotesize$2$};
    \node at($3.5*(x)+1.5*(y)$) {\footnotesize$-1$};
    \node at($0.5*(x)+2.5*(y)$) {\footnotesize$3$};
    \node at($1.5*(x)+2.5*(y)$) {\footnotesize$2$};
    \node at($2.5*(x)+2.5*(y)$) {\footnotesize$1$};
    \node at($0.5*(x)+3.5*(y)$) {\footnotesize$2$};
    \node at($1.5*(x)+3.5*(y)$) {\footnotesize$-1$};
    \node at($0.5*(x)+4.5*(y)$) {\footnotesize$1$};

    \node at(00) {\footnotesize$\bullet$};
    \node at(50) {\footnotesize$\bullet$};
    \draw[thick] (00)--(50);
    \end{tikzpicture}
    & \quad 
    \begin{tikzpicture}
    \coordinate(x) at(0.6,0);
    \coordinate(y) at(0,0.6);
    \coordinate (00) at ($0*(x)+0*(y)$); 
    \coordinate (10) at ($1*(x)+0*(y)$); 
    \coordinate (20) at ($2*(x)+0*(y)$); 
    \coordinate (30) at ($3*(x)+0*(y)$); 
    \coordinate (40) at ($4*(x)+0*(y)$); 
    \coordinate (50) at ($5*(x)+0*(y)$); 
    
    \coordinate (01) at ($0*(x)+1*(y)$); 
    \coordinate (11) at ($1*(x)+1*(y)$); 
    \coordinate (21) at ($2*(x)+1*(y)$); 
    \coordinate (31) at ($3*(x)+1*(y)$); 
    \coordinate (41) at ($4*(x)+1*(y)$); 
    \coordinate (51) at ($5*(x)+1*(y)$); 

    \coordinate (02) at ($0*(x)+2*(y)$); 
    \coordinate (12) at ($1*(x)+2*(y)$); 
    \coordinate (22) at ($2*(x)+2*(y)$); 
    \coordinate (32) at ($3*(x)+2*(y)$); 
    \coordinate (42) at ($4*(x)+2*(y)$); 
    \coordinate (52) at ($5*(x)+2*(y)$); 
    
    \coordinate (03) at ($0*(x)+3*(y)$); 
    \coordinate (13) at ($1*(x)+3*(y)$); 
    \coordinate (23) at ($2*(x)+3*(y)$); 
    \coordinate (33) at ($3*(x)+3*(y)$); 
    \coordinate (43) at ($4*(x)+3*(y)$); 
    \coordinate (53) at ($5*(x)+3*(y)$); 

    \coordinate (04) at ($0*(x)+4*(y)$); 
    \coordinate (14) at ($1*(x)+4*(y)$); 
    \coordinate (24) at ($2*(x)+4*(y)$); 
    \coordinate (34) at ($3*(x)+4*(y)$); 
    \coordinate (44) at ($4*(x)+4*(y)$); 
    \coordinate (54) at ($5*(x)+4*(y)$); 
    
    \coordinate (05) at ($0*(x)+5*(y)$); 
    \coordinate (15) at ($1*(x)+5*(y)$); 
    \coordinate (25) at ($2*(x)+5*(y)$); 
    \coordinate (35) at ($3*(x)+5*(y)$); 
    \coordinate (45) at ($4*(x)+5*(y)$); 
    \coordinate (55) at ($5*(x)+5*(y)$); 

    \fill[white!30!lightgray] (40)rectangle(51); 
    \draw[very thick] (00)--(40)--(41);

    \draw[dotted, opacity=0.8] (00)--(05);
    \draw[dotted, opacity=0.8] (10)--(15);
    \draw[dotted, opacity=0.8] (20)--(24);
    \draw[dotted, opacity=0.8] (30)--(33);
    \draw[dotted, opacity=0.8] (40)--(42);
    \draw[dotted, opacity=0.8] (50)--(51);
    \draw[dotted, opacity=0.8] (00)--(50);
    \draw[dotted, opacity=0.8] (01)--(51);
    \draw[dotted, opacity=0.8] (02)--(42);
    \draw[dotted, opacity=0.8] (03)--(33);
    \draw[dotted, opacity=0.8] (04)--(24);
    \draw[dotted, opacity=0.8] (05)--(15);
    
    \node at($0.5*(x)+0.5*(y)$) {\footnotesize$5$};
    \node at($1.5*(x)+0.5*(y)$) {\footnotesize$4$};
    \node at($2.5*(x)+0.5*(y)$) {\footnotesize$3$};
    \node at($3.5*(x)+0.5*(y)$) {\footnotesize$2$};
    \node at($4.5*(x)+0.5*(y)$) {\footnotesize$1$};
    \node at($0.5*(x)+1.5*(y)$) {\footnotesize$4$};
    \node at($1.5*(x)+1.5*(y)$) {\footnotesize$3$};
    \node at($2.5*(x)+1.5*(y)$) {\footnotesize$2$};
    \node at($3.5*(x)+1.5*(y)$) {\footnotesize$-1$};
    \node at($0.5*(x)+2.5*(y)$) {\footnotesize$3$};
    \node at($1.5*(x)+2.5*(y)$) {\footnotesize$2$};
    \node at($2.5*(x)+2.5*(y)$) {\footnotesize$1$};
    \node at($0.5*(x)+3.5*(y)$) {\footnotesize$2$};
    \node at($1.5*(x)+3.5*(y)$) {\footnotesize$-1$};
    \node at($0.5*(x)+4.5*(y)$) {\footnotesize$1$};

    \node at(00) {\footnotesize$\bullet$};
    \node at(41) {\footnotesize$\bullet$};
    \end{tikzpicture}
    & \quad 
    \begin{tikzpicture}
    \coordinate(x) at(0.6,0);
    \coordinate(y) at(0,0.6);
    \coordinate (00) at ($0*(x)+0*(y)$); 
    \coordinate (10) at ($1*(x)+0*(y)$); 
    \coordinate (20) at ($2*(x)+0*(y)$); 
    \coordinate (30) at ($3*(x)+0*(y)$); 
    \coordinate (40) at ($4*(x)+0*(y)$); 
    \coordinate (50) at ($5*(x)+0*(y)$); 
    
    \coordinate (01) at ($0*(x)+1*(y)$); 
    \coordinate (11) at ($1*(x)+1*(y)$); 
    \coordinate (21) at ($2*(x)+1*(y)$); 
    \coordinate (31) at ($3*(x)+1*(y)$); 
    \coordinate (41) at ($4*(x)+1*(y)$); 
    \coordinate (51) at ($5*(x)+1*(y)$); 

    \coordinate (02) at ($0*(x)+2*(y)$); 
    \coordinate (12) at ($1*(x)+2*(y)$); 
    \coordinate (22) at ($2*(x)+2*(y)$); 
    \coordinate (32) at ($3*(x)+2*(y)$); 
    \coordinate (42) at ($4*(x)+2*(y)$); 
    \coordinate (52) at ($5*(x)+2*(y)$); 
    
    \coordinate (03) at ($0*(x)+3*(y)$); 
    \coordinate (13) at ($1*(x)+3*(y)$); 
    \coordinate (23) at ($2*(x)+3*(y)$); 
    \coordinate (33) at ($3*(x)+3*(y)$); 
    \coordinate (43) at ($4*(x)+3*(y)$); 
    \coordinate (53) at ($5*(x)+3*(y)$); 

    \coordinate (04) at ($0*(x)+4*(y)$); 
    \coordinate (14) at ($1*(x)+4*(y)$); 
    \coordinate (24) at ($2*(x)+4*(y)$); 
    \coordinate (34) at ($3*(x)+4*(y)$); 
    \coordinate (44) at ($4*(x)+4*(y)$); 
    \coordinate (54) at ($5*(x)+4*(y)$); 
    
    \coordinate (05) at ($0*(x)+5*(y)$); 
    \coordinate (15) at ($1*(x)+5*(y)$); 
    \coordinate (25) at ($2*(x)+5*(y)$); 
    \coordinate (35) at ($3*(x)+5*(y)$); 
    \coordinate (45) at ($4*(x)+5*(y)$); 
    \coordinate (55) at ($5*(x)+5*(y)$); 

    \fill[white!30!lightgray] (10)rectangle(51);
    \fill[white!30!lightgray] (21)rectangle(42);
    \fill[white!30!lightgray] (22)rectangle(33);
    \draw[very thick] (00)--(10)--(11)--(21)--(23);

    \draw[dotted, opacity=0.8] (00)--(05);
    \draw[dotted, opacity=0.8] (10)--(15);
    \draw[dotted, opacity=0.8] (20)--(24);
    \draw[dotted, opacity=0.8] (30)--(33);
    \draw[dotted, opacity=0.8] (40)--(42);
    \draw[dotted, opacity=0.8] (50)--(51);
    \draw[dotted, opacity=0.8] (00)--(50);
    \draw[dotted, opacity=0.8] (01)--(51);
    \draw[dotted, opacity=0.8] (02)--(42);
    \draw[dotted, opacity=0.8] (03)--(33);
    \draw[dotted, opacity=0.8] (04)--(24);
    \draw[dotted, opacity=0.8] (05)--(15);
    
    \node at($0.5*(x)+0.5*(y)$) {\footnotesize$5$};
    \node at($1.5*(x)+0.5*(y)$) {\footnotesize$4$};
    \node at($2.5*(x)+0.5*(y)$) {\footnotesize$3$};
    \node at($3.5*(x)+0.5*(y)$) {\footnotesize$2$};
    \node at($4.5*(x)+0.5*(y)$) {\footnotesize$1$};
    \node at($0.5*(x)+1.5*(y)$) {\footnotesize$4$};
    \node at($1.5*(x)+1.5*(y)$) {\footnotesize$3$};
    \node at($2.5*(x)+1.5*(y)$) {\footnotesize$2$};
    \node at($3.5*(x)+1.5*(y)$) {\footnotesize$-1$};
    \node at($0.5*(x)+2.5*(y)$) {\footnotesize$3$};
    \node at($1.5*(x)+2.5*(y)$) {\footnotesize$2$};
    \node at($2.5*(x)+2.5*(y)$) {\footnotesize$1$};
    \node at($0.5*(x)+3.5*(y)$) {\footnotesize$2$};
    \node at($1.5*(x)+3.5*(y)$) {\footnotesize$-1$};
    \node at($0.5*(x)+4.5*(y)$) {\footnotesize$1$};

    \node at(00) {\footnotesize$\bullet$};
    \node at(23) {\footnotesize$\bullet$};
    \end{tikzpicture}
    & \quad 
    \begin{tikzpicture}
    \coordinate(x) at(0.6,0);
    \coordinate(y) at(0,0.6);
    \coordinate (00) at ($0*(x)+0*(y)$); 
    \coordinate (10) at ($1*(x)+0*(y)$); 
    \coordinate (20) at ($2*(x)+0*(y)$); 
    \coordinate (30) at ($3*(x)+0*(y)$); 
    \coordinate (40) at ($4*(x)+0*(y)$); 
    \coordinate (50) at ($5*(x)+0*(y)$); 
    
    \coordinate (01) at ($0*(x)+1*(y)$); 
    \coordinate (11) at ($1*(x)+1*(y)$); 
    \coordinate (21) at ($2*(x)+1*(y)$); 
    \coordinate (31) at ($3*(x)+1*(y)$); 
    \coordinate (41) at ($4*(x)+1*(y)$); 
    \coordinate (51) at ($5*(x)+1*(y)$); 

    \coordinate (02) at ($0*(x)+2*(y)$); 
    \coordinate (12) at ($1*(x)+2*(y)$); 
    \coordinate (22) at ($2*(x)+2*(y)$); 
    \coordinate (32) at ($3*(x)+2*(y)$); 
    \coordinate (42) at ($4*(x)+2*(y)$); 
    \coordinate (52) at ($5*(x)+2*(y)$); 
    
    \coordinate (03) at ($0*(x)+3*(y)$); 
    \coordinate (13) at ($1*(x)+3*(y)$); 
    \coordinate (23) at ($2*(x)+3*(y)$); 
    \coordinate (33) at ($3*(x)+3*(y)$); 
    \coordinate (43) at ($4*(x)+3*(y)$); 
    \coordinate (53) at ($5*(x)+3*(y)$); 

    \coordinate (04) at ($0*(x)+4*(y)$); 
    \coordinate (14) at ($1*(x)+4*(y)$); 
    \coordinate (24) at ($2*(x)+4*(y)$); 
    \coordinate (34) at ($3*(x)+4*(y)$); 
    \coordinate (44) at ($4*(x)+4*(y)$); 
    \coordinate (54) at ($5*(x)+4*(y)$); 
    
    \coordinate (05) at ($0*(x)+5*(y)$); 
    \coordinate (15) at ($1*(x)+5*(y)$); 
    \coordinate (25) at ($2*(x)+5*(y)$); 
    \coordinate (35) at ($3*(x)+5*(y)$); 
    \coordinate (45) at ($4*(x)+5*(y)$); 
    \coordinate (55) at ($5*(x)+5*(y)$); 

    \fill[white!30!lightgray] (00)rectangle(51);
    \fill[white!30!lightgray] (01)rectangle(42);
    \fill[white!30!lightgray] (02)rectangle(33);
    \fill[white!30!lightgray] (03)rectangle(24);
    \fill[white!30!lightgray] (04)rectangle(15);
    \draw[very thick] (00)--(05);

    \draw[dotted, opacity=0.8] (00)--(05);
    \draw[dotted, opacity=0.8] (10)--(15);
    \draw[dotted, opacity=0.8] (20)--(24);
    \draw[dotted, opacity=0.8] (30)--(33);
    \draw[dotted, opacity=0.8] (40)--(42);
    \draw[dotted, opacity=0.8] (50)--(51);
    \draw[dotted, opacity=0.8] (00)--(50);
    \draw[dotted, opacity=0.8] (01)--(51);
    \draw[dotted, opacity=0.8] (02)--(42);
    \draw[dotted, opacity=0.8] (03)--(33);
    \draw[dotted, opacity=0.8] (04)--(24);
    \draw[dotted, opacity=0.8] (05)--(15);
    
    \node at($0.5*(x)+0.5*(y)$) {\footnotesize$5$};
    \node at($1.5*(x)+0.5*(y)$) {\footnotesize$4$};
    \node at($2.5*(x)+0.5*(y)$) {\footnotesize$3$};
    \node at($3.5*(x)+0.5*(y)$) {\footnotesize$2$};
    \node at($4.5*(x)+0.5*(y)$) {\footnotesize$1$};
    \node at($0.5*(x)+1.5*(y)$) {\footnotesize$4$};
    \node at($1.5*(x)+1.5*(y)$) {\footnotesize$3$};
    \node at($2.5*(x)+1.5*(y)$) {\footnotesize$2$};
    \node at($3.5*(x)+1.5*(y)$) {\footnotesize$-1$};
    \node at($0.5*(x)+2.5*(y)$) {\footnotesize$3$};
    \node at($1.5*(x)+2.5*(y)$) {\footnotesize$2$};
    \node at($2.5*(x)+2.5*(y)$) {\footnotesize$1$};
    \node at($0.5*(x)+3.5*(y)$) {\footnotesize$2$};
    \node at($1.5*(x)+3.5*(y)$) {\footnotesize$-1$};
    \node at($0.5*(x)+4.5*(y)$) {\footnotesize$1$};

    \node at(00) {\footnotesize$\bullet$};
    \node at(05) {\footnotesize$\bullet$};
    
    \end{tikzpicture}
    \\ 
    \footnotesize$\area(\sigma)=0$ & 
    \footnotesize$\area(\sigma)=1$&
    \footnotesize$\area(\sigma)=7$&
    \footnotesize$\area(\sigma)=15$\\
    \footnotesize$X_{\sigma}=0$ &
    \footnotesize$X_{\sigma}=1$ &
    \footnotesize$X_{\sigma}= \tiny
    \begin{tikzpicture}[baseline=2mm]
    \coordinate(x) at(135:0.25);
    \coordinate(y) at(45:0.25);
    \node at($3*(x)+0*(y)$) {$4$}; 
    \node at($2*(x)+0*(y)$) {$3$}; 
    \node at($1*(x)+0*(y)$) {$2$}; 
    \node at($0*(x)+0*(y)$) {$1$}; 
    \node at($1*(x)+1*(y)$) {$-1$};
    \node at($2*(x)+2*(y)$) {$1$};
    \node at($2*(x)+1*(y)$) {$2$}; 
    \end{tikzpicture}$ &
    \footnotesize$X_{\sigma}=e_{1}\Pi$ 
\end{tabular}
\end{equation*}

\begin{prop}\cite[Theorem 6.5]{IRRT18}\label{bij:OOppalgDpm1}
Let $\ell=\pm1$. Every submodule of $e_\ell\Pi$ is $\tau$-rigid. 
Furthermore, we have a bijection 
\begin{equation}
\mathbb{L}'(n-1) \xrightarrow{\sim} [e_\ell \Pi]_{\rm s} \cup \{0\}
\quad (\sigma\mapsto X_{\sigma}) 
\end{equation}
satisfying $\dim_kX_{\sigma} = \area(\sigma).$
In particular, we have 
\begin{equation*}
    \#[e_\ell\Pi]_{\rm s} +1 = \#\mathbb{L}'(n-1) = 2^{n-1}. 
\end{equation*}

\end{prop}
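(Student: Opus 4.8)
The plan is to deduce the statement from the explicit description of $e_\ell\Pi$ in \eqref{proj:ppaDpm1}, following the same strategy as in the type $\mathbb{A}$ case (Proposition \ref{prop:bij ppalgA}), with the representation-theoretic input supplied by \cite{IRRT18}. First I would record that for $\ell=\pm1$ every submodule of $e_\ell\Pi$ is $\tau$-rigid, which is \cite[Theorem 6.5]{IRRT18}. Combined with Theorem \ref{tau weyl}, which guarantees that every indecomposable $\tau$-rigid module is a submodule of some $e_i\Pi$, this reduces the problem to parametrizing the submodules of $e_\ell\Pi$ and tracking their $k$-dimensions. Since each nonzero such submodule has connected support in \eqref{proj:ppaDpm1} it is indecomposable, hence lies in $\itaurigid\Pi$, so the set of all submodules is exactly $[e_\ell\Pi]_{\rm s}\cup\{0\}$.

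For the parametrization I would use the standard fact, stated just before the proposition, that submodules of $e_\ell\Pi$ correspond bijectively to the subquivers of \eqref{proj:ppaDpm1} that are closed under successors, i.e. closed under following the East and South arrows. The key step is to identify these successor-closed subquivers with the regions of $\mathcal{S}'$ lying underneath or to the right of a lattice path $\sigma\in\mathbb{L}'(n-1)$: any such region is closed under East and South steps, so it defines a submodule $X_\sigma$, and conversely the boundary of any successor-closed region is a monotone lattice path of length $n-1$ issuing from $(0,0)$. This yields the asserted bijection $\sigma\mapsto X_\sigma$ onto $[e_\ell\Pi]_{\rm s}\cup\{0\}$.

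Next I would verify the numerical claims. Because each unit square of $\mathcal{S}'$ carries a one-dimensional space and each arrow of \eqref{proj:ppaDpm1} is an identity map, a $k$-basis of $X_\sigma$ is indexed by the unit squares underneath or to the right of $\sigma$, so $\dim_k X_\sigma=\area(\sigma)$; the degenerate path along the bottom edge has $\area=0$ and gives $X_\sigma=0$, which accounts for the summand $\cup\{0\}$ and the "$+1$". Finally, a lattice path of length $n-1$ starting at $(0,0)$ is an arbitrary sequence of $n-1$ steps, each East or North, and every such sequence automatically stays inside $\mathcal{S}'$ since all its points satisfy $x+y\le n-1\le n$ and $x,y\ge0$; therefore $\#\mathbb{L}'(n-1)=2^{n-1}$, whence $\#[e_\ell\Pi]_{\rm s}+1=2^{n-1}$.

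The main obstacle is the geometric matching in the second step, which is more delicate than in type $\mathbb{A}$: the ambient region is the triangle $\mathcal{S}'$ rather than a rectangle, and one must check that the identification near the fork of the diagram $\mathbb{D}_n$ (the nodes $\pm1$, appearing as the entries $\pm1,\mp1,\ldots$ in \eqref{proj:ppaDpm1}) is compatible with the convention that $\area(\sigma)$ counts squares underneath \emph{or to the right} of $\sigma$. Once the successor-closure is verified to be respected across this fold, the dimension formula and the enumeration $2^{n-1}$ follow routinely.
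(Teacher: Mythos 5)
The paper itself contains no proof of this statement: it is imported verbatim from \cite[Theorem 6.5]{IRRT18}, and the text preceding the proposition only sets up the lattice-path dictionary in which that theorem is phrased. Your reconstruction is consistent with this, and the division of labour is the right one: the two substantive inputs --- that every submodule of $e_\ell\Pi$ is $\tau$-rigid, and that submodules of $e_\ell\Pi$ correspond to successor-closed subquivers of \eqref{proj:ppaDpm1} --- are exactly what you take from \cite{IRRT18}, while the remaining combinatorics (one square $=$ one dimension, hence $\dim_k X_\sigma=\area(\sigma)$; a free choice of $n-1$ steps, hence $\#\mathbb{L}'(n-1)=2^{n-1}$, with every such path staying in $\mathcal{S}'$) is routine and you carry it out correctly. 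Two points are stated more casually than they should be. First, ``the boundary of any successor-closed region is a monotone lattice path of length $n-1$ from the origin'' is not literally true: the region determines, for each occupied row $j$, the leftmost occupied column $m_j$ with $m_0\le m_1\le\cdots$, and the associated path is $E^{m_0}NE^{m_1-m_0}N\cdots$ \emph{padded with East steps at the top height} so that its total length is exactly $n-1$; it is this normalization that makes $\sigma\mapsto X_\sigma$ injective as well as surjective (distinct paths first diverge at a lattice point, and the square just north-east of that point then lies in exactly one of the two regions). Second, ``connected support implies indecomposable'' is false in general; here each nonzero successor-closed submodule is indecomposable because it is a thin, connected layered module with local endomorphism ring, but that is again part of what \cite[Theorem 6.5]{IRRT18} certifies rather than something you have argued. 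Neither point is a genuine gap given that the statement carries the citation, but both would need to be spelled out if the proposition were to be proved rather than quoted.
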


From now on, we show the statement of Proposition \ref{prop:OOppalgD} for $\ell=\pm1$ by induction on $n$. 
For $1\leq n\leq 4$, it is easy to check that the assertion holds true. 
Assume that $n> 4$. 
The set $\mathbb{L}'(n-1)$ is a disjoint union of ${\mathbb{L}'}^{\rm E}(n-1)$ and ${\mathbb{L}'}^{\rm N}(n-1)$ consisting of all lattice paths whose first steps are East and North,  respectively. By ignoring the first step of each lattice path, we have bijections 
\begin{equation} \label{eq:OPppalgDpm1}
a\colon \mathbb{L}'(n-2) \xrightarrow{\sim} 
{\mathbb{L}'}^{\rm E}(n-1)
 \quad \text{and} \quad b\colon \mathbb{L}'(n-2) \xrightarrow{\sim} 
 {\mathbb{L}'}^{\rm N}(n-1)
\end{equation}
such that $\area (a(\sigma)) = \area(a)$ and $\area(b(\sigma')) = \area(\sigma') +(n-1)$. 
Using Proposition \ref{bij:OOppalgDpm1}, we have 
\begin{eqnarray*}    
\Dim([e_{\ell}\Pi]_{\rm s})&\overset{{\rm Prop.\,}\ref{bij:OOppalgDpm1}}{=}&\sum_{\sigma\in \mathbb{L}'(n-1)}\area(\sigma)\\ 
&=&\sum_{\sigma\in \mathbb{L}'^{\rm E}(n-1)}\area(\sigma) + \sum_{\sigma'\in \mathbb{L}'^{\rm N}(n-1)}\area(\sigma') \\
&=& \sum_{\sigma\in \mathbb{L}'(n-2)}\area(\sigma)+\sum_{\sigma'\in \mathbb{L}'(n-2)}\area(\sigma') +(n-1)2^{n-2}\\ 
&=&2\sum_{\sigma\in \mathbb{L}'(n-2)}\area(\sigma)+(n-1)2^{n-2}\\ 
&\overset{\rm induction}{=}& (n-1)(n-2)2^{n-3} + (n-1)2^{n-2}\\ 
&=& n(n-1)2^{n-3}
\end{eqnarray*}
as desired. It finishes the proof for the cases $\ell = \pm1$.

\subsubsection{Case $\ell\neq \pm1$} \label{subsec:D2}
Next, we consider the case $\ell\neq \pm1$, that is, $\ell \in\{2,\ldots,n-1\}$. By \cite[Lemma 6.9]{IRRT18}, for two scalars $\alpha,\beta\in k$ with $\alpha+\beta =1$, we can describe the representation of $e_\ell\Pi$ as follows. 

\begin{equation*}
\begin{tikzpicture}[scale=0.9]
    \coordinate(x) at(1.85,0);
    \coordinate(y) at(0,1.3);
    \node(00) at ($0*(x)+0*(y)$) {\tiny$n-1$}; 
    \node(10) at ($1*(x)+0*(y)$) {\tiny$n-2$};
    \node(20) at ($2*(x)+0*(y)$) {\tiny$\cdots$}; 
    \node(30) at ($3*(x)+0*(y)$) {\tiny$n-\ell+1$};
    \node(40) at ($4*(x)+0*(y)$) {\tiny$n-\ell$};
    \node(50) at ($5*(x)+0*(y)$) {\tiny$n-\ell-1$}; 
    \node(60) at ($6*(x)+0*(y)$) {\tiny$n-\ell-2$};
    \node(70) at ($7*(x)+0*(y)$) {\tiny$\cdots$}; 
    \node(80) at ($8*(x)+0*(y)$) {\tiny$1-\ell$};
    \node(90) at ($9*(x)+0*(y)$) {\tiny$-\ell$}; 
    \node(01) at ($0*(x)+1*(y)$) {\tiny$n-2$}; 
    \node(11) at ($1*(x)+1*(y)$) {\tiny$n-3$};
    \node(21) at ($2*(x)+1*(y)$) {\tiny$\cdots$}; 
    \node(31) at ($3*(x)+1*(y)$) {\tiny$n-\ell$};
    \node(41) at ($4*(x)+1*(y)$) {\tiny$n-\ell-1$};
    \node(51) at ($5*(x)+1*(y)$) {\tiny$n-\ell-2$}; 
    \node(61) at ($6*(x)+1*(y)$) {\tiny$n-\ell-3$};
    \node(71) at ($7*(x)+1*(y)$) {\tiny$\cdots$}; 
    \node(81) at ($8*(x)+1*(y)$) {\tiny$-\ell$};
    \node(91) at ($9*(x)+1*(y)$) {\tiny$-\ell-1$}; 
    \node(02) at ($0*(x)+2*(y)$) {\tiny\rotatebox{90}{$\cdots$}}; 
    \node(12) at ($1*(x)+2*(y)$) {\tiny\rotatebox{90}{$\cdots$}};
    \node(22) at ($2*(x)+2*(y)$) {\tiny$\cdots$}; 
    \node(32) at ($3*(x)+2*(y)$) {\tiny\rotatebox{90}{$\cdots$}};
    \node(42) at ($4*(x)+2*(y)$) {\tiny\rotatebox{90}{$\cdots$}};
    \node(52) at ($5*(x)+2*(y)$) {\tiny\rotatebox{90}{$\cdots$}}; 
    \node(62) at ($6*(x)+2*(y)$) {\tiny\rotatebox{90}{$\cdots$}};
    \node(72) at ($7*(x)+2*(y)$) {\tiny$\cdots$}; 
    \node(82) at ($8*(x)+2*(y)$) {\tiny\rotatebox{90}{$\cdots$}};
    \node(92) at ($9*(x)+2*(y)$) {\tiny\rotatebox{90}{$\cdots$}};
    \node(03) at ($0*(x)+3*(y)$) {\tiny$\ell+2$}; 
    \node(13) at ($1*(x)+3*(y)$) {\tiny$\ell+1$};
    \node(23) at ($2*(x)+3*(y)$) {\tiny$\cdots$}; 
    \node(33) at ($3*(x)+3*(y)$) {\tiny$4$};
    \node(43) at ($4*(x)+3*(y)$) {\tiny$3$};
    \node(53) at ($5*(x)+3*(y)$) {\tiny$2$}; 
    \node(63) at ($5.8*(x)+2.8*(y)$) {\tiny$-1$};
    \node(63p) at ($6.2*(x)+3.2*(y)$) {\tiny$1$};
    \node(73) at ($7*(x)+3*(y)$) {\tiny$\cdots$}; 
    \node(83) at ($8*(x)+3*(y)$) {\tiny$4-n$};
    \node(93) at ($9*(x)+3*(y)$) {\tiny$3-n$}; 
    \node(04) at ($0*(x)+4*(y)$) {\tiny$\ell+1$}; 
    \node(14) at ($1*(x)+4*(y)$) {\tiny$\ell$};
    \node(24) at ($2*(x)+4*(y)$) {\tiny$\cdots$}; 
    \node(34) at ($3*(x)+4*(y)$) {\tiny$3$};
    \node(44) at ($4*(x)+4*(y)$) {\tiny$2$};
    \node(54) at ($4.8*(x)+3.8*(y)$) {\tiny$1$};
    \node(54p) at ($5.2*(x)+4.2*(y)$) {\tiny$-1$};
    \node(64) at ($6*(x)+4*(y)$) {\tiny$-2$};
    \node(74) at ($7*(x)+4*(y)$) {\tiny$\cdots$}; 
    \node(84) at ($8*(x)+4*(y)$) {\tiny$3-n$};
    \node(94) at ($9*(x)+4*(y)$) {\tiny$2-n$}; 
    \node(05) at ($0*(x)+5*(y)$) {\tiny$\ell$}; 
    \node(15) at ($1*(x)+5*(y)$) {\tiny$\ell-1$};
    \node(25) at ($2*(x)+5*(y)$) {\tiny$\cdots$}; 
    \node(35) at ($3*(x)+5*(y)$) {\tiny$2$};
    \node(45) at ($3.8*(x)+4.8*(y)$) {\tiny$-1$};
    \node(45p) at ($4.2*(x)+5.2*(y)$) {\tiny$1$};
    \node(55) at ($5*(x)+5*(y)$) {\tiny$-2$}; 
    \node(65) at ($6*(x)+5*(y)$) {\tiny$-3$};
    \node(75) at ($7*(x)+5*(y)$) {\tiny$\cdots$}; 
    \node(85) at ($8*(x)+5*(y)$) {\tiny$2-n$};
    \node(95) at ($9*(x)+5*(y)$) {\tiny$1-n$}; 
    \draw[->] (00)--(10); 
    \draw[->] (10)--(20); 
    \draw[->] (20)--(30);
    \draw[->] (30)--(40);
    \draw[->] (40)--(50);
    \draw[->] (50)--(60);
    \draw[->] (60)--(70);
    \draw[->] (70)--(80);
    \draw[->] (80)--(90);
    \draw[->] (01)--(11);
    \draw[->] (11)--(21);
    \draw[->] (21)--(31);
    \draw[->] (31)--(41);
    \draw[->] (41)--(51);
    \draw[->] (51)--(61);
    \draw[->] (61)--(71);
    \draw[->] (71)--(81);
    \draw[->] (81)--(91);
    \draw[->] (03)--(13);
    \draw[->] (13)--(23);
    \draw[->] (23)--(33);
    \draw[->] (33)--(43);
    \draw[->] (43)--(53);
    \draw[->] (53)--(63);
    \draw[->] (53)--(63p);
    \draw[->] (63)--node[below]{\tiny$-1$}(73);
    \draw[->] (63p)--(73);
    \draw[->] (73)--(83);
    \draw[->] (83)--(93);
    \draw[->] (04)--(14);
    \draw[->] (14)--(24);
    \draw[->] (24)--(34);
    \draw[->] (34)--(44);
    \draw[->] (44)--(54p);
    \draw[->] (44)--(54);
    \draw[->] (54)--node[below]{\tiny$-1$}(64);
    \draw[->] (54p)--(64);
    \draw[->] (64)--(74);
    \draw[->] (74)--(84);
    \draw[->] (84)--(94);
    \draw[->] (05)--(15);
    \draw[->] (15)--(25);
    \draw[->] (25)--(35);
    \draw[->] (35)--(45);
    \draw[->] (35)--(45p);
    \draw[->] (45)--node[below]{\tiny$-1$}(55);
    \draw[->] (45p)--(55);
    \draw[->] (55)--(65);
    \draw[->] (65)--(75);
    \draw[->] (75)--(85);
    \draw[->] (85)--(95);
    \draw[<-] (00)--(01);
    \draw[<-] (01)--(02);
    \draw[<-] (02)--(03);
    \draw[<-] (03)--(04);
    \draw[<-] (04)--(05);
    \draw[<-] (10)--(11);
    \draw[<-] (11)--(12);
    \draw[<-] (12)--(13);
    \draw[<-] (13)--(14);
    \draw[<-] (14)--(15);
    \draw[<-] (30)--(31);
    \draw[<-] (31)--(32);
    \draw[<-] (32)--(33);
    \draw[<-] (33)--(34);
    \draw[<-] (34)--(35);
    \draw[<-] (40)--(41);
    \draw[<-] (41)--(42);
    \draw[<-] (42)--(43);
    \draw[<-] (43)--(44);
    \draw[<-] (44)--node[left]{\tiny$\beta$}(45);
    \draw[<-] (44)--node[fill=white, inner sep = 0.7mm]{\tiny$\alpha$}(45p);
    \draw[<-] (50)--(51);
    \draw[<-] (51)--(52);
    \draw[<-] (52)--(53);
    \draw[<-] (53)--node[left]{\tiny$\beta$}(54);
    \draw[<-] (53)--node[fill=white, inner sep = 0.7mm]{\tiny$\alpha$}(54p);
    \draw[<-] (54)--node[fill=white, inner sep = 0.7mm]{\tiny$-\alpha$}(55);
    \draw[<-] (54p)--node[right]{\tiny$\beta$}(55);
    \draw[<-] (60)--(61);
    \draw[<-] (61)--(62);
    \draw[<-] (62)--node[left]{\tiny$\beta$}(63);
    \draw[<-] (62)--node[fill=white, inner sep = 0.7mm]{\tiny$\alpha$}(63p);
    \draw[<-] (63)--node[fill=white, inner sep = 0.7mm]{\tiny$-\alpha$}(64);
    \draw[<-] (63p)--node[right]{\tiny$\beta$}(64);
    \draw[<-] (64)--(65);
    \draw[<-] (80)--(81);
    \draw[<-] (81)--(82);
    \draw[<-] (82)--(83);
    \draw[<-] (83)--(84);
    \draw[<-] (84)--(85);
    \draw[<-] (90)--(91);
    \draw[<-] (91)--(92);
    \draw[<-] (92)--(93);
    \draw[<-] (93)--(94);
    \draw[<-] (94)--(95);
\end{tikzpicture}
\end{equation*}

Here, each number $i$ shows a one dimensional $k$-vector space $k$ lying on the vertex $i$ if $i\geq -1$ and $-i$ if $i\leq -2$. We simply represent it by using arrays as follows. 
\begin{equation}\label{array:D}
\begin{split}
\begin{tabular}{|ccccccccccccc|cc}
    \hline 
    {\tiny $\ell$} &{\tiny $\ell-1$}&{\tiny $\cdots$}&{\tiny $2$}&{\tiny $\substack{1 \\ {-1}}$} &{\tiny $-2$}&{\tiny $\cdots$}&{\tiny $1-\ell$}&{\tiny $-\ell$} &{\tiny $-\ell-1$} &{\tiny $\cdots$}&{\tiny $2-n$}&{\tiny $1-n$}
    \\ \hline 
    {\tiny $\ell+1$} &{\tiny $\ell$}&{\tiny $\cdots$}&{\tiny $3$}&{\tiny $2$}&{\tiny $\substack{1\\-1}$}&{\tiny $\cdots$}&{\tiny $2-\ell$}&{\tiny $1-\ell$}&{\tiny $-\ell$}&{\tiny $\cdots$}&{\tiny $3-n$}&{\tiny $2-n$}\\ \hline
    \rotatebox{90}{\tiny$\quad \cdots\quad $}&\rotatebox{90}{\tiny$\quad \cdots\quad $}&\rotatebox{90}{\tiny$\quad \cdots\quad $}&\rotatebox{90}{\tiny$\quad \cdots\quad $} &\rotatebox{90}{\tiny$\quad \cdots\quad $}&\rotatebox{90}{\tiny$\quad \cdots\quad $}&\rotatebox{90}{\tiny$\quad \cdots\quad $}&\rotatebox{90}{\tiny$\quad \cdots\quad $}&\rotatebox{90}{\tiny$\quad \cdots\quad $}&\rotatebox{90}{\tiny$\quad \cdots\quad $}&\rotatebox{90}{\tiny$\quad \cdots\quad $}&\rotatebox{90}{\tiny$\quad \cdots\quad $}&\rotatebox{90}{\tiny$\quad \cdots\quad $}\\ \hline
    {\tiny $n-2$}&{\tiny $n-3$}&{\tiny $\cdots$}&{\tiny $\ell$}& {\tiny $\ell-1$}&{\tiny $\ell-2$}&{\tiny $\cdots$}&{\tiny $\substack{1\\-1}$} &{\tiny $-2$} & {\tiny $-3$} &{\tiny $\cdots$}&{\tiny $-\ell$}&{\tiny $-\ell-1$}\\ \hline 
    {\tiny $n-1$}&{\tiny $n-2$}&{\tiny $\cdots$} & {\tiny $\ell+1$}&{\tiny $\ell$}&{\tiny $\ell-1$}& {\tiny $\cdots$} &{\tiny $2$} & {\tiny $\substack{1\\-1}$} &{\tiny $-2$}&{\tiny $\cdots$} &{\tiny $1-\ell$}& {\tiny $-\ell$}   
    \\ \hline 
\end{tabular}
\end{split}
\end{equation}

Now, we recall a classification of $\tau$-rigid submodules of $e_{\ell}\Pi$ given in \cite[Section 6.2]{IRRT18}. 
Remark that our convention is dual to theirs. Let 
\begin{equation}\nonumber
    \mathbb{U}_\ell:=\{(u_{\ell+1},\cdots,u_n)\in\{\pm1,\cdots,\pm n\}^{n-\ell}\mid  u_{\ell+1}>\cdots>u_n\}. 
\end{equation}
For each $u \in \mathbb{U}_{\ell}$, we define sub-arrays $S(u)$ of \eqref{array:D} by 
\begin{equation*}
    S(u) := \begin{tabular}{|l|l}\hline 
        {\tiny $C(u_{n},n-1)$}  \\ \hline
        {\tiny $C(u_{n-1},n-2)$}  \\ \hline
        \quad \quad  \rotatebox{90}{$\cdots$}  \\ \hline
        {\tiny $C(u_{\ell+3},\ell+2)$}  \\ \hline
        {\tiny $C(u_{\ell+2},\ell+1)$}  \\ \hline
        {\tiny $C(u_{\ell+1},\ell)$}  \\ \hline
    \end{tabular}
\quad \text{where} \quad 
    C(j,m) := \begin{cases}\tabcolsep = 1.5pt
        \begin{tabular}{rc}
        $\emptyset$ & \text{$-m>j$,} \\ 
        \begin{tabular}{|cccccc|}\hline 
            {\tiny $j$} & {\tiny $j-1$} & {\tiny $j-2$}&{\tiny $\cdots$}& {\tiny $-m+1$}&{\tiny $-m$} \\ \hline   
        \end{tabular} & \text{$-1\geq j \geq -m$,} \\ 
        \begin{tabular}{|cccccc|}\hline 
            {\tiny $1$} & {\tiny $-2$} & {\tiny $-3$} & {\tiny $\cdots$}& {\tiny $-m+1$}&{\tiny $-m$} \\ \hline   
        \end{tabular} & \text{$j=1$,} \\
        \begin{tabular}{|cccccc|}\hline 
            {\tiny $\substack{1\\-1}$} & {\tiny $-2$} & {\tiny $-3$}&{\tiny $\cdots$}& {\tiny $-m+1$}&{\tiny $-m$} \\ \hline 
        \end{tabular} & \text{$j =2$,} \\ 
        \begin{tabular}{|cccccccccc|}\hline 
            {\tiny $j-1$} & {\tiny $j-2$} & {\tiny $\cdots$}& {\tiny $2$}&{\tiny $\substack{1\\-1}$} &{\tiny $-2$}&&{\tiny $\cdots$}&{\tiny $-m+1$}&{\tiny $-m$}\\ \hline   
        \end{tabular} & \text{$j \geq 3$.} 
        \end{tabular}
    \end{cases}
\end{equation*}
Then, let $Y_u$ be a submodule of $e_{\ell}\Pi$ corresponding to $S(u)$. 
In addition, let ${\rm n}(u)$ be the number of integers appearing in the diagram $S(u)$.

\begin{prop}\cite[Theorem 6.12]{IRRT18}\label{bij:OOppalgD2}
For $\ell\in \{2,\ldots,n-1\}$, we have a bijection 
\begin{equation*}\label{bij PiD}
\mathbb{U}_\ell \xrightarrow{\sim} [e_\ell\Pi]_{\rm s} \cup\{0\} \quad (u \mapsto Y_u)
\end{equation*} 
satisfying $\dim_kY_u = {\rm n}(u).$
In particular, we have 
\begin{equation*}
    \#[e_\ell\Pi]_{\rm s}+1 = \# \mathbb{U}_\ell = 2^{n-\ell}\binom{n}{\ell}. 
\end{equation*} 
\end{prop}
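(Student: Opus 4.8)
The plan is to read the entire statement off the explicit representation of $e_\ell\Pi$ recorded in the array \eqref{array:D}, following the pattern already used in type $\mathbb{A}$ (Proposition \ref{prop:bij ppalgA}) and for $\ell=\pm1$ (Proposition \ref{bij:OOppalgDpm1}), but now accounting for the branch vertex $\{1,-1\}$ and the scalars $\alpha,\beta$ with $\alpha+\beta=1$ supplied by \cite[Lemma 6.9]{IRRT18}. First I would fix this representation and observe that, away from the column of the branch, every composition factor is one-dimensional and each arrow of the double quiver acts as an identity; hence a subrepresentation is determined by the successor-closed set of composition factors it contains, and in each of the $n-\ell$ rows of \eqref{array:D} such a subrepresentation occupies an initial interval which is exactly of the form $C(u_{m+1},m)$. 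This produces the candidate submodules $Y_u$ attached to the sub-arrays $S(u)$.

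Next I would pin down which collections of rows glue into an honest subrepresentation of $e_\ell\Pi$. The constraint is that the intervals chosen in consecutive rows be compatible with the arrows connecting them, and a direct inspection of \eqref{array:D} shows that this amounts precisely to the nesting condition $u_{\ell+1}>u_{\ell+2}>\cdots>u_n$ together with the requirement that the chosen entries have pairwise distinct absolute values (selecting both $+k$ and $-k$ would force an inconsistent value through the branch governed by $\alpha,\beta$). Thus the subrepresentations built in this way are indexed exactly by $u\in\mathbb{U}_\ell$, and by construction $\dim_kY_u$ equals the number of boxes of $S(u)$, i.e. ${\rm n}(u)$, where the doubled box $\substack{1\\-1}$ contributes its two one-dimensional factors.

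The genuinely delicate step is establishing $\tau$-rigidity and ruling out the submodules that do not come from $\mathbb{U}_\ell$. Here I would lean on Theorem \ref{tau weyl}: every indecomposable $\tau$-rigid $\Pi$-module has the form $e_\ell I_w$ and sits inside $e_\ell\Pi$, so it suffices to identify each $Y_u$ with some $e_\ell I_{w}$ and, conversely, to see that a submodule violating the $\mathbb{U}_\ell$-conditions cannot be of this form. Concretely I would produce, for each $u\in\mathbb{U}_\ell$, a reduced word $w=w(u)$ (obtained by reading the rows of $S(u)$ from bottom to top) with $e_\ell I_{w}\cong Y_u$ via \cite[Lemma 2.1]{Mizuno14}, which simultaneously certifies $Y_u\in\itaurigid\Pi$ and its indecomposability; the excluded configurations are precisely those for which no such word exists, since they would require passing through both branches of the $\{1,-1\}$ node and would force a nonzero map into $\tau Y_u$. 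The main obstacle is carrying out this matching uniformly at the branch vertex, where the scalars $\alpha,\beta$ make the subrepresentation lattice genuinely more intricate than in type $\mathbb{A}$, and where one must argue that no subrepresentation outside the $S(u)$-family is $\tau$-rigid.

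Finally, the cardinality is a short count: an element of $\mathbb{U}_\ell$ is a strictly decreasing length-$(n-\ell)$ sequence in $\{\pm1,\ldots,\pm n\}$ with pairwise distinct absolute values, so it is equivalent to the choice of an $(n-\ell)$-subset of $\{1,\ldots,n\}$ together with a sign for each chosen value; this yields $\#\mathbb{U}_\ell=2^{n-\ell}\binom{n}{n-\ell}=2^{n-\ell}\binom{n}{\ell}$, whence $\#[e_\ell\Pi]_{\rm s}+1=\#\mathbb{U}_\ell$ as claimed.
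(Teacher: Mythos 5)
This proposition is not proved in the paper at all: it is imported verbatim from \cite[Theorem 6.12]{IRRT18}, so there is no internal argument to compare yours against. Judged on its own terms, your outline is a reasonable reconstruction of the strategy one would expect (and of the spirit of the type $\mathbb{A}$ and $\ell=\pm1$ cases), and your final count is correct once one reads $\mathbb{U}_\ell$, as you do, with the pairwise-distinct-absolute-values condition that the stated cardinality $2^{n-\ell}\binom{n}{\ell}$ forces. However, as a proof it has genuine gaps precisely at the points you yourself flag as ``the main obstacle.''

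Concretely: (i) your opening claim that a subrepresentation of $e_\ell\Pi$ is determined by the successor-closed set of composition factors it contains, and occupies in each row an initial interval of the form $C(j,m)$, is false in general. At the branch column the boxes $\substack{1\\-1}$ carry a two-dimensional space on which the maps act through the scalars $\alpha,\beta$, and a submodule may meet it in an arbitrary line; two distinct submodules can have identical composition factors, and these ``generic line'' submodules are exactly the ones that must be excluded. So the $S(u)$-family is only a candidate list, not a description of all submodules, and the statement needs to be restructured accordingly. (ii) The two substantive implications --- that each $Y_u$ is an indecomposable $\tau$-rigid module (your proposed route being an explicit identification $Y_u\cong e_\ell I_{w(u)}$ for a reduced word read off $S(u)$, which is never constructed), and conversely that every indecomposable $\tau$-rigid submodule of $e_\ell\Pi$, equivalently every $e_\ell I_w$, is some $Y_u$ --- are announced rather than proved. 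In particular the assertion that a configuration outside the $S(u)$-family ``would force a nonzero map into $\tau Y_u$'' is exactly the content of the theorem and is given no argument. As it stands the proposal is a plan for reproving \cite[Theorem 6.12]{IRRT18}, not a proof; simply citing that theorem, as the paper does, is the honest alternative.
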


\begin{lemm}\label{n to w*}
For each $u\in \mathbb{U}_{\ell}$, we have 
\begin{equation*}
    {\rm n}(u) = \sum_{i=\ell+1}^n (i + u_i^*), 
    \quad 
\text{where} \quad  
    u_i^* :=
\begin{cases}
u_i &  u_i < 0, \\
u_i-2 & u_i > 0.
\end{cases}
\end{equation*}
\end{lemm}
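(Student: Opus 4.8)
The plan is to evaluate ${\rm n}(u)$ block by block and to compare the result with the right-hand side term by term. Since $S(u)$ is the vertical stack of the blocks $C(u_i,i-1)$ for $i=\ell+1,\dots,n$, and since these blocks are disjoint, the total number of integers satisfies
$$
{\rm n}(u)=\sum_{i=\ell+1}^{n} c(u_i,i-1),
$$
where $c(j,m)$ denotes the number of integers occurring in $C(j,m)$. Thus it is enough to establish the pointwise identity $c(u_i,i-1)=i+u_i^{*}$ for every $i$ and then sum over $i$.

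First I would read off $c(j,m)$ directly from the five cases in the definition of $C(j,m)$, remembering that each symbol $\substack{1\\-1}$ contributes two integers: one finds $c(j,m)=0$ when $-m>j$, $c(j,m)=j+m+1$ when $-m\le j\le -1$, $c(1,m)=m$, $c(2,m)=m+1$, and $c(j,m)=j+m-1$ when $j\ge 3$. Substituting $m=i-1$ and comparing with $i+u_i^{*}$, where $u_i^{*}=u_i$ for $u_i<0$ and $u_i^{*}=u_i-2$ for $u_i>0$, I would verify each case: for $u_i\ge 3$ both sides equal $i+u_i-2$; for $u_i\in\{1,2\}$ the special shapes of $C(1,m)$ and $C(2,m)$ give $i-1$ and $i$, matching $i+(u_i-2)$; and for $-(i-1)\le u_i\le -1$ both sides equal $i+u_i$. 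The only discrepancy arises in the empty case $-m>j$, i.e. $u_i\le -i$, where $c(u_i,i-1)=0$ while $i+u_i^{*}=i+u_i$ can be strictly negative.

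The main obstacle is therefore to rule out $u_i<-i$ and to check the single boundary value $u_i=-i$, and this is exactly where the defining constraint of $\mathbb{U}_\ell$ enters. Suppose $u_i<0$; then the $n-i$ entries $u_{i+1}>\cdots>u_n$ are distinct integers, all strictly smaller than $u_i$ and hence all lying in $\{-n,\dots,-1\}$, so they must occupy $n-i$ of the $u_i+n$ integers in $\{-n,\dots,u_i-1\}$, forcing $u_i+n\ge n-i$, i.e. $u_i\ge -i$. This shows $u_i<-i$ never occurs, and when $u_i=-i$ the block is empty while $i+u_i^{*}=i+u_i=0$, so the pointwise identity persists at the boundary. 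Summing $c(u_i,i-1)=i+u_i^{*}$ over $i=\ell+1,\dots,n$ then gives the claimed formula.
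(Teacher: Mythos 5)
Your proof is correct and follows essentially the same route as the paper: count the integers in each block $C(u_i,i-1)$ case by case, match the count with $i+u_i^{*}$, and dispose of the empty-block case by showing the strictly decreasing condition forces $u_i=-i$. You merely spell out more explicitly (via the pigeonhole bound $u_i\ge -i$) the step the paper compresses into the remark that $u_{\ell+1}>\cdots>u_n\ge -n$.
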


\begin{proof}
By definition, the number of integers appearing in $C(j,m)$ is exactly $0$ (respectively, $j+m+1$ and $j+m-1$) if $-m > j$ (respectively, $-1 \geq j \geq -m$ and $j >0$). 

Now, let $u\in \mathbb{U}_{\ell}$. 
From the previous sentence, the number of integers appearing in $C(u_{i},i-1)$ is exactly $i+u_i^*$ whenever $u_i^* \geq -i+1$. 
On the other hand, $C(u_{i},i-1) = \emptyset$ implies $u_{i} = -i$ since they satisfy $u_{\ell+1} > \cdots > u_{n} \geq -n$. 
Thus, we get the assertion. 
\end{proof}

Using the above results, we show the statement of Proposition \ref{prop:OOppalgD} for $\ell \in \{2,\ldots,n-1\}$ as follows. 
We have 
\begin{eqnarray} \nonumber \hspace{-10mm}
\Dim([e_\ell\Pi]_{\rm s}) &\overset{\rm Prop.\,\ref{bij:OOppalgD2}}
{=}& \sum_{u \in \mathbb{U}_{\ell}} {\rm n}(u)\\
&\overset{\rm Lem.\,\ref{n to w*}}{=}& 
\sum_{u\in \mathbb{U}_\ell}
\left(\sum_{i=\ell+1}^n(i + u_i^*)\right) \nonumber \\ 
&=&\#\mathbb{U}_\ell \cdot \sum_{i=\ell+1}^n i + \sum_{u\in \mathbb{U}_{\ell}}\left(\sum_{i=\ell+1}^n u_i\right) - 
2 \sum_{u\in \mathbb{U}_\ell} \#\{i\mid u_i > 0\}. 
\label{cal ppalgDl}
\end{eqnarray}

We compute each summand of \eqref{cal ppalgDl}.
By the definition of $\mathbb{U}_\ell$, there is a one-to-one correspondence $\mathbb{U}_{\ell}\to \mathbb{U}_{\ell}$ given by $u = (u_{\ell+1},\ldots,u_{n}) 
\mapsto -u := (-u_{n},\ldots,-u_{\ell+1})$. 
Since $u=-(-u)$ and $\sum_{i=\ell+1}^n (u_i+(-u_{i}))=0$ for all $u\in \mathbb{U}_{\ell}$, we deduce that 
\begin{equation*}
    \sum_{u\in \mathbb{U}_\ell}\left(\sum_{i=\ell+1}^n u_i\right)  = \frac{1}{2}\sum_{u\in \mathbb{U}_\ell}\left(\sum_{i=\ell+1}^n (u_i+(-u_i)) \right) =0. 
\end{equation*}
On the other hand, one can easily check that  
\begin{equation*}
    \sum_{u\in \mathbb{U}_\ell} \#\{i\mid u_i >0\} = (n-\ell)2^{n-\ell-1}\binom{n}{\ell}. 
\end{equation*}
Thus, \eqref{cal ppalgDl} equals to 
\begin{eqnarray*} 
(n+\ell+1)(n-\ell)2^{n-\ell-1}\binom{n}{\ell} - (n-\ell) 2^{n-\ell}\binom{n}{\ell} = (n+\ell-1)(n-\ell)2^{n-\ell-1}\binom{n}{\ell}. 
\end{eqnarray*}
We get the assertion for $\ell\neq \pm1$ and 
finish the proof of Proposition \ref{prop:OOppalgD}.

\subsection{Type $\mathbb{E}$}
Let $Q$ be a Dynkin quiver of type $\mathbb{E}_n$ ($n=6,7,8$) and $\Pi:=\Pi(Q)$ the preprojective algebra of $Q$. 
Our results are the following. 

\begin{lemm}\label{lem:OdimppaE}
    Let $\Pi=\Pi(Q)$ be the preprojective algebra of a Dynkin quiver $Q$ of type $\mathbb{E}$. Then, the numbers $\Dim([e_\ell \Pi]_{\rm s})$ are given by the following table. 
    \begin{equation*}    
    \begin{tabular}{rrrrrrrrrr}
    \hline
        $Q\backslash \ell$ & $1$ & $2$ & $3$ & $4$ & $5$ & $6$ & $7$ & $8$\\
        $\mathbb{E}_6$ &$216$ & $3240$ & $15120$& $792$ & $3240$ & $216$\\ 
        $\mathbb{E}_7$ &$2142$ & $66528$ & $483840$& $14112$ & $151200$ & $19656$ & $756$ \\ 
        $\mathbb{E}_8$ &$99360$ & $6289920$ & $65318400$& $1175040$ & $26611200$ &  $5080320$ & $383040$ & $6960$ \\ \hline
    \end{tabular}
\end{equation*}
\end{lemm}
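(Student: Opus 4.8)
The plan is to obtain each entry of the table by a finite enumeration of the indecomposable $\tau$-rigid submodules of $e_\ell\Pi$ together with their $k$-dimensions, following the same strategy as in types $\mathbb{A}$ and $\mathbb{D}$; the essential new feature is that, for type $\mathbb{E}$, there is no uniform closed-form combinatorial model playing the role of the lattice-path and sign-vector parametrizations of Propositions \ref{prop:bij ppalgA} and \ref{bij:OOppalgD2}, so the last step becomes a direct count rather than a summation formula.

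First I would invoke Theorem \ref{tau weyl}: every indecomposable $\tau$-rigid $\Pi$-module is of the form $e_i I_w$ and is thus a submodule of $e_i\Pi$, so $[e_\ell\Pi]_{\rm s}$ is precisely the set of pairwise non-isomorphic nonzero modules $e_\ell I_w$ with $w\in W$. This shows the sum is finite and reduces the problem to listing the distinct submodules of the single projective $e_\ell\Pi$ arising in this way, a set much smaller than $W$ itself (already for type $\mathbb{A}$ one has $\#[e_\ell\Pi]_{\rm s}+1=\binom{n+1}{\ell}$, far below $(n+1)!$). Unlike the earlier cases, $e_\ell\Pi$ is in general not multiplicity-free for type $\mathbb{E}$, so its submodule lattice is genuinely more intricate and cannot be read off from successor-closed subquivers alone; this is exactly why no path/area model is available.

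Second I would carry out the enumeration concretely. Using the bijection $W\to\langle I_1,\ldots,I_n\rangle$ of Theorem \ref{tau weyl}, I would generate the ideals $I_w$ by traversing the Hasse quiver of $\stautilt\Pi\cong W$ through the mutation rule $I_w\mapsto I_\ell I_w$ employed in the proof of Proposition \ref{prop: poset WI}, extract the nonzero summand $e_\ell I_w$ at each node, record its $k$-dimension, and sum over the distinct isomorphism classes. Since $W(\mathbb{E}_n)$ is finite with orders listed in \eqref{table:orderW}, this terminates; organizing the computation around the (far smaller) poset of distinct submodules of $e_\ell\Pi$, rather than over all of $W$, is what keeps it feasible.

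For verification I would exploit the diagram automorphism of $\mathbb{E}_6$ ($1\leftrightarrow 6$, $2\leftrightarrow 5$, fixing $3,4$), which predicts the coincidences $216=216$ and $3240=3240$ in the first row and gives an internal consistency check; I would also cross-check the totals $\sum_\ell\#[e_\ell\Pi]_{\rm s}$ against the number of indecomposable $\tau$-rigid modules, and confirm via Theorem \ref{thm:orbit ppalg}(3) that $\sum_\ell\Dim([e_\ell\Pi]_{\rm s})\cdot\#W(Q_{\I})$ reproduces the value of $\Dim(\stautilt\Pi)$. The main obstacle is the scale of the computation for $\mathbb{E}_8$, where $\#W=696729600$: a naive pass over $W$ is infeasible, so the crux is to enumerate the distinct $\tau$-rigid subrepresentations of $e_\ell\Pi$ directly and to certify both that the list is complete and that each listed module is indecomposable and $\tau$-rigid.
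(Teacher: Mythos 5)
Your proposal matches the paper's own proof: the authors likewise invoke Theorem \ref{tau weyl} to identify $[e_\ell\Pi]_{\rm s}$ with the set of nonzero modules $e_\ell I_w$ obtained by repeatedly multiplying by the ideals $I_i$, and then carry out the finite enumeration by computer (they cite the {\rm GAP} package {\rm QPA}). Your additional consistency checks (the $\mathbb{E}_6$ diagram automorphism and the cross-check against Theorem \ref{thm:orbit ppalg}(3)) are sensible but not part of the paper's argument.
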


\begin{proof}
We will give a way to compute the above numbers by using Theorem \ref{tau weyl} here. 
Let $Q$ be a Dynkin quiver and $\Pi=\Pi(Q)$ the preprojective algebra of $Q$. 
    We denote by $S_i$ the simple $\Pi$-module at $i\in Q_0$.
    In the rest, we fix $\ell\in Q_0$. 
    Recall that $[e_{\ell}\Pi]_{\rm s}$ is the set of isomorphism classes of all $\tau$-rigid submodules of $e_{\ell}\Pi$. 
    By Theorem \ref{tau weyl}, we find that 
    every $\tau$-rigid submodule of $e_{\ell}\Pi$ can be obtained from $e_{\ell}\Pi$ by multiplying ideals $I_{i}=\Pi(1-e_i)\Pi$ ($i\in Q_0$) repeatedly from the right. 
    Notice that, for any $M\in \mod \Pi$, $MI_i$ is the smallest amongst submodules $N$ of $M$ satisfying the condition that any composition factor of $M/N$ is isomorphic to $S_i$ (see \cite[Proposition 3.12]{IZ20} for example). 
    Furthermore, since $\Ext^{1}_{\Pi}(S_i,S_i)=0$, we have that $MI_i$ is isomorphic to the kernel of a homomorphism 
    $f := \left[
    \begin{smallmatrix}
        f_1 \\ \rotatebox{90}{$\cdots$} \\ f_d
    \end{smallmatrix}
    \right] \colon M \to S^{\oplus d}$, 
    where $f_1,\ldots,f_d$ is a basis of $\Hom_{\Pi}(M,S_i)$. 
    Using these observations, we are able to list all $\tau$-rigid submodules of $e_{\ell}\Pi$ up to isomorphism. 
    
    In a practical manner, our calculation is done in the  following way (For an implementation, we can use the {\rm GAP}  package {\rm QPA} \cite{QPA}, for instance). 
    Let $\mathcal{L}_0 := \{e_{\ell} \Pi\}$. 
    After defining $\mathcal{L}_t$ ($t\geq0$), whenever it is non-empty, let $\mathcal{L}_{t+1}$ be a list of modules obtained by the following process ($\star$).  
    
    ($\star$) Let $\mathcal{L}_{t+1}$ be an empty list. 
    For every $M \in \mathcal{L}_t$ and $i\in Q_0$, do the following. 
    \begin{itemize}
        \item Compute a basis $f_1,\ldots,f_d$ of the hom-space $\Hom_{\Pi}(M,S_i)$.
        \item Construct a homomorphism $f := \left[\begin{smallmatrix}
            f_1 \\ \rotatebox{90}{$\cdots$} \\  f_d
        \end{smallmatrix}\right] \colon M \to S_i^{\oplus d}$. 
        \item Compute the kernel $U := \Ker(f)$ of $f$. 
        \item Add $U$ to $\mathcal{L}_{t+1}$ if $U\neq 0$ and $U$ is not isomorphic to any object of $\bigcup_{j=0}^{t+1}\mathcal{L}_j$.
    \end{itemize}

    It is worth mentioning that, the last term of the above process can be replaced with the following statement: 
    \begin{itemize}
        \item Add $U$ to $\mathcal{L}_{t+1}$ if $U\neq 0$ and 
        $
        g(U)\not\in \{g(X)\mid \text{$X\in \mathcal{L}_j$, $j\in \{1,\ldots,t+1\}$}\}$,
    \end{itemize}
    where $g(X)$ denotes the $g$-vector of $X$ (see \cite[Section 5.1]{AIR14} for the definition of $g$-vectors). 
    In fact, it follows from \cite[Theorem 5.5]{AIR14} that the map $X \mapsto g(X)$ is an injection from the set of isomorphism classes of $\tau$-rigid modules. 
    
    Since $e_{\ell}\Pi$ is finite dimensional, there is the smallest integer, say $p$, such that $\mathcal{L}_p\neq \emptyset$ and  $\mathcal{L}_{p+1} = \emptyset$.
    Then, a union $\bigcup_{j=0}^p \mathcal{L}_j$ coincides with $[e_{\ell}\Pi]_{\rm s}$ from our observation stated in the first paragraph. 
    Finally, the number $\Dim([e_{\ell}\Pi]_{\rm s})$ is given by the sum of the $k$-dimensions of all modules in this list. 
\end{proof}

\begin{thm}
    Let $\Pi=\Pi(Q)$ be the preprojective algebra of a Dynkin quiver $Q$ of type $\mathbb{E}$. Then the $d$-polynomial $d(\Pi;t)$ is given by Table \ref{Table:dimppaE}. 
\begin{table}[h]\scriptsize
\renewcommand{\arraystretch}{0.95}
\begin{tabular}{r|rrrrrrrrrrrrr} 
    $n\backslash\,j$ & $0$ & $1$ & $2$ & $3$ & $4$ & $5$ & $6$ & $7$  \\ \hline
    $6$ & $22824$ & $538128$ & $3499200$ & $9072000$ & $10108800$ & $4043520$ \\ 
    $7$ & $738234$ & $27461448$ & $267083208$ & $1058400000$ & $1977091200$ & $1737469440$ & $579156480$ \\ 
    $8$ & $104964240$& $6395822880$& $90320832000$& $515410560000$& $1438746624000$& $2087401881600$& $1511903232000$& $431972352000$ \\
\end{tabular}    
\\  \ 
    \caption{Numbers $d_j$ for $d$-polynomials of preprojective algebras of type $\mathbb{E}$}
    \label{Table:dimppaE}
\end{table}
\end{thm}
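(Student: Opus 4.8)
The plan is to apply the master formula of Theorem \ref{thm:orbit ppalg}(2),
\[
d(\Pi;t) = \sum_{\ell\in Q_0} \Dim([e_\ell\Pi]_{\rm s})\,\Eul(Q_{\I};t+1),
\]
whose two inputs are the orbit dimensions $\Dim([e_\ell\Pi]_{\rm s})$, already recorded in Lemma \ref{lem:OdimppaE}, and the shifted $W$-Eulerian polynomials $\Eul(Q_{\I};t+1)$. Thus the only remaining task is to identify, for each vertex $\ell$ of the labeled $\mathbb{E}_n$ diagram ($n=6,7,8$), the isomorphism type of the subgraph $Q_{\I}$ obtained by deleting $\ell$, and then to evaluate the associated Eulerian polynomial.

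First I would read off the decomposition of $Q_{\I}$ from the fixed labeling in Subsection \ref{sec:reduction ppalg}, in which vertex $3$ is the trivalent node adjacent to $2$, $4$ and $5$. For instance, deleting the branch tip $\ell=4$ leaves the path $\mathbb{A}_{n-1}$; deleting $\ell=3$ splits the diagram into $\mathbb{A}_2\sqcup\mathbb{A}_1\sqcup\mathbb{A}_{n-4}$; deleting $\ell=1$ leaves $\mathbb{D}_{n-1}$; and deleting a vertex further along the long arm produces a disjoint union of a smaller (classical or exceptional) diagram with a type-$\mathbb{A}$ tail. Since $\Eul$ is multiplicative over disjoint unions by definition, each $\Eul(Q_{\I};t)$ factors as a product of $W$-Eulerian polynomials of the irreducible components, which are available from the tables for types $\mathbb{A}$, $\mathbb{D}$ and $\mathbb{E}$ (see \cite[Section 11.4]{Petersen15} together with Theorem \ref{thm:W-E is h-vector}).

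With these ingredients in hand, the remaining work is a finite computation: for each $\ell$ substitute $t+1$ into the product of Eulerian polynomials, scale by $\Dim([e_\ell\Pi]_{\rm s})$ from Lemma \ref{lem:OdimppaE}, and sum over $\ell\in Q_0$; collecting coefficients yields Table \ref{Table:dimppaE}. The main obstacle is bookkeeping rather than conceptual: for $n=7$ the subgraph $Q_{\I}$ is of exceptional type $\mathbb{E}_6$ when $\ell$ is the far end of the long arm, and for $n=8$ both $\mathbb{E}_6$ and $\mathbb{E}_7$ arise in this way, so one first needs the $W$-Eulerian polynomials of those exceptional types and must then carry out the polynomial arithmetic carefully over diagrams of high rank.

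Finally I would verify the outcome by two independent consistency checks coming from Theorem \ref{thm:orbit ppalg}(3): the constant term must equal $\Dim(\itaurigid\Pi)=\sum_{\ell}\Dim([e_\ell\Pi]_{\rm s})$, which for $\mathbb{E}_6$ reads $216+3240+15120+792+3240+216=22824$ and indeed matches $d_0$, and the leading coefficient must equal $\Dim(\stautilt\Pi)=\sum_{\ell}\Dim([e_\ell\Pi]_{\rm s})\cdot\#W(Q_{\I})$, with the group orders supplied by \eqref{table:orderW}.
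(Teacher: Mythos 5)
Your proposal is correct and follows essentially the same route as the paper: the paper's proof likewise identifies the components of $Q_{\I}$ for each vertex $\ell$, multiplies the corresponding $W$-Eulerian polynomials evaluated at $t+1$, scales by the numbers from Lemma \ref{lem:OdimppaE}, and sums via Theorem \ref{thm:orbit ppalg}(2), working out $\mathbb{E}_6$ explicitly and treating $\mathbb{E}_7$, $\mathbb{E}_8$ similarly. Your added consistency checks against Theorem \ref{thm:orbit ppalg}(3) are a sensible extra safeguard but do not change the argument.
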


\begin{proof}
Let $Q$ be a Dynkin quiver of type $\mathbb{E}_n$.
For $\ell = 1,\ldots,n$, the underlying graphs of $Q_{\I}$ are given as follows. 
\begin{eqnarray}\label{eq:EI}
&\mathbb{D}_{n-1},\ 
\mathbb{A}_1\sqcup \mathbb{A}_{n-2},\ 
\mathbb{A}_2\sqcup \mathbb{A}_1 \sqcup \mathbb{A}_{n-4},\ 
\mathbb{A}_{n-1},\ 
\mathbb{A}_4\sqcup \mathbb{A}_{n-5},\ 
\mathbb{D}_5\sqcup \mathbb{A}_{n-6}, \\ \nonumber
&\mathbb{E}_6\sqcup \mathbb{A}_{n-7} \ \text{($n=7,8$)} \quad \text{and} \quad 
\mathbb{E}_7\sqcup \mathbb{A}_{n-8} \ \text{($n=8$)}.
\end{eqnarray} 
We compute the case of $\mathbb{E}_6$. We have
    \begin{enumerate}[\rm $\bullet$]
        \item $\Eul(Q_{\bar{1}};t+1) = 
        \Eul(\mathbb{D}_{5};t+1) = 
        t^5 + 162t^4 + 1440t^3 + 4160t^2 + 4800t + 1920$.
        \item $\Eul(Q_{\bar{2}};t+1) = 
        \Eul(\mathbb{A}_{1};t+1)\Eul(\mathbb{A}_{4};t+1) = 
        t^5 + 32t^4 + 210t^3 + 540t^2 + 600t + 240$.
        \item $\Eul(Q_{\bar{3}};t+1) = \Eul(\mathbb{A}_{2};t+1)\Eul(\mathbb{A}_1;t+1)\Eul(\mathbb{A}_{2};t+1) = 
        t^5 + 14t^4 + 72t^3 + 168t^2 + 180t + 72$.
        \item $\Eul(Q_{\bar{4}};t+1) = 
        \Eul(\mathbb{A}_5;t+1) = 
        t^5 + 62t^4 + 540t^3 + 1560t^2 + 1800t + 720$. 
        \item $\Eul(Q_{\bar{5}};t+1) = 
        \Eul(\mathbb{A}_4;t+1)\Eul(\mathbb{A}_1;t+1) = 
        t^5 + 32t^4 + 210t^3 + 540t^2 + 600t + 240$.
        \item $\Eul(Q_{\bar{6}};t+1) = 
        \Eul(\mathbb{D}_5;t+1) = 
        t^5 + 162t^4 + 1440t^3 + 4160t^2 + 4800t + 1920$.
        \end{enumerate} 
Using Lemma \ref{lem:OdimppaE}, we have 
    \begin{eqnarray*}
        d(\Pi;t) &=& 216\Eul(Q_{\bar{1}};t+1) + 3240\Eul(Q_{\bar{2}};t+1) + 15120\Eul(Q_{\bar{3}};t+1) + \\ 
        &&792\Eul(Q_{\bar{4}};t+1) + 3240\Eul(Q_{\bar{5}};t+1) + 216\Eul(Q_{\bar{6}};t+1) \\ 
        &=& 22824 t^{5} + 538128 t^4 + 3499200 t^3 + 9072000 t^2 + 
        10108800 t + 4043520.  
    \end{eqnarray*}
The cases of $\mathbb{E}_7$ and $\mathbb{E}_8$ can be obtained in a similar way. 
\end{proof}
\section{$d$-polynomials of path algebras}\label{section path alg}
In this section, we study $d$-polynomials of path algebras of Dynkin type. 
Let $Q$ be a Dynkin quiver with $n$ vertices. 
Let $W=W(Q):=\langle s_i \mid i\in Q_0\rangle$ be the Coxeter group of the underlying graph of $Q$. 

We recall the definition of the $W$-Narayana numbers. 
For $w\in W$, the \emph{absolute length} $l_{\rm Ab}(w)$ of $w$ is defined by the length of the shortest word for $w$ as a product of arbitrary reflections. 
We denote by $c$ the \emph{(admissible) Coxeter element}, that is, $c=c(Q)=s_{u_1}\ldots s_{u_n}$, where $\{u_1,\ldots, u_n\}=\{1,\ldots ,n\}$ and 
$s_j$ precedes $s_i$ in any word for $c$ whenever there is an arrow from $j$ to $i$. 
Let $[{\rm id}, c] := \{w\in W\mid {\rm id}\leq_{\rm Ab} w \leq_{\rm Ab} c\}$ be the interval.  
Then, we call  
\begin{equation*}
    N(W,j) : = \#\{w\in [{\rm id},c] \mid l_{\rm Ab}(w)=j\} \quad \text{($0\leq j \leq n$)}
\end{equation*} 
the \emph{$W$-Narayana numbers}. 
In addition, the \emph{$W$-Narayana polynomial} is defined by 
\begin{equation*}
    \Cat(Q;t) = \Cat(W;t) := \sum_{j=0}^n N(W,j)t^j.
\end{equation*}
It is known that $\Cat(Q;t)$ depends only on the underlying graph of $Q$. 
Thus, we simply write the corresponding $W$-Narayana polynomials for Dynkin diagrams $\mathbb{A}_n$, $\mathbb{D}_n$ and $\mathbb{E}_n$ 
by $\Cat(\mathbb{A}_n;t)$, $\Cat(\mathbb{D}_n;t)$ and $\Cat(\mathbb{E}_n;t)$, respectively.

The following result asserts that the $h$-polynomial of the path algebra of $Q$ coincides with the $W$-Narayana polynomial.  

\begin{thm}\cite{IT09} 
\label{thm:h-Narayana}
Let $kQ$ be the path algebra of $Q$. 
Then, we have $h(kQ;t)=\Cat(Q;t)$ and $f(kQ;t) = \Cat(Q;t+1)$. 
\end{thm}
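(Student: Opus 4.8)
The plan is to pass to the cluster category $\mathcal{C}_Q$ and identify the $g$-simplicial complex $\Delta(kQ)$ with the cluster complex of the finite root system $\Phi$ of $Q$. Since $kQ$ is hereditary, $\tau$-rigidity coincides with rigidity, and a basic $\tau$-rigid pair $(M,P)$ is precisely a set of pairwise compatible indecomposable rigid objects of $\mathcal{C}_Q$: the indecomposable summands of $M$ together with the shifted projectives $P_i[1]$ recorded by $P$. Under the standard dictionary the indecomposable $\tau$-rigid modules correspond to the positive roots $\Phi_{>0}$ and the $P_i[1]$ to the negative simple roots $-\alpha_i$, so that $\Delta(kQ)$ is the cluster complex on the almost positive roots $\Phi_{\geq -1}$; its facets are the clusters, which by Definition-Theorem \ref{poset iso} are exactly the basic support $\tau$-tilting pairs. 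This is the structural heart of the argument and is where the Dynkin hypothesis enters, guaranteeing $g$-finiteness and the bijection with clusters.

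Granting this identification, the theorem reduces to computing the $h$-vector $(h_0,\dots,h_n)$ of the cluster complex and matching it with the $W$-Narayana vector $(N(W,0),\dots,N(W,n))$. Here I would follow Ingalls--Thomas: send each functorially finite torsion class $\mathcal{T}\in\ftors kQ$ to its associated wide subcategory $\mathcal{W}(\mathcal{T})$, and use that for Dynkin $Q$ these wide subcategories are in rank-preserving bijection with the noncrossing partitions, that is, with the interval $[{\rm id},c]$ in absolute order, the rank being the number of simple objects of $\mathcal{W}(\mathcal{T})$ (equivalently the absolute length of the corresponding element). One then shows that this fibration of $\stautilt kQ\cong\ftors kQ$ over the noncrossing partitions refines the facet count of $\Delta(kQ)$ exactly as the passage from the $f$-vector to the $h$-vector does: I would fix a shelling of $\Delta(kQ)$ induced by the lattice structure of $\ftors kQ$ (for instance via the brick labelling of its Hasse quiver) and verify that the restriction set of each facet has cardinality equal to the rank of the noncrossing partition attached to it, so that summing over facets yields $h_j=N(W,j)$; palindromicity of both sides ($h_j=h_{n-j}$ since the cluster complex is a sphere, and $N(W,j)=N(W,n-j)$ by self-duality of the noncrossing partition lattice) reconciles the indexing conventions. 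The statement for the $f$-polynomial is then immediate from the relation $h(\Delta(A);t)=f(\Delta(A);t-1)$ recorded in Subsection \ref{Enumeration}, which gives $f(kQ;t)=h(kQ;t+1)=\Cat(Q;t+1)$.

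I expect the main obstacle to be the identification of the $h$-vector of the cluster complex with the noncrossing-partition Narayana numbers $N(W,j)$ used in the present paper, since these are two a priori different incarnations of the Coxeter--Catalan combinatorics: one counts clusters by a negative-root statistic, the other counts elements of $[{\rm id},c]$ by absolute length. Making the shelling and the fibration match these two gradings type-uniformly, rather than checking the equality case by case over $\mathbb{A}$, $\mathbb{D}$ and $\mathbb{E}$, is the delicate point, and is precisely what the Ingalls--Thomas correspondence between torsion classes and noncrossing partitions is designed to supply.
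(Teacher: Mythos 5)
The paper does not prove this statement itself: it is quoted directly from \cite{IT09}, so the ``paper's proof'' is just the citation. Your sketch is a reconstruction of the argument of that reference along its own lines (identifying $\Delta(kQ)$ with the cluster complex, then matching its $h$-vector with the absolute-length grading on the interval $[\mathrm{id},c]$ via the Ingalls--Thomas correspondence between functorially finite torsion classes, wide subcategories and noncrossing partitions), so it is essentially the same approach as the source being cited, with the shelling step you flag being exactly the part supplied there.
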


In particular, we have $\#\stautilt kQ = \Cat(Q)$, 
where $\Cat(Q)$ is known as the \emph{$W$-Catalan number} and their numbers are given by the following table (see also \cite{ONFR15} for these numbers). 
\begin{equation}\label{table:CatW}
\begin{tabular}{ccccccc}\hline
    $Q$& $\mathbb{A}_n$ & $\mathbb{D}_n$& $\mathbb{E}_6$ &$\mathbb{E}_7$ &$ \mathbb{E}_8$ \\ 
    $\Cat(Q)$ & $C_{n+1}$  & 
    $\left[\begin{smallmatrix}
    2n-1 \\ n-1
\end{smallmatrix}\right]$
    & $833$& $4160$ & $25080$ \\ \hline 
\end{tabular} 
\end{equation}
Here, $C_n:=\frac{1}{n+1}\binom{2n}{n}$ is the $n$-th Catalan number and $\left[\begin{smallmatrix}
    s \\ t
\end{smallmatrix}\right] := \frac{s+t}{s}\binom{s}{t}$.   

We refer to \cite[Section 12.3]{Petersen15} for tables of $W$-Narayana numbers for low rank. 
Similar to the $W$-Eulerian polynomials, for a given disjoint union $Q=Q_1\sqcup \cdots \sqcup Q_{m}$ of Dynkin quivers, 
we set $\Cat(Q;t):= \prod_{i=1}^m \Cat(Q_i;t)$.

\subsection{$\tau$-orbits and reduction}
Let $Q$ be a Dynkin quiver with $n$ vertices and $H:=kQ$ the path algebra of $Q$. 
We denote by $\ind H$ the set of isomorphism classes of indecomposable $H$-modules. 
In this case, every indecomposable $H$-module is $\tau$-rigid (see \cite[Chapter VII Corollary 5.14]{ASS06} for example). 
That is, $\ind H=\itaurigid H$. 
Moreover, it is decomposed into $\tau$-orbits as follows. 

\begin{defi}
We write $U\sim_{\tau} V$ for two $U,V\in \ind H$ if they lie in the same $\tau$-orbit, i.e., $U\cong \tau^sV$ for some $s\in \mathbb{Z}$. 
Then, it gives an equivalence relation on the set $\ind H$. 
\end{defi}

Let $P_{\ell}$ be the indecomposable projective $H$-module corresponding to $\ell\in Q_0$. 
Then, the set $\{P_{\ell}\mid \ell \in Q_0\}$ forms a set of complete representatives under the above relation $\sim_{\tau}$, and their equivalence classes are given by 
$$
[P_{\ell}]_{\tau} := \{\tau^{-s} P_{\ell} \mid s\geq 0\}. 
$$ 

Our result is the following. 

\begin{thm} \label{thm:OOpathalg}
Let $Q$ be a Dynkin quiver. 
Let $H:=kQ$ be the path algebra of $Q$ and $\Pi:=\Pi(Q)$ the preprojective algebra of $Q$. Then, the following statements  hold. 

\begin{enumerate}[\rm (1)]
    \item For any $\ell \in Q_0$ and any $V\cong \tau^{-s} P_{\ell}$, we have 
    \begin{equation}\nonumber
        \link(V) \cong \Delta(kQ_{\I}).    
    \end{equation}
In particular, the equivalence relation $\sim_{\tau}$ on $\ind H$ is compatible with links. 
\item We have 
\begin{equation}\nonumber \label{dj for path}
    d(H;t) = \sum_{\ell\in Q_0} \dim_k(e_{\ell}\Pi) \Cat(Q_{\I};t+1). 
\end{equation} 
In particular, the $d$-polynomial $d(H;t)$ does not depend on an orientation of $Q$. 
\item We have 
\begin{equation}\nonumber
    \Dim(\ind H) = \dim_k\Pi \quad \text{and} \quad 
    \Dim(\stautilt H) = \sum_{\ell\in Q_0} \dim_k(e_{\ell}\Pi) \Cat(Q_{\I}).  
\end{equation}
\end{enumerate}
\end{thm}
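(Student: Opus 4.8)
The plan is to derive all three parts from the orbit-decomposition formula of Proposition \ref{prop:orbit decomposition}, applied to the $\tau$-orbit relation $\sim_\tau$ on $\ind H = \itaurigid H$, for which $\{P_\ell \mid \ell \in Q_0\}$ is a complete set of representatives. The whole content therefore rests on part (1): once the links are identified, parts (2) and (3) become bookkeeping. For part (1) I would first invoke Proposition \ref{prop:reduction f}, reducing the claim to the identification of the reduction algebra, i.e. to proving $\Delta(C_V)\cong\Delta(kQ_{\I})$ for every $V\cong\tau^{-s}P_\ell$. The base case $s=0$ is immediate: by Example \ref{example:reduction} the maximal completion of $P_\ell=e_\ell H$ is $H$ itself, so $C_{P_\ell}\cong H/He_\ell H$, and for a path algebra this quotient is exactly $kQ_{\I}$, the path algebra of the quiver obtained by deleting the vertex $\ell$ and its incident arrows; hence $\link(P_\ell)\cong\Delta(kQ_{\I})$.

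The crux, and the step I expect to be the main obstacle, is to propagate this identification along the $\tau$-orbit, i.e. to show that the reduction at $\tau^{-s}P_\ell$ has the same $g$-simplicial complex as the reduction at $P_\ell$. Since $\tau$ is not an autoequivalence of $\mod H$, I cannot transport the reduction inside $\mod H$ directly; instead I would pass to $\Db(\mod H)$, where $\tau$ is an autoequivalence, and use reflection functors (APR tilts at sinks and sources) to carry the pair $(Q,\tau^{-s}P_\ell)$ to a pair $(Q',P'_\ell)$ in which the module becomes the projective at $\ell$ over a reoriented quiver $Q'$ with the same underlying graph. The delicate point is that these derived equivalences must be shown to induce an \emph{isomorphism of the $g$-fans} of the corresponding reductions: only then, via the fan description underlying Proposition \ref{prop:reduction f} (namely \cite[Theorem 4.9]{AHIKM22}), does one obtain an honest isomorphism of simplicial complexes rather than a mere derived equivalence. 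Combined with the fact that $\Delta(kQ'_{\I})$ depends only on the underlying graph of $Q'_{\I}$, this yields $\link(\tau^{-s}P_\ell)\cong\Delta(kQ_{\I})$, which also shows that $\sim_\tau$ is compatible with links.

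For part (2), Proposition \ref{prop:orbit decomposition} gives $d(H;t)=\sum_{\ell\in Q_0}\Dim([P_\ell]_\tau)\,f(\link(P_\ell);t)$, and by part (1) together with Theorem \ref{thm:h-Narayana} we have $f(\link(P_\ell);t)=f(kQ_{\I};t)=\Cat(Q_{\I};t+1)$. It then remains to identify the coefficient $\Dim([P_\ell]_\tau)=\sum_{s\geq 0}\dim_k\tau^{-s}P_\ell$ with $\dim_k(e_\ell\Pi)$; this is the classical decomposition of the preprojective algebra restricted to $kQ$, namely $e_\ell\Pi\cong\bigoplus_{s\geq 0}\tau^{-s}P_\ell$ (in Dynkin type every indecomposable is preprojective), which I would quote from the structure theory of $\Pi$ in \cite{IRRT18}. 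Orientation-independence is then automatic, since $\dim_k(e_\ell\Pi)$ depends only on the underlying graph (as $\Pi$ does) and $\Cat(Q_{\I};t+1)$ depends only on the underlying graph of $Q_{\I}$.

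For part (3) I would read the two equalities off of part (2). The top coefficient of each $\Cat(Q_{\I};t+1)$ equals $1$ (it is $N(W_{\I},n-1)=1$), so the leading coefficient $d_0$ gives $\Dim(\ind H)=\sum_{\ell}\dim_k(e_\ell\Pi)=\dim_k\Pi$; equivalently one sees this directly from $\Dim(\ind H)=\sum_\ell\Dim([P_\ell]_\tau)$ and $\Pi=\bigoplus_\ell e_\ell\Pi$. For support $\tau$-tilting modules, the constant term of $d(H;t)$ is governed by $\Cat(Q_{\I};1)=\Cat(Q_{\I})$, the $W$-Catalan number, giving $\Dim(\stautilt H)=d_{n-1}=\sum_{\ell}\dim_k(e_\ell\Pi)\,\Cat(Q_{\I})$; alternatively this is the second formula of Proposition \ref{prop:orbit decomposition} combined with $\#\stautilt_{P_\ell}H=\#\stautilt kQ_{\I}=\Cat(Q_{\I})$ via Theorem \ref{reduction}.
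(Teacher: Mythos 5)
Your global strategy coincides with the paper's: decompose via Proposition \ref{prop:orbit decomposition} along $\tau$-orbits, identify $\link(P_\ell)\cong\Delta(kQ_{\I})$ through $C_{P_\ell}\cong H/He_\ell H\cong kQ_{\I}$, and compute $\Dim([P_\ell]_{\tau})=\dim_k(e_\ell\Pi)$ from $(e_\ell\Pi)_H\cong\bigoplus_{s\geq 0}\tau^{-s}P_\ell$ (the paper proves this as Lemma \ref{dimension of pp alg}, quoting \cite{BGL87}); parts (2) and (3) then follow exactly as you describe. The one substantive divergence --- and the one genuine gap --- is the step you yourself single out as the main obstacle: propagating $\link(P_\ell)\cong\Delta(kQ_{\I})$ along the $\tau$-orbit. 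Your proposed route (APR tilts carrying $\tau^{-s}P_\ell$ to a projective over a reoriented quiver $Q'$, plus the claim that these derived equivalences induce isomorphisms of the $g$-fans of the reductions, plus orientation-independence of $\Delta(kQ'_{\I})$ as an abstract simplicial complex) is never actually carried out: you state that the fan compatibility ``must be shown'' but do not show it, and it is not a formal consequence of derived equivalence, since a derived equivalence need not preserve $2$-term silting objects. As written, the argument for (1) is therefore incomplete at precisely its crucial point.

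The paper avoids all of this with a much shorter device (Lemma \ref{lem:simp tau-invariant}): for a $\tau$-rigid pair $(M,P)$ over the hereditary algebra $H$ it defines $(M,P)^{\natural}:=(\tau M_{\rm np}\oplus\nu P,\,M_{\rm pr})$, where $M_{\rm pr}$ is the maximal projective summand of $M$, $M_{\rm np}=M/M_{\rm pr}$, and $\nu$ is the Nakayama functor. Hereditariness gives that $(-)^{\natural}$ preserves $\tau$-rigid pairs and the grading by number of summands, hence is an automorphism of $\Delta(H)$ itself; iterating it carries the vertex $\tau^{-s}P_\ell$ to $P_\ell$ and identifies their links without ever changing the algebra, invoking fans, or comparing orientations. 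If you wish to salvage your reflection-functor route you would have to supply a proof that APR tilts induce isomorphisms of $g$-simplicial complexes and a reference for the orientation-independence of $\Delta(kQ_{\I})$; the paper's one-line automorphism renders both unnecessary.
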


To prove this, we recall the following well-known facts. 
For the convenience of the reader, we give a proof. 

\begin{lemm} \label{dimension of pp alg}
For any $\ell \in Q_0$, we have 
\begin{equation*} \label{path algebra DimO}
    \Dim ([P_{\ell}]_{\tau}) = \dim_k(e_\ell \Pi). 
\end{equation*}
\end{lemm}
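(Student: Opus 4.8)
The plan is to relate the right $H$-module structure of the preprojective algebra $\Pi$ to the $\tau$-orbit of the indecomposable projective $P_\ell$. The essential input is the well-known graded description of $\Pi$ for a hereditary algebra $H$: with respect to its natural grading $\Pi=\bigoplus_{i\geq 0}\Pi_i$ (where $\Pi_0=H$), each graded piece satisfies, for primitive idempotents $e_a,e_b$, an isomorphism of $k$-vector spaces
\[
e_a\Pi_i e_b\cong \Hom_H(P_b,\tau^{-i}P_a),
\]
where $P_j=e_jH$ is the indecomposable projective right $H$-module at $j$ and $\tau$ is the Auslander--Reiten translation of $\mod H$. For $i=0$ this reduces to $\Hom_H(P_b,P_a)\cong e_aHe_b=e_a\Pi_0e_b$, which fixes the correct orientation. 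First I would record this identity and cite the standard references, since it is the conceptual heart of the statement.

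Granting this, the computation is pure bookkeeping. Summing over the second idempotent and over the grading, and using $H=\bigoplus_{j}P_j$, gives
\[
\dim_k(e_\ell\Pi)=\sum_{i\geq 0}\sum_{j\in Q_0}\dim_k\Hom_H(P_j,\tau^{-i}P_\ell)=\sum_{i\geq 0}\dim_k\Hom_H(H,\tau^{-i}P_\ell).
\]
Since $\Hom_H(H,M)\cong M$ as $k$-vector spaces for every $M\in\mod H$, the right-hand side equals $\sum_{i\geq 0}\dim_k(\tau^{-i}P_\ell)$.

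To finish, I would observe that because $Q$ is Dynkin, $H$ is representation-finite and every indecomposable $H$-module is preprojective; hence the nonzero modules among the $\tau^{-i}P_\ell$ are pairwise non-isomorphic and are exactly the members of the $\tau$-orbit $[P_\ell]_\tau=\{\tau^{-s}P_\ell\mid s\geq 0\}$. Therefore $\sum_{i\geq 0}\dim_k(\tau^{-i}P_\ell)=\Dim([P_\ell]_\tau)$, which is the claim. As a consistency check, summing over $\ell\in Q_0$ recovers $\dim_k\Pi=\sum_\ell\dim_k(e_\ell\Pi)=\sum_\ell\Dim([P_\ell]_\tau)=\Dim(\ind H)$, matching the first equality of Theorem \ref{thm:OOpathalg}(3).

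The main obstacle is exactly the first step: pinning down the graded description of $\Pi$ with the correct orientation of the $\Hom$-space, i.e.\ whether $e_a\Pi_ie_b$ corresponds to $\Hom_H(P_b,\tau^{-i}P_a)$ or to $\Hom_H(P_a,\tau^{-i}P_b)$. Once the convention is fixed by comparison with the degree-zero part $\Pi_0=H$, everything else reduces to a routine dimension count and the standard combinatorics of $\tau$-orbits in Dynkin type.
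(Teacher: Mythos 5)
Your proof is correct and rests on the same structural input as the paper's: the Baer--Geigle--Lenzing description of $\Pi$ as a right $H$-module, which the paper states directly as $(e_\ell\Pi)_H\cong\bigoplus_{s\geq 0}\tau^{-s}P_\ell$ and you recover by summing the graded pieces $e_\ell\Pi_i e_j\cong\Hom_H(P_j,\tau^{-i}P_\ell)$ over $j$ and using $\Hom_H(H,-)\cong\mathrm{id}$. Your extra remark that the nonzero $\tau^{-i}P_\ell$ are pairwise non-isomorphic in Dynkin type (so the dimension sum really computes $\Dim([P_\ell]_\tau)$) is a worthwhile point that the paper leaves implicit.
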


\begin{proof}
It is enough to show that 
$$(e_\ell\Pi)_{H}=\bigoplus_{s\geq0}\tau^{-s}P_\ell,$$
where $(e_\ell\Pi)_{H}$ denotes the restriction of $e_\ell\Pi$ to $H$. 
Recall that, by \cite{BGL87}, we have $\tau^-=\Ext_{H}^1(DH,-)=-\otimes_{H}\Ext_{A}^1(DH,H)$ and $$\Pi_{H}=\bigoplus_{s\geq0}\tau^{-s}H=\bigoplus_{s\geq0}\Ext_{H}^1(DH,H)^{\otimes_{H}^s}.$$
Then, we have $e_\ell\Ext_{H}^1(DH,H)=\Ext_{H}^1(DH,e_\ell H)=\tau^-(e_\ell H)$. 
Therefore we have 
$$(e_\ell\Pi)_{H}=\bigoplus_{s\geq0}e_\ell\Ext_{H}^1(DH,H)^{\otimes_{H}^s}=\bigoplus_{s\geq0}\tau^{-s}P_\ell$$ 
and get the conclusion. 
\end{proof}

\begin{lemm}\label{lem:simp tau-invariant}
    Let $U,V\in \ind H$. If $U\cong \tau^sV$ for some $s\in \mathbb{Z}$, then we have $\link(V)\cong \link(U)$.
\end{lemm}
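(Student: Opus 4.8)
The plan is to deduce the isomorphism from a symmetry of the entire $g$-simplicial complex, rather than comparing the reduced algebras $C_U$ and $C_V$ by hand. By Proposition \ref{prop:reduction f} we have $\link(V)\cong\Delta(C_V)$ and $\link(U)\cong\Delta(C_U)$, so it suffices to exhibit a simplicial automorphism of $\Delta(H)$ that carries the vertex $V$ to the vertex $U$; its restriction to the star of $V$ then identifies the two links. Since $U\cong\tau^s V$, the natural candidate is the automorphism induced by the $s$-th power of the Auslander--Reiten translation, and I would produce it from an autoequivalence of a suitable triangulated category. In particular it is enough to handle the single step $U\cong\tau V$ and compose the resulting isomorphisms along the $\tau$-orbit.

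The conceptual source of such an automorphism is the cluster category $\mathcal{C}_H := \Db(\mod H)/(\tau^{-1}\circ[1])$ of the hereditary algebra $H$. I would invoke the standard label-preserving identification of $\Delta(H)$ with the cluster complex of $\mathcal{C}_H$: the indecomposable $\tau$-rigid pairs $(M,P)$ correspond bijectively to the indecomposable rigid objects $M\oplus P[1]$ of $\mathcal{C}_H$ (the indecomposable $H$-modules together with the shifted projectives $P_i[1]$, the latter representing the covertices $(0,P_i)$ of $\Delta(H)$), and a set of such pairs spans a simplex precisely when the associated object is rigid. Under this identification the defining relation $[1]\cong\tau$ in $\mathcal{C}_H$ shows that the AR translation is induced by the shift functor, hence is a triangle autoequivalence of $\mathcal{C}_H$. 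Any autoequivalence sends rigid objects to rigid objects and cluster-tilting objects to cluster-tilting objects, so $\tau$ induces a simplicial automorphism $\Phi$ of the cluster complex acting on vertices by $X\mapsto\tau X$.

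Because $\Phi$ is a simplicial automorphism with $\Phi(V)=\tau V=U$, it restricts to an isomorphism of links $\link(V)\xrightarrow{\sim}\link(U)$ inside the cluster complex, and transporting back along the identification with $\Delta(H)$ gives $\link_{\Delta(H)}(V)\cong\link_{\Delta(H)}(U)$, as desired; the general case $U\cong\tau^s V$ is handled by $\Phi^s$. Equivalently, in the language of fans underlying Proposition \ref{prop:reduction f}, one can realize $\Phi$ as the linear automorphism of $K_0(\proj H)_{\mathbb R}$ induced by the Coxeter transformation, which maps the $g$-fan of $H$ onto itself and the ray of $V$ to the ray of $U$, and therefore identifies the corresponding quotient fans $\Delta(C_V)$ and $\Delta(C_U)$.

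The step that requires the most care, and which I would spell out explicitly, is the compatibility between the two incarnations of the translation: that $U\cong\tau^s V$ as honest $H$-modules really corresponds to $U\cong[s]V$ in $\mathcal{C}_H$, and that the boundary indecomposables---the projectives, the injectives, and the shifted projectives $P_i[1]$---are permuted consistently by $\Phi$ so that the vertex set of $\Delta(H)$ is genuinely preserved. Once the label-preserving simplicial isomorphism $\Delta(H)\cong\mathcal{C}_H$-cluster complex and the identity $[1]\cong\tau$ are in place, the restriction-to-links argument is purely formal; the essential content is thus the categorical input about $\mathcal{C}_H$ rather than any computation with $C_U$ and $C_V$.
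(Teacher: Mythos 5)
Your proof is correct and follows essentially the same route as the paper: both produce a global simplicial automorphism of $\Delta(H)$ extending $\tau$ and then restrict it to links. The paper simply writes down the automorphism explicitly as $(M,P)\mapsto(\tau M_{\rm np}\oplus \nu P,\,M_{\rm pr})$ on $\tau$-rigid pairs, which is precisely the combinatorial shadow of the cluster-category shift $[1]\cong\tau$ that you invoke.
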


\begin{proof}
Let $(-)^{\ast}:=\Hom_H(-,H)\colon \proj H \xrightarrow{\sim} 
\proj H^{\rm op}$ be a functor. 
For $M\in \mod H$ with a minimal projective presentation $P^{M}_1 \xrightarrow{d_1} P_{0}^M \xrightarrow{d_0} M \to 0$, 
we recall that the transpose ${\rm Tr}M \in \mod H^{\rm op}$ of $M$ is defined by the cokernel of the map $d_1^{\ast}$.

By \cite[Theorem 2.14]{AIR14}, we have a bijection 
\begin{equation}\label{eq:dagger}
(-)^{\dagger} \colon \taurigidpair H \to \taurigidpair H^{\rm op}, \quad 
(M,P) \mapsto ({\rm Tr}M \oplus P^{\ast}, M_{\rm pr}^{\ast}),
\end{equation}
where $M_{\rm pr}$ denotes the maximal projective direct summand of $M$.

We denote by $\nu:=D(-)^{\ast}$ the Nakayama functor of $\mod H$. 
For a $\tau$-rigid pair $(M,P)$, letting 
$(N,Q) := D(M,P)^{\dagger}$, we have  
\[
N = D({\rm Tr}M \oplus P^{\ast}) = \tau M \oplus \nu P 
\quad \text{and} \quad 
Q = D(M_{\rm pr}^{\ast}) = \nu M_{\rm pr} 
\]
with $Q$ injective. Then, we have $\Hom_H(\tau^- N,N)=0$ and $\Hom_{H}(N,Q) =0$. 
Indeed, $D(-)^{\dagger}$ gives a bijection between the set of isomorphism classes of $\tau$-rigid pairs for $H$ and the set of isomorphism classes of $\tau^-$-rigid pairs for $H$ (We refer to \cite[Section 2.2]{AIR14} for the precise definitions and statements about $\tau^-$-rigid modules). 
Since $H$ is hereditary, $N$ is $\tau$-rigid by
\[
\Hom_H(N,\tau N) \cong \Hom_H(\tau^- N, N) =0, 
\]
whereas $\Hom(\nu^-Q,N) \cong D\Hom_H(N,Q) =0$ holds by the property of the Nakayama functor. 
This shows that $(M,P)^{\natural} := (N,\nu^-Q) = (\tau M\oplus \nu P, M_{\rm pr})$ is a $\tau$-rigid pair for $H$. 

Thus, we have a bijective map 
\begin{equation}\label{eq:natural}
(-)^{\natural} \colon \taurigidpair H \to \taurigidpair H, 
\quad 
(M,P) \mapsto (\tau M\oplus \nu P, M_{\rm pr}).    
\end{equation}
In fact, the bijectivity of the above map is immediate from its construction and basic properties (see \cite[Chapter IV Proposition 2.10]{ASS06}) of $\tau$.
Furthermore, it restricts to a bijection 
$\taurigidpair^j H \xrightarrow{\sim} \taurigidpair^j H$ for every $j$.
Thus, \eqref{eq:natural} gives an automorphism of $\Delta(H)$.

Now, let $V$ be a non-projective indecomposable $H$-module. 
For a given basic $\tau$-rigid pair $(M,P)$, we have that $(V,0)$ is a direct summand of $(M,P)$ if and only if $(\tau V,0)$ is a direct summand of $(M,P)^{\natural}$. 
By the definition of links, this implies that the map \eqref{eq:natural} gives an isomorphism $\link(V)\cong \link(\tau V)$ between links. 
Finally, the assertion of the statement immediately follows from this result. 
\end{proof}

Now, we are ready to prove Theorem \ref{thm:OOpathalg}. 

\begin{proof}[Proof of Theorem \ref{thm:OOpathalg}]
(1) Let $\ell\in Q_0$. 
Firstly, we clearly have an isomorphism $\link(P_{\ell})\cong \Delta(kQ_{\I})$ of simplicial complexes since we have $C_{P_{\ell}}\cong kQ_{\I}$ for the algebra obtained by the reduction at $P_{\ell}$ (see Example \ref{example:reduction}). 
On the other hand, if $V\cong \tau^{-s} P_{\ell}$ for some $s\in\mathbb{Z}$, then we obtain $\link(V)\cong \link(P_{\ell})$ by applying Lemma \ref{lem:simp tau-invariant}. 
Thus, we get $\link(V)\cong \Delta(kQ_{\I})$ as desired. 

(2) and (3). We have shown in Lemma \ref{dimension of pp alg} that
$\Dim([P_{\ell}]_{\tau})=\dim_ke_{\ell}\Pi$ for all $\ell\in Q_0$. 
Then, we get the assertions by (1) and Proposition \ref{prop:orbit decomposition}. 
\end{proof}

In the rest of this section, we compute $d$-polynomials of path algebras of Dynkin types $\mathbb{A}$, $\mathbb{D}$ and $\mathbb{E}$ using Theorem \ref{thm:OOpathalg}.

\subsection{Type $\mathbb{A}$}
For Dynkin quivers of type $\mathbb{A}$, our formula is the following. 

\begin{thm}\label{type A path}
Let $H:=kQ$ be the path algebra of a Dynkin quiver $Q$ of type $\mathbb{A}_n$. Then, we have 
\begin{equation}\label{eq:d pathA}
    d(H;t) = \sum_{\ell=1}^n \ell (n-\ell+1)
    \Cat(\mathbb{A}_{\ell-1};t+1)\Cat(\mathbb{A}_{n-\ell};t+1). 
\end{equation}
In particular, we have
\begin{eqnarray*}
\Dim(\itaurigid H) = \frac{1}{6}n(n+1)(n+2)
\quad \text{and} \quad
\Dim(\stautilt H) = 4^{n+1}- 
(n+2) C_{n+2}. 
\end{eqnarray*}
For $n \leq 9$, we describe coefficients $d_j$ $(0\leq j \leq n-1)$ of the polynomial \eqref{eq:d pathA} in Table \ref{Table:dimpathA}. 

\begin{table}[h]\scriptsize
\renewcommand{\arraystretch}{0.95}
\begin{tabular}{r| r r r r r r r r r r}
    $n\backslash\,j$ & $0$ & $1$ & $2$ & $3$ & $4$ & $5$ & $6$ & $7$ & $8$\\ \hline 
    $1$ & $1$  \\ 
    $2$ & $4$& $8$ \\
    $3$ & $10$& $46$& $46$ \\ 
    $4$ & $20$& $156$& $348$& $232$  \\ 
    $5$ & $35$ & $406$ & $1499$ & $2186$ & $1093$ & \\ 
    $6$ & $56$ & $896$ & $4824$ & $11456$ & $12360$ & $4944$ \\ 
    $7$ & $84$ & $1764$ & $12888$ & $44026$ & $76458$ & $65334$ & $21778$\\ 
    $8$ & $120$ & $3192$ & $30192$ & $138340$ & $342140$ & $466500$ & $329644$ & $94184$ \\ 
    $9$ & $165$ & $5412$ & $64086$ & $376354$ & $1237622$ & $2384738$ & $2670586$ & $1607720$ & $401930$ \\
\end{tabular}    
\\ \ 
    \caption{Numbers $d_j$ for $d$-polynomials of path algebras of type $\mathbb{A}$}
    \label{Table:dimpathA}
\end{table}
\end{thm}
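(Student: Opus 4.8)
The plan is to specialize the structural Theorem~\ref{thm:OOpathalg} to type $\mathbb{A}_n$. By part~(2) of that theorem, $d(H;t) = \sum_{\ell\in Q_0}\dim_k(e_\ell\Pi)\,\Cat(Q_{\I};t+1)$, so the entire computation reduces to two inputs at each vertex $\ell$: the integer $\dim_k(e_\ell\Pi)$ and the underlying graph of the deleted quiver $Q_{\I}$. First I would observe that removing the vertex $\ell$ from the type $\mathbb{A}_n$ diagram splits it as $\mathbb{A}_{\ell-1}\sqcup\mathbb{A}_{n-\ell}$, so by the multiplicative convention for disjoint unions $\Cat(Q_{\I};t) = \Cat(\mathbb{A}_{\ell-1};t)\,\Cat(\mathbb{A}_{n-\ell};t)$.

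Next I would compute $\dim_k(e_\ell\Pi)$ from the explicit description in Subsection~\ref{sec:ppaA}. The module $e_\ell\Pi$ is drawn in \eqref{proj:ppaA} as the full $\ell\times(n-\ell+1)$ rectangle of one-dimensional spaces; equivalently, under the bijection of Proposition~\ref{prop:bij ppalgA} it is $M_\sigma$ for the maximal lattice path, whose area is the number of unit squares in the rectangle $[0,\ell]\times[0,n-\ell+1]$. Hence $\dim_k(e_\ell\Pi) = \ell(n-\ell+1)$, and substituting this together with the $\Cat$-factorization into Theorem~\ref{thm:OOpathalg}(2) yields \eqref{eq:d pathA} immediately. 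The formula for $\Dim(\itaurigid H)$ then follows from Theorem~\ref{thm:OOpathalg}(3): since $H$ is hereditary we have $\ind H=\itaurigid H$, whence $\Dim(\itaurigid H)=\dim_k\Pi=\sum_{\ell=1}^n\ell(n-\ell+1)=\tfrac16 n(n+1)(n+2)$, the last equality being the standard evaluation $\sum_{\ell=1}^n\ell(n+1-\ell)=\binom{n+2}{3}$.

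The genuine work lies in the closed form for $\Dim(\stautilt H)$. By Theorem~\ref{thm:OOpathalg}(3) together with $\Cat(\mathbb{A}_m)=C_{m+1}$ from \eqref{table:CatW}, this number equals $\sum_{\ell=1}^n\ell(n-\ell+1)C_\ell C_{n-\ell+1}$, which under the substitution $i=\ell,\ j=n-\ell+1$ becomes $a_{n+1}:=\sum_{i+j=n+1,\ i,j\ge1} ij\,C_iC_j$. I would evaluate this with the Catalan generating function $c(x)=\sum_{m\ge0}C_mx^m$: setting $g(x):=xc'(x)=\sum_{i\ge1}iC_ix^i$, one has $a_{n+1}=[x^{n+1}]\,g(x)^2$ since the weight $iC_i$ kills the $i=0$ term. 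Then I would exploit the functional equation $c=1+xc^2$: differentiating gives $c'=c^2/(1-2xc)$ with $1-2xc=\sqrt{1-4x}$, while $c^2=(c-1)/x$ yields $c^4=(c-1)^2/x^2$; combining these, $g^2=x^2c'^2=(c-1)^2/(1-4x)$.

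Finally I would extract the coefficient. Writing $(c-1)^2=c^2-2c+1$ and using $[x^m]c^2=C_{m+1}$ and $[x^m]c=C_m$, the quantity $[x^{n+1}]\tfrac{(c-1)^2}{1-4x}$ splits into convolutions of the $C_m$ against powers of $4$; these collapse via the partial-fraction identity $[x^N]\tfrac{c}{1-4x}=\tfrac12\bigl(4^{N+1}-\binom{2N+2}{N+1}\bigr)$, itself read off from $\tfrac{1}{\sqrt{1-4x}}=\sum_m\binom{2m}{m}x^m$. The powers of $4$ sum to $4^{n+1}$, and the central-binomial recurrence $\binom{2k}{k}=\tfrac{2(2k-1)}{k}\binom{2k-2}{k-1}$ reduces the binomial remainder to $-\tfrac{n+2}{n+3}\binom{2n+4}{n+2}=-(n+2)C_{n+2}$, giving $a_{n+1}=4^{n+1}-(n+2)C_{n+2}$ as claimed. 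The main obstacle is exactly this generating-function identity: parts (1) and (2) are direct corollaries of Theorem~\ref{thm:OOpathalg}, whereas all the real substance is the careful bookkeeping of the two weighted Catalan convolutions and the ensuing binomial simplification. The table is then a routine numerical evaluation of \eqref{eq:d pathA}.
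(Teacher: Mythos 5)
Your proposal is correct, and the first two assertions are proved exactly as in the paper: Theorem \ref{thm:OOpathalg}(2)--(3) plus $\dim_k(e_{\ell}\Pi)=\ell(n-\ell+1)$ from \eqref{proj:ppaA} and the splitting $Q_{\I}=\mathbb{A}_{\ell-1}\sqcup\mathbb{A}_{n-\ell}$ give \eqref{eq:d pathA} and $\Dim(\itaurigid H)=\binom{n+2}{3}$ with no further work. For the closed form of $\Dim(\stautilt H)$ you take a genuinely different (though closely related) route. The paper sets $c_m=\binom{2m}{m}=(m+1)C_m$, observes $\sum_{i=0}^{n+1}c_ic_{n+1-i}=4^{n+1}$ from $\bigl(\tfrac{1}{\sqrt{1-4z}}\bigr)^2=\tfrac{1}{1-4z}$, and then applies the termwise identity $c_ic_{n-i+1}=iC_i(n-i+1)C_{n-i+1}+(n+2)C_iC_{n-i+1}$ (i.e.\ $(i+1)(n-i+2)=i(n-i+1)+(n+2)$) to split the sum into the desired quantity plus $(n+2)C_{n+2}$ after the boundary terms cancel. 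You instead recognize the sum as $[x^{n+1}]\,g(x)^2$ with $g=xc'(x)$, use $c=1+xc^2$ to get $g^2=(c-1)^2/(1-4x)$, and extract the coefficient via $[x^N]\tfrac{c}{1-4x}=\tfrac12\bigl(4^{N+1}-\binom{2N+2}{N+1}\bigr)$; I checked that the resulting bookkeeping does collapse to $4^{n+1}-(n+2)C_{n+2}$. The paper's version buys a shorter computation with no coefficient extraction (everything cancels telescopically once the termwise identity is in place), while yours is more mechanical and generalizes more readily to other weighted Catalan convolutions; both ultimately rest on the same fact that $1/(1-4x)$ is the square of the central-binomial generating function.
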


\begin{proof}
Let $\Pi:=\Pi(Q)$ be the preprojective algebra of $Q$. 
Let $\ell\in Q_0$. 
By Lemma \ref{dimension of pp alg}, we have  
$$
\Dim([P_\ell]_{\tau}) = \dim_k(e_{\ell} \Pi) = \ell(n-\ell+1) 
$$ 
since the quiver representation of $e_{\ell}\Pi$ is given by \eqref{proj:ppaA}. 
On the other hand, the underlying graph of $Q_{\I}$ is a disjoint union $\mathbb{A}_{\ell-1}\sqcup \mathbb{A}_{n-\ell}$. 
Thus, we have 
$$
\Cat(Q_{\I};t) = \Cat(\mathbb{A}_{\ell-1};t)\Cat(\mathbb{A}_{n-\ell};t). 
$$
Applying Theorem \ref{thm:OOpathalg}, we get the desired equation \eqref{eq:d pathA}.
For the latter assertions, we have  
\begin{eqnarray*}
\Dim(\itaurigid H) = \sum_{\ell=1}^n\Dim([P_\ell]_{\tau}) 
=
\sum_{\ell=1}^n\ell (n-\ell+1) = \frac{1}{6}n(n+1)(n+2).
\end{eqnarray*}
On the other hand, we have
\begin{equation}\label{ellCell}
    \Dim(\stautilt H) = \sum_{\ell=1}^n \Dim([P_{\ell}]_{\tau})\cdot \Cat(Q_{\I}) = \sum_{\ell=1}^n\ell C_{\ell}(n-\ell+1)C_{n-\ell+1},  
\end{equation}
where we use $\Cat(Q_{\I}) = C_{\ell}C_{n-\ell+1}$ by \eqref{table:CatW}. 
For simplicity, let $c_m:=\binom{2m}{m}=(m+1)C_m$ for $i\geq 0$. We recall that a generating function for Catalan numbers is given by (e.g. see \cite{Stanley15}) 
\begin{equation}
    \sum_{m\geq 0} C_m z^m = \frac{1-\sqrt{1-4z}}{2z}. 
\end{equation} 
Then, we can deduce that 
\begin{equation}\label{power4}
\sum_{m\geq 0} \left(\sum_{i=0}^{m-1} c_{i}c_{m-i-1} \right) z^{m-1} = \frac{1}{1-4z} \quad \text{and} \quad \sum_{i=0}^{m-1}c_{i}c_{m-i-1} = 4^{m-1}. 
\end{equation}
On the other hand, they satisfy 
\begin{eqnarray}\label{b inductive}
    c_{i}c_{n-i+1} = iC_i(n-i+1)C_{n-i+1}+(n+2)C_i C_{n-i+1}. 
\end{eqnarray}
Consequently, we obtain  
\begin{eqnarray*}
4^{n+1}&\overset{\eqref{power4}}{=}&\sum_{i=0}^{n+1}c_ic_{n-i+1}\\
&\overset{\eqref{b inductive}}{=}&2c_{n+1}+ \sum_{i=1}^n i C_i(n-i+1)C_{n-i+1}+(n+2)\sum_{j=1}^n
C_jC_{n-j+1}\\
&\overset{\eqref{ellCell}}{=}&2c_{n+1}+ \Dim(\stautilt H) + (n+2)C_{n+2}- 2c_{n+1}\\
&=&\Dim(\stautilt H)+(n+2)C_{n+2}.  
\end{eqnarray*}
It finishes the proof of Theorem \ref{type A path}.
\end{proof}

\subsection{Type $\mathbb{D}$}

For Dynkin quivers of type $\mathbb{D}$, we have the following result.

\begin{theorem}\label{type D path}
Let $H:=kQ$ be the path algebra of Dynkin quiver $Q$ of type $\mathbb{D}_n$. Then, we have
\begin{equation}\label{eq:d pathD}
\begin{array}{llll}
    d(H;t) &=& n(n-1)\Cat(\mathbb{A}_{n-1};t+1) \\ 
    &+& \displaystyle \sum_{\ell=2}^{n-1}(n-\ell)(n+\ell-1)\Cat(\mathbb{A}_{n-\ell-1};t+1)\Cat(\mathbb{D}_n;t+1).
\end{array}
\end{equation}
Here, we regard $\mathbb{D}_2 = \mathbb{A}_1\times \mathbb{A}_1$ and $\mathbb{D}_3 = \mathbb{A}_3$. 
In particular, we have 
\begin{eqnarray*}
    \Dim(\itaurigid H) &=& \frac{1}{3}n(n+1)(2n+1) \quad \text{and} \\ 
    \Dim(\stautilt H) &=& n(n-1)C_n+\sum_{\ell=2}^{n-1}(n-\ell)(n+\ell-1)C_{n-\ell}\left[\begin{smallmatrix} 2\ell-1 \\ \ell-1\end{smallmatrix}\right]. 
\end{eqnarray*}
For $n \leq 9$, we describe coefficients $d_j$ $(0\leq j \leq n-1)$ of the polynomial \eqref{eq:d pathD} in Table \ref{Table:dimpathD}. 

\begin{table}[h]\scriptsize
\renewcommand{\arraystretch}{0.95}
\begin{tabular}{r| r r r r r r r r r r}
    $n\backslash\,j$ & $0$ & $1$ & $2$ & $3$ & $4$ & $5$ & $6$ & $7$ & $8$\\ \hline 
    $4$ &  $28$ & $222$ & $498$ & $332$ \\ 
    $5$ &  $60$ & $724$ & $2716$ & $3984$ & $1992$\\ 
    $6$ &  $110$ & $1874$ & $10376$ & $24964$ & $27070$ & $10828$ \\ 
    $7$ &  $182$ & $4158$ & $31628$ & $110306$ & $193568$ & $166098$ & $55366$\\ 
    $8$ &  $280$ & $8260$ & $82308$ & $388036$ & $975060$ & $1340652$ & $950628$ & $271608$\\ 
    $9$ &  $408$ & $15096$ & $190416$ & $1159294$ & $3894538$ & $7598986$ & $8568190$ & $5173024$ & $1293256$\\
\end{tabular}   
\\  \ 
    \caption{Numbers $d_j$ for $d$-polynomials of path algebras of type $\mathbb{D}$}
    \label{Table:dimpathD}
\end{table}
\end{theorem}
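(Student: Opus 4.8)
The plan is to specialize the general formula of Theorem~\ref{thm:OOpathalg}(2),
\[
d(H;t)=\sum_{\ell\in Q_0}\dim_k(e_\ell\Pi)\,\Cat(Q_{\I};t+1),
\]
to Dynkin type $\mathbb{D}_n$ with the vertex labelling $Q_0=\{\pm1,2,\ldots,n-1\}$ fixed in Section~\ref{section ppalg}. The whole problem then splits into two independent bookkeeping tasks, carried out vertex by vertex: first identify the underlying graph of $Q_{\I}$, and second compute the single integer $\dim_k(e_\ell\Pi)$. The first is immediate graph surgery: deleting an outer vertex $\ell=\pm1$ leaves the path $\mathbb{A}_{n-1}$, while deleting an interior vertex $\ell\in\{2,\ldots,n-1\}$ disconnects $\mathbb{D}_n$ into $\mathbb{D}_\ell\sqcup\mathbb{A}_{n-\ell-1}$ (reading $\mathbb{D}_2=\mathbb{A}_1\times\mathbb{A}_1$ and $\mathbb{D}_3=\mathbb{A}_3$), exactly as recorded in \eqref{eq:EulQell D}. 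Hence $\Cat(Q_{\I};t+1)$ factors as $\Cat(\mathbb{A}_{n-1};t+1)$, respectively $\Cat(\mathbb{D}_\ell;t+1)\Cat(\mathbb{A}_{n-\ell-1};t+1)$, using the multiplicativity convention for disjoint unions.

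For the dimensions I would reuse the explicit descriptions of $e_\ell\Pi$ from the preprojective part of Section~\ref{section ppalg}, just as the type~$\mathbb{A}$ proof reads $\dim_k(e_\ell\Pi)=\ell(n-\ell+1)$ off \eqref{proj:ppaA}. The cleanest route is to observe that, among all $\tau$-rigid submodules of $e_\ell\Pi$, the module $e_\ell\Pi$ itself is the unique maximal one, so its dimension is the largest value of $\dim_k Y$ over the submodules classified in Propositions~\ref{bij:OOppalgDpm1} and \ref{bij:OOppalgD2}. For $\ell=\pm1$ this maximum is the area of the top lattice path of $\mathbb{L}'(n-1)$, i.e.\ the full triangle, giving $\dim_k(e_{\pm1}\Pi)=\binom{n}{2}$; one may equally count the vertices of the triangular diagram \eqref{proj:ppaDpm1}. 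For $\ell\in\{2,\ldots,n-1\}$ the maximum is attained at $u=(n,n-1,\ldots,\ell+1)\in\mathbb{U}_\ell$, and evaluating Lemma~\ref{n to w*} there collapses the sum to $\sum_{i=\ell+1}^n(n+\ell-1)=(n-\ell)(n+\ell-1)$; alternatively one counts the boxes of the array \eqref{array:D} row by row, keeping careful track of the doubled $\substack{1\\-1}$ entries.

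With both ingredients in hand, substituting into Theorem~\ref{thm:OOpathalg}(2) and combining the two outer vertices $\ell=\pm1$ into the single coefficient $2\binom{n}{2}=n(n-1)$ yields \eqref{eq:d pathD}; the orientation independence is then automatic, since neither ingredient sees the orientation. The two ``in particular'' identities follow by reading off the appropriate coefficients: $\Dim(\itaurigid H)=\dim_k\Pi=\sum_{\ell}\dim_k(e_\ell\Pi)$ by Theorem~\ref{thm:OOpathalg}(3), which via the elementary identity $(n-\ell)(n+\ell-1)=n(n-1)-\ell(\ell-1)$ reduces to standard power sums and gives the stated closed form, while $\Dim(\stautilt H)=\sum_\ell\dim_k(e_\ell\Pi)\,\Cat(Q_{\I})$ becomes $n(n-1)C_n+\sum_{\ell=2}^{n-1}(n-\ell)(n+\ell-1)\,C_{n-\ell}\left[\begin{smallmatrix}2\ell-1\\\ell-1\end{smallmatrix}\right]$ after inserting the $W$-Catalan values from \eqref{table:CatW}. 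The one genuinely delicate step is pinning down $\dim_k(e_\ell\Pi)=(n-\ell)(n+\ell-1)$ for interior $\ell$: the branching of the $\mathbb{D}_n$ diagram forces the care with the doubled $1/{-1}$ entries, and I expect this to be the main obstacle, everything downstream being routine summation.
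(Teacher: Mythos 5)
Your proposal follows essentially the same route as the paper's proof: specialize Theorem \ref{thm:OOpathalg}(2), identify the underlying graph of $Q_{\I}$ as $\mathbb{A}_{n-1}$ for $\ell=\pm1$ and $\mathbb{D}_{\ell}\sqcup\mathbb{A}_{n-\ell-1}$ otherwise, and compute $\dim_k(e_{\ell}\Pi)$ from the explicit descriptions of $e_{\ell}\Pi$ (the paper reads the numbers $\tfrac{1}{2}n(n-1)$ and $(n-\ell)(n+\ell-1)$ directly off \eqref{proj:ppaDpm1} and \eqref{array:D}, while you recover the same values as the maxima in the classifications of Propositions \ref{bij:OOppalgDpm1} and \ref{bij:OOppalgD2} via Lemma \ref{n to w*} --- an equivalent bookkeeping). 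One caveat: your assertion that the power-sum evaluation of $\sum_{\ell}\dim_k(e_{\ell}\Pi)$ ``gives the stated closed form'' $\tfrac{1}{3}n(n+1)(2n+1)$ does not actually hold --- using $(n-\ell)(n+\ell-1)=n(n-1)-\ell(\ell-1)$ the sum evaluates to $\tfrac{1}{3}n(n-1)(2n-1)$, which matches the tabulated $d_0$ (e.g.\ $28$ for $n=4$, whereas the printed formula gives $60$), so the displayed expression for $\Dim(\itaurigid H)$ is a misprint rather than something your computation derives.
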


\begin{proof}
Let $\Pi:=\Pi(Q)$ be the preprojective algebra of $Q$. 
For $\ell\in Q_0$, since the representation of $e_\ell\Pi$ given in \eqref{proj:ppaDpm1} and \eqref{array:D}, we have 
$$
\Dim([P_\ell]_{\tau}) 
\overset{\rm Lem.\,\ref{dimension of pp alg}}{=} 
\dim_ke_{\ell}\Pi = 
\begin{cases}
\frac{1}{2}n(n-1) & \text{if $\ell = \pm1$}, \\
(n-\ell)(n+\ell-1) & 
\text{if $\ell\neq \pm1$.}
\end{cases}
$$
On the other hand, the underlying graph of $Q_{\I}$ is 
$\mathbb{A}_{n-1}$ for $\ell=\pm1$ and $\mathbb{A}_{n-\ell-1}\sqcup \mathbb{D}_{\ell}$ for $\ell\neq \pm1$. 
Thus, we have  
$$
{\rm Cat}(Q_{\I};t) = 
\begin{cases}
{\rm Cat}(\mathbb{A}_{n-1};t) & 
\text{if $\ell = \pm1$,} \\
{\rm Cat}(\mathbb{A}_{n-\ell-1};t)
{\rm Cat}(\mathbb{D}_{\ell};t) & 
\text{if $\ell\neq \pm1$.} 
\end{cases}
$$
From these facts, we get the former assertion by Theorem \ref{thm:OOpathalg}(2). 
In addition, we get the latter assertion 
since $\Cat(Q_{\I})= C_n$ for $\ell=\pm1$ and 
$\Cat(Q_{\I}) = C_{n-\ell}\left[\begin{smallmatrix} 2\ell-1 \\ \ell-1\end{smallmatrix}\right]$ 
for $\ell\neq \pm1$ by \eqref{table:orderW}. 
\end{proof}

\subsection{Type $\mathbb{E}$}
Finally, we consider Dynkin quivers of type $\mathbb{E}$. 

\begin{thm}
Let $H:=kQ$ be the preprojective algebra of a Dynkin quiver $Q$ of type $\mathbb{E}$. 
Then, the $d$-polynomial $d(H;t)$ is given by Table \ref{Table:dimpathE}. 

\begin{table}[h]\scriptsize
\renewcommand{\arraystretch}{0.95}
\begin{tabular}{r| r r r r r r r r r r}
    $n\backslash\,j$ & $0$ & $1$ & $2$ & $3$ & $4$ & $5$ & $6$ & $7$ \\ \hline 
    $6$ &  $156$ & $2704$ & $15110$ & $36520$ & $39670$ & $15868$ \\ 
    $7$ &  $399$ & $9498$ & $73827$ & $260560$ & $460035$ & $395706$ & $131902$ \\
    $8$ &  $1240$ & $39392$ & $408048$ & $1967180$ & $5007100$ & $6931596$ & $4928756$ & $1408216$\\ 
\end{tabular}  
\\  \ 
    \caption{Numbers $d_j$ for $d$-polynomials of path algebras of type $\mathbb{E}$}
    \label{Table:dimpathE}
\end{table}
\end{thm}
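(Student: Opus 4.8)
The plan is to apply Theorem~\ref{thm:OOpathalg}(2), which gives
\[
d(H;t) = \sum_{\ell\in Q_0} \dim_k(e_{\ell}\Pi)\,\Cat(Q_{\I};t+1),
\]
so that the computation splits, for each vertex $\ell$, into two independent pieces: the dimension $\dim_k(e_\ell\Pi)$ of the indecomposable projective module of the preprojective algebra $\Pi=\Pi(Q)$, and the $W$-Narayana polynomial of the deleted quiver $Q_{\I}$ evaluated at $t+1$. Since Theorem~\ref{thm:OOpathalg}(2) already guarantees that $d(H;t)$ is independent of the orientation of $Q$, it suffices to carry out this computation for a single orientation of each of $\mathbb{E}_6$, $\mathbb{E}_7$ and $\mathbb{E}_8$.

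First I would determine the dimensions $\dim_k(e_\ell\Pi)$ for each $\ell\in Q_0$. By Lemma~\ref{dimension of pp alg} this equals $\Dim([P_\ell]_\tau)$, the total dimension of the $\tau$-orbit of the indecomposable projective $P_\ell$; equivalently, it is the $k$-dimension of the single module $e_\ell\Pi$, read off from its dimension vector. As in type $\mathbb{E}$ for preprojective algebras (Lemma~\ref{lem:OdimppaE}), there is no uniform closed-form expression, so I would obtain these values by an explicit computation, for instance using the {\rm GAP} package {\rm QPA} \cite{QPA} to build $e_\ell\Pi$ via repeated right multiplication by the ideals $I_i=\Pi(1-e_i)\Pi$. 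Next I would compute the factors $\Cat(Q_{\I};t+1)$. The underlying graphs of the $Q_{\I}$ for type $\mathbb{E}_n$ are exactly those listed in \eqref{eq:EI}, and by the multiplicativity $\Cat(Q_1\sqcup\cdots\sqcup Q_m;t)=\prod_{i=1}^m\Cat(Q_i;t)$ each such polynomial factors into $W$-Narayana polynomials of types $\mathbb{A}$, $\mathbb{D}$ and $\mathbb{E}$ of smaller rank, which are tabulated (see \cite[Section 12.3]{Petersen15}). Substituting $t+1$ and multiplying out gives $\Cat(Q_{\I};t+1)$ for each $\ell$.

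Finally, I would assemble $d(H;t)$ by weighting each $\Cat(Q_{\I};t+1)$ by $\dim_k(e_\ell\Pi)$ and summing over $\ell\in Q_0$, exactly as in the computation for preprojective algebras of type $\mathbb{E}$; reading off the coefficients then yields Table~\ref{Table:dimpathE}. The main obstacle is entirely computational rather than conceptual: because there is no closed form in type $\mathbb{E}$, both the dimensions $\dim_k(e_\ell\Pi)$ and the Narayana polynomials must be produced explicitly and then combined, the chief risk being arithmetic error in the large cases $\mathbb{E}_7$ and $\mathbb{E}_8$ where the relevant groups $W(\mathbb{E}_6)$ and $W(\mathbb{E}_7)$ enter the factorizations of $Q_{\I}$.
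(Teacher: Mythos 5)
Your proposal follows essentially the same route as the paper's proof: invoke Theorem \ref{thm:OOpathalg}(2), tabulate $\dim_k(e_\ell\Pi)$ for each vertex, compute $\Cat(Q_{\I};t+1)$ from the decompositions \eqref{eq:EI} and the multiplicativity of $\Cat$ over disjoint unions, and sum the weighted products. The only quibble is that the QPA ideal-multiplication procedure you cite is what the paper needs for Lemma \ref{lem:OdimppaE} (summing over all $\tau$-rigid submodules of $e_\ell\Pi$), whereas here one only needs the single number $\dim_k(e_\ell\Pi)$, which the paper obtains by direct calculation; this does not affect correctness.
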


\begin{proof}
Let $\Pi=\Pi(Q)$ be the preprojective algebra of $Q$. 
By a direct calculation, the numbers 
$\Dim([P_{\ell}]_{\tau}) = \dim_k(e_{\ell} \Pi)$ are given by the following table. 
\begin{equation*}
    \begin{tabular}{rrrrrrrrrr}
    \hline
        $Q\backslash \ell$ & $1$ & $2$ & $3$ & $4$ & $5$ & $6$ & $7$ & $8$\\
        $\mathbb{E}_6$ &$16$ & $30$ & $42$& $22$ & $30$ & $16$\\ 
        $\mathbb{E}_7$ &$34$ & $66$ & $96$& $49$ & $75$ & $52$ & $27$ \\ 
        $\mathbb{E}_8$ &$92$ & $182$ & $270$& $136$ & $220$ &  $168$ & $114$ & $58$ \\ \hline
    \end{tabular}
\end{equation*}

We compute the case $\mathbb{E}_6$. 
Recall that the underlying graph of $Q_{\I}$ is given by \eqref{eq:EI}.
Then, we have 
\begin{enumerate}[\rm $\bullet$]
    \item $\Cat(Q_{\bar{1}};t+1) = \Cat(\mathbb{D}_{5};t+1) =  t^5 + 25t^4 + 160t^3 + 410t^2 + 455t + 182$.
    \item $\Cat(Q_{\bar{2}};t+1) = \Cat(\mathbb{A}_{1};t+1)\Cat(\mathbb{A}_{4};t+1) = 
    t^5 + 16t^4+ 84t^3+ 196t^2+ 210t+ 84$.
    \item $\Cat(Q_{\bar{3}};t+1) = \Cat(\mathbb{A}_{2};t+1)\Cat(\mathbb{A}_1;t+1)\Cat(\mathbb{A}_{2};t+1) = 
    t^5 + 12t^4 + 55t^3 + 120t^2 + 125t + 50$.
    \item $\Cat(Q_{\bar{4}};t+1) = \Cat(\mathbb{A}_5;t+1) = t^5 + 20t^4 + 120t^3 + 300t^2 + 330t + 132$. 
    \item $\Cat(Q_{\bar{5}};t+1) = \Cat(\mathbb{A}_4;t+1)\Cat(\mathbb{A}_1;t+1) = 
    t^5 + 16t^4 + 84t^3 + 196t^2 + 210t + 84$.
    \item $\Cat(Q_{\bar{6}};t+1) = \Cat(\mathbb{D}_5;t+1) = 
    t^5 + 25t^4 + 160t^3 + 410t^2 + 455t + 182$.
\end{enumerate}
Using Proposition \ref{thm:OOpathalg}, we obtain the desired equation by 
\begin{eqnarray*}
    d(H;x) &=& 
    16\Cat(Q_{\bar{1}};t+1) + 
    30\Cat(Q_{\bar{2}};t+1) + 
    42\Cat(Q_{\bar{3}};t+1) + \\ 
    &&22\Cat(Q_{\bar{4}};t+1) + 
    30\Cat(Q_{\bar{5}};t+1) +
    16\Cat(Q_{\bar{6}};t+1)\\ 
    &=& 156t^5 + 2704t^4 + 15110t^3 + 36520t^2 + 39670t + 15868. 
\end{eqnarray*}
The cases for $\mathbb{E}_7$ and $\mathbb{E}_8$ can be obtained in a similar way. 
\end{proof}

\section{Generating functions of $d$-polynomials} \label{sec:generating_function}
In combinatorics, the method of generating functions provides a fundamental tool to solve enumeration problems of topics of interest (we refer to \cite{Stanley99} for example). 
We end this paper with discussing the generating function defined by a family of $h$-polynomials and $d$-polynomials.

We first prepare the following basic terminology. 
For a (possibly infinite) sequence $(a_0,a_1,a_2, \ldots)$, we recall that the \emph{ordinary generating function} is the series 
\begin{equation*}
\sum_{n\geq 0} a_n z^n = a_0 + a_1z + a_2z^{2} +a_3z^3 +  \cdots, 
\end{equation*}
while the \emph{exponential generating function} is the series 
\begin{equation*}
\sum_{n\geq 0} a_n \frac{z^n}{n!} = a_0 + a_1z + a_2\frac{z^{2}}{2} + a_3\frac{z^{3}}{6} + \cdots. 
\end{equation*}
We have $\frac{1}{1-z} = 1+z+z^2+z^3+\cdots$ and $e^z = 1+ z + \frac{z^2}{2} + \frac{z^3}{6} + \cdots$, which are obtained by letting $a_i=1$ for all $i$, respectively. 

Now, we recall the following famous result, which is originally due to Euler \cite{Euler}. 
Throughout, let $\mathbb{A}_{-1}:=\mathbb{A}_0$. 
We define the exponential generating function for Eulerian polynomials by 
\begin{equation}\label{gen Euler_poly}
 S(t,z) :=  
    \sum_{n\geq 0} \Eul(\mathbb{A}_{n-1};t) \frac{z^{n}}{n!}.  
\end{equation}
It is known (see \cite[Section 1.6]{Petersen15} for example) that it satisfies the equation 
\begin{equation*}
    \frac{\partial}{\partial z} S(t,z) = tS(t,z)^2 + (1-t)S(t,z). 
\end{equation*}
Solving the differential equation $f'(z)=tf(z)^2 + (1-t)f(z)$ with $f(0)=1$, we obtain the following closed-form expression. 
 
\begin{prop}[Euler]  \label{prop:Euler}
    \begin{equation}\label{expression Euler}
       S(t,z) = \frac{t-1}{t-e^{z(t-1)}}. 
    \end{equation} 
\end{prop}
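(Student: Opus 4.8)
The plan is to regard $t$ as a fixed parameter and simply solve the separable ordinary differential equation that $S(t,z)$ is already known to satisfy, namely $f'(z) = t f(z)^2 + (1-t)f(z)$, subject to $f(0)=1$. The initial condition is forced because the constant term of the series \eqref{gen Euler_poly} is $\Eul(\mathbb{A}_{-1};t) = \Eul(\mathbb{A}_0;t) = 1$. First I would rewrite the equation in separated form $\frac{df}{f\,(tf+1-t)} = dz$ and decompose the left-hand side by partial fractions.

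The one genuine computation is the identity
$$\frac{1}{f\,(tf+1-t)} = \frac{1}{1-t}\left(\frac{1}{f} - \frac{t}{tf+1-t}\right),$$
obtained by the cover-up rule at $f=0$ and at $f=(t-1)/t$. Integrating both sides gives $\frac{1}{1-t}\log\!\left(\frac{f}{tf+1-t}\right) = z + C$, hence $\frac{f}{tf+1-t} = D\,e^{(1-t)z}$ for a constant $D$; imposing $f(0)=1$ yields $D=1$. Solving the resulting linear relation for $f$ produces
$$f = \frac{(1-t)\,e^{(1-t)z}}{1 - t\,e^{(1-t)z}} = \frac{t-1}{t - e^{z(t-1)}},$$
after clearing $e^{(1-t)z}$ from numerator and denominator, which is exactly \eqref{expression Euler}.

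For a cleaner justification in the formal-power-series setting I would instead verify the closed form directly: set $g(t,z) := \frac{t-1}{t-e^{z(t-1)}}$, observe that $g$ is a well-defined element of $\mathbb{Q}(t)[[z]]$ with $g(t,0)=1$ by writing $t - e^{z(t-1)} = (t-1)\bigl(1 - \tfrac{e^{z(t-1)}-1}{t-1}\bigr)$ (whose last factor has invertible constant term), and then check by differentiation that $\partial_z g = t\,g^2 + (1-t)\,g$. Since comparing the coefficients of $z^n$ in the differential equation determines every coefficient of a solution uniquely from $g(t,0)=1$, this identifies $g$ with $S(t,z)$. I do not expect a real obstacle: the calculation is routine, and the only points needing care are the degenerate value $t=1$, where the partial-fraction step breaks down but both sides of \eqref{expression Euler} specialize to $\frac{1}{1-z}$ (so that the identity, holding as one of power series with coefficients rational in $t$ for all $t\neq 1$, persists at $t=1$), and the bookkeeping that legitimizes treating the generating function as an honest solution of the ODE, which the direct-verification argument handles automatically.
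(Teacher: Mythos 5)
Your proposal is correct and follows essentially the same route as the paper, which likewise takes the differential equation $\partial_z S = tS^2+(1-t)S$ from \cite{Petersen15} as given and obtains \eqref{expression Euler} by solving it with the initial condition $f(0)=1$. You merely supply the details the paper leaves implicit (the partial-fraction integration, the formal-power-series justification via direct verification of the ODE for $g(t,z)=\frac{t-1}{t-e^{z(t-1)}}$, and the degenerate case $t=1$), all of which check out.
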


As we mentioned in Theorem \ref{thm:h of ppalg}, Eulerian polynomials coincide with $h$-polynomials of preprojective algebras of type $\mathbb{A}$. 
Inspired by this fact, we will introduce four kinds of generating functions associated with a family of algebras as follows.

\begin{defi}
Let $A_{\bullet} := (A_n)_{n\geq0}$ be a (possibly infinite) sequence of finite dimensional algebras such that $A_n$ is $g$-finite for each $n$.
We define generating functions
$$
\mathcal{H}(A_{\bullet};t,z):= \sum_{n\geq 0} h(A_n;t) z^{n} \quad \text{and} \quad 
\mathscr{H}(A_{\bullet};t,z):= \sum_{n\geq 0} h(A_n;t) \frac{z^{n}}{n!}, 
$$
which we call the \emph{ordinary and exponential generating functions of $h$-polynomials of $A_{\bullet}$}, respectively. 
Similarly, we call   
$$
\mathcal{D}(A_{\bullet};t,z):= \sum_{n\geq 0} d(A_n;t) z^{n} \quad \text{and} \quad 
\mathscr{D}(A_{\bullet};t,z):= \sum_{n\geq 0} d(A_n;t) \frac{z^{n}}{n!}
$$
the \emph{ordinary and exponential generating functions of $d$-polynomials of $A_{\bullet}$}, respectively. 
\end{defi}

In the above, we notice that the generating functions of $f$-polynomials can be obtained from those of $h$-polynomials as  
\begin{equation*}
    \mathcal{H}(A_{\bullet};t-1,z) = \sum_{n\geq 0} f(A_n;t)z^n \quad \text{and} \quad 
    \mathscr{H}(A_{\bullet};t-1,z) = \sum_{n\geq 0} f(A_n;t)\frac{z^n}{n!}
\end{equation*}
respectively. 

In our context, the above Proposition \ref{prop:Euler} can be formulated as follows: 
Let $\Pi(\mathbb{A}_{n})$ be the preprojective algebras of type $\mathbb{A}_{n}$ for all $n$, and let  
$\Pi(\mathbb{A}_{\bullet}):=(\Pi(\mathbb{A}_{n-1}))_{n\geq 0}$. 
Then, we have 
\begin{eqnarray*}
    \mathscr{H}(\Pi(\mathbb{A}_{\bullet}),t,z) &:=& \sum_{n\geq 0} h(\Pi(\mathbb{A}_{n-1};t)) \frac{z^{n}}{n!} \\ 
    &=& \sum_{n\geq 0} \Eul(\mathbb{A}_{n-1};t)) \frac{z^{n}}{n!} \\ 
    &=& S(t,z). 
\end{eqnarray*}
Thus, Proposition \ref{prop:Euler} gives 

\begin{prop}\label{prop:Euler-ppaA}
$$\mathscr{H}(\Pi(\mathbb{A}_{\bullet});t,z)=\frac{t-1}{t-e^{z(t-1)}}.$$    
\end{prop}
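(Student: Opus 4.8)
The plan is to deduce this statement purely by chaining together the two substantive inputs already available, namely the representation-theoretic identification of $h$-polynomials with Eulerian polynomials and Euler's closed form for their exponential generating function. First I would unwind the definition of the exponential generating function of $h$-polynomials for the family $\Pi(\mathbb{A}_{\bullet}) = (\Pi(\mathbb{A}_{n-1}))_{n\geq 0}$, writing
\begin{equation*}
    \mathscr{H}(\Pi(\mathbb{A}_{\bullet});t,z) = \sum_{n\geq 0} h(\Pi(\mathbb{A}_{n-1});t) \frac{z^{n}}{n!}.
\end{equation*}

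Next I would apply Theorem \ref{thm:h of ppalg}, which gives $h(\Pi(Q);t) = \Eul(Q;t)$ for every Dynkin quiver $Q$; specializing to $Q$ of type $\mathbb{A}_{n-1}$ yields $h(\Pi(\mathbb{A}_{n-1});t) = \Eul(\mathbb{A}_{n-1};t)$ for all $n\geq 0$ (with the convention $\mathbb{A}_{-1} = \mathbb{A}_0$). Substituting term by term, the series becomes exactly $\sum_{n\geq 0} \Eul(\mathbb{A}_{n-1};t) \frac{z^n}{n!}$, which is the defining expression \eqref{gen Euler_poly} for $S(t,z)$. Hence $\mathscr{H}(\Pi(\mathbb{A}_{\bullet});t,z) = S(t,z)$.

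Finally I would invoke Proposition \ref{prop:Euler}, which records Euler's closed-form evaluation $S(t,z) = \frac{t-1}{t-e^{z(t-1)}}$, to obtain the claimed identity. The genuine mathematical content is entirely concentrated in these two cited results: Euler's solution of the differential equation $f'(z) = tf(z)^2 + (1-t)f(z)$ with $f(0)=1$, and the simplicial-complex isomorphism $\Delta(\Pi(Q)) \cong \Delta(W(Q))$ underlying Theorem \ref{thm:h of ppalg}. Consequently there is no real obstacle here; the proof is a formal consequence, and the only point requiring a moment of care is keeping the indexing convention $\Pi(\mathbb{A}_{\bullet}) = (\Pi(\mathbb{A}_{n-1}))_{n\geq 0}$ aligned with the index $n$ appearing in the factorials of \eqref{gen Euler_poly}, so that the two series are matched coefficientwise.
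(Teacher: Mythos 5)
Your proposal is correct and is essentially identical to the paper's argument: the paper likewise unwinds the definition of $\mathscr{H}(\Pi(\mathbb{A}_{\bullet});t,z)$, substitutes $h(\Pi(\mathbb{A}_{n-1});t)=\Eul(\mathbb{A}_{n-1};t)$ via Theorem \ref{thm:h of ppalg} to identify the series with $S(t,z)$, and then invokes Euler's closed form \eqref{expression Euler}. No differences worth noting.
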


It is natural to consider the analogue for $d$-polynomials. 
Using their description in Theorem 4.11, we can also get a closed-form expression for the
exponential generating function of $d$-polynomials of $\Pi(\mathbb{A}_{\bullet})$. 

Our result is the following.  

\begin{theorem}\label{thm:gen ppaA}
We have 
$$
\mathscr{D}(\Pi(\mathbb{A}_{\bullet});t,z) = \frac{z^2t^4 e^{2z(t+2)}}{2(e^{z}(t+1) - e^{z(t+1)})^4}. 
$$

\end{theorem}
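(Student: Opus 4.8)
The plan is to reduce the computation of $\mathscr{D}(\Pi(\mathbb{A}_{\bullet});t,z)$ to the exponential generating function $S(t,z)$ of Eulerian polynomials from Proposition \ref{prop:Euler}, using the explicit formula for $d(\Pi;t)$ in Theorem \ref{thm:dpoly ppaA}. The key structural observation I would exploit is that the weight ${\rm T}(p,q)=\tfrac{(p+1)(p+2)}{2}\binom{p}{q}$ factors into a binomial part $\binom{p}{q}$, which governs an exponential-generating-function convolution, and a purely quadratic part $\tfrac{(p+1)(p+2)}{2}$, which will ultimately produce the prefactor $\tfrac{z^2}{2}$ after EGF normalization.

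First I would set $\tilde{S}(t,z):=S(t+1,z)=\sum_{n\ge0}\Eul(\mathbb{A}_{n-1};t+1)\frac{z^n}{n!}$; the substitution $t\mapsto t+1$ in \eqref{expression Euler} gives $\tilde S(t,z)=\frac{t}{t+1-e^{zt}}$. Writing $E_i:=\Eul(\mathbb{A}_{i-1};t+1)$, I would rewrite Theorem \ref{thm:dpoly ppaA} for $\Pi(\mathbb{A}_{n-1})$ (that is, $m=n-1$ vertices): substituting $i=\ell$, $j=m-\ell+1$ (so $i+j=n$ and $i,j\ge1$) and using ${\rm T}(n-2,j-1)=\tfrac{(n-1)n}{2}\binom{n-2}{j-1}$ yields
$$d(\Pi(\mathbb{A}_{n-1});t)=\frac{(n-1)n}{2}\sum_{\substack{i+j=n\\ i,j\ge1}}\binom{n-2}{j-1}E_iE_j.$$
The inner sum is where the index shift matters: setting $F_k:=E_{k+1}=\Eul(\mathbb{A}_{k};t+1)$ and $i'=i-1$, $j'=j-1$, it becomes $\sum_{i'+j'=n-2}\binom{n-2}{j'}F_{i'}F_{j'}$, which is exactly the coefficient of $z^{n-2}/(n-2)!$ in $G(z)^2$, where $G(z):=\sum_{k\ge0}F_k\frac{z^k}{k!}$. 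A one-line check shows $G=\partial_z\tilde S$, since differentiating $\tilde S=\sum_i E_i z^i/i!$ shifts the summation index by one.

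Assembling these and using $\frac{(n-1)n}{n!}=\frac{1}{(n-2)!}$, I would obtain the compact identity
$$\mathscr{D}(\Pi(\mathbb{A}_{\bullet});t,z)=\sum_{n\ge0}d(\Pi(\mathbb{A}_{n-1});t)\frac{z^n}{n!}=\frac{z^2}{2}\,G(z)^2=\frac{z^2}{2}\Bigl(\frac{\partial}{\partial z}\tilde S(t,z)\Bigr)^2,$$
where the $n=0,1$ terms drop out because of the factor $(n-1)n$, consistent with $d(\Pi(\mathbb{A}_{-1}))=d(\Pi(\mathbb{A}_{0}))=0$ under the convention $\mathbb{A}_{-1}=\mathbb{A}_0$. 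It then remains to differentiate $\tilde S=\frac{t}{t+1-e^{zt}}$, giving $\partial_z\tilde S=\frac{t^2e^{zt}}{(t+1-e^{zt})^2}$ and hence $G(z)^2=\frac{t^4e^{2zt}}{(t+1-e^{zt})^4}$, so that $\mathscr{D}=\frac{z^2t^4e^{2zt}}{2(t+1-e^{zt})^4}$. Finally I would record that this agrees with the stated form by factoring $e^z$ out of $e^z(t+1)-e^{z(t+1)}=e^z\bigl((t+1)-e^{zt}\bigr)$, whose fourth power contributes $e^{4z}$ that cancels against the $e^{2z(t+2)}=e^{2zt}e^{4z}$ in the numerator. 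The only genuinely delicate step is the middle one — correctly matching the binomial weight $\binom{n-2}{j-1}$ together with the quadratic factor $\tfrac{(n-1)n}{2}$ to a squared first derivative of $\tilde S$; once the reindexing $E_i=F_{i-1}$ is performed, everything else is routine differentiation and algebraic simplification.
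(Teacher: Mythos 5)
Your proposal is correct and follows essentially the same route as the paper: both reduce the claim to the identity $\mathscr{D}(\Pi(\mathbb{A}_{\bullet});t,z)=\tfrac{z^2}{2}\bigl(\partial_z S(t+1,z)\bigr)^2$ by recognizing the weighted sum in Theorem \ref{thm:dpoly ppaA} as an exponential convolution of the shifted Eulerian EGF, and then substitute Euler's closed form \eqref{expression Euler}. The only (harmless) differences are cosmetic: you work directly at parameter $t+1$ instead of proving the identity at $t-1$ and substituting afterwards, and you make explicit the reconciliation between the two equivalent forms of the final expression.
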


\begin{proof}
We will prove the equality 
    \begin{equation*}
        \mathscr{D}(\Pi(\mathbb{A}_{\bullet});t-1,z) = \frac{z^2}{2}\left(\frac{\partial}{\partial z} S(t,z)\right)^2.
    \end{equation*}
After that, we can easily get the desired equation from \eqref{expression Euler} by replacing $t\to t+1$.  

We recall from Theorem \ref{thm:dpoly ppaA} that 
the $d$-polynomial of $\Pi(\mathbb{A}_n)$ is given by 
\begin{eqnarray*}
d(\Pi(\mathbb{A}_{n});t) = 
\sum_{\ell=1}^n 
\frac{(n+1)!}{2\cdot (\ell-1)!(n-\ell)!}
\Eul(\mathbb{A}_{\ell-1};t+1) \Eul(\mathbb{A}_{n-\ell};t+1). 
\end{eqnarray*}
Applying this result, we get 
\begin{eqnarray*}    
\mathscr{D}(\Pi(\mathbb{A}_{\bullet});t-1,z) &=& \sum_{n\geq 0} d(\Pi(\mathbb{A}_{n-1});t-1)\frac{z^n}{n!} \\ 
&=& 
\sum_{n\geq 0} \left(\sum_{\ell=1}^{n-1} 
\frac{n!}{2\cdot (\ell-1)!(n-\ell-1)!}
\Eul(\mathbb{A}_{\ell-1};t) \Eul(\mathbb{A}_{n-\ell-1};t)  \right) \frac{z^n}{n!} \\ 
&=& \frac{1}{2} \sum_{n\geq 0} \left(\sum_{\ell=1}^{n-1} \Eul(\mathbb{A}_{\ell-1};t)\frac{z^{\ell-1}}{(\ell-1)!} 
\Eul(\mathbb{A}_{n-\ell-1};t)\frac{z^{n-\ell-1}}{(n-\ell-1)!}
\right) \\ 
&=& \frac{z^2}{2} 
\cdot \frac{\partial}{\partial z} 
\left(
\sum_{i\geq 0} \Eul(\mathbb{A}_{i-1};t)\frac{z^{i}}{i!} 
\right)
\cdot \frac{\partial}{\partial z} \left(
\sum_{j\geq 0} \Eul(\mathbb{A}_{j-1};t)\frac{z^{j}}{j!} 
\right) \\ 
&=& \frac{z^2}{2} 
\left( \frac{\partial}{\partial z} S(t,z)
\right)^2
\end{eqnarray*}
as desired. It finishes the proof. 
\end{proof}

Next, we study generating functions for the path algebras of type $\mathbb{A}$ and give their explicit formula. 
Contrary to the case of Eulerian polynomials, 
we consider the ordinary generating function of Narayana polynomials: 
\begin{equation}\label{gen Narayana}
    C(t,z) := 
    \sum_{n\geq 0} \Cat(\mathbb{A}_{n-1};t) z^{n}.   
\end{equation}
Then, the equation
\begin{equation*}
    tzC(t,z)^2-(1+z(t-1))C(t,z) + 1 = 0 
\end{equation*}
gives the following result. 

\begin{prop}{\rm (see \cite[Section 2.3]{Petersen15} for example)} 
    \begin{equation}\label{expression Narayana}
        C(t,z) = \frac{1 - z (t - 1) - \sqrt{1 - 2 z (t + 1) + z^2 (t - 1)^2}}{2 t z}. 
    \end{equation}
\end{prop}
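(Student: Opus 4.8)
The plan is to treat the displayed quadratic relation
\begin{equation*}
tz\,C(t,z)^2 - \bigl(1 + z(t-1)\bigr)\,C(t,z) + 1 = 0
\end{equation*}
as the defining identity for $C(t,z)$ and to solve it for $C(t,z)$ as a formal power series in $z$ by the quadratic formula, the only real work being the correct choice of branch. This functional equation is the standard one satisfied by the ordinary generating function of the Narayana polynomials (see \cite[Section 2.3]{Petersen15}); it can be obtained by hand from the recursion for the numbers $N(W(\mathbb{A}_{n-1}),j)$, equivalently from a first-return decomposition of the Dyck paths enumerated by $\Cat(\mathbb{A}_{n-1};t)$, but since it is already recorded in the literature I would simply cite it and pass to the algebra.

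First I would apply the quadratic formula to $C = C(t,z)$, viewing $t$ and $z$ as parameters. With leading coefficient $tz$, linear coefficient $-(1+z(t-1))$ and constant term $1$, the discriminant is
\begin{equation*}
\bigl(1+z(t-1)\bigr)^2 - 4tz = 1 - 2z(t+1) + z^2(t-1)^2,
\end{equation*}
which is precisely the radicand appearing in \eqref{expression Narayana}. Thus $C(t,z)$ must be one of the two candidate roots obtained from the two choices of sign for the square root, and what remains is to fix that sign.

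The branch is pinned down by requiring $C(t,z)$ to be an honest power series in $z$. Expanding $\sqrt{1 - 2z(t+1) + z^2(t-1)^2} = 1 - z(t+1) + O(z^2)$, exactly one of the two signs makes the numerator vanish at $z=0$ and hence divisible by $z$; for the other sign the numerator has nonzero constant term and the quotient acquires a pole $\tfrac{1}{tz}$, so it is excluded. Taking the admissible branch and comparing the constant terms yields $C(t,0) = 1 = \Cat(\mathbb{A}_{-1};t)$, matching the $n=0$ coefficient of \eqref{gen Narayana}; since a quadratic has at most one power-series solution with prescribed constant term, this branch is $C(t,z)$ and equals the closed form \eqref{expression Narayana}.

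The genuinely delicate point is this branch selection together with the verification that the chosen numerator is divisible by $z$, so that division by $2tz$ returns an element of $\mathbb{Z}[t][[z]]$ rather than a formal Laurent series; the rest is a routine application of the quadratic formula. Were one to insist on deriving the functional equation from scratch rather than citing it, the main obstacle would instead be the bookkeeping of the peak (or block) statistic in the Dyck-path (or noncrossing-partition) decomposition that produces the quadratic.
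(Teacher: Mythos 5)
Your overall strategy---take the quadratic functional equation as given (which is exactly what the paper does, citing Petersen and offering no further argument), apply the quadratic formula, and select the branch that is an honest power series with the correct constant term---is the right one, and your computation of the discriminant $(1+z(t-1))^2-4tz=1-2z(t+1)+z^2(t-1)^2$ is correct. The branch selection is also sound: only the root in which the square root is subtracted has numerator vanishing at $z=0$, and the functional equation determines the coefficients of a power-series solution with constant term $1$ recursively, so that root must be $C(t,z)$.

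The problem is that the root your argument actually produces is
\begin{equation*}
\frac{1+z(t-1)-\sqrt{1-2z(t+1)+z^2(t-1)^2}}{2tz},
\end{equation*}
with a \emph{plus} sign on $z(t-1)$ coming from $-b=1+z(t-1)$, and this is not the expression displayed in \eqref{expression Narayana}, which has $1-z(t-1)$. The two differ by the additive constant $\tfrac{1}{t}-1$: expanding $\sqrt{1-2z(t+1)+z^2(t-1)^2}=1-(t+1)z-2tz^2-\cdots$, the displayed formula has constant term $\tfrac{1}{t}$ rather than $1=\Cat(\mathbb{A}_{-1};t)$, so as printed \eqref{expression Narayana} cannot equal the series \eqref{gen Narayana} for $t\neq 1$; it equals $C(t,z)-1+\tfrac{1}{t}$, which is the closed form for the other common normalization of the Narayana polynomials (the one beginning at $t^1$). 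Your closing claim that the admissible branch ``equals the closed form \eqref{expression Narayana}'' is therefore an identification you never verified---you checked that the radicand matches but not the polynomial part of the numerator---and it is in fact false. The repair is a one-character sign change (replace $1-z(t-1)$ by $1+z(t-1)$, i.e.\ the statement as printed contains a typo), but a complete proof must either make that correction explicit or reconcile the two expressions, and yours does neither.
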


We apply this result to compute generating functions for path algebras of type $\mathbb{A}$. 
Let $Q_n$ be Dynkin quivers of type $\mathbb{A}_n$ for all $n$ and $kQ_{\bullet}:=(kQ_{n-1})_{n\geq 0}$. 
Then, we have 
\begin{eqnarray*}
    \mathcal{H}(kQ_{\bullet};t,z) &:=& \sum_{n\geq 0} h(kQ_{n-1};t) z^{n} \\ 
    &=& \sum_{n\geq 0} \Cat(\mathbb{A}_n;t) z^{n} \\ 
    &=& C(t,z)
\end{eqnarray*}
and 
\begin{eqnarray*}
    \mathcal{D}(kQ_{\bullet};t,z) &=& 
    \sum_{n\geq 0} d(kQ_{n-1};t) z^{n}. 
\end{eqnarray*}

Our results are the following. 

\begin{thm}\label{thm:gen pathA}
We have 
\begin{eqnarray*}
\mathcal{H}(kQ_{\bullet};t,z) &=& \frac{1 - z (t-1) - f(t,z)}{2 t z} \quad \text{and} \\ 
\mathcal{D}(kQ_{\bullet};t,z) &=& \frac{(z(t+2)-z^2t^2+f(t+1,z)-f(t+1,z)^2)^2}{4(t+1)^2z^2f(t+1,z)^2}, 
\end{eqnarray*}
where $f(t,z) := \sqrt{1 - 2 z (t + 1) + z^2 (t - 1)^2}$. 
\end{thm}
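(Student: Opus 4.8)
The plan is to follow the same strategy as in Theorem~\ref{thm:gen ppaA}, replacing the exponential generating function $S(t,z)$ for Eulerian polynomials by the ordinary generating function $C(t,z)$ for Narayana polynomials. The first formula is immediate: by Theorem~\ref{thm:h-Narayana} we have $h(kQ_{n-1};t) = \Cat(\mathbb{A}_{n-1};t)$, so $\mathcal{H}(kQ_{\bullet};t,z) = C(t,z)$, and the closed form \eqref{expression Narayana} gives exactly the stated expression with $f(t,z) = \sqrt{1 - 2z(t+1) + z^2(t-1)^2}$.

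For the second formula, I would first rewrite Theorem~\ref{type A path} in exponential-free form. From \eqref{eq:d pathA}, the $d$-polynomial of $kQ_{n-1}$ (of type $\mathbb{A}_{n-1}$) is a convolution
\begin{equation*}
    d(kQ_{n-1};t) = \sum_{\ell=1}^{n-1} \ell(n-\ell)\,\Cat(\mathbb{A}_{\ell-1};t+1)\,\Cat(\mathbb{A}_{n-\ell-1};t+1).
\end{equation*}
The factor $\ell(n-\ell)$ is what makes this a product of two differentiated series: since $\frac{\partial}{\partial z}\bigl(\Cat(\mathbb{A}_{\ell-1};t)z^\ell\bigr) = \ell\,\Cat(\mathbb{A}_{\ell-1};t)z^{\ell-1}$, the coefficient structure suggests
\begin{equation*}
    \mathcal{D}(kQ_{\bullet};t-1,z) = z^2\Bigl(\frac{\partial}{\partial z}C(t,z)\Bigr)^2,
\end{equation*}
in complete analogy with the proof of Theorem~\ref{thm:gen ppaA}. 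I would verify this by extracting the coefficient of $z^n$ on the right-hand side: writing $C(t,z) = \sum_{i\geq 0}\Cat(\mathbb{A}_{i-1};t)z^i$, the product $\bigl(\partial_z C\bigr)^2$ has $z^{n-2}$-coefficient equal to $\sum_{\ell} \ell(n-\ell)\Cat(\mathbb{A}_{\ell-1};t)\Cat(\mathbb{A}_{n-\ell-1};t)$, which matches after multiplying by $z^2$ and shifting the index. Then replacing $t\to t+1$ recovers $\mathcal{D}(kQ_{\bullet};t,z)$.

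The remaining work is the explicit differentiation and algebraic simplification. I would compute $\partial_z C(t,z)$ from \eqref{expression Narayana}, square it, multiply by $z^2$, and then substitute $t\to t+1$; the quantity $f(t,z)$ and its shift $f(t+1,z)$ naturally appear because $\partial_z f$ is rational in $f$. The main obstacle will be the bookkeeping in this simplification: differentiating the quotient in \eqref{expression Narayana} produces a $\sqrt{\phantom{x}}$ in the denominator, and collapsing the result into the compact closed form
\begin{equation*}
    \frac{\bigl(z(t+2) - z^2t^2 + f(t+1,z) - f(t+1,z)^2\bigr)^2}{4(t+1)^2 z^2 f(t+1,z)^2}
\end{equation*}
requires repeatedly using the algebraic identity $f(t+1,z)^2 = 1 - 2z(t+2) + z^2 t^2$ to eliminate higher powers of the radical and to clear the denominators coming from the $2tz$ factor in $C$. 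I do not expect any conceptual difficulty beyond this; the convolution-to-derivative step is the conceptual heart, and it is a verbatim adaptation of the argument already carried out for the exponential generating function in Theorem~\ref{thm:gen ppaA}.
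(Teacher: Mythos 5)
Your proposal is correct and follows essentially the same route as the paper: the first identity is read off from Theorem \ref{thm:h-Narayana} together with \eqref{expression Narayana}, and the second is obtained exactly as in the paper's proof by recognizing the convolution $\sum_{\ell}\ell(n-\ell)\Cat(\mathbb{A}_{\ell-1};t)\Cat(\mathbb{A}_{n-\ell-1};t)$ as the coefficient of $z^{n-2}$ in $\bigl(\partial_z C(t,z)\bigr)^2$, giving $\mathcal{D}(kQ_{\bullet};t-1,z)=z^2\bigl(\partial_z C(t,z)\bigr)^2$ and then substituting $t\to t+1$. The remaining algebraic simplification you describe is the same final step the paper leaves implicit.
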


\begin{proof}
The first equation is \eqref{expression Narayana}. 
For the second one, our proof is similar to the proof of Theorem \ref{thm:gen ppaA}. 
We will show 
\begin{equation*}
        \mathcal{D}(kQ_{\bullet};t-1,z) = z^2\left(\frac{\partial}{\partial z} C(t,z)\right)^2.  
\end{equation*}
Then, we obtain the desired equality from \eqref{expression Narayana} by replacing $t\to t+1$. 

For a Dynkin quiver $Q$ of type $\mathbb{A}_n$, by Theorem \ref{type A path}, we have  
\begin{equation*}
    d(kQ;t) = \sum_{\ell=1}^n \ell (n-\ell+1)
    \Cat(\mathbb{A}_{\ell-1};t+1)\Cat(\mathbb{A}_{n-\ell};t+1). 
\end{equation*}
Then, it gives  
\begin{eqnarray*}
    \mathcal{D}(kQ_{\bullet};t-1,z) &=& \sum_{n\geq 0} d(kQ_{n-1};t-1) z^n \\ 
    &=&
    \sum_{n\geq 0} \left(
    \sum_{\ell=1}^{n-1} \ell (n-\ell)
    \Cat(\mathbb{A}_{\ell-1};t)\Cat(\mathbb{A}_{n-\ell-1};t)
    \right) z^n \\ 
    &=&
    \sum_{n\geq 0} \left(
    \sum_{\ell=1}^{n-1} \ell \Cat(\mathbb{A}_{\ell-1};t)z^{\ell-1} 
    (n-\ell)\Cat(\mathbb{A}_{n-\ell-1};t)z^{n-\ell-1}
    \right)\\ 
    &=&
    z^2 \cdot \frac{\partial}{\partial z} \left( 
    \sum_{i\geq 0}
    \Cat(\mathbb{A}_{i-1};t)z^{i} 
    \right)\cdot 
    \frac{\partial}{\partial z} \left( 
    \sum_{j\geq 0}
    \Cat(\mathbb{A}_{j-1};t)z^{j} 
    \right) \\ 
    &=&
    z^2\left( \frac{\partial}{\partial z} C(t,z) \right)^2. 
\end{eqnarray*}
We finish the proof. 
\end{proof}

\section{Questions}\label{question}

At the end of the paper, 
we pose the following questions related to the results in Section \ref{sec:generating_function}.

\begin{question}
 Can we extend the results of Theorem \ref{thm:gen ppaA} and Theorem \ref{thm:gen pathA} to other types? $($The closed form of the exponential generating function for Eulerian polynomials of type $\mathbb{D}$ is well-known \cite{Brenti94,Chow03}$)$. 
\end{question}

\begin{question}
\begin{enumerate}[\rm (1)]
\item Is there a family of algebras that provides a well-known class of generating functions by polynomials? 
More generally, what kinds of generating functions can we realize as $f$-, $h$- and $d$-polynomials of algebras? 

\item What types of properties of algebras are captured by generating functions?
For example, the closed form of the exponential generating function of d-polynomials of preprojective algebras is given in Theorem \ref{thm:gen ppaA}, while the ordinary generating functions of one of the path algebras is given in Theorem \ref{thm:gen pathA}. What properties of algebras determine this difference? 
\end{enumerate}
\end{question}

\section*{Acknowledgments} 
T.A is supported by JSPS Grant-in-Aid for Transformative Research Areas (A) 22H05105. 
Y.M is supported by Grant-in-Aid for Scientific Research 20K03539. 
The project began with discussions at the workshop \emph{Unsolved problems in tilting theory and the use of computer aids} in March 2023. We are very grateful to the organizers and participants. 
We thank the referee for their kind comments, which greatly improve the paper.

\bibliographystyle{alpha} 

\newcommand{\etalchar}[1]{$^{#1}$}

\end{document}